\tikzstyle directed=[postaction={decorate,decoration={markings,
    mark=at position #1 with {\arrow{>}}}}]
\tikzstyle rdirected=[postaction={decorate,decoration={markings,
    mark=at position #1 with {\arrow{<}}}}]
\newtheorem{Proposition}{Proposition}[subsection] 
\newtheorem{Lemma}[Proposition]{Lemma}
\newtheorem{Theorem}[Proposition]{Theorem}
\newtheorem{Corollary}[Proposition]{Corollary}
\newtheorem{Definition}[Proposition]{Definition}
\newtheorem{Remark}[Proposition]{Remark}
\newtheorem{Example}[Proposition]{Example}
\newbox\squ  
\def\Tr{\operatorname{Tr}}
\def\tr{\operatorname{tr}}
\def\Id{\operatorname{Id}}
\def\C{{\mathbb C}}
\def\Q{{\mathbb Q}}
\def\Z{{\mathbb Z}}
\def\N{{\mathbb N}}
\def\0{{\bar 0}}
\def\1{{\bar 1}}
\def\End{{\operatorname{End}}}
\def\im{{\operatorname{im}}}
\def\phi{{\varphi}}
\def\emptyset{{\varnothing}}
\def\underbar{\mathpalette\@underbar}
\def\@underbar#1#2{\settowidth{\@tempdimb}{$#1#2$}\@tempdimb=0.8\@tempdimb
                   \ooalign{$#1#2$\crcr%
                         \hfil\rule[-.5mm]{\@tempdimb}{.4pt}\hfil}}
\newcommand{\maps}{\colon}
\newcommand\sE{{\mathcal{E}}}
\newcommand\sF{{\mathcal{F}}}
\newcommand{\onel}{{\mathbf 1}_{\bar{\nu}}}
\newcommand{\nub}{\overbar{\nu}}
\newcommand{\mub}{\overbar{\mu}}
\newcommand{\lb}{\overbar{\l}}
\newcommand{\onenn}[1]{{\mathbf 1}_{#1}}
\newcommand{\bfit}[1]{\textit{#1}}
\newcommand{\ra}{\rangle}
\newcommand{\la}{\langle}
\newcommand{\overbar}[1]{\mkern 1.5mu\overline{\mkern-1.5mu#1\mkern-1.5mu}\mkern 1.5mu}
\newcommand{\refequal}[1]{\xy {\ar@{=}^{#1}
(-1,0)*{};(1,0)*{}};
\endxy}
\newcommand{\U}{\dot{{\bf U}}}
\newcommand{\UA}{{_{\cal{A}}\dot{{\bf U}}}}
\newcommand{\Ucat}{\cal{U}}
\newcommand{\xsum}[2]{
  \vcenter{\xy
  (0,.4)*{\sum};
  (0,3.8)*{\scs #2};
  (0,-3.2)*{\scs #1};
  \endxy}
}
\newcommand{\cat}[1]{\ensuremath{\mbox{\bfseries {\upshape {#1}}}}}
\newcommand{\BOX}{\hbox {$\sqcap$ \kern -1em $\sqcup$}}
\renewcommand{\to}{\rightarrow}
\newcommand{\scs}{\scriptstyle}
\newcommand{\lineu}[1]{\xybox{%
  (-2,0)*{};
  (2,0)*{};
  (0,0)*{}; (0,-18)*{} **\dir{-}; ?(.5)*\dir{<}+(1.7,-7)*{\scs #1};
}}
\newcommand{\lined}[1]{\xybox{%
  (-2,0)*{};
  (2,0)*{};
  (0,0)*{}; (0,-18)*{} **\dir{-}; ?(.5)*\dir{>}+(1.7,-7)*{\scs #1};
}}
\newcommand{\lowrru}[1]{\xybox{%
  (-8,0)*{};
  (8,0)*{};
  (-6,-18)*{};(6,-9)*{} **\crv{(-6,-13) & (6,-15)} ?(1)*\dir{>};
  (6,-9)*{};(6,0)*{}  **\dir{-} ?(.3)*\dir{ }+(2,0)*{\scs {\bf j}};
}}
\newcommand{\lowllu}[1]{\xybox{%
  (-8,0)*{};
  (8,0)*{};
  (6,-18)*{};(-6,-9)*{} **\crv{(6,-13) & (-6,-15)} ?(1)*\dir{>};
  (-6,-9)*{};(-6,0)*{}  **\dir{-} ?(.3)*\dir{ }+(-2,0)*{\scs {\bf j}};
}}
\newcommand{\bbe}[1]{\xybox{%
  (-2,0)*{};
  (2,0)*{};
  (0,0);(0,-18) **\dir{-}; ?(.5)*\dir{<}+(2.3,0)*{\scriptstyle{#1}};
}}
\newcommand{\bbf}[1]{\xybox{%
  (-2,0)*{};
  (2,0)*{};
  (0,0);(0,-18) **\dir{-}; ?(.5)*\dir{>}+(2.3,0)*{\scriptstyle{#1}};
}}
\newcommand{\bbpef}[1]{\xybox{%
  (-6,0)*{};
  (6,0)*{};
  (-4,0)*{}="t1";
  (4,0)*{}="t2";
  "t1";"t2" **\crv{(-4,-6) & (4,-6)}; ?(.15)*\dir{>} ?(.9)*\dir{>}
   ?(.5)*\dir{}+(0,-2)*{\scriptstyle{#1}};
}}
\newcommand{\bbpfe}[1]{\xybox{%
  (-6,0)*{};
  (6,0)*{};
  (-4,0)*{}="t1";
  (4,0)*{}="t2";
  "t2";"t1" **\crv{(4,-6) & (-4,-6)}; ?(.15)*\dir{>} ?(.9)*\dir{>}
  ?(.5)*\dir{}+(0,-2)*{\scriptstyle{#1}};
}}
\newcommand{\bbcfe}[1]{\xybox{%
  (-6,0)*{};
  (6,0)*{};
  (-4,0)*{}="t1";
  (4,0)*{}="t2";
  "t1";"t2" **\crv{(-4,6) & (4,6)}; ?(.15)*\dir{>} ?(.9)*\dir{>}
  ?(.5)*\dir{}+(0,2)*{\scriptstyle{#1}};
}}
\newcommand{\bbcef}[1]{\xybox{%
  (-6,0)*{};
  (6,0)*{};
  (-4,0)*{}="t1";
  (4,0)*{}="t2";
  "t2";"t1" **\crv{(4,6) & (-4,6)}; ?(.15)*\dir{>}
  ?(.9)*\dir{>} ?(.5)*\dir{}+(0,2)*{\scriptstyle{#1}};
}}
\newcommand{\ccbub}[2]{
\xybox{%
 (-6,0)*{};
  (6,0)*{};
  (-4,0)*{}="t1";
  (4,0)*{}="t2";
  "t2";"t1" **\crv{(4,6) & (-4,6)}; ?(.7)*\dir{}+(-2,0)*{\scs #2}
  ?(.05)*\dir{>} ?(1)*\dir{>};
  "t2";"t1" **\crv{(4,-6) & (-4,-6)};
   ?(.3)*\dir{}+(0,0)*{\bullet}+(0,-3)*{\scs {#1}};
}}
\newcommand{\cbub}[2]{
\xybox{%
 (-6,0)*{};
  (6,0)*{};
  (-4,0)*{}="t1";
  (4,0)*{}="t2";
  "t2";"t1" **\crv{(4,6) & (-4,6)};?(.7)*\dir{}+(-2,0)*{\scs #2};
   ?(0)*\dir{<} ?(.95)*\dir{<};
  "t2";"t1" **\crv{(4,-6) & (-4,-6)};
   ?(.3)*\dir{}+(0,0)*{\bullet}+(0,-3)*{\scs {#1}};
}}
\newcommand{\bbdl}[1]{\xybox{%
  (2,0);(0,-8) **\crv{(2,-2)&(0,-6)}; ?(.5)*\dir{>}
}}
\newcommand{\bbdlu}[1]{\xybox{%
  (2,0);(0,-8) **\crv{(2,-2)&(0,-6)}; ?(.5)*\dir{<}
}}
\newcommand{\bbdr}[1]{\xybox{%
  (-2,0);(0,-8) **\crv{(-2,-2)&(0,-6)}; ?(.5)*\dir{>}
}}
\newcommand{\bbdru}[1]{\xybox{%
  (-2,0);(0,-8) **\crv{(-2,-2)&(0,-6)}; ?(.5)*\dir{<}
}}
\newcommand{\lccbub}[1]{
\xybox{%
 (-12,0)*{};
  (12,0)*{};
  (-12,0)*{}="t1";
  (12,0)*{}="t2";
  "t2";"t1" **\crv{(12,14) & (-12,14)}; ?(0)*\dir{>} ?(1)*\dir{>};
  "t2";"t1" **\crv{(12,-14) & (-12,-14)}; ?(.3)*\dir{}+(2,-1)*{\scs #1};
}}
\newcommand{\lcbub}[1]{
\xybox{%
 (-12,0)*{};
  (12,0)*{};
  (-12,0)*{}="t1";
  (12,0)*{}="t2";
  "t2";"t1" **\crv{(12,14) & (-12,14)}; ?(0)*\dir{<} ?(1)*\dir{<};
  "t2";"t1" **\crv{(12,-14) & (-12,-14)}; ?(.3)*\dir{}+(2,-1)*{\scs #1};
}}
\newcommand{\lcap}{
\xybox{%
 (-12,2)*{};(12,2)*{}; **\crv{(12,14) & (-12,14)};
 }}
\newcommand{\xlcap}{
\xybox{%
 (-20,0)*{};(20,0)*{}; **\crv{(20,22) & (-20,22)};
 }}
\newcommand{\lcupef}{\xybox{%
 (12,0)*{};(-12,0)*{} **\crv{(12,-12) & (-12,-12)} ?(0)*\dir{>} ?(1)*\dir{>};
 }}
\newcommand{\xlcupef}{\xybox{%
 (-20,0)*{};(20,0)*{}; **\crv{(20,-22) & (-20,-22)} ?(0)*\dir{>} ?(1)*\dir{>};
 }}
\newcommand{\ecross}{\xybox{%
(-6,0)*{};
  (6,0)*{};
 (-4,-4)*{};(4,4)*{} **\crv{(-4,-1) & (4,1)}?(1)*\dir{>};
 (4,-4)*{};(-4,4)*{} **\crv{(4,-1) & (-4,1)}?(1)*\dir{>};
 }}
\newcommand{\sfline}{\xybox{%
 (0,-4)*{};(0,4)*{} **\dir{-} ?(1)*\dir{<};
}}
\newcommand{\hackcenter}[1]{
 \xy (0,0)*{#1}; \endxy}
\tikzset{->-/.style={decoration={
  markings,
  mark=at position #1 with {\arrow{>}}},postaction={decorate}}}
\tikzset{middlearrow/.style={
        decoration={markings,
            mark= at position 0.5 with {\arrow{#1}} ,
        },
        postaction={decorate}
    }
}
\theoremstyle{plain}
\newtheorem{theorem}{Theorem}
\theoremstyle{definition}
\theoremstyle{remark}
\renewcommand{\to}{\rightarrow}
\def\Id{\mathrm{Id}}
\def\mf{\mathfrak}
\def\shuffle{\,\raise 1pt\hbox{$\scriptscriptstyle\cup{\mskip
               -4mu}\cup$}\,}
\numberwithin{equation}{section}
\def\emph#1{{\sl #1\/}}
\let\hat=\widehat
\let\tilde=\widetilde
\let\theta=\vartheta
\let\epsilon=\varepsilon
\def\C{{\mathbbm C}}
\def\N{{\mathbbm N}}
\def\Z{{\mathbbm Z}}
\def\Q{{\mathbbm Q}}
\def\cal#1{\mathcal{#1}}%
\def\1{\mathbbm{1}}%
\def\tr{\mathrm{tr}}%
\def\nn{\notag}
\def\la{\langle}
\def\ra{\rangle}
\def\cal#1{\mathcal{#1}}
\def \k {\mathbbm{k}}
\def \Z {\mathbbm{Z}}
\def \N {\mathbbm{N}}
\def \Q {\mathbbm{Q}}
\def \E {\mathcal{E}}
\def \U {\mathcal{U}}
\def \C {\mathcal{C}}
\def \Tr{\operatorname{Tr}}
\def \Span{\operatorname{Span}}
\def \Ob{\operatorname{Ob}}
\def \HH{\operatorname{HH}}
\def \min {{\rm min}}
\def \Id {{\rm Id}}
\def\k{\mathbbm{k}}
\newcommand{\xto}[1]{{\overset{#1}{\longrightarrow}}}
\newcommand\nc{\newcommand}
\nc\rnc{\renewcommand}
\nc\Kar{\operatorname{Kar}}
 \nc\modQ {{\mathbb Q}}
\nc\modZ {{\mathbb Z}}
\nc\simeqto{\overset{\simeq}{\longrightarrow }}
\nc\modC {{\mathcal C}}
\nc\modD {{\mathcal D}}
\nc\K{\mathcal {K}}
\nc\CC{\mathbf{C}}
\nc\calU{\mathcal{U}}
\nc\cU{\calU}
\nc\Ab{\mathbf{Ab}}
\nc\Ko{K_0}
\nc\TrhorCC{\Tr^{\mathrm{hor}}(\CC)}
\nc\AdCat{\mathbf{AdCat}}
\nc\TrCC{\Tr(\CC)}
\nc\Udot{\dot{\mathcal{U}}}
\nc\diag{\mathrm{d}}
\nc\modU {\mathcal{U}}
\nc\bfU{\mathbf{U}}
\nc\dU{\dot{\mathbf U}}
\nc\dUZ{{_\modZ\dot{\mathbf U}}}
\nc\UZ{{_\modZ \mathbf U} }
\nc\fsl{\mathfrak{sl}}
\nc\Uaa{{\bf U} (\mathfrak{sl}_2\otimes \Q[t,t^{-1}])}
\nc\UZslt{{_\modZ\mathbf{U}} (\mathfrak{sl}_2\otimes \Q[t])}
\nc\UdZslt{{_\modZ\dot{\mathbf{U}}} (\mathfrak{sl}_2\otimes \Q[t])}
\nc\LL{L^+\fsl_2}
\nc\UL{\mathbf U(\LL)}
\nc\UZL{\UZ(\LL)}
\nc\dUZL{\dUZ(\LL)}
\nc\dUL{\dU(\LL)}
\nc\Kom{\rm Kom}
\nc\GL{\rm{GL}}
\nc\g{\mathfrak{g}}
\nc\tG{\tilde{G}} \nc\tE{\tilde{E}}
\nc\Vect{\rm Vect}
\nc{\Gras}{{\rm {Gr}}}
\nc\FMod{\rm FMod}
\nc\yto[1]{\underset{#1}{\to}}
\nc\Ear{\yto{E}}
\def\l{\lambda}
\newcommand{\iccbub}[2]{
\xybox{%
 (-6,0)*{};
  (6,0)*{};
  (-4,0)*{}="t1";
  (4,0)*{}="t2";
  "t2";"t1" **\crv{(4,6) & (-4,6)}; ?(.7)*\dir{}+(-2,0)*{\scs #2}
  ?(.05)*\dir{>} ?(1)*\dir{>};
  "t2";"t1" **\crv{(4,-6) & (-4,-6)};
   ?(.3)*\dir{}+(0,0)*{\bullet}+(0,-3)*{\scs {#1}};
}}
\newcommand{\icbub}[2]{
\xybox{%
 (-6,0)*{};
  (6,0)*{};
  (-4,0)*{}="t1";
  (4,0)*{}="t2";
  "t2";"t1" **\crv{(4,6) & (-4,6)};?(.7)*\dir{}+(-2,0)*{\scs #2};
   ?(0)*\dir{<} ?(.95)*\dir{<};
  "t2";"t1" **\crv{(4,-6) & (-4,-6)};
   ?(.3)*\dir{}+(0,0)*{\bullet}+(0,-3)*{\scs {#1}};
}}
\newdimen\hoogte    \hoogte=8pt    
\newdimen\breedte   \breedte=8pt   
\newdimen\dikte     \dikte=0.5pt    
\newenvironment{young}{\begingroup
       \def\vr{\vrule height0.8\hoogte width\dikte depth 0.2\hoogte}
       \def\fbox##1{\vbox{\offinterlineskip
                    \hrule height\dikte
                    \hbox to \breedte{\vr\hfill##1\hfill\vr}
                    \hrule height\dikte}}
       \vbox\bgroup \offinterlineskip \tabskip=-\dikte \lineskip=-\dikte
            \halign\bgroup &\fbox{##\unskip}\unskip  \crcr }
       {\egroup\egroup\endgroup}
\def\diagram#1{\relax\ifmmode\vcenter{\,\begin{young}#1\end{young}\,}\else%
              $\vcenter{\,\begin{young}#1\end{young}\,}$\fi}
\newdimen\Hoogte    \Hoogte=12pt    
\newdimen\Breedte   \Breedte=12pt   
\newdimen\Dikte     \Dikte=0.5pt    
\newenvironment{Young}{\begingroup
       \def\vr{\vrule height0.8\Hoogte width\Dikte depth 0.2\Hoogte}
       \def\fbox##1{\vbox{\offinterlineskip
                    \hrule height\Dikte
                    \hbox to \Breedte{\vr\hfill##1\hfill\vr}
                    \hrule height\Dikte}}
       \vbox\bgroup \offinterlineskip \tabskip=-\Dikte \lineskip=-\Dikte
            \halign\bgroup &\fbox{##\unskip}\unskip  \crcr }
       {\egroup\egroup\endgroup}
\def\Diagram#1{\relax\ifmmode\vcenter{\,\begin{Young}#1\end{Young}\,}\else%
              $\vcenter{\,\begin{Young}#1\end{Young}\,}$\fi}
\begin{document}
\title{Trace  and categorical \(\mathfrak{sl}_n\) representations }
\author {Zaur Guliyev} 

\maketitle

\begin{abstract}
Khovanov-Lauda \cite{lauda2} define a 2-category \(\U\) such that the split Grothendieck group \(K_0(\U)\) is isomorphic to an integral version of the quantized universal enveloping algebra \(\mathbf{U}(\mf{sl}_n)\), \(n \geq 2\). Beliakova-Habiro-Lauda-Webster \cite{lcurr} prove that the trace decategorification \(\Tr(\U)\) of the Khovanov-Lauda 2-category is isomorphic to the 
the current algebra \(\mathbf{U}(\mathfrak{sl}_n [t])\) -- the universal enveloping algebra of the Lie algebra \(\mathfrak{sl}_n \otimes \mathbb{C} [t]\). A 2-representation of \(\,\U\) is a 2-functor from \(\, \U\) to a linear, additive 2-category.  In this note we are interested in the 2-representation, defined by Khovanov-Lauda using bimodules over cohomology rings of flag varieties. This 2-representation induces an action of the current algebra \(\mathbf{U}(\mathfrak{sl}_n [t])\) on the cohomology rings. We explicitly compute the  action of \(\mathbf{U}(\mathfrak{sl}_n [t])\) generators using the trace functor. It turns out that the obtained current algebra module is related to another family of \(\mathbf{U}(\mathfrak{sl}_n [t])\)-modules, called local Weyl modules. Using known results about the cohomology rings, we are able to provide a new proof of the character formula for the local Weyl modules.  

\end{abstract}
 
\tableofcontents
\section*{Introduction}
\addcontentsline{toc}{section}{Introduction}

{\em Categorification} is a process of finding category-theoretic analogs of set-like mathematical structures by replacing sets with categories, functions with functors and equations between functions by natural isomorphisms between functors.  {\em Decategorification} is the inverse process which 'forgets' the morphisms in the category and identifies the isomorphic objects. Decategorification is usually understood as a functor that returns the  split Grothendieck group  $K_0(\C)$ of an additive category \(\C\). The split Grothendieck group $K_0(\C)$ is the abelian group generated by isomorphisms classes $\{[x]_{\cong}\}_{ x\in \Ob(\C)}$, modulo the relation $$[x \oplus y]_{\cong} = [x]_{\cong}  + [y]_{\cong}.$$  A 2-category is a category with an additional structure -- the hom-sets are also categories. The split Grothendieck group of a 2-category can be thought of as a procedure for turning the 2-categorical structure of an additive category into a 1-categorical structure of an abelian group. We get a 1-category if we forget 2-morphisms in a 2-category and identify all isomorphic 1-morphisms.

We say that a category is  \(\k\)-linear if the set of morphisms form a \(\k\)-linear vector space, and  compositions are bilinear over \(\k\). The trace of a \(\k\)-linear category \(\C\) is defined by
 \begin{equation*} \label{trdef}
  \Tr \C =\left( \bigoplus_{x \in \Ob(\C )} \End_{\C} (x) \right) /\Span_{\k}\{f\circ g-g\circ f\},
\end{equation*}
where $f$ and $g$ run through all pairs of morphisms $f \colon x\to y$ and $g\colon y \to x$ with $x,y\in \Ob(\C)$. The trace \(\Tr \C\) of a 2-category \(\C\) is a category who has the same object set as \(\C\), and morphisms of \(\Tr \C\) are the trace classes of 2-endomorphisms of  \(\, \C\).

A milestone in categorification was achieved independently by M. Khovanov and A. Lauda  \cite{lauda2} and by R. Rouquier \cite{rouq}. They define a 2-category \(\U_Q(\mf{sl}_n)\) whose split Grothendieck group gives the integral version of the idempotent completion \( \dot{\mathbf{U}}_q (\mathfrak{sl}_n)\) of the associated quantum group \( \mathbf{U}_q (\mathfrak{sl}_n)\). In this note we will mainly work with the diagrammatic 2-category defined by Khovanov-Lauda \cite{lauda2}. They categorify Beilinson-Lusztig-MacPherson \cite{beil} idempotent version \( \dot{\mathbf{U}}_q (\mathfrak{sl}_n)\), in which the unit in \( \mathbf{U}_q (\mathfrak{sl}_n)\) is replaced by a collection of idempotents \(\{1_{\nub}\}\), indexed by elements \(\nub \) of the integral \(\mathfrak{sl}_n\) weight lattice \(X\). The  \(\mathbb{Q}(q)\)-module  \( \dot{\mathbf{U}}_q (\mathfrak{sl}_n)\) has a natural category structure with the object set \(X\), and hom-sets from \(\nub \in X\) to \(\mub \in X\) are given by  \( 1_{\mub} \dot{\mathbf{U}}_q (\mathfrak{sl}_n) 1_{\nub}\). The algebra \( \dot{\mathbf{U}}_q (\mathfrak{sl}_n)\) ̇ is generated over \(\mathbb{Q}(q)\) by elements \(1_{\nub}\),  \(1_{\nub +\alpha_i} E_i 1_{\nub}\) and \(1_{\nub -\alpha_i} F_i 1_{\nub}\),  modulo certain relations, where \( \nub \in X\), \(i \in I =\{1, \dots , n-1\}\) and \(\alpha_i$ is the \(i\)-th simple root.

Khovanov and Lauda define $\U_Q(\mf{sl}_n)$ to be the 2-category with the object set \(X\) and 1-morphisms generated by the identity 1-morphism $\onel $ and the morphisms  $\cal{E}_i \onel \colon \nub \to \nub+\alpha_i$, $\cal{F}_i\onel \colon \nub \to \nub-\alpha_i$ for all \( i \in I\). For each 1-morphism $f$ there is also a degree shifted 1-morphism $f \la t\ra$ in $\U_Q(\mf{sl}_n)$ for each \(t \in \Z\). The 2-morphisms are \(\k\)-vector spaces spanned by certain oriented planar diagrams, modulo isotopies and local relations. For example, 
\[ 
\xy 0;/r.17pc/:
  (0,0)*{\xybox{
    (-4,-4)*{};(4,4)*{} **\crv{(-4,-1) & (4,1)}?(1)*\dir{>} ;
    (4,-4)*{};(-4,4)*{} **\crv{(4,-1) & (-4,1)}?(1)*\dir{>};
    (-6,-7)*{\scs {\cal E}_i};
     (6,-7)*{\scs {\cal E}_{j}};
     (-6,7)*{\scs {\cal E}_{j }};
     (6,7)*{\scs {\cal E}_{i} \la 1\ra};
     (11,1)*{\scs  \nub}; (-16,1)*{\scs  \nub +\alpha_i+\alpha_j};
     (-10,0)*{};(10,0)*{}; (0,8)*{};(0,-8)*{};
     }};
  \endxy
  \]
is a 2-morphism \( {\cal E}_{i} {\cal E}_{j} \onel \to {\cal E}_{j} {\cal E}_{i} \onel \la 1\ra \).  Let $\dot{\U}_Q(\mf{sl}_n)$ be the smallest 2-category which contains $\U_Q(\mf{sl}_n)$ and has splitting idempotents. The split Grothendieck group $K_0(\dot{\U}_Q(\mf{sl}_n))$ can be regarded as a \(\Z[q, q^{-1}]\)-module if we forget the 2-morphisms, identify isomorphic 1-morphisms and let the degree shift \(\la 1\ra \) of a 1-morphism correspond to a multiplication by \(q\). Khovanov and Lauda prove that there is a \(\Z[q, q^{-1}]\)-module  isomorphism between an integral  form of  \( \dot{\mathbf{U}}_q (\mathfrak{sl}_n)\)  and the split Grothendieck group $K_0(\dot{\U}_Q(\mf{sl}_n))$ which maps $q^t E_i 1_{\nub}$ to $[\cal{E}_i \onel \la t\ra]_{\cong}$, $q^tF_i 1_{\nub}$ to  $[\cal{F}_i\onel  \la t\ra ]_{\cong}$ for all \( i \in I\) and $q^t1_{\nub}$ to $[ \onel \la t\ra]_{\cong} $.

 Let \(\U\) be the modification of $\U_Q(\mf{sl}_n)$,  where degree shifts of each 1-morphism are identified, and 2-morphisms are not required  to preserve the degree. The split Grothendieck group  \(K_0(\U)\) is a \(\Z\)-module rather than a \(\Z[q,q^{-1}]\)-module since identifying morphisms that differ by a degree shifts will correspond to setting \(q=1\) on the \(K_0\) level. 

The trace \( \Tr \U\) turns out to be a larger algebra with a \(\Z\)-grading.  Let \(\mf{sl}_n[t]=\mf{sl}_n \otimes \k[t]\) be the extension of \(\mf{sl}_n \) by the polynomials in parameter \(t\). The Lie bracket \( [a \otimes t^r, b \otimes t^s] = [a,b] \otimes t^{r+s}\) defines a graded Lie algebra structure on \(\mf{sl}_n \otimes \k[t]\) with  \(\deg (a \otimes t^r) = 2r\). We call   the universal enveloping algebra \(\mathbf{U}(\mf{sl}_n [t])\) the {\em current algebra}. Current algebra is generated over $\mathbb{\k} $ by \(E_{i,r}=E_{i}\otimes t^r, F_{i,r}=F_i\otimes t^r, H_{i,r}=H_i\otimes t^r\) for all $r \geq 0$ and \(i \in I\), where \( E_i, F_i, H_i\) are the Chevalley basis of \(\mf{sl}_n \). Beliakova-Habiro-Lauda-Webster \cite{lcurr} prove that \( \Tr \U\)  is algebraically isomorphic to \(\mathbf{U}(\mf{sl}_n [t])\).

Representations of \( \mathbf{U}_q (\mathfrak{sl}_n)\) are used to obtain invariants of links and 3-manifolds. A motivation for categorifying the representations is to achieve even stronger invariants. 
A {\em 2-representation} or {\em categorical representation} of  \(\mathbf{U}_q(\mathfrak{sl}_n)\) is a graded, additive 2-functor \( \U_Q(\mathfrak{sl}_n) \to {\cal K}\) for some graded, additive 2-category \( {\cal K}\). Categorical representations of  \(\mathbf{U}_q(\mathfrak{sl}_n)\) have been studied even before the categorification of the quantum group  \(\mathbf{U}_q(\mathfrak{sl}_n)\) itself. A categorification of the tensor products of the fundamental \(\mathfrak{sl}_2\) representation  was first algebraically formulated by Bernstein-Frenkel-Khovanov \cite{bfk}, later extended to the quantum group  \( \mathbf{U}_q(\mathfrak{sl}_2)\) by Frenkel-Khovanov-Stroppel \cite{fren}, Stroppel \cite{ strop}, Chuang-Rouquier \cite{cr}, and to \( \mathbf{U}_q(\mathfrak{sl}_n)\) by Mazorchuk-Stroppel \cite{maz}, Webster \cite{ben}. A big role here has always played the graded, parabolic BGG category \({\cal O}\) of certain finitely generated \(\mathfrak{gl}_m\) weight modules. 

A modification of  \({\cal K}\) by allowing 2-morphisms of any degree lets us to define a 2-representation \(\U \to {\cal K}\). We define the center \(Z(x)\) of an object \(x \in \Ob({\cal K})\) to be the endomorphism ring \(\End(1_x)\) and {\em the center of objects} \(Z(\K)\) to be \(Z(\K)= \bigoplus_{x \in \Ob(\K)} Z(x) \). In \cite{trac} it has been proven that any 2-representation \(\, \U \to \mathcal{K}\) induces  an  action of \(\,\mathbf{U}(\mf{sl}_n[t]) \) on \(\Tr \mathcal{K}\) and on  \(Z (\mathcal{K})\).

Our first result is a construction of a current algebra module using  a 2-representation defined by Khovanov-Lauda \cite{lauda2}. We first present their 2-representation in terms of symmetric polynomials and then compute the current algebra action on the center of objects.

A {\em \(n\)-composition of \(N\)} is an ordered \(n\)-tuple of non-negative integers which sum up to a fixed integer \(N\). Let  \(\nu = (\nu_1, \nu_2, \dots, \nu_n) \) be  a {\em \(n\)-composition of \(N\)} and \(P_\nu\) be the subalgebra of the polynomial ring in variables \( X_1, X_2, \dots, X_N\), which is symmetric in the first \(\nu_1\)-tuple of variables, in the second \(\nu_2\)-tuple of variables and so forth. This ring is generated by complete symmetric polynomials \(h_r(\nu; i)\) in \(i\)-th tuple of \(\nu_i\) variables over all \(r \geq 0\), \(1\leq i \leq n\), as well as by elementary symmetric polynomials \(e_r(\nu; i)\) or by power sum symmetric polynomials \(p_r(\nu; i)\).  

We will construct a 2-category \(\textbf{Pol}_N\) together with a 2-representation
\(\Theta_N \colon \mathcal{U} \to \textbf{Pol}_N\). Objects in \(\textbf{Pol}_N\) are the graded rings \(P_\nu\) for each \(n\)-composition of \(N\), 1-morphisms are bimodules over \(P_\nu\), and 2-morphisms are bimodule maps.  \(\textbf{Pol}_N\) is equivalent to the 2-category \(\textbf{EqFlag}_N\), which is defined by Khovanov-Lauda to show the non-degeneracy of \(\, \U\). \(\textbf{EqFlag}_N\) is constructed using bimodules over  equivariant cohomology rings of partial flag varieties, and mapping Chern classes in \(\textbf{EqFlag}_N\)  to elementary symmetric polynomials in \(\textbf{Pol}_N\) defines an equivalence of 2-categories. 

 The split Grothendieck group of \(\textbf{Pol}_N\) gives the finite-dimensional, simple highest weight \(\mathfrak{sl}_n\) module \(V(\lb_0)\) of the highest weight \( \lb_0 = N\omega_1\) for the first fundamental weight \(\omega_1\).  This proven by Frenkel-Khovanov-Stroppel \cite{fren} for the case \(n=2\) and by Khovanov-Lauda \cite{lauda2} for general \(n\).
 
Let \(\check{R}^{\lb}\) be the deformed KLR algebra assigned to a dominant weight \(\lb\) (see \cite{ben}, section 3.3), and let \(\check{R}^{\lb} \text{-}  \mathrm{ pMod}\) be the category of finitely generated, projective, graded \(\check{R}^{\lb} \)-modules. The 2-functor \(\Theta_N \) factors through \(\check{R}^{\lb} \text{-}  \mathrm{ pMod}\):
\[ \begin{large}
 \begin{tikzcd}
  \scs \mathcal{U} \arrow[rd] \arrow[r, "\Theta_N"] & \scs \textbf{Pol}_N \\
  &\scs \check{R}^{\lb_0} \text{-}  \mathrm{ pMod}\arrow[u,  "\Theta'_N", swap] 
  \end{tikzcd},
  \end{large}
  \]
where \(\Theta'_N\) is a strongly equivariant 2-functor in the sense of Webster \cite{ben}.

The 2-representation  \(\Theta_N\) induces a current algebra action on the center of objects 
\(Z(\textbf{Pol}_N) =  \bigoplus_{\nu} P_{\nu} \). We identify the current algebra module structure in \(\bigoplus_{\nu} P_{\nu} \) explicitly  in our next result.

\begin{theorem} \label{thf}
Let \(\nu\) be an \(n\)-composition of \(N\), \(i \in I\), \(\alpha_i = (0, \dots, 0, 1, -1,0, \dots 0)\), \(k_i = \sum_{l=1}^i \nu_l\) and \(m, j \geq 0\).
The action of the current algebra \(\mathbf{U}(\mathfrak{sl}_n[t])\) on \(\bigoplus_{\nu} P_{\nu} \) is defined as follows.
\begin{enumerate}
\item The map \(F_{i,j} \colon P_\nu \to  P_{\nu-\alpha_i} \)  is the \(P_{\nu-\alpha_i}\)-module homomorphism such that 
\begin{equation*} \label{ss1}
F_{i,j} (X^{m}_{k_i})=
\sum_{l=0}^{\nu_i-1}(-1)^{l} e_l(\nu-\alpha_i;i) h_{m+j+\nu_i- \nu_{i+1}  -1-l }(\nu-\alpha_i; i+1).
\end{equation*}
\item The map \(E_{i,j} \colon P_\nu \to  P_{\nu+\alpha_i} \)  is the \(P_{\nu+\alpha_i}\)-module homomorphism such that 
\begin{equation*} \label{ss2}
E_{i,j} (X^{m}_{k_i+1})= 
\sum_{l=0}^{\nu_{i+1}-1}(-1)^{l} e_l(\nu+\alpha_i;i+1)   h_{m+j+\nu_{i+1}- \nu_i-1-l}(\nu+\alpha_i; i) .
\end{equation*}
\item  The map \(H_{i,j} \colon P_\nu \to P_\nu\) is the multiplication by
 \( (-1)^j(p_j(\nu; i+1) - p_j(\nu; i))\)  if \(j>0\), and by \(\nu_i - \nu_{i+1}\) if  \(j=0\). 
 \end{enumerate}
\end{theorem}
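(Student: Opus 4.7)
The strategy is to apply the general framework from \cite{trac}: a 2-representation $\Theta_N \colon \mathcal{U} \to \textbf{Pol}_N$ induces an action of $\Tr\mathcal{U} \cong \mathbf{U}(\mathfrak{sl}_n[t])$ on the center $Z(\textbf{Pol}_N) = \bigoplus_\nu P_\nu$, where the trace class of a 2-endomorphism $\phi \colon f \Rightarrow f$ of a 1-morphism $f \colon \nub \to \mub$ in $\mathcal{U}$ acts on $z \in P_\nu$ by the partial trace of $\Theta_N(\phi) \circ (z \cdot \mathrm{id}_f)$, landing in $P_\mu$. Combined with the identification from \cite{lcurr} of the Chevalley generators of $\mathbf{U}(\mathfrak{sl}_n[t])$ as explicit trace classes in $\Tr\mathcal{U}$, this reduces the theorem to three tasks: (i) find the dotted 2-morphisms in $\mathcal{U}$ representing $E_{i,j}$, $F_{i,j}$, $H_{i,j}$; (ii) compute their images under $\Theta_N$; (iii) evaluate the resulting partial traces explicitly.

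For (i), the generator $F_{i,j}$ at weight $\nub$ is the trace class of $j$ dots on the downward strand $\cal{F}_i\onel$; $E_{i,j}$ is the trace class of $j$ dots on $\cal{E}_i\onel$; and $H_{i,j}$ is a trace class of a $j$-dotted bubble at $\nub$. For (ii), $\Theta_N$ sends $\cal{F}_i\onel$ to the $(P_{\nu-\alpha_i}, P_\nu)$-bimodule obtained from the common refinement $P_{\nu \vee (\nu-\alpha_i)}$, namely the ring symmetric in the block sequence $(\nu_1, \ldots, \nu_{i-1}, \nu_i - 1, 1, \nu_{i+1}, \ldots, \nu_n)$, with bimodule structure inherited via the inclusions of $P_{\nu-\alpha_i}$ and $P_\nu$. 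The dot on $\cal{F}_i\onel$ translates to multiplication by $X_{k_i}$, so the 2-morphism for $F_{i,j}$ pre-composed with central multiplication by $X_{k_i}^m$ is multiplication by $X_{k_i}^{m+j}$ on this bimodule. Analogously, $\Theta_N$ of $\cal{E}_i\onel$ yields the mirror bimodule structure for $E_{i,j}$, with the dot acting by $X_{k_i+1}$, and the bubble maps to a Chern-class endomorphism of the identity bimodule.

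For (iii), the bimodule $P_{\nu \vee (\nu-\alpha_i)}$ is free of rank $\nu_i$ over $P_{\nu-\alpha_i}$ with basis $\{1, X_{k_i}, \ldots, X_{k_i}^{\nu_i - 1}\}$; its dual basis with respect to the partial-trace pairing is computed via a Vandermonde / Lagrange interpolation argument, giving a generating function whose coefficients are the products $(-1)^l e_l(\nu - \alpha_i; i)\, h_s(\nu - \alpha_i; i+1)$. Expanding the partial trace of multiplication by $X_{k_i}^{m+j}$ against this dual basis then yields the stated sum for $F_{i,j}$; swapping the roles of blocks $i$ and $i+1$ gives $E_{i,j}$. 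For $H_{i,j}$, the bubble evaluation combined with the infinite Grassmannian relations of $\mathcal{U}$ identifies the answer with $(-1)^j (p_j(\nu; i+1) - p_j(\nu; i))$ via Newton's identities for $j > 0$, and with $\nu_i - \nu_{i+1}$ for $j = 0$. The principal technical obstacle is the dual-basis computation in step (iii): tracking signs carefully and justifying the exact exponent shift $\nu_i - \nu_{i+1} - 1$ that appears in the index of the complete symmetric polynomial, which originates from the degree difference between the two blocks being mixed.
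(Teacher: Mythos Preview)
Your proposal is correct and follows essentially the same approach as the paper: both compute the action by closing the dotted strand diagrammatically via the biadjunction (cup and cap) in $\textbf{Pol}_N$, and both handle $H_{i,j}$ through generating functions and the Newton identity $(\log E(t))' = p(-t)$.

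The one noteworthy difference is in packaging. You flag the ``dual-basis computation via Vandermonde/Lagrange interpolation'' as the principal technical obstacle, because you are proposing to derive the partial-trace formula for multiplication by $X_{k_i}^{m+j}$ from scratch. The paper avoids this entirely: since the explicit bimodule formulas for the cup and cap 2-morphisms are already part of the construction of $\Theta_N$ (Theorem~\ref{ythm}, taken from Khovanov--Lauda), the proof simply applies the cup $1 \mapsto c^{-1}\sum (-1)^l e_l(\nu-\alpha_i;i)\, p \otimes X_{k_i}^{\nu_i-1-l}$, inserts $p = X_{k_i}^m$, adds $j$ dots, and then applies the cap $X_{k_i}^{r_1}\otimes X_{k_i}^{r_2} \mapsto h_{r_1+r_2+1-(\nu-\alpha_i)_{i+1}}(\nu-\alpha_i;i+1)$. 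The index shift $\nu_i - \nu_{i+1} - 1$ then drops out immediately from $(\nu_i-1-l) + m + j + 1 - (\nu_{i+1}+1)$, with no separate interpolation argument needed. So your route is sound, but the paper's is shorter because the dual-basis content is already encoded in the cap map.
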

The proof of Theorem \ref{thf} follows from a direct computation using diagrammatic description of 2-morphisms in  \(\U\) and some relations in the polynomial algebra. The action of \(E_{i,j}\) and  \(F_{i,j}\) for  \(j=0\)  are already given by Brundan \cite{brun1} using Schur-Weyl duality, Theorem \ref{thf} generalizes it for a current algebra parameter \(j\) in the context of 2-representation theory.

Brundan  \cite{brun1} defines a finite-dimensional quotient \( C_\nu^\l\)  of the algebra \(P_\nu\) for a given \(n\)-composition \(\nu\) and {\em \(n\)-partition} \(\l\) of \(N\). We will show that the current algebra action defined in Theorem \ref{thf} descends to the quotient ring \(\bigoplus_{\nu} C_\nu^\l\), following a similar proof for \(\mf{sl}_n\)  action  given by Brundan  \cite{brun1}.

The rings \( C_\nu^\l \)  appear naturally in geometry and 2-representation theory. Brundan and Ostrik \cite{brun2} prove that \( C_\nu^\l \) is isomorphic to the cohomology ring of a Spaltenstein variety -- a quiver variety of type \(A\). Mackaay \cite{mack} shows that \(\bigoplus_{\nu} C_\nu^\l \)  is isomorphic to the center of the \(\mathfrak{sl}_m\) web algebra and to the center of the foam category. Brundan \cite{brun1} proves an algebra isomorphism  between  \(\bigoplus_{ \nu} C_\nu^\l \) and the center of parabolic category \(\mathcal{O}^\l\). Varagnolo-Vasserot-Shan \cite{vas} and Webster \cite{bens} prove a graded algebra isomorphism between the center of the cyclotomic KLR algebra and \(\bigoplus_{\nu} C_\nu^\l\).

It turns out that the current algebra action on \(\bigoplus_{\nu} C_\nu^\l\) is related to a certain family of current algebra modules, called {\em local Weyl modules}.  A local Weyl module \(W(\lb)\) of the current algebra  \(\mathbf{U}(\mathfrak{sl}_n[t])\) assigned to a dominant weight \(\lb \in X\) is defined to be the graded module generated by \(v_{\lb}\) over  \(\mathbf{U}(\mathfrak{sl}_n[t])\), subject to the following relations for all \(i \in I\) and \(j \geq 0\): 
\[ E_{i,j} v_{\lb}=F^{\lb_i+1}_{i,0} v_{\lb}=0 , \qquad H_{i,j} v_{\lb}=\delta_{0,j} \, \lb_i v_{\lb},\]
where \(\delta\) is the Kronecker symbol.  \(W(\lb)\)   is  finite-dimensional, inherits degree from the current algebra, and each homogeneous  piece  \(W(\lb) \{2r\}\), \(r \geq 0\) is an \(\mathfrak{sl}_n\)-module.

Local Weyl modules play an important role in the study of finite-dimensional representations of affine Lie algebras. Chari-Pressley \cite{chp}  were first to introduce them, where they prove  certain universality properties. Since then local Weyl modules and their infinite-dimensional analogues, called {\em global Weyl modules}, have attracted a vast amount of interest. Chari-Loktev \cite{lamp} show that local Weyl modules are isomorphic to another families of current algebra modules, called Demazure modules and fusion modules. Fusion modules are certain graded tensor products of fundamental \(\mathfrak{sl}_n\) representations, and by setting \(t=1\) we recover the usual tensor products. Kodera-Naoi \cite{kod} prove that radical and socle series of local Weyl modules coincide with their grading filtration. They also relate local Weyl modules to the homology groups of Nakajima's quiver varieties of type \(A\).

The graded character and dimension formulas for  \(\bigoplus_{\nu} C_\nu^\l \) are given by Brundan \cite{brun1}. This result allows us to compute  graded characters and  dimensions of the local Weyl modules in terms of the {\em Kostka-Foulkes polynomials} (see \cite{mcd}). The Kostka-Foulkes polynomials \(K_{\l, \nu}(t)\), indexed by a partition \(\l\) and a composition \(\nu\)  play major role in the representation theory of affine Lie algebras and symmetric groups. 

\begin{theorem} \label{lqqq}
Let \(\l\) be a \(n\)-partition of a positive integer \(N\). Let \(\lb = \sum_{i=1}^{n-1} (\l_i - \l_{i+1}) \omega_i\), where \(\omega_i\), \(i \in I\) are fundamental \(\mf{sl}_n\) weights. 
 The graded character of the local Weyl  module \(W(\lb)\) is given by the formula
 \begin{equation*} 
\text{ch}_t \,  W(\lb) = \sum_{\tau} K_{ \tau^T, \l^T}  (t^{}) \, \text{ch}_{} \, V(\bar{\tau}) ,
\end{equation*}
and the graded dimension of the weight space \(\nub\) is given by
\begin{equation*} 
\sum_{r \geq 0}
 \dim_\mathbb{C} \, W_{\nub}(\lb) \{ 2r \} \, t^r = \sum_{\tau} K_{\tau, \nu}(1) \, K_{ \tau^T, \l^T}  (t^{}),
 \end{equation*}
 where the summations on the right hand sides are over all \(n\)-compositions of \(N\), \(V(\bar{\tau}) \) is the finite-dimensional, simple \(\mf{sl}_n\) module with the highest weight \(\bar{\tau} = \sum_{i=1}^{n-1} (\tau_i - \tau_{i+1}) \omega_i\), and \( W_{\nub}(\lb) \) is the weight space 
 \( W_{\nub}(\lb) = \{w \in W_{}(\lb)\, | \, H_{i,0} w = (\nu_i-\nu_{i+1})w \}\). 
\end{theorem}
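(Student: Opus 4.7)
The plan is to identify the local Weyl module $W(\lb)$ with $\bigoplus_{\nu} C_\nu^\l$ as graded $\mathbf{U}(\mathfrak{sl}_n[t])$-modules, and then transport Brundan's known graded character and dimension formulas for $\bigoplus_\nu C_\nu^\l$ across this identification. By the descent statement preceding the theorem, the current algebra action of Theorem \ref{thf} already passes to $\bigoplus_\nu C_\nu^\l$.

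I view $\l = (\l_1 \ge \cdots \ge \l_n)$ as an $n$-composition of $N$, so that its associated weight is exactly $\bar\l = \lb$, and consider the unit $1 \in C_\l^\l$. Using the explicit formulas of Theorem \ref{thf} together with Brundan's presentation of $C_\nu^\l$, I verify that $1$ satisfies the defining relations of the local Weyl module of highest weight $\lb$:
\[ E_{i,j}\cdot 1 = 0, \qquad H_{i,j}\cdot 1 = \delta_{0,j}\,\lb_i\cdot 1, \qquad F_{i,0}^{\lb_i+1}\cdot 1 = 0 \]
for all $i \in I$ and $j \ge 0$. The $H$-relation reduces to showing that the power-sum difference $(-1)^j\bigl(p_j(\l;i+1)-p_j(\l;i)\bigr)$ lies in the defining ideal of $C_\l^\l$ for $j>0$, and the last relation follows from the classical finite-dimensional $\mathfrak{sl}_n$-structure of $\bigoplus_\nu C_\nu^\l$. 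The universal property of $W(\lb)$ then yields a graded current algebra homomorphism $\pi: W(\lb) \to \bigoplus_\nu C_\nu^\l$ sending $v_\lb \mapsto 1$.

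To promote $\pi$ to an isomorphism, I compare dimensions. The classical results of Chari--Pressley and Chari--Loktev give $\dim_{\mathbb{C}} W(\lb) = \prod_{i=1}^{n-1}\binom{n}{i}^{\lb_i}$, while Brundan's ungraded dimension formula for $\bigoplus_\nu C_\nu^\l$ specializes to the same number. It remains to show that $\pi$ is surjective, i.e., that $\bigoplus_\nu C_\nu^\l$ is cyclic as a current algebra module generated by $1$. The classical $\mathfrak{sl}_n$-submodule generated by $1$ realises the whole extremal orbit of $\lb$, and the higher operators $F_{i,j}$ with $j>0$ bridge between the graded pieces within each weight space by the formulas of Theorem \ref{thf}. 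A degree-induction argument, exploiting the fact that $F_{i,j}$ lowers weight by $\alpha_i$ and raises degree by $2j$, then exhausts $\bigoplus_\nu C_\nu^\l$. Once $\pi$ is an isomorphism, Brundan's graded character formula
\[ \text{ch}_t\!\left(\bigoplus_\nu C_\nu^\l\right) = \sum_\tau K_{\tau^T,\l^T}(t)\, \text{ch}\, V(\bar\tau) \]
and his graded dimension formula for the $\nub$-weight space (in which the classical Kostka number $K_{\tau,\nu}(1)$ records the multiplicity of $\nub$ in $V(\bar\tau)$) transfer directly to $W(\lb)$, producing both formulas of Theorem \ref{lqqq}.

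The main obstacle is the $E$-annihilation check in the second step: evaluating the formula of Theorem \ref{thf} at $1$ produces the explicit polynomial
\[ \sum_{l=0}^{\l_{i+1}-1}(-1)^l\, e_l(\l+\alpha_i; i+1)\, h_{j+\l_{i+1}-\l_i-1-l}(\l+\alpha_i; i)\; \in\; P_{\l+\alpha_i}, \]
and one must verify that it lies in the defining ideal of the quotient $C_{\l+\alpha_i}^\l$. This amounts to an identity in the generating series of the $h_r(\nu;i)$ forced by Brundan's defining relations for $C_\nu^\l$, and is the only step that is not essentially formal; everything else reduces either to the universal property of $W(\lb)$ or to classical character combinatorics for symmetric functions.
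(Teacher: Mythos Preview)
Your approach has a genuine gap at the very first step: the graded current algebra module $\bigoplus_\nu C^\l_\nu$ is \emph{not} isomorphic to $W(\lb)$; it is isomorphic to the dual Weyl module $W^*(\lb)$ (this is Theorem~\ref{main} in the paper). The obstruction is already visible in the degrees. On $W(\lb)$ the generator $F_{i,0}$ has degree $0$, so $F_{i,0}v_\lb$ lies in degree $0$. But by the formulas of Theorem~\ref{thf} (or the remark after Corollary~\ref{thm_main}), the map $F_{i,0}\colon C^\l_\l\to C^\l_{\l-\alpha_i}$ raises degree by $2(\lb_i-1)$, so $F_{i,0}\cdot 1$ sits in strictly positive degree whenever $\lb_i>1$. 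Hence your map $\pi\colon W(\lb)\to\bigoplus_\nu C^\l_\nu$ cannot be a graded homomorphism, and Brundan's character formula for $\bigoplus_\nu C^\l_\nu$ does \emph{not} transfer directly: his formula \eqref{chr} records multiplicities in degree $d^\l_\nu-2r$, not in degree $2r$, precisely because of this shift. Relatedly, $\bigoplus_\nu C^\l_\nu$ is cocyclic (cogenerated by $1_\l$) rather than cyclic, so your ``degree-induction'' surjectivity argument is aimed at the wrong universal property.

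The paper's route is to use the isomorphism $\bigoplus_\nu C^\l_\nu\cong W^*(\lb)$ and then dualize: under the grading convention $W^*_{\nub}(\lb)\{2r\}=(W_{\nub}(\lb)\{d^\l_\nu-2r\})^*$, Brundan's formula \eqref{chr} becomes $\sum_{r\ge 0}[W(\lb)\{2r\}:V(\bar\tau)]\,t^r=K_{\tau^T,\l^T}(t)$, which yields both statements of the theorem. The degree adjustment by $d^\l_\nu$ is the whole point, and your proposal bypasses it.
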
 

The proof of Theorem \ref{lqqq} follows easily from the work of Shan-Varagnolo-Vasserot \cite{vas} (also see Webster \cite{bens}). They show that \(\bigoplus_{\nu} C_\nu^\l \) is isomorphic to the dual Weyl module \(W^*(\lb)\). We only need to adjust the degree, and this will gives a character formula. This formula can be understood as a {\em character decategorification} of \(W(\lb)\). The graded dimension is obtained from the character formula.

Theorem \ref{lqqq} is probably already known to experts. We, however, give a new proof with a different approach and a simpler formula. Sanderson \cite{sand} computes characters of Demazure modules, and this can be combined with results of  Chari-Loktev \cite{lamp} to derive the characters for local Weyl modules. Chari-Ion (\cite{bgg}, Theorem 4.1) also give a character formula, however they use an entirely different approach, combinatorics and proof of which is beyond our understanding.

\textit{Organization.} The structure of this note is organized as follows. We start by giving the relevant definitions concerning traces and 2-categories. In the second section we define the Khovanov-Lauda 2-category using the diagrammatic presentation. We also state main results in trace decategorification of categorified quantum groups.  In the third and fourth sections, we concern ourselves with the local Weyl modules of current algebra and 2-representations of categorified quantum \(\mathfrak{sl}_n\). The last section is devoted to the polynomial construction of the 2-category \(\textbf{Pol}_N\). We then explicitly describe the current algebra action on the center of objects of \(\textbf{Pol}_N\) and use it to derive the character formula of local Weyl modules.

\textit{Acknowledgements.} The author thanks \text{Anna Beliakova} for suggesting this project and teaching a great deal about the categorified quantum groups; \text{Jonathan Brundan} and \text{Aaron Lauda} for valuable comments on earlier versions of this note. The author was supported by the Swiss National Science Foundation.

 \section{Trace functor}
\subsection{Trace of a ring}
Let \(\k\) be a field of zero characteristics. The trace of a square matrix with elements in \(\k\) is the sum of its diagonal entries. Given two square matrices $A,B$, the trace satisfies the property
\begin{equation} \label{trace-property}\tr(AB)=\tr(BA)\, .
\end{equation}
More generally, for any finite-dimensional $\k$-vector space $V$, a trace is a linear map
 $$ \tr\colon \End_{\k}(V) \to \k$$
such that the {\it trace relation} holds:
$$
\tr (fg)=\tr (gf)\quad\text{for any}\quad f,g \in \End_{\k}(V)\, .$$
As a simple invariant of a linear endomorphism, the trace has many
important applications. For example, given a representation $\phi\colon G\to {\rm Aut}_{\k}(V)$ of a group $G$,
the character $\chi_{\phi} \maps G \to \k$ of the representation $\phi$ is the function sending each group element $g\in G$ to the trace of $\phi(g)$.  The character of a representation carries much of the essential information about a representation. 

We now generalize the notion of the trace from the ring of endomorphisms of \(V\) to any \(\k\)-algebra.  For a \(\k\)-algebra $A$, the {\em Hochschild homology} $\HH_*(A)$ of $A$ can be defined as the homology of the Hochschild chain complex
\begin{equation} \label{hoc}
C_{\bullet}=C_{\bullet}(A)\colon \quad \quad \dots\longrightarrow C_n(A) \xto{d_n} C_{n-1}(A) \xto{d_{n-1}} \dots \xto{d_2} C_1(A) \xto{d_1} A \longrightarrow 0,
\end{equation}

where $C_n(A)=A^{\otimes n+1}$ and
$$
d_n(a_0\otimes\dots\otimes
a_n):=\sum_{i=0}^{n-1}(-1)^ia_0\otimes\dots\otimes
a_ia_{i+1}\otimes\dots\otimes a_n+
(-1)^na_na_0\otimes a_1\otimes\dots\otimes a_{n-1}
$$
for $a_0,\ldots ,a_{n-1}\in A$.
We define  {\em the trace} or  {\em the cocenter} of \(A\) to be the zeroth Hochschild homology \(\HH_0(A)\) of $A$. 
\begin{Definition}
The trace of a \(\k\)-algebra \(A\) defined as 
\[ \Tr A = \HH_0(A)= A/[A,A] = A/\, \text{Span}_{\, \k}\{ab-ba \, | \, a,b \in A\} .\] 
\end{Definition}
Many interesting algebras can be presented by diagrams in the plane modulo some local relations.  Multiplication in $A$ is represented by stacking diagrams on top of each other:
\[
\hackcenter{\begin{tikzpicture}
    \draw[very thick] (-.55,0) -- (-.55,1.5);
    \draw[very thick] (0,0) -- (0,1.5);
    \draw[very thick] (.55,0) -- (.55,1.5);
    \draw[fill=red!20,] (-.8,.5) rectangle (.8,1);
    \node () at (0,.75) {$a$};
\end{tikzpicture}}
\;\;
\circ \;\;
\hackcenter{\begin{tikzpicture}
    \draw[very thick] (-.55,0) -- (-.55,1.5);
    \draw[very thick] (0,0) -- (0,1.5);
    \draw[very thick] (.55,0) -- (.55,1.5);
    \draw[fill=red!20,] (-.8,.5) rectangle (.8,1);
    \node () at (0,.75) {$b$};
\end{tikzpicture}}
\;\; = \;\;
\hackcenter{\begin{tikzpicture}
    \draw[very thick] (-.55,0) -- (-.55,2.5);
    \draw[very thick] (0,0) -- (0,2.5);
    \draw[very thick] (.55,0) -- (.55,2.5);
    \draw[fill=red!20,] (-.8,.5) rectangle (.8,1);
    \draw[fill=red!20,] (-.8,1.5) rectangle (.8,2);
    \node () at (0,.75) {$b$};
    \node () at (0,1.75) {$a$};
\end{tikzpicture}}
\]
for $a,b \in A$.  Then the trace $\Tr A$ can be described by diagrams on the annulus, modulo the same local relations
as those for $A$ and the map
\[
 A \longrightarrow A/[A,A]
\]
has a diagrammatic description
\[
\hackcenter{\begin{tikzpicture}
    \draw[very thick] (-.55,0) -- (-.55,1.5);
    \draw[very thick] (0,0) -- (0,1.5);
    \draw[very thick] (.55,0) -- (.55,1.5);
    \draw[fill=red!20,] (-.8,.5) rectangle (.8,1);
    \node () at (0,.75) {$a$};
\end{tikzpicture}}
\quad \mapsto
\quad
\hackcenter{\begin{tikzpicture}[scale=0.80]
      \path[draw,blue, very thick, fill=blue!10]
        (-2.3,-.6) to (-2.3,.6) .. controls ++(0,1.85) and ++(0,1.85) .. (2.3,.6)
         to (2.3,-.6)  .. controls ++(0,-1.85) and ++(0,-1.85) .. (-2.3,-.6);
        \path[draw, blue, very thick, fill=white]
            (-0.2,0) .. controls ++(0,.35) and ++(0,.35) .. (0.2,0)
            .. controls ++(0,-.35) and ++(0,-.35) .. (-0.2,0);
    \draw[very thick] (-1.65,-.7) -- (-1.65, .7).. controls ++(0,.95) and ++(0,.95) .. (1.65,.7)
        to (1.65,-.7) .. controls ++(0,-.95) and ++(0,-.95) .. (-1.65,-.7);
    \draw[very thick] (-1.1,-.55) -- (-1.1,.55) .. controls ++(0,.65) and ++(0,.65) .. (1.1,.55)
        to (1.1,-.55) .. controls ++(0,-.65) and ++(0,-.65) .. (-1.1, -.55);
    \draw[very thick] (-.55,-.4) -- (-.55,.4) .. controls ++(0,.35) and ++(0,.35) .. (.55,.4)
        to (.55, -.4) .. controls ++(0,-.35) and ++(0,-.35) .. (-.55,-.4);
    \draw[fill=red!20,] (-1.8,-.25) rectangle (-.4,.25);
    \node () at (-1,0) {$a$};
\end{tikzpicture}} \; .
 \]

Notice that by sliding elements of $A$ around the annulus we see that $ab=ba$ in the quotient $\HH_0(A)$. 

The Hochschild cochain complex is defined as
\begin{equation*} \label{hoc}
C^{\bullet}=C^{\bullet}(A)\colon  \quad  0  \xrightarrow{} A   \xrightarrow{\delta_0} C^0(A) \xrightarrow{\delta_1}  C^{1}(A) \xrightarrow{\delta_2}  \dots \xrightarrow{\delta_{n-1}}  C^{n-1}(A) \xrightarrow{\delta_n}  C^n(A) \longrightarrow \dots,
\end{equation*}

where $C^n(A)=\text{Hom}_{\mathbf{k}} (A^{\otimes n+1}, A)$ and
$$
\delta_n (f)(a_0\otimes\dots\otimes
a_n)=\sum_{i=0}^{n-1}(-1)^i f(a_0\otimes\dots\otimes
a_ia_{i+1}\otimes\dots\otimes a_n)+
(-1)^n f(a_na_0\otimes a_1\otimes\dots\otimes a_{n-1}),
$$
$$
\delta_0 (a_0)( a_1)= a_0a_1-a_1a_0
$$
for $a_0,\ldots ,a_{n-1}\in A$, $f \in \text{Hom}_{\mathbf{k}} (A^{\otimes n}, A)$.
The  {\em  Hochschild cohomology} is 
$$ HH^n(A) = \ker \delta_{n}/ \im\, \delta_{n-1}$$
for \(n \geq 0\). In particular,
$$ HH^0(A) = \ker \delta_{0} = \{ a \in A \, | \, \delta_0(a) =0 \}= \{ a \in A \, | \, a_0a -aa_0 =0 \quad \text{for all} \quad a_0 \in A\}.$$

The  zeroth Hochschild cohomology $\HH^0(A)$ of a ring $A$ can be identified with the center of $A$:
\[
 \HH^0(A) \cong Z(A) .
\]
The center $Z(A)$ acts naturally on the trace $\Tr A$.  This action can be graphically understood by cutting the diagram of an element $b \in \Tr A$  and inserting the diagram of $a\in Z(A)$:
\[ a\quad\rhd\quad
\hackcenter{\begin{tikzpicture}[scale=0.80]
      \path[draw,blue, very thick, fill=blue!10]
        (-2.3,-.6) to (-2.3,1.1) .. controls ++(0,1.85) and ++(0,1.85) .. (2.3,1.1)
         to (2.3,-.6)  .. controls ++(0,-1.85) and ++(0,-1.85) .. (-2.3,-.6);
    \draw[very thick] (-1.65,-.7) -- (-1.65, 1.2).. controls ++(0,.95) and ++(0,.95) .. (1.65,1.2)
        to (1.65,-.7) .. controls ++(0,-.95) and ++(0,-.95) .. (-1.65,-.7);
    \draw[very thick] (-1.1,-.55) -- (-1.1,1.05) .. controls ++(0,.65) and ++(0,.65) .. (1.1,1.05)
        to (1.1,-.55) .. controls ++(0,-.65) and ++(0,-.65) .. (-1.1, -.55);
    \draw[very thick] (-.55,-.4) -- (-.55,.9) .. controls ++(0,.35) and ++(0,.35) .. (.55,.9)
        to (.55, -.4) .. controls ++(0,-.35) and ++(0,-.35) .. (-.55,-.4);
    \draw[fill=red!20,] (-1.8,-.25) rectangle (-.4,.25);
    \node () at (-1,0) {$b$};
   \draw[very thick, red, dashed] (0,.25) -- (-2.3,2.3);
            \path[draw, blue, very thick, fill=white]
            (-0.2,.25) .. controls ++(0,.35) and ++(0,.35) .. (0.2,.25)
            .. controls ++(0,-.35) and ++(0,-.35) .. (-0.2,.25);
\end{tikzpicture}}
\qquad  =\qquad
\hackcenter{\begin{tikzpicture} [scale=0.80]
      \path[draw,blue, very thick, fill=blue!10]
        (-2.3,-.6) to (-2.3,1.1) .. controls ++(0,1.85) and ++(0,1.85) .. (2.3,1.1)
         to (2.3,-.6)  .. controls ++(0,-1.85) and ++(0,-1.85) .. (-2.3,-.6);
    \draw[very thick] (-1.65,-.7) -- (-1.65, 1.2).. controls ++(0,.95) and ++(0,.95) .. (1.65,1.2)
        to (1.65,-.7) .. controls ++(0,-.95) and ++(0,-.95) .. (-1.65,-.7);
    \draw[very thick] (-1.1,-.55) -- (-1.1,1.05) .. controls ++(0,.65) and ++(0,.65) .. (1.1,1.05)
        to (1.1,-.55) .. controls ++(0,-.65) and ++(0,-.65) .. (-1.1, -.55);
    \draw[very thick] (-.55,-.4) -- (-.55,.9) .. controls ++(0,.35) and ++(0,.35) .. (.55,.9)
        to (.55, -.4) .. controls ++(0,-.35) and ++(0,-.35) .. (-.55,-.4);
    \draw[fill=red!20,] (-1.8,-.25) rectangle (-.4,.25);
    \node () at (-1,0) {$b$};
      \draw[fill=red!20,] (-1.8,.5) rectangle (-.4,1);
    \node () at (-1,0) {$b$};\node () at (-1,.75) {$a$};
            \path[draw, blue, very thick, fill=white]
            (-0.2,.25) .. controls ++(0,.35) and ++(0,.35) .. (0.2,.25)
            .. controls ++(0,-.35) and ++(0,-.35) .. (-0.2,.25);
\end{tikzpicture}}
\]

\subsection{Trace of a linear category}
A {\em small category } \(\C\) consists of a set of objects \(\Ob( \C)\) and a set of morphisms  \(\C(x, y)\) for each pair $x,y\in \Ob(\C)$. We assume that categories are small for the rest of this section. A {\em \(\k\)-linear category} is a category in which hom-sets are \(\k\)-linear vector spaces, and compositions of morphisms are bilinear over \(\k\). A \(\k\)-algebra can be understood as a \(\k\)-linear category with one object. We generalize the notion of trace to \(\k\)-linear categories. 
\begin{Definition}
The trace $\Tr \modC $ of the category $\modC$ is defined as
\begin{equation}
 \Tr \C =\left( \bigoplus_{x \in \Ob(\C )} \End_{\C} (x) \right) /\Span_{\k}\{f\circ g-g\circ f\},
\end{equation}
where \(f\) and \(g\) run through all pairs of morphisms \(f \colon x   \to y\), \(g \colon y \to x\) and \(x, y \in \text{Ob}(\mathcal{C})\). 
\end{Definition}
We denote the trace class of a morphism \(f\)  by \([f]\).  A {\em \(\k\)-linear functor} between two \(\k\)-linear categories $\C$ and $\cal{D}$ is a functor  $F\colon \C\to \cal{D}$ such that  the map \(F \colon \C(x,y)\to \cal{D}(F(x), F(y))\) is an abelian group homomorphism for $x,y\in \Ob(\C)$. The trace \(\Tr\) is a \(\k\)-linear functor.

An {\em additive} category is a \(\k\)-linear  category equipped with a zero object and biproducts, also called direct sums. 

\begin{Lemma}
  \label{r14}
  If $\modC $ is an additive category, then for $f \colon x\rightarrow x$ and $g \colon y\rightarrow y$ we have
  \begin{gather*}
    [f\oplus g]=[f]+[g]
  \end{gather*}
in $\Tr \C $.
\end{Lemma}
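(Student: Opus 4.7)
The plan is to use the universal property of the biproduct $x\oplus y$ in the additive category $\C$. Writing $\iota_x\colon x\to x\oplus y$ and $\iota_y\colon y\to x\oplus y$ for the canonical inclusions and $\pi_x\colon x\oplus y\to x$, $\pi_y\colon x\oplus y\to y$ for the canonical projections, these morphisms satisfy $\pi_x\iota_x=\id_x$, $\pi_y\iota_y=\id_y$, $\pi_x\iota_y=0$, $\pi_y\iota_x=0$, and $\iota_x\pi_x+\iota_y\pi_y=\id_{x\oplus y}$.

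First I would express the direct sum endomorphism in the form
\[
f\oplus g \;=\; \iota_x \circ f\circ \pi_x \;+\; \iota_y\circ g\circ \pi_y
\]
as an element of $\End_{\C}(x\oplus y)$. Since $\Tr\C$ is built from a direct sum of abelian groups quotiented by commutator relations, passing to trace classes gives
\[
[f\oplus g] \;=\; [\iota_x\circ f\circ \pi_x] + [\iota_y\circ g\circ\pi_y].
\]

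Next I would apply the trace (cyclicity) relation to each summand. Viewing $\iota_x\circ f\colon x\to x\oplus y$ and $\pi_x\colon x\oplus y\to x$ as a composable pair in opposite directions, the defining relation in $\Tr\C$ gives $[\iota_x\circ f\circ\pi_x]=[\pi_x\circ\iota_x\circ f]=[\id_x\circ f]=[f]$, and analogously $[\iota_y\circ g\circ\pi_y]=[g]$. Summing these yields $[f\oplus g]=[f]+[g]$, as required.

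There is no real obstacle here; the only mild subtlety is that the two summands $\iota_x f\pi_x$ and $\iota_y g\pi_y$ live in $\End_{\C}(x\oplus y)$ while $f$ and $g$ live in $\End_{\C}(x)$ and $\End_{\C}(y)$ respectively, so one must be careful to apply the cyclicity relation $[h\circ k]=[k\circ h]$ across different hom-sets of $\C$ — which is exactly what the definition of $\Tr\C$ allows, since $f,g$ range over \emph{all} pairs of morphisms in opposite directions between arbitrary objects.
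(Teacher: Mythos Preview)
Your proof is correct and is essentially the same as the paper's: the paper writes $f\oplus g=(f\oplus 0)+(0\oplus g)$, notes $f\oplus 0=ifp$ with $i,p$ the inclusion and projection, and then uses cyclicity to get $[ifp]=[pif]=[f]$. You have simply made the biproduct identities more explicit.
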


\begin{proof}
  Since $f\oplus g=(f\oplus 0)+(0\oplus g)\colon x\oplus y\rightarrow x\oplus y$, we have
  \begin{gather*}
    [f\oplus g]=[f\oplus 0]+[0\oplus g].
  \end{gather*}
  Now we have $[f\oplus0]=[ifp]=[pif]=[f]$, where $p\colon x\oplus y\rightarrow x$
  and $i\colon x\rightarrow x\oplus y$ are the projection and the
  inclusion maps. Similarly, $[0\oplus g]=[g]$ holds. 
\end{proof}

\begin{Example}
Let $\C=\cat{Vect}_\k$ be the  category of finite-dimensional $\k$-vector spaces.
Since any finite-dimensional vector space is isomorphic to a finite direct sum of copies of $\k$,
by Lemma~\ref{r14} we have $\Tr \cat{Vect}_\k =\k$.
\end{Example}

\begin{Example}
Let $\C=\cat{grVect}_\k$ be the  category of finite-dimensional $\Z$-graded vector spaces, i.e.
any $V\in\Ob(\C)$ decomposes as $V=\bigoplus_{n\in\Z} V_{n}$ with $\deg(x)=n$ for any $x\in V_n$, and morphisms in $\cat{grVect}_\k$ are degree preserving. Then $\Tr \cat{grVect}_\k=\bigoplus_{n\in\Z}\k=\k [q,q^{-1}]$. The
multiplication by $q$ is interpreted as a shift of the degree by one.
\end{Example}

The previous examples can be further generalized. For a linear category $\modC$ there is a universal additive category generated by $\modC $, called the {\em additive closure} $\modC ^{\oplus}$, in which the objects are formal finite direct sums of objects in $\modC $, and the morphisms are
matrices of morphisms in $\modC$.  There is a canonical fully faithful functor $ \modC \rightarrow \modC ^\oplus$.  Every linear functor $F \colon\modC \rightarrow \modD $  to an additive category $\modD $ factors through $ \modC \rightarrow \modC ^\oplus$ uniquely up to natural isomorphism. We consider the trace of the additive closure $\modC ^{\oplus}$ of a linear category $\modC $.
The homomorphism
\begin{gather*}
  \Tr i \colon\Tr \modC  \rightarrow \Tr \modC ^\oplus
\end{gather*}
induced by the canonical functor $i\colon \modC \rightarrow \modC ^{\oplus}$ is an
isomorphism. The inverse
$\tr\colon \Tr \modC ^\oplus \rightarrow \Tr \modC $ is defined by
\begin{gather*}
  \tr [(f_{k,l})_{k,l}]=\sum_k [f_{k,k}]
\end{gather*}
for an endomorphism in $\modC ^\oplus$
\begin{gather*}
  (f_{k,l})_{k,l\in \{1,\ldots ,n\}} \colon x_1\oplus\dots \oplus x_n\rightarrow x_1\oplus\dots \oplus x_n
\end{gather*}
with $f_{k,l}\colon x_l\rightarrow x_k$ in $\modC $.

%
\subsection{Idempotent completions and split Grothendieck group}

A {\em projection} or {\em idempotent} in $\cat{Vect}_{\k}$ is an endomorphism $p \maps V \to V$ satisfying the relation $p^2=p$.  In this case $(\Id_V-p )\maps V \to V$ is also an idempotent, and together these two idempotents decompose the space $V$ into a direct sum of the image of the projection $p$ and the image of the projection $\Id_V-p$.

More generally, an idempotent in a \(\k\)-linear category $\modC$ is an endomorphism $e \maps x \to x$ satisfying $e^2=e$. An idempotent $e\colon x\rightarrow x$ in $\modC $ is said to {\em split} if there is an object $y$ and morphisms $g\colon x\rightarrow y$, $h\colon y\rightarrow x$ such that $hg=e$ and $gh=1_y$.
The Karoubi envelope $\Kar(\modC)$ is the category whose objects are pairs $(x,e)$ of objects $x\in \Ob(\C)$ and
an idempotent endomorphism $e \colon x\rightarrow x$ in $\mathcal
C$. The morphisms
$$f \colon (x,e)\rightarrow(y,e')$$
are morphisms $f\colon x\rightarrow y$ in $\modC $ such that $f=e'fe$.
Composition is induced by the composition in $\mathcal C$, and the identity morphism is
$e \colon (x,e)\rightarrow(x,e)$. $\Kar(\modC )$ is equipped with a \(\k\)-linear category structure. The Karoubi envelope can be thought of as a minimal enlargement of the category $\modC$ in which all idempotents split.

We can identify $(x,1_x)$ with the object $x$ of $\mathcal{C}$. This identification gives rise to a  natural embedding functor $\iota\colon\mathcal C\rightarrow
\Kar(\mathcal C)$ such that $\iota(x)=(x,1_x)$ for $x\in \Ob(\modC )$ and
$\iota(f\colon x\rightarrow y)=f$.  The Karoubi envelope $\Kar(\modC )$ has
the universality property that if $F\colon \modC \rightarrow \modD $ is
a \(\k\)-linear functor to a \(\k\)-linear category $\modD $ with split idempotents,
then $F$ extends to a functor from $\Kar(\modC)$ to $\modD$ uniquely
up to natural isomorphism. The following proposition illustrates one of the key advantages of the trace, namely its invariance under passing to the Karoubi envelope.

\begin{Proposition}
  \label{r1}
  The map $\Tr \iota \colon\Tr \modC  \longrightarrow \Tr \Kar(\modC )$ induced by $\iota$ is bijective.
\end{Proposition}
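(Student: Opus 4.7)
The plan is to construct an explicit two-sided inverse
\[ \tau \colon \Tr \Kar(\modC) \longrightarrow \Tr \modC \]
to the homomorphism $\Tr \iota$. Recall that an endomorphism of $(x,e)$ in $\Kar(\modC)$ is a morphism $f \colon x \to x$ in $\modC$ subject to the idempotent condition $f = efe$. I will define $\tau$ on representatives by $\tau\bigl([f]_{(x,e)}\bigr) := [f]_x$, where on the right $f$ is regarded as an endomorphism of $x$ in $\modC$.

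To confirm that $\tau$ descends to the quotient, I must check that the commutator relations defining $\Tr \Kar(\modC)$ are sent to commutator relations in $\Tr \modC$. If $g \colon (x,e) \to (y,e')$ and $h \colon (y,e') \to (x,e)$ are morphisms in $\Kar(\modC)$, their underlying $\modC$-morphisms satisfy $g \colon x \to y$ and $h \colon y \to x$, so the relation $[hg]_x = [gh]_y$ holds in $\Tr \modC$ by definition. This makes $\tau$ well-defined and $\k$-linear.

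It is immediate that $\tau \circ \Tr \iota = \id$, because $\iota$ sends $f \colon x \to x$ to $f \colon (x, 1_x) \to (x, 1_x)$, which $\tau$ returns to $[f]_x$. The nontrivial direction is $\Tr \iota \circ \tau = \id$: for any endomorphism $f \colon (x,e) \to (x,e)$ with $f = efe$, one must show
\[ [f]_{(x,e)} = [f]_{(x, 1_x)} \quad \text{in } \Tr \Kar(\modC). \]
To exhibit this equality, consider the morphisms $g := e \colon (x, 1_x) \to (x, e)$ and $h := f \colon (x, e) \to (x, 1_x)$. They are valid morphisms in $\Kar(\modC)$: for $g$ one needs $e \circ g \circ 1_x = g$, which reduces to $e^2 = e$; for $h$ one needs $1_x \circ h \circ e = h$, i.e., $fe = f$, which follows from $f = efe$ and $e^2 = e$. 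Then $hg = fe = f$ as an endomorphism of $(x, 1_x)$ and $gh = ef = f$ as an endomorphism of $(x,e)$ (again using $f = efe$), so the commutator relation $[hg] = [gh]$ in $\Tr \Kar(\modC)$ delivers precisely $[f]_{(x, 1_x)} = [f]_{(x, e)}$.

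The only real obstacle lies in the last step, namely identifying a pair $(g, h)$ that realizes the trace-equivalence between the two incarnations of the underlying morphism $f$ at the objects $(x,e)$ and $(x,1_x)$; once one observes that the idempotent condition $f = efe$ forces $fe = ef = f$, the choice $(g, h) = (e, f)$ is natural and the remainder is a routine verification.
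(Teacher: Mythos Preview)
Your proof is correct and follows exactly the same approach as the paper: define the inverse $u$ (your $\tau$) by sending $[f]_{(x,e)}$ to $[f]_x$ in $\Tr\modC$. The paper simply asserts that one can check $u$ is inverse to $\Tr\iota$, whereas you supply the verification, including the key trace-cyclicity argument with $(g,h)=(e,f)$ showing $[f]_{(x,1_x)}=[f]_{(x,e)}$; this is precisely the detail the paper suppresses.
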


\begin{proof}
Recall that an endomorphism $f\maps (x,e) \to (x,e)$ in $\Kar(\modC)$ is just a morphism $f\maps x \to x$ in $\modC$ satisfying the condition that $f=efe$.  Define a map $u \colon\Tr \Kar(\modC  ) \longrightarrow \Tr \modC  $ sending $[f]\in \Tr \Kar(\modC )$ to $[f]\in \Tr \modC $.
Then one can check that $u$ is an inverse to $\Tr \iota $.
\end{proof}

The split Grothendieck group \(K_0(\cal{C})\) of an additive category \(\cal{C}\) is an abelian group generated by isomorphism classes \([x]_{\simeq}\) of objects  \(x \in \text{Ob}(\cal{C})\) modulo the relation \([x\oplus y]_{\simeq} =[x]_{\simeq}+[ y]_{\simeq}\). The split
Grothendieck group $K_0$ defines a functor from additive categories to abelian groups. 

\begin{Example}
$K_0(\cat{Vect}_\k)=\Z$ is generated by $[\k]_{\cong}$.
\end{Example}

Define a homomorphism
\begin{gather*}
  h_\modC \colon K_0(\modC )\longrightarrow \Tr \modC 
\end{gather*}
by
\begin{gather*}
  h_\modC ([x]_{\cong})=[1_x]
\end{gather*}
for $x\in \Ob(\modC )$.  Indeed, one can easily check that
\begin{gather*}
  h_\modC ([x\oplus y]_{\cong})=[1_x]+[1_y] \, ,
\end{gather*}
since $1_{f \oplus g} = 1_f \oplus 1_g$.
The map $h_\modC $ defines a natural transformation
\begin{gather*}
  h\colon K_0 \Rightarrow  \Tr \colon \AdCat\rightarrow \Ab,
\end{gather*}
where $\AdCat$ denotes the category of additive small categories. The map  $h_\C$ is called {\em Chern character map}. It is neither injective nor surjective in general.

A {\em graded \(\k\)-linear category} is a \(\k\)-linear category equipped with an auto-equivalence \(\la 1\ra \colon\C \to \C \). In a graded \(\k\)-linear category, for  objects \(x, y \in \text{Ob}(\C)\) and a morphism \(f \colon x \to y\), there exist objects \(x \la t\ra, y \la t\ra  \in \text{Ob}(\C)\) and a morphism \(f \la t \ra \colon x\la t \ra \to y \la t \ra\) for each \(t \in \Z\), where \(\la t\ra\) is \(\la 1\ra\) applied \(t \in \Z\) times.
The trace and the split Grothendieck groups of a graded linear category \(\C\) are \(\Z[q, q^{-1}]\)-modules such that \([x\la t \ra]_\simeq = q^t [x]_\simeq \), and \([f\la t \ra] = q^t [f] \) for \(x \in \text{Ob}(\C)\) and \(f \in \text{End}(x)\).

A {\em translation} in \(\C\) is a family of natural isomorphisms \(x \simeq x \la t \ra\) for all \(x \in \text{Ob}(\C)\) and \(t \in \Z\).
Given a graded \(\k\)-linear category \(\C\), we can form a category \( \C^{\star} \) with the same objects as \(\C\) and morphisms
\begin{equation} \label{grr}
 \C^{\star} (x, y) = \bigoplus_{t \in \Z}\C^{}(x, y\la t \ra) .
\end{equation}

The category \(\C^{\star} \) admits a translation, since for any \(x \in \text{Ob}(\C)\) and  \(t \in \Z\) the natural isomorphism \(x \simeq x \la t \ra\)  is given by 
\(1_x \in \C(x, x) = \C(x, x\langle t \rangle \langle -t \rangle) \subset \C^{\star} (x, x\langle t \rangle) \) together with the inverse map \(1_x\langle t \rangle \in   \C(x\langle t \rangle, x\langle t \rangle) = \C(x\langle t \rangle, x\langle 0 \rangle \langle t \rangle)  \subset  \C^{\star} (x\langle t \rangle, x)\).

The isomorphism \(x \simeq x \la t \ra\) forgets the \(q\)-grading by setting \(q=1\) on the split Grothendieck group and makes \(K_0(\C^{\star} )\) a \(\Z\)-module. The same is also true for  \(\Tr \C^{\star}  \) since \([f\langle t \rangle] =[f]\) for any endomorphism \( f \colon x \to x\). 

However, hom-spaces of \(\C^{\star} \) contain degree \(t\) morphisms  \(\C^{\star} (x,y\la t \ra)\) for all \(t \in\Z\). Hence \(\Tr \C^{\star} \) decomposes into equivalence classes of morphisms of degree \(t\):
\begin{equation}
\Tr \C^{\star} (x, y) = \bigoplus_{t \in \Z} \Tr \C (x , y \la t \ra).
\end{equation}
Thus, \( \Tr \C^{\star} (x, y)  \) is a \(\Z\)-graded abelian group.

\subsection{2-categories and trace}
\begin{Definition}
A 2-category \(\C\) consists of the following data.
\begin{itemize}
\item A collection of objects.
\item For each pair of objects \(x,y\), a category  \(\C(x,y)\). The objects of  \(\C(x,y)\) are called 1-morphisms from \(x\) to \(y\). For any \(u,u' \in \text{Ob}(\C(x,y))\), morphisms \(f\colon u \to u'\) are called 2-morphisms.
\item There is a composition of 1-morphisms: if \(u \in \text{Ob}(\C(x,y))\) and \(v \in \text{Ob}(\C(y,z))\),  then \(vu \in \text{Ob}(\C(x,z))\). This composition is associative: for any object \(\zeta\) and \(w \in \text{Ob}(\C(z,\zeta))\)
we have 
\[ (wv)u = w(vu).\]
\item There is a horizontal composition of 2-morphisms: if \(u, u' \in \text{Ob}(\C(x,y))\), \(f\colon u \to u'\)  and \(v, v' \in \text{Ob}(\C(y,z))\), \(g\colon v \to v'\),  then there exists a 2-morphism \(g*f \colon vu \to  v'u'\). The horizontal composition is associative.
\item 2-morphisms can also be composed vertically:  if \(u, u' , u'' \in \text{Ob}(\C(x,y))\), \(f\colon u \to u'\)  and, \(f'\colon u' \to u''\), then there is a 2-morphism \(f' f \colon u \to u''\). This composition is associative. Moreover, let \(v, v' , v'' \in \text{Ob}(\C(y,z))\), \(g\colon v \to v'\)  and, \(g'\colon v' \to v''\).  The following rule holds between horizontal and vertical compositions of 2-morphisms:
\[ (g'g) *(f'f) = (g'*f')(g*f).\]
\item For every object \(x\) there is an identity 1-morphism \(1_x \in \text{Ob}(\C(x,x))\) such that for any \(u \in \text{Ob}(\C(x,y))\), we have \(1_y u = u1_x=u\). 
\item For every 1-morphism \(u\in  \text{Ob}(\C(x,y))\) there exists an identity 2-morphism \(1_u\) such that for all \(u' \in \text{Ob}(\C(x,y))\) and \(f\colon u \to u'\), we have \(1_{u'} f = f1_u=f\) and 
\( 1_{1_{y}} * f = f* 1_{1_x} = f\). Additionally, the identity 2-morphisms must be compatible with composition of 1-morphisms: \(1_v * 1_u = 1_{vu}\) for any \(v \in \text{Ob}(\C(y,z))\).
\end{itemize}
\end{Definition}  

\begin{Definition}
A 2-functor \(F\colon \C \to \mathcal{D}\) from a 2-category \(\C\) to a 2-category \(\mathcal{D}\) consists of 
\begin{itemize}
\item a function \(F\colon \text{Ob} (\C) \to \text{Ob} (\mathcal{D}) \), and 
\item for each pair of objects \(x,y\) in \(\C\), a functor \(F \colon \C(x,y) \to \mathcal{D}(Fx, Fy)\).
\end{itemize}
\end{Definition}

\subsection{$\Kar$, $K_0$ and $\Tr$ for 2-categories} \label{sec_traceK0-2cats}
%
 A 2-category \(\C\) is \(\k\)-linear if the categories \(\C(x,y)\) are \(\k\)-linear for all objects \(x,y\) in \(\C\), and the composition functor preserves the \(\k\)-linear structure. Similarly, an additive \(\k\)-linear 2-category is a \(\k\)-linear 2-category in which the categories \(\C(x, y)\) are additive. 

A {\em graded linear 2-category} is a 2-category whose hom-spaces form graded linear categories, and the composition map preserves degree.

The following definitions extend notions of  trace, split Grothendieck group and  Karoubi envelope to the 2-categorical setting.
\begin{itemize}
\item Given an additive \(\k\)-linear 2-category $\C$, define the split Grothendieck group $K_0(\C)$ of $\C$ to be the \(\k\)-linear category with $\Ob(K_0(\C)) = \Ob(\C)$ and with $K_0(\C)(x,y) := K_0(\C(x,y))$
for any two objects $x,y \in \Ob(\C)$.  For $[f]_{\cong} \in
K_0(\C)(x,y)$ and $[g]_{\cong} \in K_0(\C)(y,z) $ the
composition in $K_0(\C)$ is defined by $[g]_{\cong} \circ [f]_{\cong}
:= [g \circ f]_{\cong}$.

\item The trace $\Tr \C$ of a \(\k\)-linear 2-category is the \(\k\)-linear category with $\Ob(\Tr \C) = \Ob(\C)$ and with $\Tr \C(x,y) := \Tr \C(x,y)$ for any two objects $x,y \in \Ob(\C)$. For a 2-endomorphism $\sigma$ in $\C(x,y)$ and a 2-endomorphism $\tau$ in $\C(y,z)$, we have
$[\tau ]\circ[\sigma ]=[\tau \circ\sigma ]$. The identity morphism for $x\in \Ob(\Tr \C)=\Ob(\C)$ is given by
$[1_{1_x}]$.

\item The Karoubi envelope $\Kar(\C)$ of a \(\k\)-linear 2-category $\C$ is the \(\k\)-linear 2-category with $\Ob(\Kar(\C)) = \Ob(\C)$ and with hom-categories $\Kar(\C)(x,y) := \Kar(\C(x,y))$. The composition functor $\Kar(\C)(y,z) \times \Kar(\C)(x,y) \to \Kar(\C)(x,z)$ is induced
by the universal property of the Karoubi envelope from the composition
functor in $\C$.  The fully-faithful additive functors  $\C(x,y) \to
\Kar(\C(x,y))$  form an additive $2$-functor $\C \to \Kar(\C)$ that is universal with respect to splitting idempotents in the  hom-categories $\C(x,y)$.
\end{itemize}

A 2-functor \(F \colon \C \to \mathcal{D} \) between \(\k\)-linear 2-categories \(\C\) and \(\mathcal{D}\) is a \(\k\)-linear 2-functor if for objects \(x, y\) in \(\C\)  the functor \(F \colon \C(x, y) \to \mathcal{D} (x, y)\) is \(\k\)-linear. In this case \(F\) induces a \(\k\)-linear functor \[ \Tr F \colon \Tr \C \to \Tr \mathcal{D}, \] such that 
\[ \Tr F = F \colon \text{Ob}(\Tr \C ) \to \text{Ob}(\Tr \cal{D}) \]
on the objects, and for \(x, y \in \text{Ob}(\cal{C})\)
\[ (\Tr F)_{x,y} = \Tr (F_{x,y}) \colon \Tr \C (x,y)  \to \Tr \cal{D}  (F(x), F(y)) . \]

 Let \(\C\) be a \(\k\)-linear 2-category. The {\em center} \(Z(x)\) of an object \( x \in \text{Ob}(\C)\) is the commutative ring of endomorphisms \(\C(1_x, 1_x)\). We call \(Z(\C) = \bigoplus_{x \in \text{Ob}(\C)} Z(x) \) the {\em center of objects} of \(\C\).

We call  \(\, \C\)  {\em a pivotal 2-category} if for every 1-morphism \(u\colon x \to y\) there exists a biadjoint morphism \(u^* \colon y \to x\), together with 2-morphisms

\begin{align} \label{rea}
  \text{ev}_u \colon u^*u \to 1_x, \quad \text{coev}_u\colon 1_{y} \to uu^*, \\
  \label{ree}
 \tilde{\text{ev}}_u \colon uu^* \to 1_{y}, \quad  \tilde{\text{coev}}_u\colon 1_{x} \to u^*u
\end{align}
 such that biadjointness relations 
\begin{align}\label{rea1}
 \left( \tilde{\text{ev}}_u \otimes 1_u\right) \left( 1_u \otimes \tilde{\text{coev}}_u\right) = \left(1_u  \otimes \text{ev}_u  \right) \left(\text{coev}_u \otimes 1_u \right) = 1_u, \\
\label{rea2}
  \left( \text{ev}_u \otimes 1_{u^*} \right) \left( 1_{u^*}  \otimes \text{coev}_u\right) = \left(1_{u^*}  \otimes \tilde{\text{ev}}_u  \right) \left(\tilde{\text{coev}}_u \otimes 1_{u^*}  \right) = 1_{u^*}
   \end{align}
 hold. 
 Pivotal 2-categories admit rich a diagrammatic presentation in the following way. We denote the objects by areas labeled with the objects. Then 1-morphisms are described by the vertices of the diagrams separating two regions, and 2-morphisms are the edges of the diagrams. The horizontal composition of two diagrams \(A \circ B\) places \(A\) to the right of \(B\) in the plane. The vertical composition \(AB\) of two diagrams means the stacking of \(A\) on top of \(B\). 
 The 2-morphisms \eqref{rea} and \eqref{ree} can be depicted as 

\[  \xy 0;/r.25pc/:
    (0,0)*{\bbcef{}};
    (6,2)*{\scs  x};
    (-5,-4.5)*{u^*};(5,-5)*{u};
    \endxy\, , \quad \quad \xy 0;/r.25pc/:
    (0,-3)*{\bbpfe{}};
    (6,-4)*{\scs y};
   (-5,2)*{u};(5,2.5)*{u^*};
    \endxy \, ; \]  
\[  \xy 0;/r.25pc/:
    (0,0)*{\bbcfe{}};
    (6,2)*{\scs  y};
    (-5,-5.5)*{u};(5,-5)*{u^*};
    \endxy \, , \quad \quad \xy 0;/r.25pc/:
    (0,-3)*{\bbpef{}};
    (6,-4)*{\scs  x};
    (-5,2.5)*{u^*};(5,2)*{u};
    \endxy,\]  
and the relations \eqref{rea1} and \eqref{rea2} are 
 \[  \xy   0;/r.25pc/:
    (8,0)*{}="1";
    (0,0)*{}="2";
    (-8,0)*{}="3";
    (8,-10);"1" **\dir{-};
    "1";"2" **\crv{(8,8) & (0,8)} ?(0)*\dir{>} ?(1)*\dir{>};
    "2";"3" **\crv{(0,-8) & (-8,-8)}?(1)*\dir{>};
    "3"; (-8,10) **\dir{-};
      (10,7)*{x}; (-6,-9)*{y};
       (8,-11)*{\scs u};   (-8,12)*{\scs u};
    \endxy
    \; =
  \xy   0;/r.25pc/:
    (-8,0)*{}="1";
    (0,0)*{}="2";
    (8,0)*{}="3";
    (-8,-10);"1" **\dir{-};
    "1";"2" **\crv{(-8,8) & (0,8)} ?(0)*\dir{>} ?(1)*\dir{>};
    "2";"3" **\crv{(0,-8) & (8,-8)}?(1)*\dir{>};
    "3"; (8,10) **\dir{-};
    (9,-6)*{x};
    (-6,9)*{y};
        (-8,-12)*{\scs u}; (8,12)*{\scs u};
    \endxy
    \; =
    \;
    \xy 0;/r.20pc/:
    (8,0)*{}="1";
    (0,0)*{}="2";
    (-8,0)*{}="3";
    (0,-10);(0,10)**\dir{-} ?(.5)*\dir{>};
   (0,-12)*{\scs u}; (0,12)*{\scs u};
    (6,2)*{x};
    (-6,2)*{y};
    \endxy,
 \] 

\[\xy   0;/r.25pc/:
    (-8,0)*{}="1";
    (0,0)*{}="2";
    (8,0)*{}="3";
    (-8,-10);"1" **\dir{-};
    "1";"2" **\crv{(-8,8) & (0,8)} ?(0)*\dir{<} ?(1)*\dir{<};
    "2";"3" **\crv{(0,-8) & (8,-8)}?(1)*\dir{<};
    "3"; (8,10) **\dir{-};
    (9,-6)*{y};
    (-6,9)*{x};
      (-8,-12)*{\scs u^*}; (8,12)*{\scs u^*};
    \endxy
    \; = \xy  0;/r.25pc/:
    (8,0)*{}="1";
    (0,0)*{}="2";
    (-8,0)*{}="3";
    (8,-10);"1" **\dir{-};
    "1";"2" **\crv{(8,8) & (0,8)} ?(0)*\dir{<} ?(1)*\dir{<};
    "2";"3" **\crv{(0,-8) & (-8,-8)}?(1)*\dir{<};
    "3"; (-8,10) **\dir{-};
    (10,7)*{y};
    (-6,-9)*{x};
      (8,-11)*{\scs u^*};   (-8,12)*{\scs u^*};
    \endxy
    \; =
    \;
    \;
\xy   0;/r.20pc/:
    (-8,0)*{}="1";
    (0,0)*{}="2";
    (8,0)*{}="3";
    (0,-10);(0,10)**\dir{-} ?(.5)*\dir{<};
   (6,2)*{y};
    (-6,2)*{x};
     (0,-12)*{\scs u^*}; (0,12)*{\scs u^*};
  \endxy .\] 

 In a pivotal 2-category \(\C\) we can define the left  and right duals of a given 2-morphism \( f \colon u \to v\) as follows:
 \[ f^* := \left(\text{ev}_v \otimes 1_{u^*} \right)  \left(1_v \otimes f \otimes 1_{u^*} \right)  \left(1_{v^*}\otimes \text{coev}_u   \right) \colon v^{*} \to u^*, \]
 \[ {}^*f : = \left(1_{u^*} \otimes \tilde{\text{ev}}_v   \right)  \left(1_{u^*} \otimes f \otimes 1_{v^*} \right)  \left(\tilde{\text{coev}}_u \otimes 1_{v^*}    \right) \colon v^{*} \to u^*. \]

Diagrammatically, the right and left duals of the 2-morphism
   \[\quad  \xy 0;/r.15pc/:
    (8,0)*{}="1";
    (0,0)*{}="2";
    (-8,0)*{}="3";  (0,0)*{\bullet}; (0,-0.5)*{\bullet};(3,0)*{f}; (0,0.5)*{\bullet};
    (0,-10);(0,10)**\dir{-} ?(0.85)*\dir{>};
   (0,-12)*{\scs u}; (0,12)*{\scs v};
    \endxy\]
  can be depicted as 
   \[\xy 0;/r.20pc/:
    (8,0)*{}="1";
    (0,0)*{}="2";
    (-8,0)*{}="3";  (0,0)*{\bullet}; (0,-0.5)*{\bullet}; (0,0.5)*{\bullet};(4,0)*{f^*};
    (0,-10);(0,10)**\dir{-} ?(0.15)*\dir{<};
   (0,-12)*{\scs v^*}; (0,12)*{\scs u^*};
    \endxy \, = \, \xy   0;/r.20pc/:
    (-8,0)*{}="1";
    (0,0)*{}="2";
    (8,0)*{}="3";
    (-8,-10);"1" **\dir{-}; (0,0)*{\bullet}; (0,-0.5)*{\bullet}; (0,0.5)*{\bullet};(2.5,0)*{f};
    "1";"2" **\crv{(-8,8) & (0,8)} ?(0)*\dir{<} ?(1)*\dir{};
    "2";"3" **\crv{(0,-8) & (8,-8)}?(1)*\dir{<};
    "3"; (8,10) **\dir{-};
      (-8,-12)*{\scs v^*}; (8,12)*{\scs u^*};
    \endxy
    \; ,\quad \quad 
   \xy 0;/r.20pc/:
    (8,0)*{}="1";
    (0,0)*{}="2";
    (-8,0)*{}="3";  (0,0)*{\bullet}; (0,-0.5)*{\bullet}; (0,0.5)*{\bullet};(5,0)*{{}^*f };
    (0,-10);(0,10)**\dir{-} ?(0.15)*\dir{<};
   (0,-12)*{\scs v^*}; (0,12)*{\scs u^*};
    \endxy \, =
    \;
  \xy  0;/r.20pc/:
    (8,0)*{}="1";
    (0,0)*{}="2";
    (-8,0)*{}="3";
    (8,-10);"1" **\dir{-};
    "1";"2" **\crv{(8,8) & (0,8)} ?(0)*\dir{<} ?(1)*\dir{};
    "2";"3" **\crv{(0,-8) & (-8,-8)}?(1)*\dir{<}; (0,0)*{\bullet}; (0,-0.5)*{\bullet}; (0,0.5)*{\bullet};(2.5,0)*{f};
    "3"; (-8,10) **\dir{-};
      (8,-11)*{\scs v^*};   (-8,12)*{\scs u^*};
    \endxy \; .\]   
  
 A pivotal 2-category \(\C\) is called {\em cyclic} if the left and right duals of all 2-morphisms are equal. In a cyclic category \(\C\) there is a well-defined action of the trace \(\Tr \C\) on the center of objects \(Z(\C)\) as follows. Let \(u \colon x \to y\) be a 1-morphism,  \([f]\) denote the trace class of a 2-morphism \(f \colon u \to u\), and  \(a \in Z(x)\). Then we define the map
 \( [f] \colon Z(x) \to Z(y)\),
 
 \[ [f] (a) = \tilde{\text{ev}}_u \circ \left( f \otimes a \otimes 1_{u^*} \right) \circ \text{coev}_u .\]
 In terms of diagrams an element in \( Z(x)\) is a closed diagram at a region labeled by \(x\). The action of the trace corresponds to the closure of diagram \(f\) around \(a\), and this will give us a closed diagram at region \(y\):
 \begin{equation} \label{pp1}
 \quad \xy
  (0,0)*{a}+(-3,-3)*{\ast};
  (6,4)*{x};
 \endxy
 \mapsto  \;\;
  \xy
(-12,-0.4)*{\bullet}; (-12,0.3)*{\bullet};(-16,0)*{f};
 (-12,0)*{};
  (12,0)*{};
  (-12,0)*{}="t1";
  (12,0)*{}="t2";
  "t2";"t1" **\crv{(12,14) & (-12,14)}; ?(0)*\dir{} ?(1)*\dir{};
  "t2";"t1" **\crv{(12,-14) & (-12,-14)}; ?(.3)*\dir{>}+(2,-1)*{\scs };
  (0,0)*{a}+(-3,-3)*{\ast};;
  (14,4)*{y};
 \endxy\;\;,
   \end{equation}

\section{The categorified quantum \(\mathfrak{sl}_n \)}

In this section we will set our notation for \(\mathfrak{sl}_n\) weight lattice. We will then introduce the quantum \(\mathfrak{sl}_n\)  and its diagrammatic categorification $\U_Q(\mathfrak{sl}_n)$  by Khovanov and Lauda. There is an alternative categorification of the quantum \(\mathfrak{sl}_n\), which is given by Rouquier \cite{rouq}. Brundan \cite{brun0} proves the isomorphism between Rouquier's \(2\)-category and $\U_Q(\mathfrak{sl}_n)$. 
\subsection{Cartan datum for $\mathfrak{sl}_n$ }  
Let $I=\{1,2,\dots,n-1\}$ be the set of vertices of the Dynkin diagram of type $A_{n-1}$, \(n \geq 2\):
\begin{equation} \label{eq_dynkin_sln}
    \xy
  (-15,0)*{\circ}="1";
  (-5, 0)*{\circ}="2";
  (5,  0)*{\circ}="3";
  (35,  0)*{\circ}="4";
  "1";"2" **\dir{-}?(.55)*\dir{};
  "2";"3" **\dir{-}?(.55)*\dir{};
  "3";(15,0) **\dir{-}?(.55)*\dir{};
  (25,0);"4" **\dir{-}?(.55)*\dir{};
  (-15,2.2)*{\scs 1};
  (-5,2.2)*{\scs 2};
  (5,2.2)*{\scs 3};
  (35,2.2)*{\scs n-1};
  (20,0)*{\cdots };
  \endxy.
\end{equation}
{\em Simple roots} are defined as \(\alpha_i=(0, \dots, 0, 1, -1, 0,\dots, 0) \in \Z^{n} \),  \(i \in I\), where 1 occurs in the \(i\)-th component. Let  $(\cdot , \cdot) $ be the standard scalar product on $\Z^{n}$.  Then
\[ a_{ij}=(\alpha_i, \alpha_j)=
\left\{
\begin{array}{ll}
  2 & \text{if $i=j$},\\
  -1&  \text{if $|i-j|=1$}, \\
  0 & \text{if $|i-j|>1$}
\end{array}
\right.
\]
are the entries of the Cartan matrix associated to $\mathfrak{sl}_n$.

We will call the elements \(\omega_i=(1,\dots 1, 0, \dots, 0) \in \Z_+^n\), \(i \in I\) with \(i\) number of 1's the {\em fundamental weights}.  Notice that we have \((\omega_i, \alpha_j)= \delta_{ij}\).  Fundamental weights generate the integral \(\mathfrak{sl}_n\)-weight lattice \(X\)  over \(\Z\). 

By  \(\cal{P}(n, N)\) we denote \(n\)-compositions of \(N\) -- the set of \(n\)-tuples of non-negative integers which sum up to a positive integer \(N\), and by \(\cal{P}^+(n, N)\)  the set of \(n\)-partitions of \(N\), that is, elements  \(\l = (\l_1, \l_2, \dots, \l_{n}) \in \cal{P}(n, N)\) such that \(\l_i \geq \l_{i+1}\) for all \(i \in I\). For each composition \(\nu= (\nu_1, \nu_2, \dots, \nu_{n}) \in \cal{P}(n, N)\) there is a corresponding \(\mathfrak{sl}_n\) weight \(\bar{\nu}= \sum_{i \in I}(\nu_i-\nu_{i+1})\omega_i \). Note that \(\nub\) does not define \(\nu\) uniquely unless we fix \(N\). 
We will sometimes abuse the notation and write \(\nub=( \nub_1, \nub_2, \dots , \nub_{n-1}) \in  \Z^{n-1}\), \(\nub_i=\nu_i-\nu_{i+1}\) although \(\nub\) is not a composition. If \(\nub_i \geq 0\) for all \(i \in I\), we call \(\nub\) a dominant integral weight.  If \(\l \in \cal{P}^+(n, N)\), then \(\lb\) is a dominant weight.

Define the composition \( \nu+\alpha_i\in \mathcal{P}(n,N) \)  as 
\[\nu+\alpha_i= (\nu_1, \dots, \nu_{i-1}, \nu_i+1, \nu_{i+1}-1, \nu_{i+2}, \dots, \nu_n) \]
if \(\nu_{i+1} >0\) and \(\emptyset\) otherwise. 
Similarly, \[\nu-\alpha_i= (\nu_1, \dots, \nu_{i-1}, \nu_i-1, \nu_{i+1}+1, \nu_{i+2}, \dots, \nu_n) \]
for \(\nu_i >0\) and \(\emptyset\) if \(\nu_{i} =0\). In this case we have
 
\begin{eqnarray*} 
 \overbar{\nu +\alpha_i} &=& \quad \left\{
\begin{array}{ccl}
   (\overbar{\nu}_1+2, \overbar{\nu}_2-1,\overbar{\nu}_3,\dots,\overbar{\nu}_{n-2}, \overbar{\nu}_{n-1}) & \quad & \text{if $i=1,$} \\
   (\overbar{\nu}_1, \overbar{\nu}_2,\dots,\overbar{\nu}_{n-2},\overbar{\nu}_{n-1}-1, \overbar{\nu}_{n-1}+2) & \quad & \text{if $i=n-1,$} \\
  (\overbar{\nu}_1, \dots, \overbar{\nu}_{i-1}-1, \overbar{\nu}_i+2, \overbar{\nu}_{i+1}-1, \dots,
\overbar{\nu}_{n-1}) & \quad &  \text{if $1<i<n-1$,} 
\end{array}
 \right.
\end{eqnarray*}
 
 \begin{eqnarray*} 
 \overbar{\nu -\alpha_i} &=& \quad \left\{
\begin{array}{ccl}
   (\overbar{\nu}_1-2, \overbar{\nu}_2+1,\overbar{\nu}_3,\dots,\overbar{\nu}_{n-2}, \overbar{\nu}_{n-1}) & \quad & \text{if $i=1,$} \\
   (\overbar{\nu}_1, \overbar{\nu}_2,\dots,\overbar{\nu}_{n-2},\overbar{\nu}_{n-1}+1, \overbar{\nu}_{n-1}-2) & \quad & \text{if $i=n-1,$} \\
  (\overbar{\nu}_1, \dots, \overbar{\nu}_{i-1}+1, \overbar{\nu}_i-2, \overbar{\nu}_{i+1}+1, \dots,
\overbar{\nu}_{n-1}) & \quad &  \text{if $1<i<n-1$.} 
\end{array}
 \right.
\end{eqnarray*}
%
\subsection{The quantum group $\mathbf{U}_q(\mathfrak{sl}_n)$} 
The algebra $\mathbf{U}_q(\mathfrak{sl}_n)$ is the unital \(\Q(q)\)-algebra generated by the elements \(E_i, F_i\) and \(K_i^{}\), \(K_i^{-}\)  for \(i \in I\)  together with the following relations:
\[ K_i^{-1}K_i^{} =K_i^{}K_i^{-1} =1, \quad K_i^{}K_j^{}=K_j^{}K_i^{},\]
\[ K_i^{}E_j K_i^{-1} = q^{a_{ij}} E_j, \quad  K_i^{}F_j K_i^{-1} = q^{-a_{ij}} F_j, \]
\[ E_i F_j -F_jE_i =\delta_{ij} \frac{K_i - K_i^{-1}}{q-q^{-1}},\] 
\[ E_i^2E_j - (q+q^{-1}) E_iE_jE_i + E_j E_i^{2} = 0  \quad  \text{ if } a_{ij} = -1,\]
\[ F_i^2F_j - (q+q^{-1}) F_iF_jF_i + F_j F_i^{2} = 0  \quad \text{ if } a_{ij} = -1,\]
\[ E_iE_j = E_jE_i, \quad F_iF_j = F_jF_i  \quad \text{ if } a_{ij} = 0.\] 
Let $\dot{\mathbf{U}}_q(\mathfrak{sl}_n)$ be Lusztig's idempotented version of $\mathbf{U}_q(\mathfrak{sl}_n)$, where the unit is replaced by a collection of orthogonal idempotents $1_{\nub}$ indexed by the weight lattice $X$ of $\mathfrak{sl}_n$,
\begin{equation}
  1_{\nub}1_{\nub'} = \delta_{\nub\nub'} 1_{\nub},
\end{equation}
such that if \(\nub=( \nub_1, \nub_2, \dots , \nub_{n-1}) \), then
\begin{equation} \label{eq_onesubn}
K_i1_{\nub} =1_{\nub}K_i= q^{\nub_i} 1_{\nub}, \quad
E_i^{}1_{\nub} = 1_{\overbar{\nu +\alpha_i}}E_i, \quad F_i1_{\nub} = 1_{\overbar{\nu -\alpha_i}}F_i
.
\end{equation}

$\dot{\mathbf{U}}_q(\mathfrak{sl}_n)$ can be viewed as a category with objects $\nub \in X$ and morphisms
given  by compositions of $E_i$ and $F_i$ with $1\leq i <n$ modulo the above relations.

For $\cal{A} := \Z[q,q^{-1}]$, the $\cal{A}$-algebra $\UA_q(\mathfrak{sl}_n)$, the integral form of $\dot{\mathbf{U}}_q(\mathfrak{sl}_n)$ is generated by products of divided powers $E^{(a)}_i1_{\nub}:=
\frac{E^{a}_i}{[a]!}1_{\nub}$, $F^{(a)}_i1_{\nub}:=
\frac{F^{a}_i}{[a]!}1_{\nub}$ for $\nub \in X$ and $i \in I$. The subalgebra $\UA^+_q(\mathfrak{sl}_n)$
generated by $E^{(a)}_i1_{\nub}$, $\nub \in X$ is called the {\em positive half} of  $\UA_q(\mathfrak{sl}_n)$.

\subsection{The 2-category $\U_Q(\mathfrak{sl}_n)$}
Here we describe a categorification of  $\mathbf{U}_q(\mathfrak{sl}_n)$  by M. Khovanov and A. Lauda. From now on we assume that the underlying field is complex field. Fix the following choice of scalars $Q$ consisting of $\{ t_{ij}\}_{i,j \in I}$
such that
\begin{itemize}
\item $t_{ii}=1$ for all $i \in I$ and $t_{ij} \in \mathbb{C}^{*}$ for $i\neq j$,
 \item $t_{ij}=t_{ji}$ if $a_{ij}=0$,
\end{itemize}
 and a choice of  bubble parameters $c_{i,\nub} \in \mathbb{C}^{*}$  for $i\in I$ and a weight $\nub$ such that
\[
  c_{j,\overbar{\nu+\alpha_j}}/c_{i,\nub}=t_{ij}.
\]

\begin{Definition} \label{defU_cat}
The 2-category $\, \U_Q(\mathfrak{sl}_n)$ is the graded linear 2-category consisting of:
\begin{itemize}
\item objects are \(\mathfrak{sl}_n\) weights $\bar{\nu}$.
\item 1-morphisms are formal direct sums of (degree shifts of) compositions of
$$\onel, \quad \onenn{\overbar{\nu+\alpha_i}} \sE_i = \onenn{\overbar{\nu+\alpha_i}} \sE_i\onel = \sE_i \onel, \quad \text{ and }\quad \onenn{\overbar{\nu-\alpha_i}} \sF_i = \onenn{\overbar{\nu-\alpha_i}} \sF_i\onel = \sF_i\onel$$
for $i \in I$ and a weight $\bar{\nu} $.
\item 2-morphisms are $\mathbb{C}$-vector spaces spanned by compositions of the following tangle-like diagrams label by \(i \in I\):
\begin{align}
  \xy 0;/r.17pc/:
 (0,7);(0,-7); **\dir{-} ?(.75)*\dir{>};
 (5,3)*{ \scs \bar{\nu}};
 (-9,3)*{\scs  \overbar{\nu+\alpha_i}};
 (-2.5,-6)*{\scs i};
 (-10,0)*{};(10,0)*{};
 \endxy  &\maps \cal{E}_i\onel \to \cal{E}_i\onel,  & \quad
 &
    \xy 0;/r.17pc/:
 (0,7);(0,-7); **\dir{-} ?(.75)*\dir{<};
 (5,3)*{ \scs \nub};
 (-9,3)*{\scs  \overbar{\nu-\alpha_i}};
 (-2.5,-6)*{\scs i};
 (-10,0)*{};(10,0)*{};
 \endxy \maps \cal{F}_i\onel \to \cal{F}_i\onel,  \label{idmorph}\\
   & & & \nn \\
  \xy 0;/r.17pc/:
 (0,7);(0,-7); **\dir{-} ?(.75)*\dir{>};
 (0,0)*{\bullet};
 (5,3)*{ \scs \bar{\nu}};
 (-9,3)*{\scs  \overbar{\nu+\alpha_i}};
 (-2.5,-6)*{\scs i};
 (-10,0)*{};(10,0)*{};
 \endxy &\maps \cal{E}_i\onel \to \cal{E}_i\onel\la 2 \ra,  & \quad
 &
    \xy 0;/r.17pc/:
 (0,7);(0,-7); **\dir{-} ?(.75)*\dir{<};
 (0,0)*{\bullet};
 (5,3)*{ \scs \nub};
 (-9,3)*{\scs  \overbar{\nu-\alpha_i}};
 (-2.5,-6)*{\scs i};
 (-10,0)*{};(10,0)*{};
 \endxy\maps \cal{F}_i\onel \to \cal{F}_i\onel\la 2 \ra,   \\
   & & & \nn \\
   \xy 0;/r.17pc/:
  (0,0)*{\xybox{
    (-4,-4)*{};(4,4)*{} **\crv{(-4,-1) & (4,1)}?(1)*\dir{>} ;
    (4,-4)*{};(-4,4)*{} **\crv{(4,-1) & (-4,1)}?(1)*\dir{>};
    (-5.5,-3)*{\scs i};
     (5.5,-3)*{\scs j};
     (9,1)*{\scs  \nub};
     (-10,0)*{};(10,0)*{};
     }};
  \endxy \;\;&\maps \cal{E}_i\cal{E}_j\onel  \to \cal{E}_j\cal{E}_i\onel\la - a_{ij} \ra,  &
  &
   \xy 0;/r.17pc/:
  (0,0)*{\xybox{
    (-4,4)*{};(4,-4)*{} **\crv{(-4,1) & (4,-1)}?(1)*\dir{>} ;
    (4,4)*{};(-4,-4)*{} **\crv{(4,1) & (-4,-1)}?(1)*\dir{>};
    (-6.5,-3)*{\scs i};
     (6.5,-3)*{\scs j};
     (9,1)*{\scs  \nub};
     (-10,0)*{};(10,0)*{};
     }};
  \endxy\;\; \maps \cal{F}_i\cal{F}_j\onel  \to \cal{F}_j\cal{F}_i\onel\la - a_{ij} \ra,   \\
  & & & \nn \\
     \xy 0;/r.17pc/:
    (0,-3)*{\bbpef{i}};
    (8,-5)*{\scs  \nub};
    (-10,0)*{};(10,0)*{};
    \endxy &\maps \onel  \to \cal{F}_i\cal{E}_i\onel\la 1 + \nub_i \ra,   &
    &
   \xy 0;/r.17pc/:
    (0,-3)*{\bbpfe{i}};
    (8,-5)*{\scs \nub};
    (-10,0)*{};(10,0)*{};
    \endxy \maps \onel  \to\cal{E}_i\cal{F}_i\onel\la 1 - \nub_i \ra,   \\
      & & & \nn \\
  \xy 0;/r.17pc/:
    (0,0)*{\bbcef{i}};
    (8,4)*{\scs  \nub};
    (-10,0)*{};(10,0)*{};
    \endxy & \maps \cal{F}_i\cal{E}_i\onel \to\onel\la 1 + \nub_i \ra,  &
    &
 \xy 0;/r.17pc/:
    (0,0)*{\bbcfe{i}};
    (8,4)*{\scs  \nub};
    (-10,0)*{};(10,0)*{};
    \endxy \maps \cal{E}_i\cal{F}_i\onel  \to\onel \la 1 - \nub_i \ra. 
\end{align}
\end{itemize}
\end{Definition}
 
The two 2-morphisms \eqref{idmorph} are identity 2-morphisms. We read diagrams from right to left and from bottom to top. That is, the horizontal composition of the 2-morphisms corresponds to drawing the respective diagrams side by side from right to left, and vertical composition means stacking diagrams on top of each other. If the labels of the strands do not match in the vertical composition, then we set the composition to zero. Isotopies are allowed as long as they do not change the combinatorial type of a diagram.

The 2-morphisms satisfy the following relations:
\begin{enumerate}
\item \label{item_cycbiadjoint} The identity morphisms of $\cal{E}_i \onel$ and $\cal{F}_i \onel$ are biadjoint up to a degree shift:
    \begin{equation} \label{eq_biadjoint1}
 \xy   0;/r.17pc/:
    (-8,0)*{}="1";
    (0,0)*{}="2";
    (8,0)*{}="3";
    (-8,-10);"1" **\dir{-};
    "1";"2" **\crv{(-8,8) & (0,8)} ?(0)*\dir{>} ?(1)*\dir{>};
    "2";"3" **\crv{(0,-8) & (8,-8)}?(1)*\dir{>};
    "3"; (8,10) **\dir{-};
    (12,-9)*{\nub};
    (-6,9)*{\overbar{\nu+\alpha_i}};
     (-6,-8)*{\scs i};
    \endxy
    \; =
    \;
\xy   0;/r.17pc/:
    (-8,0)*{}="1";
    (0,0)*{}="2";
    (8,0)*{}="3";
    (0,-10);(0,10)**\dir{-} ?(.5)*\dir{>};
    (5,6)*{\nub};
    (-9,6)*{\overbar{\nu+\alpha_i}};
     (-2,-8)*{\scs i};
    \endxy, 
\qquad \quad \xy  0;/r.17pc/:
    (8,0)*{}="1";
    (0,0)*{}="2";
    (-8,0)*{}="3";
    (8,-10);"1" **\dir{-};
    "1";"2" **\crv{(8,8) & (0,8)} ?(0)*\dir{<} ?(1)*\dir{<};
    "2";"3" **\crv{(0,-8) & (-8,-8)}?(1)*\dir{<};
    "3"; (-8,10) **\dir{-};
    (12,9)*{\nub};
    (-6,-9)*{\overbar{\nu-\alpha_i}};
      (10,-8)*{\scs i};
    \endxy
    \; =
    \;
\xy  0;/r.17pc/:
    (8,0)*{}="1";
    (0,0)*{}="2";
    (-8,0)*{}="3";
    (0,-10);(0,10)**\dir{-} ?(.5)*\dir{<};
    (6,6)*{\nub};
    (-9,6)*{\overbar{\nu-\alpha_i}};
    (2,-8)*{\scs i};
    \endxy,
\end{equation}

\begin{equation}\label{eq_biadjoint2}
 \xy   0;/r.17pc/:
    (8,0)*{}="1";
    (0,0)*{}="2";
    (-8,0)*{}="3";
    (8,-10);"1" **\dir{-};
    "1";"2" **\crv{(8,8) & (0,8)} ?(0)*\dir{>} ?(1)*\dir{>};
    "2";"3" **\crv{(0,-8) & (-8,-8)}?(1)*\dir{>};
    "3"; (-8,10) **\dir{-};
    (12,9)*{\nub};
    (-5,-9)*{\overbar{\nu+\alpha_i}};
    (10,-8)*{\scs i};
    \endxy
    \; =
    \;
    \xy 0;/r.17pc/:
    (8,0)*{}="1";
    (0,0)*{}="2";
    (-8,0)*{}="3";
    (0,-10);(0,10)**\dir{-} ?(.5)*\dir{>};
    (5,6)*{\nub};
    (-9,6)*{\overbar{\nu+\alpha_i}};
     (2,-8)*{\scs i};
    \endxy,
\qquad \quad \xy   0;/r.17pc/:
    (-8,0)*{}="1";
    (0,0)*{}="2";
    (8,0)*{}="3";
    (-8,-10);"1" **\dir{-};
    "1";"2" **\crv{(-8,8) & (0,8)} ?(0)*\dir{<} ?(1)*\dir{<};
    "2";"3" **\crv{(0,-8) & (8,-8)}?(1)*\dir{<};
    "3"; (8,10) **\dir{-};
    (10,-7)*{\nub};
    (-6,9)*{\overbar{\nu-\alpha_i}};
      (-6,-8)*{\scs i};
    \endxy
    \; =
    \;
\xy   0;/r.17pc/:
    (-8,0)*{}="1";
    (0,0)*{}="2";
    (8,0)*{}="3";
    (0,-10);(0,10)**\dir{-} ?(.5)*\dir{<};
   (6,5)*{\nub};
    (-9,5)*{\overbar{\nu-\alpha_i}};
     (2,-8)*{\scs i};
    \endxy.
\end{equation}

  \item The 2-morphisms are cyclic with respect to this biadjoint structure:
\begin{equation} \label{eq_cyclic_dot}
 \xy 0;/r.17pc/:
    (-8,5)*{}="1";
    (0,5)*{}="2";
    (0,-5)*{}="2'";
    (8,-5)*{}="3";
    (-8,-10);"1" **\dir{-};
    "2";"2'" **\dir{-} ?(.5)*\dir{<};
    "1";"2" **\crv{(-8,12) & (0,12)} ?(0)*\dir{<};
    "2'";"3" **\crv{(0,-12) & (8,-12)}?(1)*\dir{<};
    "3"; (8,10) **\dir{-};
    (12,-7)*{\nub};
    (-18,8)*{\overbar{\nu-\alpha_i}};
    (0,4)*{\bullet};
    (10,8)*{\scs };
    (-10,-8)*{\scs };
     (-6,-8)*{\scs i};
    \endxy
    \quad = \quad
      \xy 0;/r.17pc/:
 (0,10);(0,-10); **\dir{-} ?(.75)*\dir{<}+(2.3,0)*{\scriptstyle{}}
 ?(.1)*\dir{ }+(2,0)*{\scs };
 (0,0)*{\bullet};
 (-10,7)*{\overbar{\nu-\alpha_i}};
 (5,7)*{\nub};
  (2,-8)*{\scs i};
 (-10,0)*{};(10,0)*{};(-2,-8)*{\scs };
 \endxy
    \quad = \quad
   \xy 0;/r.17pc/:
    (8,5)*{}="1";
    (0,5)*{}="2";
    (0,-5)*{}="2'";
    (-8,-5)*{}="3";
    (8,-10);"1" **\dir{-};
    "2";"2'" **\dir{-} ?(.5)*\dir{<};
    "1";"2" **\crv{(8,12) & (0,12)} ?(0)*\dir{<};
    "2'";"3" **\crv{(0,-12) & (-8,-12)}?(1)*\dir{<};
    "3"; (-8,10) **\dir{-};
    (12,7)*{\nub};
    (-17,-7)*{\overbar{\nu-\alpha_i}};
     (10,-8)*{\scs i};
    (0,4)*{\bullet};
    (-10,8)*{\scs };
    (10,-8)*{\scs };
    \endxy.
\end{equation}
The cyclicity for crossings are given by
\begin{equation} \label{eq_almost_cyclic}
   \xy 0;/r.17pc/:
  (0,0)*{\xybox{
    (-4,4)*{};(4,-4)*{} **\crv{(-4,1) & (4,-1)}?(1)*\dir{>} ;
    (4,4)*{};(-4,-4)*{} **\crv{(4,1) & (-4,-1)}?(1)*\dir{>};
    (-6.5,-3)*{\scs i};
     (6.5,-3)*{\scs j};
     (9,1)*{\scs  \nub};
     (-10,0)*{};(10,0)*{};
     }};
  \endxy \quad = \quad
 \xy 0;/r.17pc/:
  (0,0)*{\xybox{
    (4,-4)*{};(-4,4)*{} **\crv{(4,-1) & (-4,1)}?(1)*\dir{>};
    (-4,-4)*{};(4,4)*{} **\crv{(-4,-1) & (4,1)};
     (-4,4)*{};(18,4)*{} **\crv{(-4,16) & (18,16)} ?(1)*\dir{>};
     (4,-4)*{};(-18,-4)*{} **\crv{(4,-16) & (-18,-16)} ?(1)*\dir{<}?(0)*\dir{<};
     (-18,-4);(-18,12) **\dir{-};(-12,-4);(-12,12) **\dir{-};
     (18,4);(18,-12) **\dir{-};(12,4);(12,-12) **\dir{-};
     (8,1)*{ \nub};
     (-10,0)*{};(10,0)*{};
     (-4,-4)*{};(-12,-4)*{} **\crv{(-4,-10) & (-12,-10)}?(1)*\dir{<}?(0)*\dir{<};
      (4,4)*{};(12,4)*{} **\crv{(4,10) & (12,10)}?(1)*\dir{>}?(0)*\dir{>};
      (-20,11)*{\scs j};(-10,11)*{\scs i};
      (20,-11)*{\scs j};(10,-11)*{\scs i};
     }};
  \endxy
\quad =  
\xy 0;/r.17pc/:
  (0,0)*{\xybox{
    (-4,-4)*{};(4,4)*{} **\crv{(-4,-1) & (4,1)}?(1)*\dir{>};
    (4,-4)*{};(-4,4)*{} **\crv{(4,-1) & (-4,1)};
     (4,4)*{};(-18,4)*{} **\crv{(4,16) & (-18,16)} ?(1)*\dir{>};
     (-4,-4)*{};(18,-4)*{} **\crv{(-4,-16) & (18,-16)} ?(1)*\dir{<}?(0)*\dir{<};
     (18,-4);(18,12) **\dir{-};(12,-4);(12,12) **\dir{-};
     (-18,4);(-18,-12) **\dir{-};(-12,4);(-12,-12) **\dir{-};
     (8,1)*{ \nub};
     (-10,0)*{};(10,0)*{};
      (4,-4)*{};(12,-4)*{} **\crv{(4,-10) & (12,-10)}?(1)*\dir{<}?(0)*\dir{<};
      (-4,4)*{};(-12,4)*{} **\crv{(-4,10) & (-12,10)}?(1)*\dir{>}?(0)*\dir{>};
      (20,11)*{\scs i};(10,11)*{\scs j};
      (-20,-11)*{\scs i};(-10,-11)*{\scs j};
     }};
  \endxy.
\end{equation}
Sideways crossings are defined by the equalities:
\begin{equation} \label{eq_crossl-gen}
  \xy 0;/r.18pc/:
  (0,0)*{\xybox{
    (-4,-4)*{};(4,4)*{} **\crv{(-4,-1) & (4,1)}?(1)*\dir{>} ;
    (4,-4)*{};(-4,4)*{} **\crv{(4,-1) & (-4,1)}?(0)*\dir{<};
    (-5,-3)*{\scs j};
     (6.5,-3)*{\scs i};
     (9,2)*{ \nub};
     (-12,0)*{};(12,0)*{};
     }};
  \endxy
\quad = \quad
 \xy 0;/r.17pc/:
  (0,0)*{\xybox{
    (4,-4)*{};(-4,4)*{} **\crv{(4,-1) & (-4,1)}?(1)*\dir{>};
    (-4,-4)*{};(4,4)*{} **\crv{(-4,-1) & (4,1)};
     (-4,4);(-4,12) **\dir{-};
     (-12,-4);(-12,12) **\dir{-};
     (4,-4);(4,-12) **\dir{-};(12,4);(12,-12) **\dir{-};
     (16,1)*{\nub};
     (-10,0)*{};(10,0)*{};
     (-4,-4)*{};(-12,-4)*{} **\crv{(-4,-10) & (-12,-10)}?(1)*\dir{<}?(0)*\dir{<};
      (4,4)*{};(12,4)*{} **\crv{(4,10) & (12,10)}?(1)*\dir{>}?(0)*\dir{>};
      (-14,11)*{\scs i};(-2,11)*{\scs j};
      (14,-11)*{\scs i};(2,-11)*{\scs j};
     }};
  \endxy
  \quad = \,  \;\;
 \xy 0;/r.17pc/:
  (0,0)*{\xybox{
    (-4,-4)*{};(4,4)*{} **\crv{(-4,-1) & (4,1)}?(1)*\dir{<};
    (4,-4)*{};(-4,4)*{} **\crv{(4,-1) & (-4,1)};
     (4,4);(4,12) **\dir{-};
     (12,-4);(12,12) **\dir{-};
     (-4,-4);(-4,-12) **\dir{-};(-12,4);(-12,-12) **\dir{-};
     (16,1)*{\nub};
     (10,0)*{};(-10,0)*{};
     (4,-4)*{};(12,-4)*{} **\crv{(4,-10) & (12,-10)}?(1)*\dir{>}?(0)*\dir{>};
      (-4,4)*{};(-12,4)*{} **\crv{(-4,10) & (-12,10)}?(1)*\dir{<}?(0)*\dir{<};
     }};
     (12,11)*{\scs j};(0,11)*{\scs i};
      (-17,-11)*{\scs j};(-5,-11)*{\scs i};
  \endxy,
\end{equation}
\begin{equation} \label{eq_crossr-gen}
  \xy 0;/r.18pc/:
  (0,0)*{\xybox{
    (-4,-4)*{};(4,4)*{} **\crv{(-4,-1) & (4,1)}?(0)*\dir{<} ;
    (4,-4)*{};(-4,4)*{} **\crv{(4,-1) & (-4,1)}?(1)*\dir{>};
    (5.1,-3)*{\scs i};
     (-6.5,-3)*{\scs j};
     (9,2)*{ \nub};
     (-12,0)*{};(12,0)*{};
     }};
  \endxy
\quad = \quad
 \xy 0;/r.17pc/:
  (0,0)*{\xybox{
    (-4,-4)*{};(4,4)*{} **\crv{(-4,-1) & (4,1)}?(1)*\dir{>};
    (4,-4)*{};(-4,4)*{} **\crv{(4,-1) & (-4,1)};
     (4,4);(4,12) **\dir{-};
     (12,-4);(12,12) **\dir{-};
     (-4,-4);(-4,-12) **\dir{-};(-12,4);(-12,-12) **\dir{-};
     (16,-6)*{\nub};
     (10,0)*{};(-10,0)*{};
     (4,-4)*{};(12,-4)*{} **\crv{(4,-10) & (12,-10)}?(1)*\dir{<}?(0)*\dir{<};
      (-4,4)*{};(-12,4)*{} **\crv{(-4,10) & (-12,10)}?(1)*\dir{>}?(0)*\dir{>};
      (14,11)*{\scs j};(2,11)*{\scs i};
      (-14,-11)*{\scs j};(-2,-11)*{\scs i};
     }};
  \endxy
  \quad = \,  \;\;
  \xy 0;/r.17pc/:
  (0,0)*{\xybox{
    (4,-4)*{};(-4,4)*{} **\crv{(4,-1) & (-4,1)}?(1)*\dir{<};
    (-4,-4)*{};(4,4)*{} **\crv{(-4,-1) & (4,1)};
     (-4,4);(-4,12) **\dir{-};
     (-12,-4);(-12,12) **\dir{-};
     (4,-4);(4,-12) **\dir{-};(12,4);(12,-12) **\dir{-};
     (16,6)*{\nub};
     (-10,0)*{};(10,0)*{};
     (-4,-4)*{};(-12,-4)*{} **\crv{(-4,-10) & (-12,-10)}?(1)*\dir{>}?(0)*\dir{>};
      (4,4)*{};(12,4)*{} **\crv{(4,10) & (12,10)}?(1)*\dir{<}?(0)*\dir{<};
      (-14,11)*{\scs i};(-2,11)*{\scs j};(14,-11)*{\scs i};(2,-11)*{\scs j};
     }};
  \endxy,
\end{equation}
where the second equalities in \eqref{eq_crossl-gen} and \eqref{eq_crossr-gen} follow from \eqref{eq_almost_cyclic}.

\item The following local relations hold for upwards oriented strands:
\begin{enumerate}[i)]

\item
\begin{equation}
 \vcenter{\xy 0;/r.17pc/:
    (-4,-4)*{};(4,4)*{} **\crv{(-4,-1) & (4,1)}?(1)*\dir{};
    (4,-4)*{};(-4,4)*{} **\crv{(4,-1) & (-4,1)}?(1)*\dir{};
    (-4,4)*{};(4,12)*{} **\crv{(-4,7) & (4,9)}?(1)*\dir{};
    (4,4)*{};(-4,12)*{} **\crv{(4,7) & (-4,9)}?(1)*\dir{};
    (9,6)*{\nub};
    (4,12); (4,13) **\dir{-}?(1)*\dir{>};
    (-4,12); (-4,13) **\dir{-}?(1)*\dir{>};
  (-5.5,-3)*{\scs i};
     (5.5,-3)*{\scs j};
 \endxy}
 \qquad = \qquad
 \left\{
 \begin{array}{ccc}
 0 &  &  \text{if $a_{ij}=2$,}\\ \\
 
     t_{ij}\;\xy 0;/r.17pc/:
  (3,9);(3,-9) **\dir{-}?(0)*\dir{<}+(2.3,0)*{};
  (-3,9);(-3,-9) **\dir{-}?(0)*\dir{<}+(2.3,0)*{};
  (9,3)*{\nub};
  (-5,-6)*{\scs i};     (5.1,-6)*{\scs j};
 \endxy &  &  \text{if $a_{ij}=0$,}\\ \\
 t_{ij} \vcenter{\xy 0;/r.17pc/:
  (3,9);(3,-9) **\dir{-}?(0)*\dir{<}+(2.3,0)*{};
  (-3,9);(-3,-9) **\dir{-}?(0)*\dir{<}+(2.3,0)*{};
   (9,3)*{\nub};
  (-3,4)*{\bullet};(-6.5,5)*{};
  (-5,-6)*{\scs i};     (5.1,-6)*{\scs j};
 \endxy} \;\; + \;\; t_{ji}
  \vcenter{\xy 0;/r.17pc/:
  (3,9);(3,-9) **\dir{-}?(0)*\dir{<}+(2.3,0)*{};
  (-3,9);(-3,-9) **\dir{-}?(0)*\dir{<}+(2.3,0)*{};
    (9,3)*{\nub};
  (3,4)*{\bullet};(7,5)*{};
  (-5,-6)*{\scs i};     (5.1,-6)*{\scs j};
 \endxy}
   &  & \text{if $a_{ij} =-1$.}
 \end{array}
 \right. \label{eq_r2_ij-gen}
\end{equation}

\item The nilHecke dot sliding relations
\begin{equation} \label{eq_dot_slide_ii-gen-cyc}
\xy 0;/r.22pc/:
  (0,0)*{\xybox{
    (-4,-4)*{};(4,6)*{} **\crv{(-4,-1) & (4,1)}?(1)*\dir{>}?(.25)*{\bullet};
    (4,-4)*{};(-4,6)*{} **\crv{(4,-1) & (-4,1)}?(1)*\dir{>};
    (-5,-3)*{\scs i};
     (5.1,-3)*{\scs i};
   (9,3)*{\nub};
     (-10,0)*{};(10,0)*{};
     }};
  \endxy
 \;\; -
\xy 0;/r.22pc/:
  (0,0)*{\xybox{
    (-4,-4)*{};(4,6)*{} **\crv{(-4,-1) & (4,1)}?(1)*\dir{>}?(.75)*{\bullet};
    (4,-4)*{};(-4,6)*{} **\crv{(4,-1) & (-4,1)}?(1)*\dir{>};
    (-5,-3)*{\scs i};
     (5.1,-3)*{\scs i};
     (9,3)*{\nub};
     (-10,0)*{};(10,0)*{};
     }};
  \endxy
\;\; =  \xy 0;/r.22pc/:
  (0,0)*{\xybox{
    (-4,-4)*{};(4,6)*{} **\crv{(-4,-1) & (4,1)}?(1)*\dir{>};
    (4,-4)*{};(-4,6)*{} **\crv{(4,-1) & (-4,1)}?(1)*\dir{>}?(.75)*{\bullet};
    (-5,-3)*{\scs i};
     (5.1,-3)*{\scs i};
     (9,3)*{\nub};
     (-10,0)*{};(10,0)*{};
     }};
  \endxy
\;\;  -
  \xy 0;/r.22pc/:
  (0,0)*{\xybox{
    (-4,-4)*{};(4,6)*{} **\crv{(-4,-1) & (4,1)}?(1)*\dir{>} ;
    (4,-4)*{};(-4,6)*{} **\crv{(4,-1) & (-4,1)}?(1)*\dir{>}?(.25)*{\bullet};
    (-5,-3)*{\scs i};
     (5.1,-3)*{\scs i};
     (9,3)*{\nub};
     (-10,0)*{};(12,0)*{};
     }};
  \endxy
  \;\; = \;\;
    \xy 0;/r.22pc/:
  (0,0)*{\xybox{
    (-4,-4)*{};(-4,6)*{} **\dir{-}?(1)*\dir{>} ;
    (4,-4)*{};(4,6)*{} **\dir{-}?(1)*\dir{>};
    (-5,-3)*{\scs i};
      (9,3)*{\nub};
     (5.1,-3)*{\scs i};
          (-10,0)*{};(12,0)*{};
     }};
  \endxy
\end{equation}
hold.

\item For $i \neq j$ the dot sliding relations
\begin{equation} \label{eq_dot_slide_ij-gen}
\xy 0;/r.22pc/:
  (0,0)*{\xybox{
    (-4,-4)*{};(4,6)*{} **\crv{(-4,-1) & (4,1)}?(1)*\dir{>}?(.75)*{\bullet};
    (4,-4)*{};(-4,6)*{} **\crv{(4,-1) & (-4,1)}?(1)*\dir{>};
    (-5,-3)*{\scs i};
     (5.1,-3)*{\scs j};
      (9,3)*{\nub};
     (-10,0)*{};(10,0)*{};
     }};
  \endxy
 \;\; =
\xy 0;/r.22pc/:
  (0,0)*{\xybox{
    (-4,-4)*{};(4,6)*{} **\crv{(-4,-1) & (4,1)}?(1)*\dir{>}?(.25)*{\bullet};
    (4,-4)*{};(-4,6)*{} **\crv{(4,-1) & (-4,1)}?(1)*\dir{>};
    (-5,-3)*{\scs i};
     (5.1,-3)*{\scs j};
       (9,3)*{\nub};
     (-10,0)*{};(10,0)*{};
     }};
  \endxy,
\qquad  \xy 0;/r.22pc/:
  (0,0)*{\xybox{
    (-4,-4)*{};(4,6)*{} **\crv{(-4,-1) & (4,1)}?(1)*\dir{>};
    (4,-4)*{};(-4,6)*{} **\crv{(4,-1) & (-4,1)}?(1)*\dir{>}?(.75)*{\bullet};
    (-5,-3)*{\scs i};
     (5.1,-3)*{\scs j};
     (9,3)*{\nub};
     (-10,0)*{};(10,0)*{};
     }};
  \endxy
\;\;  =
  \xy 0;/r.22pc/:
  (0,0)*{\xybox{
    (-4,-4)*{};(4,6)*{} **\crv{(-4,-1) & (4,1)}?(1)*\dir{>} ;
    (4,-4)*{};(-4,6)*{} **\crv{(4,-1) & (-4,1)}?(1)*\dir{>}?(.25)*{\bullet};
    (-5,-3)*{\scs i};
     (5.1,-3)*{\scs j};
  (9,3)*{\nub};
     (-10,0)*{};(12,0)*{};
     }};
  \endxy
\end{equation}
hold.

\item If $i \neq k$ and $a_{ij} \geq 0$, the relation
\begin{equation}
\vcenter{\xy 0;/r.17pc/:
    (-4,-4)*{};(4,4)*{} **\crv{(-4,-1) & (4,1)}?(1)*\dir{};
    (4,-4)*{};(-4,4)*{} **\crv{(4,-1) & (-4,1)}?(1)*\dir{};
    (4,4)*{};(12,12)*{} **\crv{(4,7) & (12,9)}?(1)*\dir{};
    (12,4)*{};(4,12)*{} **\crv{(12,7) & (4,9)}?(1)*\dir{};
    (-4,12)*{};(4,20)*{} **\crv{(-4,15) & (4,17)}?(1)*\dir{};
    (4,12)*{};(-4,20)*{} **\crv{(4,15) & (-4,17)}?(1)*\dir{};
    (-4,4)*{}; (-4,12) **\dir{-};
    (12,-4)*{}; (12,4) **\dir{-};
    (12,12)*{}; (12,20) **\dir{-};
    (4,20); (4,21) **\dir{-}?(1)*\dir{>};
    (-4,20); (-4,21) **\dir{-}?(1)*\dir{>};
    (12,20); (12,21) **\dir{-}?(1)*\dir{>};
  (18,12)*{\nub}; 
   (-6,-3)*{\scs i};
  (6,-3)*{\scs j};
  (15,-3)*{\scs k};
\endxy}
 \;\; =\;\;
\vcenter{\xy 0;/r.17pc/:
    (4,-4)*{};(-4,4)*{} **\crv{(4,-1) & (-4,1)}?(1)*\dir{};
    (-4,-4)*{};(4,4)*{} **\crv{(-4,-1) & (4,1)}?(1)*\dir{};
    (-4,4)*{};(-12,12)*{} **\crv{(-4,7) & (-12,9)}?(1)*\dir{};
    (-12,4)*{};(-4,12)*{} **\crv{(-12,7) & (-4,9)}?(1)*\dir{};
    (4,12)*{};(-4,20)*{} **\crv{(4,15) & (-4,17)}?(1)*\dir{};
    (-4,12)*{};(4,20)*{} **\crv{(-4,15) & (4,17)}?(1)*\dir{};
    (4,4)*{}; (4,12) **\dir{-};
    (-12,-4)*{}; (-12,4) **\dir{-};
    (-12,12)*{}; (-12,20) **\dir{-};
    (4,20); (4,21) **\dir{-}?(1)*\dir{>};
    (-4,20); (-4,21) **\dir{-}?(1)*\dir{>};
    (-12,20); (-12,21) **\dir{-}?(1)*\dir{>};
  (10,12)*{\nub};
  (-14,-3)*{\scs i};
  (-6,-3)*{\scs j};
  (6,-3)*{\scs k};
\endxy}
 \label{eq_r3_easy-gen}
\end{equation}
holds. Otherwise if $a_{ij} =-1$, we have
\begin{equation}
\vcenter{\xy 0;/r.17pc/:
    (-4,-4)*{};(4,4)*{} **\crv{(-4,-1) & (4,1)}?(1)*\dir{};
    (4,-4)*{};(-4,4)*{} **\crv{(4,-1) & (-4,1)}?(1)*\dir{};
    (4,4)*{};(12,12)*{} **\crv{(4,7) & (12,9)}?(1)*\dir{};
    (12,4)*{};(4,12)*{} **\crv{(12,7) & (4,9)}?(1)*\dir{};
    (-4,12)*{};(4,20)*{} **\crv{(-4,15) & (4,17)}?(1)*\dir{};
    (4,12)*{};(-4,20)*{} **\crv{(4,15) & (-4,17)}?(1)*\dir{};
    (-4,4)*{}; (-4,12) **\dir{-};
    (12,-4)*{}; (12,4) **\dir{-};
    (12,12)*{}; (12,20) **\dir{-};
    (4,20); (4,21) **\dir{-}?(1)*\dir{>};
    (-4,20); (-4,21) **\dir{-}?(1)*\dir{>};
    (12,20); (12,21) **\dir{-}?(1)*\dir{>};
   (20,12)*{\nub};
    (-6,-3)*{\scs i};
  (6,-3)*{\scs j};
  (15,-3)*{\scs i};
\endxy}
\quad - \quad
\vcenter{\xy 0;/r.17pc/:
    (4,-4)*{};(-4,4)*{} **\crv{(4,-1) & (-4,1)}?(1)*\dir{};
    (-4,-4)*{};(4,4)*{} **\crv{(-4,-1) & (4,1)}?(1)*\dir{};
    (-4,4)*{};(-12,12)*{} **\crv{(-4,7) & (-12,9)}?(1)*\dir{};
    (-12,4)*{};(-4,12)*{} **\crv{(-12,7) & (-4,9)}?(1)*\dir{};
    (4,12)*{};(-4,20)*{} **\crv{(4,15) & (-4,17)}?(1)*\dir{};
    (-4,12)*{};(4,20)*{} **\crv{(-4,15) & (4,17)}?(1)*\dir{};
    (4,4)*{}; (4,12) **\dir{-};
    (-12,-4)*{}; (-12,4) **\dir{-};
    (-12,12)*{}; (-12,20) **\dir{-};
    (4,20); (4,21) **\dir{-}?(1)*\dir{>};
    (-4,20); (-4,21) **\dir{-}?(1)*\dir{>};
    (-12,20); (-12,21) **\dir{-}?(1)*\dir{>};
  (10,12)*{\nub};
  (-14,-3)*{\scs i};
  (-6,-3)*{\scs j};
  (6,-3)*{\scs i};
\endxy}
 \;\; =\;\;
 t_{ij} \;\;
\xy 0;/r.17pc/:
  (4,12);(4,-12) **\dir{-}?(0)*\dir{<};
  (-4,12);(-4,-12) **\dir{-}?(0)*\dir{<}?(.25)*\dir{};
  (12,12);(12,-12) **\dir{-}?(0)*\dir{<}?(.25)*\dir{};
  (18,3)*{\nub};
  (-6,-9)*{\scs i};     (6.1,-9)*{\scs j};
  (14,-9)*{\scs i};
 \endxy.
 \label{eq_r3_hard-gen}
\end{equation}
\end{enumerate}

\item When $i \ne j$ one has the mixed relations
\begin{equation} \label{mixed_rel}
 \vcenter{   \xy 0;/r.18pc/:
    (-4,-4)*{};(4,4)*{} **\crv{(-4,-1) & (4,1)}?(1)*\dir{>};
    (4,-4)*{};(-4,4)*{} **\crv{(4,-1) & (-4,1)}?(1)*\dir{<};?(0)*\dir{<};
    (-4,4)*{};(4,12)*{} **\crv{(-4,7) & (4,9)};
    (4,4)*{};(-4,12)*{} **\crv{(4,7) & (-4,9)}?(1)*\dir{>};
  (9,7)*{\nub};
  (-6,-3)*{\scs i};
     (6,-3)*{\scs j};
 \endxy}
 \;\; = \;\; 
\xy 0;/r.18pc/:
  (3,9);(3,-9) **\dir{-}?(.55)*\dir{>}+(2.3,0)*{};
  (-3,9);(-3,-9) **\dir{-}?(.5)*\dir{<}+(2.3,0)*{};
  (9,3)*{\nub};
  (-5,-6)*{\scs i};     (5.1,-6)*{\scs j};
 \endxy,
\qquad \quad
    \vcenter{\xy 0;/r.18pc/:
    (-4,-4)*{};(4,4)*{} **\crv{(-4,-1) & (4,1)}?(1)*\dir{<};?(0)*\dir{<};
    (4,-4)*{};(-4,4)*{} **\crv{(4,-1) & (-4,1)}?(1)*\dir{>};
    (-4,4)*{};(4,12)*{} **\crv{(-4,7) & (4,9)}?(1)*\dir{>};
    (4,4)*{};(-4,12)*{} **\crv{(4,7) & (-4,9)};
   (9,7)*{\nub};
  (-6,-3)*{\scs i};
     (6,-3)*{\scs j};
 \endxy}
 \;\;=\;\;  
\xy 0;/r.18pc/:
  (3,9);(3,-9) **\dir{-}?(.5)*\dir{<}+(2.3,0)*{};
  (-3,9);(-3,-9) **\dir{-}?(.55)*\dir{>}+(2.3,0)*{};
  (9,3)*{\nub};
  (-5,-6)*{\scs i};     (5.1,-6)*{\scs j};
 \endxy.
\end{equation}

\item \label{item_positivity} Negative degree bubbles are zero. That is, for all $m \in \Z_+$ one has
\begin{equation} \label{eq_positivity_bubbles}
\xy 0;/r.18pc/:
 (-12,0)*{\icbub{m}{i}};
 (-8,8)*{\nub};
 \endxy
  = 0
 \qquad  \text{if $m<\nub_i-1$,} \qquad \xy 0;/r.18pc/: (-12,0)*{\iccbub{m}{i}};
 (-8,8)*{\nub};
 \endxy = 0\quad
  \text{if $m< -\nub_i-1$.}
\end{equation}
A dotted bubble of degree zero is just the scaled identity 2-morphism:

\[
\xy 0;/r.18pc/:
 (0,0)*{\icbub{\nub_i-1}{i}};
  (4,8)*{\nub};
 \endxy
  = c_{i, \nub} \, \Id_{\onenn{\nub}} \quad \text{for $\nub_i \geq 1$,}
  \qquad \quad
  \xy 0;/r.18pc/:
 (0,0)*{\iccbub{-\nub_i-1}{i}};
  (4,8)*{\nub};
 \endxy  = c^{-1}_{i, \nub} \, \Id_{\onenn{\nub}} \quad \text{for $\nub_i \leq -1$.}\]

\item \label{item_highersl2} 
It is convenient to introduce so called fake bubbles. These diagrams are dotted bubbles where the number of dots is negative, but the total degree of the dotted bubble taken with these negative dots is still positive.  
\begin{itemize}
 \item Degree zero fake bubbles are equal to the identity 2-morphisms
\[
 \xy 0;/r.18pc/:
    (2,0)*{\icbub{\nub_i-1}{i}};
  (10, 6)*{\nub};
 \endxy
  =  c_{i, \nub} \, \Id_{\onenn{\nub}} \quad \text{if $\nub_i \leq 0$,}
  \qquad \quad
\xy 0;/r.18pc/:
    (2,0)*{\iccbub{-\nub_i-1}{i}};
   (10, 6)*{\nub};
 \endxy = c^{-1}_{i, \nub}\, \Id_{\onenn{\nub}} \quad  \text{if $\nub_i \geq 0$}.\]

 \item Higher degree fake bubbles for $\nub_i<0$ are defined inductively as
  \begin{equation} \label{eq_fake_nleqz}
  \vcenter{\xy 0;/r.18pc/:
    (2,-11)*{\icbub{\nub_i-1+j}{i}};
  (10,-4)*{\nub};
 \endxy} \;\; =
 \left\{
 \begin{array}{cl}
  \;\; -\;c_{i, \nub} \,
\xsum{\xy (0,6)*{};  (0,1)*{\scs a+b=j}; (0,-2)*{\scs b\geq 1}; 
\endxy}
\;\; \vcenter{\xy 0;/r.18pc/:
    (2,0)*{\cbub{\nub_i-1+a}{}};
    (20,0)*{\ccbub{-\nub_i-1+b}{}};
  (12,8)*{\nub};
 \endxy}  & \text{if $0 \leq j < -\nub_i+1$} \\ & \\
   0 & \text{if $j < 0$. }
 \end{array}
\right.
 \end{equation}

  \item Higher degree fake bubbles for $\nub_i>0$ are defined inductively as
   \begin{equation} \label{eq_fake_ngeqz}
  \vcenter{\xy 0;/r.18pc/:
    (2,-11)*{\iccbub{-\nub_i-1+j}{i}};
  (10,-4)*{\nub};
 \endxy} \;\; =
 \left\{
 \begin{array}{cl}
  \;\; -\;c^{-1}_{i, \nub} \,
\xsum{\xy (0,6)*{}; (0,1)*{\scs a+b=j}; (0,-2)*{\scs a\geq 1}; \endxy}
\;\; \vcenter{\xy 0;/r.18pc/:
    (2,0)*{\cbub{\nub_i-1+a}{}};
    (20,0)*{\ccbub{-\nub_i-1+b}{}};
  (12,8)*{\nub};
 \endxy}  & \text{if $0 \leq j < \nub_i+1$} \\ & \\
   0 & \text{if $j < 0$. }
 \end{array}
\right.
\end{equation}
\end{itemize}
These equations arise from the homogeneous terms in $t$ of the Grassmannian equation
\begin{center}
 \begin{eqnarray}
 \makebox[0pt]{ $
\left( \xy 0;/r.15pc/:
 (0,0)*{\iccbub{-\nub_i-1}{i}};
  (4,8)*{\nub};
 \endxy
 +
 \xy 0;/r.15pc/:
 (0,0)*{\iccbub{-\nub_i-1+1}{i}};
  (4,8)*{\nub};
 \endxy t
 + \cdots +
\xy 0;/r.15pc/:
 (0,0)*{\iccbub{-\nub_i-1+q}{i}};
  (4,8)*{\nub};
 \endxy t^q
 + \cdots
\right)
\left(\xy 0;/r.15pc/:
 (0,0)*{\icbub{\nub_i-1}{i}};
  (4,8)*{\nub};
 \endxy
 + \xy 0;/r.15pc/:
 (0,0)*{\icbub{\nub_i-1+1}{i}};
  (4,8)*{\nub};
 \endxy t
 +\cdots +
\xy 0;/r.15pc/:
 (0,0)*{\icbub{\nub_i-1+p}{i}};
 (4,8)*{\nub};
 \endxy t^{p}
 + \cdots
\right) = \Id_{\onel}.$ } \nn \\ \label{eq_infinite_Grass}
\end{eqnarray}
\end{center}
 
Using the Grassmannian relation, we can express clockwise bubbles in terms of fake and real counterclockwise bubbles.  We will use the following notation in order to emphasize the degree of bubble: 
\[
    \xy 0;/r.18pc/:
  (4,8)*{\nub};
  (2,-2)*{\icbub{\spadesuit+r}{i}};
 \endxy \; := \;
   \xy 0;/r.18pc/:
  (4,8)*{\nub};
  (2,-2)*{\icbub{\nub_i-1+r}{i}};
 \endxy,
 \qquad
 \qquad
    \xy 0;/r.18pc/:
  (4,8)*{\nub};
  (2,-2)*{\iccbub{\spadesuit+r}{i}};
 \endxy \; := \;
   \xy 0;/r.18pc/:
  (4,8)*{\nub};
  (2,-2)*{\iccbub{-\nub_i-1+r}{i}};
 \endxy.
\]

\item The following set of relations are sometimes called extended \(\mathfrak{sl}_2\) relations:
\begin{equation} \label{eq_curl}
  \xy 0;/r.17pc/:
  (14,8)*{\nub};
  (-3,-10)*{};(3,5)*{} **\crv{(-3,-2) & (2,1)}?(1)*\dir{>};?(.15)*\dir{>};
    (3,-5)*{};(-3,10)*{} **\crv{(2,-1) & (-3,2)}?(.85)*\dir{>} ?(.1)*\dir{>};
  (3,5)*{}="t1";  (9,5)*{}="t2";
  (3,-5)*{}="t1'";  (9,-5)*{}="t2'";
   "t1";"t2" **\crv{(4,8) & (9, 8)};
   "t1'";"t2'" **\crv{(4,-8) & (9, -8)};
   "t2'";"t2" **\crv{(10,0)} ;
   (-6,-8)*{\scs i};
 \endxy \;\; = \;\; -
   \sum_{ \xy  (0,3)*{\scs f_1+f_2}; (0,0)*{\scs =-1};\endxy}
 \xy 0;/r.17pc/:
  (19,4)*{\nub};
  (0,0)*{\bbe{}};(-2,-8)*{\scs };
  (-2,-8)*{\scs i};
  (12,-2)*{\icbub{f_2}{i}};
  (0,6)*{\bullet}+(3,-1)*{\scs f_1};
 \endxy,
\qquad
  \xy 0;/r.17pc/:
  (-14,8)*{\nub};
  (3,-10)*{};(-3,5)*{} **\crv{(3,-2) & (-2,1)}?(1)*\dir{>};?(.15)*\dir{>};
    (-3,-5)*{};(3,10)*{} **\crv{(-2,-1) & (3,2)}?(.85)*\dir{>} ?(.1)*\dir{>};
  (-3,5)*{}="t1";  (-9,5)*{}="t2";
  (-3,-5)*{}="t1'";  (-9,-5)*{}="t2'";
   "t1";"t2" **\crv{(-4,8) & (-9, 8)};
   "t1'";"t2'" **\crv{(-4,-8) & (-9, -8)};
   "t2'";"t2" **\crv{(-10,0)} ;
   (6,-8)*{\scs i};
 \endxy \;\; = \;\;
 \sum_{ \xy  (0,3)*{\scs g_1+g_2}; (0,0)*{\scs =-1};\endxy}
  \xy 0;/r.17pc/:
  (5,-8)*{\scs i};
  (-12,8)*{\nub};
  (3,0)*{\bbe{}};(2,-8)*{\scs};
  (-12,-2)*{\iccbub{g_2}{i}};
  (3,6)*{\bullet}+(3,-1)*{\scs g_1};
 \endxy,
\end{equation}

\begin{equation}
\vcenter{   \xy 0;/r.17pc/:
    (-4,-4)*{};(4,4)*{} **\crv{(-4,-1) & (4,1)}?(1)*\dir{>};
    (4,-4)*{};(-4,4)*{} **\crv{(4,-1) & (-4,1)}?(1)*\dir{<};?(0)*\dir{<};
    (-4,4)*{};(4,12)*{} **\crv{(-4,7) & (4,9)};
    (4,4)*{};(-4,12)*{} **\crv{(4,7) & (-4,9)}?(1)*\dir{>};
  (8,8)*{\nub};
     (-6,-3)*{\scs i};
     (6.5,-3)*{\scs i};
 \endxy}
\;\; = \;\; -\;
  \vcenter{\xy 0;/r.17pc/:
  (-8,0)*{};
  (8,0)*{};
  (-4,10)*{}="t1";
  (4,10)*{}="t2";
  (-4,-10)*{}="b1";
  (4,-10)*{}="b2";(-6,-8)*{\scs i};(6,-8)*{\scs i};
  "t1";"b1" **\dir{-} ?(.5)*\dir{<};
  "t2";"b2" **\dir{-} ?(.5)*\dir{>};
  (10,2)*{\nub};
  \endxy}
  \;\; + \;\;
   \sum_{ \xy  (0,3)*{\scs f_1+f_2+f_3}; (0,0)*{\scs =-2};\endxy}
    \vcenter{\xy 0;/r.17pc/:
    (-12,10)*{\nub};
    (-8,0)*{};
  (8,0)*{};
  (-4,-15)*{}="b1";
  (4,-15)*{}="b2";
  "b2";"b1" **\crv{(5,-8) & (-5,-8)}; ?(.05)*\dir{<} ?(.93)*\dir{<}
  ?(.8)*\dir{}+(0,-.1)*{\bullet}+(-3,2)*{\scs f_3};
  (-4,15)*{}="t1";
  (4,15)*{}="t2";
  "t2";"t1" **\crv{(5,8) & (-5,8)}; ?(.15)*\dir{>} ?(.95)*\dir{>}
  ?(.4)*\dir{}+(0,-.2)*{\bullet}+(3,-2)*{\scs \; f_1};
  (0,0)*{\iccbub{\scs \quad f_2}{i}};
  (7,-13)*{\scs i};
  (-7,13)*{\scs i};
  \endxy}, \label{eq_ident_decomp-ngeqz}
\end{equation}

\begin{equation}
\vcenter{\xy 0;/r.17pc/:
    (-4,-4)*{};(4,4)*{} **\crv{(-4,-1) & (4,1)}?(1)*\dir{<};?(0)*\dir{<};
    (4,-4)*{};(-4,4)*{} **\crv{(4,-1) & (-4,1)}?(1)*\dir{>};
    (-4,4)*{};(4,12)*{} **\crv{(-4,7) & (4,9)}?(1)*\dir{>};
    (4,4)*{};(-4,12)*{} **\crv{(4,7) & (-4,9)};
  (8,8)*{\nub};(-6.5,-3)*{\scs i};  (6,-3)*{\scs i};
 \endxy}
\;\; = \;\;
  -\;\;
  \vcenter{\xy 0;/r.17pc/:
  (-8,0)*{};(-6,-8)*{\scs i};(6,-8)*{\scs i};
  (8,0)*{};
  (-4,10)*{}="t1";
  (4,10)*{}="t2";
  (-4,-10)*{}="b1";
  (4,-10)*{}="b2";
  "t1";"b1" **\dir{-} ?(.5)*\dir{>};
  "t2";"b2" **\dir{-} ?(.5)*\dir{<};
  (10,2)*{\nub};
  \endxy}
  \;\; + \;\;
    \sum_{ \xy  (0,3)*{\scs g_1+g_2+g_3}; (0,0)*{\scs =-2};\endxy}
    \vcenter{\xy 0;/r.17pc/:
    (-8,0)*{};
  (8,0)*{};
  (-4,-15)*{}="b1";
  (4,-15)*{}="b2";
  "b2";"b1" **\crv{(5,-8) & (-5,-8)}; ?(.1)*\dir{>} ?(.95)*\dir{>}
  ?(.8)*\dir{}+(0,-.1)*{\bullet}+(-3,2)*{\scs g_3};
  (-4,15)*{}="t1";
  (4,15)*{}="t2";
  "t2";"t1" **\crv{(5,8) & (-5,8)}; ?(.15)*\dir{<} ?(.9)*\dir{<}
  ?(.4)*\dir{}+(0,-.2)*{\bullet}+(3,-2)*{\scs g_1};
  (0,0)*{\icbub{\scs \quad\;  g_2}{i}};
    (7,-13)*{\scs i};
  (-7,13)*{\scs i};
  (-10,10)*{\nub};
  \endxy}. \label{eq_ident_decomp-nleqz}
\end{equation}
\end{enumerate}
We prove the following relation which we will need for our main result.
\begin{Proposition} \label{dotslide}
Let  \(k, l \geq 0\) be integers such that \(k+l >0\). We have
\begin{equation}
 \vcenter{\xy 0;/r.18pc/:
    (-4,-4)*{};(4,4)*{} **\crv{(-4,-1) & (4,1)}?(1)*\dir{};
    (4,-4)*{};(-4,4)*{} **\crv{(4,-1) & (-4,1)}?(1)*\dir{};
    (-4,4)*{};(4,12)*{} **\crv{(-4,7) & (4,9)}?(1)*\dir{};
    (4,4)*{};(-4,12)*{} **\crv{(4,7) & (-4,9)}?(1)*\dir{};
    (8,9)*{\nub};
    (4,12); (4,13) **\dir{-}?(1)*\dir{>};
    (-4,12); (-4,13) **\dir{-}?(1)*\dir{>};
  (-5.5,-3)*{\scs i};
     (5.5,-3)*{\scs i};
       (7,4)*{\scs l};
      (-7,4)*{\scs k};
      (4,4)*{\bullet};
      (-4,4)*{\bullet};
 \endxy}
 \;\;= 
 \;\; \sum_{s=0}^{k-1}
  \; \vcenter{\xy 0;/r.18pc/:
  (0,0)*{\xybox{
    (-4,-4)*{};(4,6)*{} **\crv{(-4,-1) & (4,1)}?(1)*\dir{>}?;
    (4,-4)*{};(-4,6)*{} **\crv{(4,-1) & (-4,1)}?(1)*\dir{>};
    (-5,-3)*{\scs i};
     (5.1,-3)*{\scs i};
     (9,0)*{\nub};
     (-10,0)*{};(10,0)*{};
      (-3.2,3)*{\bullet};
       (3.2,3)*{\bullet};
       (-13,3)*{\scs k+l-1-s};
        (6,3)*{\scs s};
     }};
  \endxy}\;\; -\;\; \sum_{s=0}^{l-1}
  \; \vcenter{\xy 0;/r.18pc/:
  (0,0)*{\xybox{
    (-4,-4)*{};(4,6)*{} **\crv{(-4,-1) & (4,1)}?(1)*\dir{>}?;
    (4,-4)*{};(-4,6)*{} **\crv{(4,-1) & (-4,1)}?(1)*\dir{>};
    (-5,-3)*{\scs i};
     (5.1,-3)*{\scs i};
     (9,0)*{\nub};
     (-10,0)*{};(10,0)*{};
      (-3.2,3)*{\bullet};
       (3.2,3)*{\bullet};
       (-13,3)*{\scs k+l-1-s};
        (6,3)*{\scs s};
     }};
  \endxy}\; \; =  
  \end{equation}
  \[ = 
   \;\; \sum_{s=0}^{k-1}
  \; \vcenter{\xy 0;/r.18pc/:
  (0,0)*{\xybox{
    (-4,-4)*{};(4,6)*{} **\crv{(-4,-1) & (4,1)}?(1)*\dir{>}?;
    (4,-4)*{};(-4,6)*{} **\crv{(4,-1) & (-4,1)}?(1)*\dir{>};
    (-5,3)*{\scs i};
     (5.1,3)*{\scs i};
     (9,3)*{\nub};
     (-10,0)*{};(10,0)*{};
      (-3.0,-2.5)*{\bullet};
       (3.2,-2.5)*{\bullet};
       (-13,-2.5)*{\scs k+l-1-s};
        (6,-2.5)*{\scs s};
     }};
  \endxy}\;\;- \;\;
  \sum_{s=0}^{l-1}
  \; \vcenter{\xy 0;/r.18pc/:
  (0,0)*{\xybox{
    (-4,-4)*{};(4,6)*{} **\crv{(-4,-1) & (4,1)}?(1)*\dir{>}?;
    (4,-4)*{};(-4,6)*{} **\crv{(4,-1) & (-4,1)}?(1)*\dir{>};
    (-5,3)*{\scs i};
     (5.1,3)*{\scs i};
     (9,3)*{\nub};
     (-10,0)*{};(10,0)*{};
      (-3.0,-2.5)*{\bullet};
       (3.2,-2.5)*{\bullet};
       (-13,-2.5)*{\scs k+l-1-s};
        (6,-2.5)*{\scs s};
     }};
  \endxy}\, . 
  \]
  \end{Proposition}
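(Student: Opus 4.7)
The plan is to view the left-hand side as a product inside the nilHecke-style endomorphism algebra of $\mathcal{E}_i\mathcal{E}_i\onel$. Let $\psi$ denote the single upward crossing with both strands labelled $i$, and let $x_1, x_2$ denote the dot 2-morphisms on the left and right strand respectively. Reading diagrams bottom-to-top as operator composition, the dot-sliding relation \eqref{eq_dot_slide_ii-gen-cyc} specialises to the two nilHecke identities
\[ \psi \cdot x_1 - x_2 \cdot \psi = \mathrm{id} = x_1 \cdot \psi - \psi \cdot x_2, \]
while the quadratic relation \eqref{eq_r2_ij-gen} with $j = i$ (so $a_{ii} = 2$) gives $\psi^2 = 0$ on $\mathcal{E}_i\mathcal{E}_i\onel$.

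An easy induction on degree upgrades these two relations to divided-difference formulas valid for every $f \in \mathbb{C}[x_1, x_2]$:
\[ \psi \cdot f(x_1, x_2) = f(x_2, x_1) \cdot \psi + \partial f, \qquad f(x_1, x_2) \cdot \psi = \psi \cdot f(x_2, x_1) + \partial f, \]
where $\partial f := (f - sf)/(x_1 - x_2)$ is the Demazure divided-difference operator and $s$ swaps $x_1$ and $x_2$.

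The left-hand side of the proposition is the product $\psi \cdot (x_1^k x_2^l) \cdot \psi$, with the middle factor recording the $k$ dots on the left and $l$ dots on the right in the region between the two crossings. Applying the first identity with $f = x_1^k x_2^l$ absorbs one crossing into the polynomial:
\[ \psi \cdot (x_1^k x_2^l) \cdot \psi = (x_1^l x_2^k)\,\psi^2 + \partial(x_1^k x_2^l) \cdot \psi = \partial(x_1^k x_2^l) \cdot \psi, \]
using $\psi^2 = 0$. Since in $\partial(x_1^k x_2^l)\cdot\psi$ the polynomial of dots sits above the remaining crossing, this yields the middle expression in the claim. Applying the second identity symmetrically to $(x_1^k x_2^l)\cdot \psi$ produces $\psi \cdot \partial(x_1^k x_2^l)$, with the dots now sitting below the crossing, giving the third expression.

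Finally, a direct geometric-series computation, splitting into the cases $k \geq l$ and $k \leq l$, gives
\[ \partial(x_1^k x_2^l) = \frac{x_1^k x_2^l - x_1^l x_2^k}{x_1 - x_2} = \sum_{s=0}^{k-1} x_1^{k+l-1-s} x_2^s - \sum_{s=0}^{l-1} x_1^{k+l-1-s} x_2^s, \]
and the monomial $x_1^{k+l-1-s} x_2^s$ translates to a single crossing carrying $k+l-1-s$ dots on the left strand and $s$ dots on the right strand, which matches the sums in the proposition exactly. The only real source of friction is the bookkeeping of which polynomial monomial corresponds to which diagrammatic dot placement, in particular the identification of left/right top endpoints with the NW/NE strands of the single crossing in the right-hand side.
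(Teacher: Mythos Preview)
Your proof is correct and follows essentially the same approach as the paper: both use the nilHecke dot-sliding relations together with $\psi^2=0$ to reduce the double crossing with dots in the middle to a single crossing with a divided-difference of dots. The only difference is packaging---where the paper slides the dots through the upper crossing one at a time in an explicit induction, you invoke the closed-form identity $\psi f = (sf)\psi + \partial f$ (and its mirror) once, which is just the same induction recorded as a single formula.
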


\begin{proof}
We apply the following two nilHecke relations for the proof of the statement:
\[
\xy 0;/r.18pc/:
  (0,0)*{\xybox{
    (-4,-4)*{};(4,6)*{} **\crv{(-4,-1) & (4,1)}?(1)*\dir{>}?(.25)*{\bullet};
    (4,-4)*{};(-4,6)*{} **\crv{(4,-1) & (-4,1)}?(1)*\dir{>};
    (-5,-3)*{\scs i};
     (5.1,-3)*{\scs i};
   (9,3)*{\nub};
     (-10,0)*{};(10,0)*{};
     }};
  \endxy
 \; -
\xy 0;/r.18pc/:
  (0,0)*{\xybox{
    (-4,-4)*{};(4,6)*{} **\crv{(-4,-1) & (4,1)}?(1)*\dir{>}?(.75)*{\bullet};
    (4,-4)*{};(-4,6)*{} **\crv{(4,-1) & (-4,1)}?(1)*\dir{>};
    (-5,-3)*{\scs i};
     (5.1,-3)*{\scs i};
     (9,3)*{\nub};
     (-10,0)*{};(10,0)*{};
     }};
  \endxy
\; =  \xy 0;/r.18pc/:
  (0,0)*{\xybox{
    (-4,-4)*{};(4,6)*{} **\crv{(-4,-1) & (4,1)}?(1)*\dir{>};
    (4,-4)*{};(-4,6)*{} **\crv{(4,-1) & (-4,1)}?(1)*\dir{>}?(.75)*{\bullet};
    (-5,-3)*{\scs i};
     (5.1,-3)*{\scs i};
     (9,3)*{\nub};
     (-10,0)*{};(10,0)*{};
     }};
  \endxy
\; -
  \xy 0;/r.18pc/:
  (0,0)*{\xybox{
    (-4,-4)*{};(4,6)*{} **\crv{(-4,-1) & (4,1)}?(1)*\dir{>} ;
    (4,-4)*{};(-4,6)*{} **\crv{(4,-1) & (-4,1)}?(1)*\dir{>}?(.25)*{\bullet};
    (-5,-3)*{\scs i};
     (5.1,-3)*{\scs i};
     (9,3)*{\nub};
     (-10,0)*{};(12,0)*{};
     }};
  \endxy
  \; = \;
    \xy 0;/r.18pc/:
  (0,0)*{\xybox{
    (-4,-4)*{};(-4,6)*{} **\dir{-}?(1)*\dir{>} ;
    (4,-4)*{};(4,6)*{} **\dir{-}?(1)*\dir{>};
    (-5,-3)*{\scs i};
      (9,3)*{\nub};
     (5.1,-3)*{\scs i};
          (-10,0)*{};(12,0)*{};
     }};
  \endxy, \quad \quad \quad 
   \vcenter{ \xy 0;/r.14pc/:
    (-4,-4)*{};(4,4)*{} **\crv{(-4,-1) & (4,1)}?(1)*\dir{};
    (4,-4)*{};(-4,4)*{} **\crv{(4,-1) & (-4,1)}?(1)*\dir{};
    (-4,4)*{};(4,12)*{} **\crv{(-4,7) & (4,9)}?(1)*\dir{};
    (4,4)*{};(-4,12)*{} **\crv{(4,7) & (-4,9)}?(1)*\dir{};
    (8,9)*{\nub};
    (4,12); (4,13) **\dir{-}?(1)*\dir{>};
    (-4,12); (-4,13) **\dir{-}?(1)*\dir{>};
  (-5.5,-3)*{\scs i};
     (5.5,-3)*{\scs i};
 \endxy} \; = 0 .
\]

First assume \(l=0\). Then by sliding the \(k\) dots upwards through the crossing, we get
\[
 \vcenter{\xy 0;/r.17pc/:
    (-4,-4)*{};(4,4)*{} **\crv{(-4,-1) & (4,1)}?(1)*\dir{};
    (4,-4)*{};(-4,4)*{} **\crv{(4,-1) & (-4,1)}?(1)*\dir{};
    (-4,4)*{};(4,12)*{} **\crv{(-4,7) & (4,9)}?(1)*\dir{};
    (4,4)*{};(-4,12)*{} **\crv{(4,7) & (-4,9)}?(1)*\dir{};
    (8,9)*{\nub};
    (4,12); (4,13) **\dir{-}?(1)*\dir{>};
    (-4,12); (-4,13) **\dir{-}?(1)*\dir{>};
  (-5.5,-3)*{\scs i};
     (5.5,-3)*{\scs i};
      (-7,4)*{\scs k};
      (-4,4)*{\bullet};
 \endxy}
 \, =
 \; \vcenter{\xy 0;/r.17pc/:
    (-4,-4)*{};(4,4)*{} **\crv{(-4,-1) & (4,1)}?(1)*\dir{};
    (4,-4)*{};(-4,4)*{} **\crv{(4,-1) & (-4,1)}?(1)*\dir{};
    (-4,4)*{};(4,12)*{} **\crv{(-4,7) & (4,9)}?(1)*\dir{};
    (4,4)*{};(-4,12)*{} **\crv{(4,7) & (-4,9)}?(1)*\dir{};
    (9,9)*{\nub};
    (4,12); (4,13) **\dir{-}?(1)*\dir{>};
    (-4,12); (-4,13) **\dir{-}?(1)*\dir{>};
  (-5.5,-3)*{\scs i};
     (5.5,-3)*{\scs i};
      (-9,4)*{\scs k-1};
      (-4,4)*{\bullet};
      (3,10)*{\bullet};
 \endxy}
 \;\; + \; \;
\vcenter{\xy 0;/r.17pc/:
  (0,0)*{\xybox{
    (-4,-4)*{};(4,6)*{} **\crv{(-4,-1) & (4,1)}?(1)*\dir{>}?;
    (4,-4)*{};(-4,6)*{} **\crv{(4,-1) & (-4,1)}?(1)*\dir{>};
    (-5,-3)*{\scs i};
     (5.1,-3)*{\scs i};
     (9,3)*{\nub};
     (-10,0)*{};(10,0)*{};
       (-3.2,3)*{\bullet};
        (-8,3)*{\scs k-1};
     }};
  \endxy}
  \;\; =  \; \vcenter{\xy 0;/r.17pc/:
    (-4,-4)*{};(4,4)*{} **\crv{(-4,-1) & (4,1)}?(1)*\dir{};
    (4,-4)*{};(-4,4)*{} **\crv{(4,-1) & (-4,1)}?(1)*\dir{};
    (-4,4)*{};(4,12)*{} **\crv{(-4,7) & (4,9)}?(1)*\dir{};
    (4,4)*{};(-4,12)*{} **\crv{(4,7) & (-4,9)}?(1)*\dir{};
    (9,9)*{\nub};
    (4,12); (4,13) **\dir{-}?(1)*\dir{>};
    (-4,12); (-4,13) **\dir{-}?(1)*\dir{>};
  (-5.5,-3)*{\scs i};
     (5.5,-3)*{\scs i};
      (-9,4)*{\scs k-2};
      (-4,4)*{\bullet};
      (3,10)*{\bullet};
      (5.5,10)*{\scs 2};
 \endxy}
 \;\; + \; \;
\vcenter{\xy 0;/r.17pc/:
  (0,0)*{\xybox{
    (-4,-4)*{};(4,6)*{} **\crv{(-4,-1) & (4,1)}?(1)*\dir{>}?;
    (4,-4)*{};(-4,6)*{} **\crv{(4,-1) & (-4,1)}?(1)*\dir{>};
    (-5,-3)*{\scs i};
     (5.1,-3)*{\scs i};
     (9,3)*{\nub};
     (-10,0)*{};(10,0)*{};
       (-3.2,3)*{\bullet};
        (-8,3)*{\scs k-1};
     }};
  \endxy}
  + \; \;
\vcenter{\xy 0;/r.17pc/:
  (0,0)*{\xybox{
    (-4,-4)*{};(4,6)*{} **\crv{(-4,-1) & (4,1)}?(1)*\dir{>}?;
    (4,-4)*{};(-4,6)*{} **\crv{(4,-1) & (-4,1)}?(1)*\dir{>};
    (-5,-3)*{\scs i};
     (5.1,-3)*{\scs i};
     (9,3)*{\nub};
     (-10,0)*{};(10,0)*{};
       (-3.2,3)*{\bullet};
       (3.2,3)*{\bullet};
        (-8,3)*{\scs k-2};
     }};
  \endxy}
  \;\; = \, \dots 
  \]
  \begin{equation} \label{eqql}
  = \; \vcenter{\xy 0;/r.17pc/:
    (-4,-4)*{};(4,4)*{} **\crv{(-4,-1) & (4,1)}?(1)*\dir{};
    (4,-4)*{};(-4,4)*{} **\crv{(4,-1) & (-4,1)}?(1)*\dir{};
    (-4,4)*{};(4,12)*{} **\crv{(-4,7) & (4,9)}?(1)*\dir{};
    (4,4)*{};(-4,12)*{} **\crv{(4,7) & (-4,9)}?(1)*\dir{};
    (9,9)*{\nub};
    (4,12); (4,13) **\dir{-}?(1)*\dir{>};
    (-4,12); (-4,13) **\dir{-}?(1)*\dir{>};
  (-5.5,-3)*{\scs i};
     (5.5,-3)*{\scs i};
      (3,10)*{\bullet};
      (6,10)*{\scs k};
 \endxy}
 \;\; + \, \sum_{s=0}^{k-1}\; 
\vcenter{\xy 0;/r.17pc/:
  (0,0)*{\xybox{
    (-4,-4)*{};(4,6)*{} **\crv{(-4,-1) & (4,1)}?(1)*\dir{>}?;
    (4,-4)*{};(-4,6)*{} **\crv{(4,-1) & (-4,1)}?(1)*\dir{>};
    (-5,-3)*{\scs i};
     (5.1,-3)*{\scs i};
     (9,3)*{\nub};
     (-10,0)*{};(10,0)*{};
       (-3.2,3)*{\bullet};
       (3.2,3)*{\bullet};
        (-11,3)*{\scs k-1-s};
        (6,3)*{\scs s};
     }};
  \endxy}
  \;\;  = \; \sum_{s=0}^{k-1}\; 
\vcenter{\xy 0;/r.20pc/:
  (0,0)*{\xybox{
    (-4,-4)*{};(4,6)*{} **\crv{(-4,-1) & (4,1)}?(1)*\dir{>}?;
    (4,-4)*{};(-4,6)*{} **\crv{(4,-1) & (-4,1)}?(1)*\dir{>};
    (-5,-3)*{\scs i};
     (5.1,-3)*{\scs i};
     (9,3)*{\nub};
     (-10,0)*{};(10,0)*{};
       (-3.2,3)*{\bullet};
       (3.2,3)*{\bullet};
        (-11,3)*{\scs k-1-s};
        (6,3)*{\scs s};
     }};
  \endxy}
  \;.
  \end{equation}
   We now let \(l \geq 0\) and slide the \(l\) dots upwards:
   \[\vcenter{\xy 0;/r.17pc/:
    (-4,-4)*{};(4,4)*{} **\crv{(-4,-1) & (4,1)}?(1)*\dir{};
    (4,-4)*{};(-4,4)*{} **\crv{(4,-1) & (-4,1)}?(1)*\dir{};
    (-4,4)*{};(4,12)*{} **\crv{(-4,7) & (4,9)}?(1)*\dir{};
    (4,4)*{};(-4,12)*{} **\crv{(4,7) & (-4,9)}?(1)*\dir{};
    (8,9)*{\nub};
    (4,12); (4,13) **\dir{-}?(1)*\dir{>};
    (-4,12); (-4,13) **\dir{-}?(1)*\dir{>};
  (-5.5,-3)*{\scs i};
     (5.5,-3)*{\scs i};
       (7,4)*{\scs l};
      (-7,4)*{\scs k};
     (4,4)*{\bullet};
      (-4,4)*{\bullet};
 \endxy}
 \; \; = \; \;
 \vcenter{\xy 0;/r.17pc/:
    (-4,-4)*{};(4,4)*{} **\crv{(-4,-1) & (4,1)}?(1)*\dir{};
    (4,-4)*{};(-4,4)*{} **\crv{(4,-1) & (-4,1)}?(1)*\dir{};
    (-4,4)*{};(4,12)*{} **\crv{(-4,7) & (4,9)}?(1)*\dir{};
    (4,4)*{};(-4,12)*{} **\crv{(4,7) & (-4,9)}?(1)*\dir{};
    (8,9)*{\nub};
    (4,12); (4,13) **\dir{-}?(1)*\dir{>};
    (-4,12); (-4,13) **\dir{-}?(1)*\dir{>};
  (-5.5,-3)*{\scs i};
     (5.5,-3)*{\scs i};
       (9,4)*{\scs l-1};
      (-7,4)*{\scs k};
     (4,4)*{\bullet};
      (-4,4)*{\bullet};
      (-3,10)*{\bullet};
 \endxy}
 \; \; - \; \vcenter{\xy 0;/r.18pc/:
  (0,0)*{\xybox{
    (-4,-4)*{};(4,6)*{} **\crv{(-4,-1) & (4,1)}?(1)*\dir{>}?;
    (4,-4)*{};(-4,6)*{} **\crv{(4,-1) & (-4,1)}?(1)*\dir{>};
    (-5,-3)*{\scs i};
     (5.1,-3)*{\scs i};
     (9,0)*{\nub};
     (-10,0)*{};(10,0)*{};
      (-3.2,3)*{\bullet};
       (3.2,3)*{\bullet};
       (-6,3)*{\scs k};
        (8,3)*{\scs l-1};
     }};
  \endxy}\; \; =\;\;
 \vcenter{\xy 0;/r.17pc/:
    (-4,-4)*{};(4,4)*{} **\crv{(-4,-1) & (4,1)}?(1)*\dir{};
    (4,-4)*{};(-4,4)*{} **\crv{(4,-1) & (-4,1)}?(1)*\dir{};
    (-4,4)*{};(4,12)*{} **\crv{(-4,7) & (4,9)}?(1)*\dir{};
    (4,4)*{};(-4,12)*{} **\crv{(4,7) & (-4,9)}?(1)*\dir{};
    (8,9)*{\nub};
    (4,12); (4,13) **\dir{-}?(1)*\dir{>};
    (-4,12); (-4,13) **\dir{-}?(1)*\dir{>};
  (-5.5,-3)*{\scs i};
     (5.5,-3)*{\scs i};
       (9,4)*{\scs l-2};
      (-7,4)*{\scs k};
     (4,4)*{\bullet};
      (-4,4)*{\bullet};
      (-3,10)*{\bullet};
      (-5.7,10)*{\scs 2};
 \endxy}
 \; \; -\; \vcenter{\xy 0;/r.18pc/:
  (0,0)*{\xybox{
    (-4,-4)*{};(4,6)*{} **\crv{(-4,-1) & (4,1)}?(1)*\dir{>}?;
    (4,-4)*{};(-4,6)*{} **\crv{(4,-1) & (-4,1)}?(1)*\dir{>};
    (-5,-3)*{\scs i};
     (5.1,-3)*{\scs i};
     (9,0)*{\nub};
     (-10,0)*{};(10,0)*{};
      (-3.2,3)*{\bullet};
       (3.2,3)*{\bullet};
       (-6,3)*{\scs k};
        (8,3)*{\scs l-1};
     }};
  \endxy} \;\; -
  \; \vcenter{\xy 0;/r.18pc/:
  (0,0)*{\xybox{
    (-4,-4)*{};(4,6)*{} **\crv{(-4,-1) & (4,1)}?(1)*\dir{>}?;
    (4,-4)*{};(-4,6)*{} **\crv{(4,-1) & (-4,1)}?(1)*\dir{>};
    (-5,-3)*{\scs i};
     (5.1,-3)*{\scs i};
     (9,0)*{\nub};
     (-10,0)*{};(10,0)*{};
      (-3.2,3)*{\bullet};
       (3.2,3)*{\bullet};
       (-8,3)*{\scs k+1};
        (8,3)*{\scs l-2};
     }};
  \endxy}\; = \dots 
 \]
 
 \[=\;\;  \vcenter{\xy 0;/r.17pc/:
    (-4,-4)*{};(4,4)*{} **\crv{(-4,-1) & (4,1)}?(1)*\dir{};
    (4,-4)*{};(-4,4)*{} **\crv{(4,-1) & (-4,1)}?(1)*\dir{};
    (-4,4)*{};(4,12)*{} **\crv{(-4,7) & (4,9)}?(1)*\dir{};
    (4,4)*{};(-4,12)*{} **\crv{(4,7) & (-4,9)}?(1)*\dir{};
    (8,9)*{\nub};
    (4,12); (4,13) **\dir{-}?(1)*\dir{>};
    (-4,12); (-4,13) **\dir{-}?(1)*\dir{>};
  (-5.5,-3)*{\scs i};
     (5.5,-3)*{\scs i};
      (-7,4)*{\scs k};
      (-4,4)*{\bullet};
      (-3,10)*{\bullet};
      (-5.7,10)*{\scs l};
 \endxy}
 \; \;  -\;\; \sum_{s=0}^{l-1}
  \; \vcenter{\xy 0;/r.18pc/:
  (0,0)*{\xybox{
    (-4,-4)*{};(4,6)*{} **\crv{(-4,-1) & (4,1)}?(1)*\dir{>}?;
    (4,-4)*{};(-4,6)*{} **\crv{(4,-1) & (-4,1)}?(1)*\dir{>};
    (-5,-3)*{\scs i};
     (5.1,-3)*{\scs i};
     (9,0)*{\nub};
     (-10,0)*{};(10,0)*{};
      (-3.2,3)*{\bullet};
       (3.2,3)*{\bullet};
       (-13,3)*{\scs k+l-1-s};
        (6,3)*{\scs s};
     }};
  \endxy}\; \stackrel{\eqref{eqql}}{=} \;\;
 \sum_{s=0}^{k-1}
  \; \vcenter{\xy 0;/r.18pc/:
  (0,0)*{\xybox{
    (-4,-4)*{};(4,6)*{} **\crv{(-4,-1) & (4,1)}?(1)*\dir{>}?;
    (4,-4)*{};(-4,6)*{} **\crv{(4,-1) & (-4,1)}?(1)*\dir{>};
    (-5,-3)*{\scs i};
     (5.1,-3)*{\scs i};
     (9,0)*{\nub};
     (-10,0)*{};(10,0)*{};
      (-3.2,3)*{\bullet};
       (3.2,3)*{\bullet};
       (-13,3)*{\scs k+l-1-s};
        (6,3)*{\scs s};
     }};
  \endxy}\;\; -
  \;\; \sum_{s=0}^{l-1}
  \; \vcenter{\xy 0;/r.18pc/:
  (0,0)*{\xybox{
    (-4,-4)*{};(4,6)*{} **\crv{(-4,-1) & (4,1)}?(1)*\dir{>}?;
    (4,-4)*{};(-4,6)*{} **\crv{(4,-1) & (-4,1)}?(1)*\dir{>};
    (-5,-3)*{\scs i};
     (5.1,-3)*{\scs i};
     (9,0)*{\nub};
     (-10,0)*{};(10,0)*{};
      (-3.2,3)*{\bullet};
       (3.2,3)*{\bullet};
       (-13,3)*{\scs k+l-1-s};
        (6,3)*{\scs s};
     }};
  \endxy}\;.
 \]
The proof of the second equality is entirely similar, we slide the dots downwards through the crossing.
\end{proof}

Let \(\dot{\U}_Q(\mathfrak{sl}_n)\) be the Karoubi envelope of \(\,\U_Q(\mathfrak{sl}_n)\), the smallest 2-category which contains \(\U_Q(\mathfrak{sl}_n)\) and has splitting idempotents. 
The diagrammatic description of \(\dot{\U}_Q(\mathfrak{sl}_2)\) is given in \cite{exten}, but has not been worked out yet in full generality for any \(n\). 
The split Grothendieck group  \(K_0(\dot{\U}_Q(\mathfrak{sl}_n))\) is the additive category with objects \(\nub \in X\), and the abelian group of morphisms \(\nub \to \mub \), \(\mub \in X\)  is the split Grothendieck group \(K_0 \left(\dot{\U}_Q(\mathfrak{sl}_n) (\nub, \mub) \right) \) of the additive category \(\dot{\U}_Q(\mathfrak{sl}_n)(\nub, \mub) \). We can view \(K_0(\dot{\U}_Q(\mathfrak{sl}_n))\) as a \(\cal{A}=\Z[q, q^{-1}]\) module, since the degree shift functor on 
\(\dot{\U}_Q(\mathfrak{sl}_n)\) induces a multiplication by \(q\) in \(K_0(\dot{\U}_Q(\mathfrak{sl}_n))\).  The following theorem is the categorification  of quantum \(\mathfrak{sl}_n\) by Khovanov and Lauda. 
\begin{Theorem} \cite{lauda} \label{klau}
There is \(\cal{A}\)-module isomorphism 
\begin{equation}
\gamma : \UA_q(\mathfrak{sl}_n)  \to K_0(\dot{\U}_Q(\mathfrak{sl}_n)),
\end{equation}
such that \(\gamma(1_{\nub}) = [\onel ]_{\cong} \), \( \gamma(E_i 1_{\nub})  =[ \mathcal{E}_i \onel]_{\cong}  \), and \( \gamma(F_i 1_{\nub})  =[ \mathcal{F}_i \onel]_{\cong}  \).
\end{Theorem}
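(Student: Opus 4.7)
The strategy is to define $\gamma$ on the generators of $\UA_q(\mathfrak{sl}_n)$, verify it extends to a well-defined $\mathcal{A}$-module map, and then establish surjectivity and injectivity separately. Set $\gamma(1_{\nub}) = [\onel]_\cong$, $\gamma(E_i^{(a)} 1_{\nub}) = [\mathcal{E}_i^{(a)} \onel]_\cong$ and $\gamma(F_i^{(a)} 1_{\nub}) = [\mathcal{F}_i^{(a)} \onel]_\cong$, where $\mathcal{E}_i^{(a)}$ and $\mathcal{F}_i^{(a)}$ are the divided power 1-morphisms realized as the images of the primitive idempotent $e_a$ of the nilHecke algebra $NH_a$ acting on $\mathcal{E}_i^a$ (respectively $\mathcal{F}_i^a$); these idempotents exist in $\dot{\U}_Q(\mathfrak{sl}_n)$ by construction of the Karoubi envelope. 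The first step is to verify that $\gamma$ is well-defined: each defining relation of $\UA_q(\mathfrak{sl}_n)$ must lift to an isomorphism of 1-morphisms in $\dot{\U}_Q(\mathfrak{sl}_n)$. The Chevalley relation $E_iF_j - F_jE_i = \delta_{ij}\frac{K_i - K_i^{-1}}{q-q^{-1}}$ corresponds to direct sum decompositions of $\mathcal{E}_i\mathcal{F}_j\onel$ and $\mathcal{F}_j\mathcal{E}_i\onel$ built out of the cyclic biadjunction (relations \eqref{eq_biadjoint1}--\eqref{eq_biadjoint2}), the mixed relations \eqref{mixed_rel}, and the extended $\mathfrak{sl}_2$ relations \eqref{eq_ident_decomp-ngeqz}--\eqref{eq_ident_decomp-nleqz}; these collectively yield $\mathcal{F}_i\mathcal{E}_i\onel \oplus \onel^{\oplus[\nub_i]} \cong \mathcal{E}_i\mathcal{F}_i\onel$ if $\nub_i \geq 0$ (and the opposite decomposition if $\nub_i\leq 0$). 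The quantum Serre relations lift from the R3-type relation \eqref{eq_r3_hard-gen} combined with \eqref{eq_r2_ij-gen}, producing the required isomorphism $\mathcal{E}_i\mathcal{E}_i\mathcal{E}_j \onel \oplus \mathcal{E}_j\mathcal{E}_i\mathcal{E}_i\onel \cong \mathcal{E}_i\mathcal{E}_j\mathcal{E}_i\onel \oplus \mathcal{E}_i\mathcal{E}_j\mathcal{E}_i\onel$ when $a_{ij}=-1$.

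Next I would prove surjectivity. The basic observation is that the nilHecke algebra $NH_a$ contains an idempotent $e_a$ whose image in $K_0$ satisfies $[\mathcal{E}_i^a \onel]_\cong = [a]!\,[\mathcal{E}_i^{(a)}\onel]_\cong$, because $NH_a \cong \mathrm{Mat}_{a!}(\mathrm{Sym}_a)$ and the graded multiplicity records the quantum factorial. Consequently monomials in divided powers of the $\mathcal{E}_i\onel$'s and $\mathcal{F}_i\onel$'s, together with the $[\onel]_\cong$, generate $K_0(\dot{\U}_Q(\mathfrak{sl}_n))$ as an $\mathcal{A}$-module, showing that $\gamma$ hits every class in the image of $K_0$ of the appropriate additive closure. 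Combined with the fact that by construction every indecomposable object of the Karoubi envelope of a fully additive closure of $\mathcal{U}_Q(\mathfrak{sl}_n)$ appears as a summand of some such divided power monomial, this gives surjectivity of $\gamma$.

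The main obstacle is injectivity, which is the essential content of the nondegeneracy theorem of Khovanov--Lauda. The plan is to establish injectivity via an explicit faithful 2-representation, for which the natural choice is the bimodule 2-representation $\Theta_N \colon \mathcal{U}_Q(\mathfrak{sl}_n)\to \mathbf{Pol}_N$ (equivalently $\mathbf{EqFlag}_N$) discussed in the introduction. One shows that for each weight $\nub$ and each pair of sequences $\bi, \bj$ the graded dimension of $\hom_{\mathcal{U}_Q}(\mathcal{E}_{\bi}\onel, \mathcal{E}_{\bj}\onel)$ is bounded above by the value of the Khovanov--Lauda semilinear form $\langle E_{\bj}1_\nub, E_{\bi}1_\nub\rangle$ on $\UA_q^+$; this bound is obtained diagrammatically by reducing every 2-morphism to a normal form involving upward strands decorated by bubbles and dots modulo the local relations, with the bubble generators corresponding to a polynomial subalgebra acting on each hom-space. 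The 2-functor $\Theta_N$ then realizes the upper bound as an equality by matching it with the Hilbert series of the equivariant cohomology ring of the appropriate flag variety, which in the limit $N\to\infty$ recovers the full semilinear form. Since the form is nondegenerate on $\UA_q(\mathfrak{sl}_n)$, and since equal graded dimensions upgrade to an isomorphism of Hom-spaces (this is where the Karoubi envelope is needed so that idempotents split and one can identify indecomposables), one concludes that distinct canonical basis elements in $\UA_q(\mathfrak{sl}_n)$ map to distinct classes in $K_0(\dot{\U}_Q(\mathfrak{sl}_n))$. Combined with surjectivity this yields the isomorphism $\gamma$.
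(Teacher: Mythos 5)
The paper does not prove Theorem \ref{klau} at all: it is quoted from Khovanov--Lauda (via \cite{lauda}, \cite{lauda2}), so the only meaningful comparison is with the cited argument. Your outline does follow that argument in its broad shape: define $\gamma$ on divided powers using nilHecke idempotents split in the Karoubi envelope; get well-definedness from the categorified relations (the decompositions $\mathcal{E}_i\mathcal{F}_i\onel \cong \mathcal{F}_i\mathcal{E}_i\onel \oplus \onel^{\oplus [\nub_i]}$ for $\nub_i\geq 0$, and the Serre-type decomposition, which however must carry the degree shifts, $\mathcal{E}_i\mathcal{E}_i\mathcal{E}_j\onel \oplus \mathcal{E}_j\mathcal{E}_i\mathcal{E}_i\onel \cong \mathcal{E}_i\mathcal{E}_j\mathcal{E}_i\onel\la 1\ra \oplus \mathcal{E}_i\mathcal{E}_j\mathcal{E}_i\onel\la -1\ra$, since the theorem is about $\mathcal{A}=\Z[q,q^{-1}]$-modules); and prove injectivity by matching graded dimensions of hom-spaces with the semilinear form, the diagrammatic spanning argument giving the upper bound and the flag-variety/polynomial 2-representations (the $\textbf{Pol}_N$ of Section 5, for $N\gg 0$) giving nondegeneracy. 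One caveat on your injectivity phrasing: ``distinct canonical basis elements map to distinct classes'' is strictly weaker than injectivity of an $\mathcal{A}$-linear map; the correct conclusion from nondegeneracy is $\ker\gamma=0$, because $\la x,y\ra$ is recovered from graded dimensions of HOM-spaces for all $y$.

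The genuine gap is in surjectivity. From $[\mathcal{E}_i^{a}\onel]_{\cong}=[a]!\,[\mathcal{E}_i^{(a)}\onel]_{\cong}$ and the fact that every indecomposable of $\dot{\U}_Q(\mathfrak{sl}_n)$ occurs as a summand of some word in the $\mathcal{E}_i\onel$'s and $\mathcal{F}_i\onel$'s, you conclude that the divided-power classes generate $K_0$. That inference is invalid: $K_0$ of a Krull--Schmidt category is free on shift-classes of indecomposables, and knowing that $[X]_{\cong}$ lies in the image of $\gamma$ for every word $X$ says nothing about the classes of the individual summands of $X$. Controlling those summands is exactly the hard half of the Khovanov--Lauda proof: one must account for all idempotents in the graded endomorphism rings of such words, which they do by reducing (via the categorified commutation relations) to endomorphism rings of words in the $\mathcal{E}$'s alone tensored with the positively graded bubble algebra --- which creates no new idempotents --- and then invoking their earlier theorem that $K_0$ of graded projective $R(\nu)$-modules is the integral positive half $\UA^+_q(\mathfrak{sl}_n)$, together with a triangular-decomposition argument for mixed words. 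Without that input your argument only shows that the image of $\gamma$ contains the classes $[\mathcal{E}_{i_1}\cdots\mathcal{F}_{j_1}\cdots\onel\la t\ra]_{\cong}$, not that $\gamma$ is onto $K_0(\dot{\U}_Q(\mathfrak{sl}_n))$.
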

We define $\U =\U^{\star}_Q(\mf{sl}_n)$  to be the 2-category with the same objects and 1-morphisms as $\Ucat_Q(\mf{sl}_n)$ and with 2-morphisms
\[\U(\nub,  \bar{\mu}) (f,g) = \bigoplus_{t \in \Z} \, \U_Q (\mf{sl}_n)(\nub,  \bar{\mu}) (f, g \la t \ra) \]
for all  \(\nub,  \bar{\mu} \in \text{Ob}(\U) )\) and \(f,g \colon \nub \to \bar{\mu}\). Horizontal composition in  $\U$ is induced from the horizontal composition in  $\Ucat_Q(\mf{sl}_n)$. We will work mainly with the 2-category  $\U$ from now on. \(K_0\) decategorification of \(\U\) is the same as that of $\U^{}_Q(\mf{sl}_n)$, however, it has more interesting trace decategorification as we will see next.

\subsection{The current algebra and the trace of \(\U\)  }
Here we will compute the trace  of the 2-category \(\U\) and compare it to the split Grothendieck group. Let \(E_i, F_i, H_i, \, i \in I \) be the  Chevalley generators of \(\mathfrak{sl}_n\).  By \(\mathfrak{sl}_n[t]= \mathfrak{sl}_n \otimes \k[t]\), we denote the extension of \(\mathfrak{sl}_n\) by polynomials in \(t\). Then \(\mathfrak{sl}_n[t]\) is a Lie algebra with the Lie bracket is defined as \( [a \otimes t^r, b \otimes t^s]= [a, b] \otimes t^{r+s}\), for \(a,b \in \mathfrak{sl}_n\) and \(r,s \geq 0 \). 
We call  \(\mathbf{U}(\mathfrak{sl}_n[t])\) the current algebra. Current algebra is graded with  \(\deg (a \otimes t^k) = 2k\). 
This algebra is generated over $\k$ by \(E_{i,r}=E_{i}\otimes t^r, F_{i,r}=F_i\otimes t^r, H_{i,r}=H_i\otimes t^r\) for $r\geq 0$ and $i \in I$
modulo the following relations:
\begin{description}
  \item[C1] For $i,j \in I$ and $r,s \geq 0$
\[
[ H_{i,r}, H_{j,s} ] = 0,
\]

  \item[C2] For any $i,j \in I$ and $r,s \geq 0$,
\[
 [H_{i,r}, E_{j,s} ] =  a_{ij} E_{j, r+s}, \quad [H_{i,r}, F_{j,s} ] = - a_{ij} F_{j, r+s},
\]

  \item[C3] For any $i,j \in I$ and $r, s  \geq 0$,
\[
[E_{i, r+1}, E_{j,s}]=[E_{i,r}, E_{j,s+1}], \quad [F_{i, r+1}, F_{j,s}]=[F_{i,r}, F_{j,s+1}],
\]
  \item[C4] For any $i,j \in I$ and $r,s \geq 0$
\[
 [E_{i,r}, F_{j,s}] = \delta_{i,j} H_{i, r+s},
\]
  \item[C5] Let $i \neq j$ and set $m=1-a_{ij}$.  For every $r_1, \dots, r_m, s \geq 0$ 
\[
 \sum_{\pi \in S_m} \sum_{l=0}^{m} (-1)^l
 \binom{m}{l} E_{i, k_{\pi(1)}} \dots E_{i, k_{\pi(l)}}
E_{j, s} E_{i, k_{\pi(l+1)}} \dots E_{i, k_{\pi(m)}} = 0, \] 
\[ \sum_{\pi \in S_m} \sum_{l=0}^{m} (-1)^l \binom{m}{l} F_{i, k_{\pi(1)}} \dots F_{i, k_{\pi(l)}}
F_{j, s} F_{i, k_{\pi(l+1)}} \dots F_{i, k_{\pi(m)}} = 0.
\]
\end{description}

 We identify \(\mathbf{U}(\mathfrak{sl}_n)\) as a subalgebra of \(\mathbf{U}(\mathfrak{sl}_n[t])\) generated by degree zero elements \(E_{i,0}, F_{i,0}, H_{i,0}, \, i \in I \). Let \(\dot{\mathbf{U}}(\mathfrak{sl}_n[t])\) be the idempotented version of the current algebra  \(\mathbf{U}(\mathfrak{sl}_n[t])\), where the unit is replaced by the collection of mutually orthogonal idempotents \(1_{\nub}\) for each \(\mathfrak{sl}_n\)-weight \(\nub\), such that
\[ 1_{\nub}1_{\nub'}= \delta_{\nub. \nub'}1_{\nub}, \quad 1_{\nub}H_{i,0} =H_{i,0}1_{\nub}= \nub_i 1_{\nub},  
\quad 1_{\overbar{\nu+\alpha_i}}E_{i,j} =E_{i,j}1_{\nub}, \quad 1_{\overbar{\nu-\alpha_i}}F_{i,j} =F_{i,j}1_{\nub}.
\]


Now we describe the trace \( \Tr \U\) of the 2-category \(\U\). The trace class of a 2-morphisms in \(\U\) can be depicted by closing a 2-morphism diagrams to the right:
 \[\left[ \vcenter{ \xy 0;/r.30pc/:
 (-8,0)*{\ecross};
(-3,-3)*{\scs i}; (-13,-3)*{\scs i};
 (-10,1)*{\bullet}; (-2,0)*{\nub};
 \endxy} 
   \right]
 \qquad = \qquad
 \vcenter{ \xy 0;/r.18pc/:
  (8,8)*{\lcap};
  (8,14)*{\xlcap};
(9,-4)*{
 \hackcenter{\begin{tikzpicture}
  \path[draw, blue, very thick, fill=white]
    (-0.15,0) .. controls ++(0,.2) and ++(0,.2) .. (0.15,0)
            .. controls ++(0,-.2) and ++(0,-.2) .. (-0.15,0);
  \end{tikzpicture}}
 };
 (8,-14)*{\xlcupef};
 (-8,0)*{\ecross};
 (20,0)*{\sfline};
 (28,0)*{\sfline};
 (8,-10)*{\lcupef}; (5,2)*{\nub};
(-2,-3)*{\scs i}; 
(-15,-3)*{\scs i};
 (-10,1)*{\bullet};
 \endxy}\, \, .
\]

Let \(\mathsf{E}_{i,r}1_{\nub}, \mathsf{F}_{j,r}1_{\nub}, \mathsf{H}_{i,r}1_{\nub}\) denote
the following trace classes:
\[\mathsf{E}_{i,r}1_{\nub}=  \left[
\xy 0;/r.18pc/:
  (10,6)*{\nub};
  (0,0)*{\bbe{}};
  (0,2)*{\bullet}+(3,1)*{r};
  (-8,6)*{ }; (12,-7)*{}; (2,-7)*{\scs i};
 \endxy
\right],  \quad \quad
\mathsf{F}_{j,r}1_{\nub}=  \left[
\xy 0;/r.18pc/:
  (10,6)*{\nub};
  (0,0)*{\bbf{}};
  (2,-7)*{\scs j};
  (0,4)*{\bullet}+(3,1)*{r};
   (-8,6)*{ }; (12,-7)*{};
 \endxy
\right],\quad \quad
  \mathsf{H}_{i,r}1_{\nub}= \left[
 \pi_{i,r}(\nub)\; \Id_{\onel}
\right],\]
where 
\begin{equation*} \label{eq_defpil}
 \pi_{i,r}(\nub) = \sum_{l=0}^{r} (l+1)
 \xy
   (0,-2)*{\icbub{\spadesuit+l}{i}};
   (12,-2)*{\iccbub{\spadesuit+r-l}{i}};
  (8,8)*{\nub};
 \endxy
\end{equation*}
for \(r>0\) and  $\pi_{i,0}(\nub) = \nub_i$. 
 
 The following theorem is a fundamental  result proved by A. Beliakova, K. Habiro, A. Lauda, B. Webster \cite{lcurr}. 

 \begin{Theorem} \label{prop-cur}
  There is an algebra isomorphism 
\begin{equation} \label{eq_sln-homom}
 \rho \maps \dot{\bfU}(\mf{sl}_n[t]) \longrightarrow \Tr \U,
\end{equation}
given by
\begin{equation*} 
E_{i, r} 1_{\nub} \mapsto
\mathsf{E}_{i,r} 1_{\nub}
 ,\quad \quad
F_{i, r} 1_{\nub}\mapsto
\mathsf{F}_{i,r} 1_{\nub},
\quad \quad
H_{i, r} 1_{\nub}\mapsto
 \mathsf{H}_{i,r}1_{\nub}.
\end{equation*}
\end{Theorem}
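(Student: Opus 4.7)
The plan is to define $\rho$ on generators as in the statement and then verify, in turn, that (i) $\rho$ is a well-defined algebra homomorphism, (ii) $\rho$ is surjective, and (iii) $\rho$ is injective, organizing everything around a triangular decomposition $\dot{\bfU}(\mf{sl}_n[t]) = \bfU^- \bfU^0 \bfU^+$ and the parallel decomposition of $\Tr \U$ into closures of downward, bubble, and upward diagrams.

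For well-definedness I would check that the trace classes $\mathsf{E}_{i,r}$, $\mathsf{F}_{i,r}$, $\mathsf{H}_{i,r}$ satisfy the current algebra relations \textbf{C1}--\textbf{C5}. Relations \textbf{C1} and \textbf{C2} are consequences of sliding a dotted strand past a bubble inside the annulus: the weight on either side of a $j$-strand differs by $\alpha_j$, so the bubble sum defining $\pi_{i,r}(\nub)$ jumps by $a_{ij}$ units of $\mathsf{E}_{j,r+s}$ (respectively $\mathsf{F}_{j,r+s}$) when pulled across. Relation \textbf{C3} follows directly from Proposition \ref{dotslide}: sliding a single dot through an $ij$-crossing, then closing up, equates $[\mathsf{E}_{i,r+1},\mathsf{E}_{j,s}]$ and $[\mathsf{E}_{i,r},\mathsf{E}_{j,s+1}]$ modulo the trace relation. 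Relation \textbf{C4} is the central identity and follows from the extended $\fsl_2$-relations \eqref{eq_curl}--\eqref{eq_ident_decomp-nleqz}: closing $\mathsf{E}_{i,r}\mathsf{F}_{j,s} - \mathsf{F}_{j,s}\mathsf{E}_{i,r}$ into an annulus, applying the mixed relations \eqref{mixed_rel} to kill the $i\neq j$ case, and expanding the bubbles that appear in the $i=j$ case produces exactly the polynomial $\pi_{i,r+s}(\nub)$ after accumulating the $(l+1)$-weighted sum over bubble splittings. The Serre relation \textbf{C5} reduces to the KLR/nilHecke relation \eqref{eq_r3_hard-gen}, where the dotted bubbles can be moved outside the relevant region and the trace relation symmetrizes over the permutation group $S_m$.

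For surjectivity I would exhibit a diagrammatic spanning set for $\Tr \U(\nub,\mub)$ and show it lies in the image of $\rho$. Every 2-endomorphism in $\U$ can, modulo $\U$-relations, be normal-ordered using the mixed relations \eqref{mixed_rel} together with the biadjointness \eqref{eq_biadjoint1}--\eqref{eq_biadjoint2} into a composition of the form $\mathcal{F}^{\bi}\text{(bubbles)}\,\mathcal{E}^{\bj}\onel$, with dots stacked at the bottom of each strand. Closing such a normal form into an annulus gives a trace class that is manifestly a product of the generators $\mathsf{F}_{i_a,r_a}$, $\mathsf{H}_{i_b, r_b}$, $\mathsf{E}_{j_c,r_c}$, after rewriting the real and fake bubble sums in terms of the $\pi_{i,r}(\nub)$ via the Grassmannian relation \eqref{eq_infinite_Grass}. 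This yields surjectivity onto each idempotented block $1_\mub\,\Tr \U\,1_\nub$.

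The main obstacle is injectivity. My plan is to upgrade the $K_0$-isomorphism of Theorem \ref{klau} to a graded statement compatible with the trace, then pair against an explicit 2-representation. More precisely, the hom-spaces \eqref{grr} of $\U$ carry a $\Z$-grading coming from the degree shifts in $\U_Q(\mf{sl}_n)$, inducing a grading on $\Tr \U$ in which $\mathsf{E}_{i,r}$, $\mathsf{F}_{i,r}$, $\mathsf{H}_{i,r}$ have the degrees matching those of $E_{i,r}$, $F_{i,r}$, $H_{i,r}$ in $\dot{\bfU}(\mf{sl}_n[t])$. In the degree-zero part, $\rho$ specializes to the composition $\dot{\bfU}(\fsl_n) \hookrightarrow \Ko(\dot\U_Q)|_{q=1} \to \Tr \U$, which is injective by Theorem \ref{klau}. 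To lift injectivity to all degrees, I would produce a PBW basis of $\dot{\bfU}(\fsl_n[t])$ consisting of ordered monomials in the $F_{i,r}, H_{i,r}, E_{j,s}$, then verify that their $\rho$-images are linearly independent by evaluating them against the faithful 2-representation $\Theta_N$ on $\bigoplus_\nu P_\nu$ from Section 5, where Theorem \ref{thf} gives the action of each generator as an explicit symmetric-polynomial operator that distinguishes PBW monomials of different multi-indices. The hard part is organizing this PBW comparison cleanly, since one must simultaneously track the bubble-generator identifications, the grading, and the weight-space decomposition, but once this bookkeeping is in place injectivity follows by linear independence of the polynomial operators in each weight block.
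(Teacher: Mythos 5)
Before anything else: the paper you are working from does not prove Theorem \ref{prop-cur} at all; it is quoted from Beliakova--Habiro--Lauda--Webster \cite{lcurr} (with the rank-one case in \cite{sl2}), so there is no internal proof to compare with. Measured against the argument in those references, the overall architecture of your proposal --- verify \textbf{C1}--\textbf{C5} diagrammatically to obtain a homomorphism, deduce surjectivity from a triangular-decomposition normal form, and prove injectivity by evaluating on explicit 2-representations --- has the right shape, and your sketches for \textbf{C1}--\textbf{C4} point at the correct ingredients (bubble slides, Proposition \ref{dotslide}, the extended \(\mathfrak{sl}_2\) relations).

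However, several steps as written have genuine gaps. (a) Your degree-zero injectivity step fails: Theorem \ref{klau} identifies \(\UA_q(\mathfrak{sl}_n)\) with \(K_0(\dot{\U}_Q(\mathfrak{sl}_n))\), but says nothing about the Chern character \(h\colon K_0 \to \Tr\), which the paper explicitly notes is neither injective nor surjective in general; so injectivity of \(\rho\) in degree zero cannot be quoted from Theorem \ref{klau}. (b) Surjectivity is not ``manifest'': after normal ordering one still has closures of endomorphisms containing crossings and dots on several like-colored strands (already for endomorphisms of \(\mathcal{E}_i\mathcal{E}_i\onel\)), and showing that these lie in the subalgebra generated by single-strand closures and bubbles requires computing the trace of the nilHecke algebra (and of the KLR summands) --- a substantive ingredient of \cite{sl2,lcurr} that your outline omits; similarly, the current-algebra Serre relation \textbf{C5}, with its sum over dot patterns indexed by \(S_m\), does not simply ``reduce to'' \eqref{eq_r3_hard-gen}. (c) The injectivity endgame, which is the hard core of the theorem, is asserted rather than argued: a single \(\Theta_N\) is not faithful (every idempotent \(1_{\mub}\) with \(\mub\) not of the form \(\nub\), \(\nu\in\mathcal{P}(n,N)\), acts by zero), and even the family over all \(N\) only reaches highest weights \(N\omega_1\); the known proofs instead act on cyclotomic quotients for all dominant \(\lb\), identify the resulting trace modules with (dual) local Weyl modules as in Theorem \ref{thmm}, and use their known characters to show that the spanning set maps to a linearly independent set. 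Finally, note that Theorem \ref{thf} is itself derived in this paper from Theorem \ref{prop-cur} via Corollary \ref{thm_main}; you only need the homomorphism half of the theorem to make sense of those formulas, so this circularity is repairable, but it must be made explicit.
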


Let \(Z({\nub})\),  the center of an object \(\nub \in \text{Ob}(\U)\),  to be the endomorphism ring \(\End (1_{\nub})\). The ring \(Z({\nub})\) is a commutative ring, diagrammatically given by closed diagrams  label by \(\nub\) on the outside of the diagram. 
By \cite{lauda2},  \(Z({\nub})\) is freely generated by counterclockwise oriented fake and real bubbles. We will define the center of objects  \(Z(\U_{})\) of $\U$ as
\[ Z(\U_{}) = \bigoplus_{\nub \in \Ob (\U)} Z({\nub}). \]
Cyclicity relations show that \(\U\) is cyclic as a 2-category. Hence, we can define an action of \( \Tr \U \) on the center of objects \(Z (\U) \), and thus an action of the current algebra. 
\begin{Corollary} \label{thm_main}
The vector space \(Z (\U)\) is a $\bfU(\fsl_n[t])$-module with
\begin{equation} \label{mpp1}
  E_{i,j}  \maps Z({\nub}) \to  Z({\overbar{\nu+\alpha_i}}),   
\quad \quad  \quad \xy
  (0,0)*{\ast};
  (6,4)*{\nub};
 \endxy
 \longmapsto  \;\;
  \xy
(0,0)*{\lcbub{i}};
  (-8,-8)*{\bullet}+(-3,-2)*{j};
  (0,0)*{\ast};
  (4,6)*{\nub};
  (22,6)*{\overbar{\nu+\alpha_i}};
 \endxy\;\;,
   \end{equation}
   
\begin{equation} \label{mpp2}
 F_{i,j} \maps Z({\nub}) \to  Z({\overbar{\nu-\alpha_i}}), \quad \quad \quad
 \xy
  (0,0)*{\ast};
  (6,4)*{\nub};
 \endxy
 \longmapsto   \;\;
 \xy
(0,0)*{\lccbub{i}};
(-8,-8)*{\bullet}+(-3,-2)*{j};
  (0,0)*{\ast};
  (4,6)*{\nub};
  (22,6)*{{\overbar{\nu-\alpha_i}}};
 \endxy,
 \end{equation}
 
 \begin{equation} \label{mpp3}
   H_{i,j} \colon Z({\nub}) \to  Z({\nub}),  \\ \quad \quad  \quad
   \xy
  (0,0)*{\ast};
  (6,4)*{\nub};
 \endxy  \longmapsto 
     \xy
     (0,-6)*{};
  (-9,0)*{\pi_{i,j}(\nub)};
  (0,0)*{\ast};
  (6,4)*{\nub};
 \endxy.
\end{equation}

\end{Corollary}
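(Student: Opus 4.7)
The plan is to deduce this corollary directly from Theorem \ref{prop-cur} combined with the general trace-on-center construction for cyclic pivotal 2-categories. First I would verify that \(\U\) is a cyclic pivotal 2-category: biadjointness is encoded in the relations \eqref{eq_biadjoint1}--\eqref{eq_biadjoint2}, and the cyclic condition that left and right duals of 2-morphisms agree follows from the cyclicity relations \eqref{eq_cyclic_dot}--\eqref{eq_crossr-gen}. Once this is in place, the general formula \eqref{pp1} furnishes a well-defined action of \(\Tr \U\) on \(Z(\U)\) sending a trace class \([f]\) of a 2-endomorphism \(f\) of a 1-morphism \(u\colon x\to y\) to the map \(Z(x)\to Z(y)\) obtained by closing \(f\) around the central element.

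Next, I would transport the current algebra action via Theorem \ref{prop-cur}. The isomorphism \(\rho\) identifies the generators \(E_{i,j}1_{\nub}\), \(F_{i,j}1_{\nub}\), \(H_{i,j}1_{\nub}\) with the trace classes \(\mathsf{E}_{i,j}1_{\nub}\), \(\mathsf{F}_{i,j}1_{\nub}\), \(\mathsf{H}_{i,j}1_{\nub}\) respectively. Since \(\rho\) is an algebra isomorphism, the resulting action of the generators on \(Z(\U)\) automatically extends to an action of the whole algebra \(\dot{\mathbf{U}}(\mathfrak{sl}_n[t])\); no separate verification of relations \textbf{C1}--\textbf{C5} is needed.

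It then remains only to read off the three formulas \eqref{mpp1}--\eqref{mpp3} from the diagrammatic closure \eqref{pp1}. For \(\mathsf{E}_{i,j}1_{\nub}\), whose representative is a single upward \(i\)-strand carrying \(j\) dots, the closure around a central element \(a\in Z(\nub)\) produces precisely a counterclockwise oriented \(i\)-bubble carrying \(j\) dots and enclosing \(a\), which is the diagram on the right of \eqref{mpp1}; the output lives in \(Z(\overbar{\nu+\alpha_i})\) because the region outside the bubble is labelled by the target weight. The computation for \(\mathsf{F}_{i,j}1_{\nub}\) is entirely parallel with the orientation reversed, giving \eqref{mpp2}. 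For \(\mathsf{H}_{i,j}1_{\nub}\), the representative is already the scalar 2-morphism \(\pi_{i,j}(\nub)\cdot \Id_{\onel}\); closing it around \(a\) simply multiplies \(a\) by \(\pi_{i,j}(\nub)\in Z(\nub)\), yielding \eqref{mpp3}.

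The only substantive point to double-check is that the trace-on-center map is well-defined, i.e.\ insensitive to the choice of 2-endomorphism representative of a trace class; this is exactly where cyclicity of \(\U\) is used, and it is an instance of the general lemma that in a cyclic pivotal 2-category the assignment \([f]\mapsto \tilde{\mathrm{ev}}_u\circ(f\otimes a\otimes 1_{u^*})\circ \mathrm{coev}_u\) descends to the quotient by the commutator subspace. I do not anticipate any genuine obstacle beyond this bookkeeping: given Theorem \ref{prop-cur}, the corollary is essentially a diagrammatic unpacking of \eqref{pp1} applied to the three classes of generating trace elements.
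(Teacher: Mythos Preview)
Your proposal is correct and follows essentially the same approach as the paper, which simply states that the corollary follows immediately from Theorem~\ref{prop-cur} since the current algebra relations already hold in \(\Tr\U\). Your write-up is more explicit about the role of cyclicity and the diagrammatic unpacking of \eqref{pp1}, but the underlying argument is the same; one small slip is that the closure of the upward \(i\)-strand for \(\mathsf{E}_{i,j}\) yields a clockwise bubble (as drawn in \eqref{mpp1}), not a counterclockwise one.
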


\begin{proof}
This theorem follows immediately from Theorem \ref{prop-cur}.
In particular, the fact that the current algebra relations hold in the trace $\Tr \U$ imply that these relations hold under the action described above.
\end{proof}

Notice that the maps  \( F_{i,j}\) and \( E_{i,j}\) send an element of degree \(d\) to an element of degree \(d+2(j+\nub_i -1)\) and \(d+2(j-\nub_i -1)\), respectively. In order to see this for \( F_{i,0}\), let \(*\) the empty diagram and \(j=0\). Then \(F_{i,0}\) sends it to a counterclockwise bubble of degree \(2(\nub_i -1)\):
 \begin{equation} \label{mpw2}
 \xy
  (0,0)*{\ast};
  (6,4)*{\nub};
 \endxy
 \xrightarrow[]{ F_{i,0} }   \;\;
 \xy
(0,0)*{\lccbub{i}};
  (4,6)*{\nub};
  (18,6)*{{\overbar{\nu-\alpha_i}}};
 \endxy =  \;
   \xy 0;/r.18pc/:
  (4,8)*{{\overbar{\nu-\alpha_i}}};
  (2,-2)*{\iccbub{-(\overbar{\nu-\alpha_i})_i-1+ \nub_i-1}{i}};
 \endxy =  \xy 0;/r.18pc/:
  (6,8)*{{\overbar{\nu-\alpha_i}}};
  (2,-2)*{\iccbub{\spadesuit+\nub_i-1}{i}};
 \endxy,
 \end{equation}
 where we use the identity \((\overbar{\nu-\alpha_i})_i=\nub_i-2\).

\section{Local Weyl modules of current algebra}
\subsection{Local Weyl modules}

Let \(X\) be the integral \(\mathfrak{sl}_n\) weight lattice and  \(\lb \in X\) be a dominant integral weight. There is a partial order on \(X\), defined as \(\bar{\l} \geq \nub \) if \(\bar{\l} - \nub \) is a positive linear combination of simple roots. A \(\mathfrak{sl}_n\)-module is called a {\em weight module} if it is a direct sum of its weight spaces, and an {\em integrable module} if \(E_i\) and \(F_i\) act nilpotently for all \(i \in I\).

We define \(V(\lb)\) to be the \(\mathbf{U}(\mathfrak{sl}_n)\)-module generated by a vector \(v_{\lb}\) over  \(\mathbf{U}(\mathfrak{sl}_n)\), subject to the following relations for all \(i \in I\):
\[ E_iv_{\lb}=F_i ^{\lb_i+1}v_{\lb}=0, \qquad H_iv_{\lb}=\lb_i v_{\lb}.\]
\(V(\lb)\)  is the unique, up to isomorphism, finite-dimensional irreducible \(\mathbf{U}(\mathfrak{sl}_n)\)-module with the highest weight \(\lb\).  We can also view \(V(\lb)\) as a   \(\mathbf{U}(\mathfrak{sl}_n[t])\)-module, where the action of positive degree elements of \(\mathbf{U}(\mathfrak{sl}_n[t])\) are trivial.  Let \(\Z[X ]\) be the integral group ring spanned by elements \(e(\nub)\), \(\nub \in X\). We call the element 
\[ \text{ch} \, V(\lb) = \sum_{\lb \geq \nub } \dim_{\mathbb{C}} \, V_{\nub}(\lb) \, e(\nub)\]
in \(\Z[X ]\) the {\em  character} of \(V(\lb)\), where \( {V}_{\nub}(\lb) = \{ v \in {V}_{}(\lb) \,| \, H_{i} v = \nub_i v \}\).

In this section we give a short introduction to   \(\mathbf{U}(\mathfrak{sl}_n[t])\)-modules, called local Weyl modules. The simple \(\mathfrak{sl}_n\)-modules \(V(\lb)\) are sometimes also called Weyl modules. In this work, however, we apply the term only to the following current algebra modules. 
\begin{Definition}
The local Weyl module \(W(\lb)\) is the  \(\mathbf{U}(\mathfrak{sl}_n[t])\)-module generated by the element \(v_{\lb}\) over  \(\mathbf{U}(\mathfrak{sl}_n[t])\) together with the following relations for all  \(i\in I\) and \( j \geq 0\):
\begin{equation} \label{defrel}
E_{i,j} v_{\lb}=F^{\lb_i+1}_{i,0} v_{\lb}=0, \quad \quad H_{i,0}v_{\lb}=\delta_{j,0 }\, \lb_i v_{\lb}.
\end{equation}    
\end{Definition}

 \({W}(\lb)\)  is a weight module, that is, it decomposes into a direct sum of weight spaces:
 \begin{equation}
  {W}(\lb) = \bigoplus_{\lb \geq \nub} {W}_{\nub}(\lb),
 \end{equation}
where \( {W}_{\nub}(\lb) = \{ v \in {W}_{}(\lb) \,| \, H_{i,0} v = \nub_i v \}\).

In the simplest case \(n=2\) and \(\lb = 2\omega_1\), the local Weyl module and the current algebra action can be illustrated by the following graph. The upward oriented edge describes the action of \(H_1:=H_{1,1}\), the left and right oriented edges shows the actions of \(E_j:=E_{1,j}\) and \(F_j:=F_{1,j}\), respectively.

\begin{center}
\begin{tikzpicture}[->,>=stealth',shorten >=1pt,auto,node distance=4cm,
                    thick,main node/.style={circle, draw, font=\sffamily\large\bfseries}]
  \node[main node] (1) {\(F_{1} v_{\lb}\) };
  \node[main node] (2) [below left  of = 1] {\(v_{\lb}\)};
  \node[main node] (3) [below right of=2] {\(F_{0} v_{\lb}\)};
  \node[main node] (4) [below right of=1] {\(F^2_{0} v_{\lb}\)};

  \path[every node/.style={font=\sffamily\small}]
    (1)
    (2) edge node [right] {\(F_{1}\)} (1)
        edge [bend right] node[left] {\(F_{0}\)} (3)
    (3) edge node [ right] {\(\frac{1}{2} E_{0}\)} (2)
        edge [bend right] node[right] {\(F_{0}\)} (4)
         edge node  {\(-\frac{1}{2} H_{1}\)} (1)
   (4) edge node [ left] {\(\frac{1}{2} E_{0}\)} (3)
        edge [ right] node[right] {\(-\frac{1}{2} E_{1}\)} (1);
\end{tikzpicture}
\end{center}

As a module generated by a single element \(v_{\lb}\) over a graded algebra, \({W}(\lb)\) inherits the degree from the current algebra, where we set the degree of \(v_{\lb}\) to zero. Since the degree of \(t\)-parameter of the current algebra is 2, we can write \(W(\lb) \)  as 
\[W(\lb) = \bigoplus_{r \geq 0} W(\lb) \{ 2r \} ,\]
where each homogeneous  \(2r\)-graded piece \(W(\lb) \{ 2r \} = \{ v \in W(\lb), \deg (v)=2r\}\)  is a \(\mathfrak{sl}_n\)-module. Then the character  of a local Weyl module \(W(\lb)\) is 
\begin{equation}
\text{ch}_t \, W(\lb) = \sum_{r \geq 0} \text{ch} \, W(\lb) \{ 2r \} \,  t^{r}.
\end{equation}
The local Weyl modules have the following universal property.
\begin{Theorem} \cite{chp} \label{unip}
Any finite-dimensional current algebra module generated by an element \(v_{\lb}\) satisfying the relations \eqref{defrel} is a quotient of the local Weyl module \(W(\lb)\).
\end{Theorem}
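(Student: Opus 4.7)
The plan is to invoke the universal property of a module defined by generators and relations. First I would describe $W(\lb)$ concretely as the quotient $\mathbf{U}(\mathfrak{sl}_n[t])/I_{\lb}$, where $I_{\lb}$ is the left ideal generated by the elements
\[
E_{i,j}, \qquad F_{i,0}^{\lb_i+1}, \qquad H_{i,j}-\delta_{j,0}\lb_i
\]
for $i \in I$ and $j \geq 0$, with the distinguished generator $v_{\lb}$ corresponding to the class of $1 \in \mathbf{U}(\mathfrak{sl}_n[t])$. This is just a reformulation of Definition \ref{defrel}, but it makes the universal property transparent.

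Next, let $V$ be a finite-dimensional $\mathbf{U}(\mathfrak{sl}_n[t])$-module generated by a single element $v \in V$ satisfying the same relations \eqref{defrel}. Consider the $\mathbf{U}(\mathfrak{sl}_n[t])$-linear map $\phi \colon \mathbf{U}(\mathfrak{sl}_n[t]) \to V$, $u \mapsto u \cdot v$. Since $v$ generates $V$, the map $\phi$ is surjective. By the hypothesized relations on $v$, every generator of $I_{\lb}$ lies in the kernel of $\phi$, so $I_{\lb} \subseteq \ker \phi$ and $\phi$ factors through the quotient to yield a surjective current algebra homomorphism
\[
\bar\phi \colon W(\lb) \twoheadrightarrow V, \qquad v_{\lb} \mapsto v,
\]
which exhibits $V$ as a quotient of $W(\lb)$ as claimed.

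Strictly speaking no real obstacle arises: the argument is a formal consequence of the presentation of $W(\lb)$, so the role of finite-dimensionality of $V$ is merely to place the statement in the category where it is useful. The substantive content of the Chari-Pressley result (which this note uses later) is the parallel assertion that $W(\lb)$ itself is finite-dimensional, which would be proved separately by identifying $W(\lb)$ with a tensor product of fundamental local Weyl modules and exhibiting each fundamental factor as an evaluation module; that part is not needed here.
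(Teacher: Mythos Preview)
Your argument is correct: since $W(\lb)$ is by definition the cyclic $\mathbf{U}(\mathfrak{sl}_n[t])$-module presented by the relations \eqref{defrel}, the universal property of such a presentation immediately gives the surjection $W(\lb)\twoheadrightarrow V$ onto any module generated by a vector satisfying the same relations. Your observation that finite-dimensionality of $V$ plays no role in this direction, and that the real content of Chari--Pressley is the finite-dimensionality of $W(\lb)$ itself, is also on point.

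There is nothing to compare against here: the paper does not supply its own proof of this statement but simply cites it from \cite{chp}. Your write-up is exactly the standard justification one would give.
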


\subsection{Duals of local Weyl modules}
Let \(\mathcal{P}^{}(n, N)\) be the set of \(n\)-compositions of \(N\). We define a partial order, called the {\em dominance order}, on \(\mathcal{P}^{}(n, N)\) by setting \(\mu \geq \nu\) for \( \nu, \mu \in \mathcal{P}^{}(n, N)\) if \(\mu -\nu\) is a positive linear composition of simple roots. 

For any compositions \( \nu, \mu \in \mathcal{P}(n, N)\) define the integer
\begin{equation} \label{dd}
d^{\mu}_{\nu} = \max\{ (\mu, \mu)-(\nu, \nu), 0\}.
\end{equation}
Notice that \(d^{\mu}_{\nu}\)  is an even non-negative integer, and it depends on  \(\bar{\mu}\) and \(\nub\) rather than \(\mu_{}\) and \(\nu\). To see this, take the partition \(\mu' = \mu +(m, m, \dots, m) \in \mathcal{P}+(n, N+nm) \) and \(\nu' = \nu +(m, m, \dots, m) \in \mathcal{P}+(n, N+nm) \) for any \(m \in \Z_+\). Then we have \(d^{\mu'}_{\nu'} = d^{\mu}_{\nu}\).

 We now define the duals of \(V(\lb)\) and \(W(\lb)\) as follows. 
 Let \(V^*_{\nub}(\lb)\)  be the dual vector space of the  weight space \(V_{\nub}(\lb)\). Namely, for each \(w \in V_{\nub}(\lb)\), \(V^*_{\nub}(\lb)\)  is spanned by linear maps \(\delta_w \colon V_{\nub}(\lb) \to \mathbb{ C}\) such that \(\delta_w(v)\) equals the Kronecker delta function \( \delta_{w,v}\).
 
The dual of \(V(\lb)\) is the  \(\mathbf{U}(\mathfrak{sl}_n)\) -module
 \[V^*_{}(\lb) = \bigoplus_{\lb \geq \nub}  V^*_{\nub}(\lb)\] 
  in which the action is defined as  
\begin{equation} \label{dual}
(x \cdot \delta_w)v = \delta_w(\omega(x) v).
\end{equation} 

In the equation \eqref{dual},  \(v, w \in {V}(\lb)\), \(\delta_w \in {V}^*(\lb)\), \(x \in \mathbf{U}(\mathfrak{sl}_n)\) and \(\omega\) is the anti-involution on \(\mathbf{U}(\mathfrak{sl}_n)\) which sends \(F_{i}\) to \(E_{i}\) and fixes \(H_{i}\). Under this definition the dual \(V^*(\lb)\) is isomorphic to \(V(\lb)\) as a \(\mathbf{U}(\mathfrak{sl}_n)\)-module.

As we will see next, the dual local Weyl module  \(W^*(\lb)\)  is defined similarly, by dualizing each weight space and taking the direct sum. However, we have to consider the fact that the weight spaces \(W_{\nub}(\lb)\), \(\nub \in X\) are graded. We will need the following statement which is proven in Proposition 4.2 in \cite{ragh}.
\begin{Proposition} 
The top degree elements of the weight space \(W_{\nub}(\lb)\) have degree \(d^{\l}_{\nu}\).
\end{Proposition}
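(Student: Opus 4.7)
The plan is to pin down the top degree in $W_{\nub}(\lb)$ via two matching bounds: an upper bound from Chari--Pressley-style integrability relations, and a lower bound via the Chari--Loktev fusion-product realization of $W(\lb)$.

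For the upper bound, the starting point is the defining relation $F_{i,0}^{\l_i+1}v_{\lb}=0$ together with $H_{i,j}v_{\lb}=0$ for $j>0$. Iterated commutators with the positive-degree operators $E_{j,r}$, using the current-algebra relations $[E_{j,r},F_{i,s}]=\delta_{ij}H_{i,r+s}$ and $[H_{i,r},F_{j,s}]=-a_{ij}F_{j,r+s}$, would yield the generating-series identity $F_i(u)^{\l_i+1}v_{\lb}=0$ for $F_i(u)=\sum_{j\ge 0}F_{i,j}u^j$. Extracting coefficients produces a spanning family of relations on monomials $F_{i_1,j_1}\cdots F_{i_s,j_s}v_{\lb}$ lying in $W_{\nub}(\lb)$. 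A careful strand-by-strand bookkeeping of these relations, tracking how $H$-commutators transfer $t$-powers between strands, should show that any surviving monomial has total $t$-degree at most $\tfrac{1}{2}[(\l,\l)-(\nu,\nu)]$, i.e., degree at most $d^{\l}_{\nu}$.

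For the lower bound, I would invoke the Chari--Loktev theorem realizing $W(\lb)$ as a graded fusion product $V(\omega_{\l^T_1})\ast\cdots\ast V(\omega_{\l^T_{\l_1}})$ of fundamental $\mf{sl}_n$-representations indexed by the columns of $\l$. In this model the graded weight multiplicities are given by Kostka--Foulkes polynomials $K_{\tau^T,\l^T}(t)$ summed over compositions $\tau$ with $V(\overbar{\tau})_{\nub}\neq 0$. Using the identity $2 n(\mu^T)=(\mu,\mu)-N$ (valid for any partition $\mu$ of $N$), one has $2(n(\l^T)-n(\tau^T)) = (\l,\l)-(\tau,\tau)$, and the known formula $\deg_t K_{\tau^T,\l^T}(t)=n(\l^T)-n(\tau^T)$ shows that the maximal $t$-exponent over admissible $\tau$ equals $d^{\l}_{\nu}$, realized by an explicit PBW-basis element in the fusion product which pulls back to a non-zero vector in $W_{\nub}(\lb)$.

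The main obstacle is the upper bound: controlling how the integrability relations at different indices $i\in I$ interact through the $H$-commutators is delicate, since a relation in strand $i$ can propagate to constrain strand $j$ whenever $a_{ij}\neq 0$. The lower bound, while requiring the Chari--Loktev realization as input, reduces to a transparent combinatorial computation. A possible alternative route for the upper bound, bypassing some of this bookkeeping, would use Garland's PBW basis for the positive loop algebra $\mf{n}^+\otimes\k[t]$ and translate the integrability relations directly into vanishing statements for the associated ordered monomials acting on $v_{\lb}$.
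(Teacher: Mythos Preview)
The paper does not supply a proof of this proposition: it is quoted verbatim as a black box, with a pointer to Proposition~4.2 of Raghavan--Ravinder--Viswanath~\cite{ragh}. So there is no ``paper's own proof'' to compare against; your outline is an attempt to prove something the author chose to cite.

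Your lower-bound half is essentially correct and in fact does all the work. Once you invoke the Chari--Loktev realization of $W(\lb)$ as a fusion product (or equivalently their explicit PBW-type basis), both bounds fall out simultaneously: the basis vectors of $W_{\nub}(\lb)$ have explicitly known degrees, and one reads off directly that the maximum is $d^{\l}_{\nu}$. Splitting into an upper bound (via Garland-type relations) and a lower bound (via Kostka--Foulkes) is therefore redundant. One small slip: you write ``the maximal $t$-exponent over admissible $\tau$ equals $d^{\l}_{\nu}$'', but with the paper's grading convention $\deg(a\otimes t^r)=2r$ the maximal $t$-exponent is $d^{\l}_{\nu}/2$; it is the \emph{degree} that equals $d^{\l}_{\nu}$. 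You got this right in the upper-bound paragraph but not here. Also, to conclude that the degree of $K_{\hat\nu^T,\l^T}(t)$ is exactly $n(\l^T)-n(\hat\nu^T)$ (not merely at most), you need the fact that the top coefficient of a Kostka--Foulkes polynomial is nonzero, which deserves a reference.

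The genuine gap is your upper-bound argument taken on its own. Deriving $F_i(u)^{\lb_i+1}v_{\lb}=0$ from the defining relations is standard (Garland), but the passage from these generating-series relations to the uniform degree bound $d^{\l}_{\nu}$ is not ``careful bookkeeping'': it is essentially the content of the Chari--Loktev dimension estimate, which involves a nontrivial induction on weights and an explicit spanning set. The phrase ``should show'' is carrying an entire paper's worth of argument. If you intend this as an independent proof not relying on Chari--Loktev, you would need to actually execute this; if you are willing to cite Chari--Loktev (as you do for the lower bound), then the upper bound is already included and the separate Garland-relation discussion is unnecessary.
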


\begin{Definition}
Let  \(W_{\nub}^*(\lb)\)  be the  dual space of the vector space \(W_{\nub}(\lb)\), spanned by elements \(\{ \delta_w\}_{w \in W_{\nub}(\lb)}\). We define the grading on \(W_{\nub}^*(\lb)\) as follows:
 \[ 
 W_{\nub}^*(\lb) \{ 2r\} := \left( W_{\nub}(\lb) \{ d^{\l}_{\nu} - 2r\} \right)^*,
 \]
 and let \( W_{\nub}^*(\lb) = \bigoplus_{r \geq 0}  W_{\nub}^*(\lb) \{ 2r\} \).
 Then \begin{equation}
 W^*(\lb)  := \bigoplus_{\lb \geq \nub} W_{\nub}^*(\lb),
  \end{equation}
  is called the dual Weyl module of highest weight \(\lb\), together with a current algebra action
  \begin{equation} \label{dual}
(x \cdot \delta_w)v = \delta_w(\omega(x) v),
\end{equation}
where  \(v,w \in {W}(\lb)\),  \(x \in \mathbf{U}(\mathfrak{sl}_n[t])\), and \(\omega\) is the anti-involution on \(\mathbf{U}(\mathfrak{sl}_n[t])\) which sends \(F_{i,j}\) to \(E_{i,j}\) and fixes \(H_{i,j}\).
\end{Definition}

There is a non-degenerate bilinear pairing \( (\delta_w, v) \mapsto \delta_w(v)\) of degree \(d^{\l}_{\nu}\) between \(W_{\nub}^*(\lb)\)  and \(W_{\nub}(\lb)\).  Note that current algebra action on local Weyl modules preserves the degree, but the generators \(E_{i,j}\) and \(F_{i,j} \) does not preserve the degree on dual Weyl module. The action of \(H_{i,j}\colon W_{\nub}^*(\lb) \to W_{\nub}^*(\lb) \) increases the degree by \(2j\), while \(E_{i,j}\colon W_{\nub}^*(\lb) \to W_{\overline{\nu + \alpha_i}}^*(\lb)\) increases the degree by \(d^{\l}_{\nu} -d^{\l}_{\nu+\alpha_i}+2j = 2(-\nub_{i}-1+j)\), and \(F_{i,j}\colon W_{\nub}^*(\lb) \to W_{\overline{\nu- \alpha_i}}^*(\lb) \) increases the degree by \(d^{\l}_{\nu} -d^{\l}_{\nu-\alpha_i} +2j= 2(\nub_{i}-1+j)\).
  
  For the case \(n=2\) and \(\lb =2\omega\), the dual Weyl module \(W^*(\lb)\) and the current algebra action has the following graphical description:
 \begin{center}
\begin{tikzpicture} 
[->,>=stealth',shorten >=1pt,auto,node distance=4cm,
                    thick,main node/.style={circle, draw, font=\sffamily\large\bfseries}]
  \node[main node] (1) {\( \delta_{ F_{0} v_{\lb}}\) };
  \node[main node] (2) [below left  of = 1] {\(  \delta_{ v_{\lb}}\)};
  \node[main node] (3) [below right of=2] {\(\delta_{F_{1} v_{\lb}}\)};
  \node[main node] (4) [below right of=1] {\( \delta_{F^2_{0} v_{\lb} }\)};

  \path[every node/.style={font=\sffamily\small}]
    (1) edge node [right] {\(E_{0}\)} (2)
        edge [bend left] node[right] {\(\frac{1}{2} F_{0}\)} (4)
    (2)  
       edge [bend left] node[left] {\(\frac{1}{2} F_{0}\)} (1)
    (3) edge node [right] {\(E_{1}\)} (2)
        edge [ right] node[right] {-\(\frac{1}{2} F_{1}\)} (4)
         edge node  {\(-\frac{1}{2} H_{1}\)} (1)
   (4) 
        edge [ right] node[left] {-\(\frac{1}{2} E_{0}\)} (1);
\end{tikzpicture}
 \end{center}  

 
\section{2-representations}
While classical representation theory studies actions on vector spaces, 2-representation theory concerns with actions on \(\k\)-linear categories. Instead of linear maps, 2-representation theory studies linear functors and natural transformations between them. There has been significant progress in understanding 2-representations of \(\U\). In this section we define some important examples of 2-representations of \(\U\). 
\subsection{Integrable 2-representations }

\begin{Definition}
A 2-representation of \(\, \U\) is a graded, additive 2-functor \(\Phi \colon \U \to \cal{K}\) for some graded, additive 2-category \(\K\).  
\end{Definition}
We will denote the image of an object \(\nub \in  \Ob(\U)\) under the 2-functor by \(\cal{K}_{\nub}\). A 2-representation \(\Phi \colon \U \to \cal{K}\) is called {\em integrable} if the 1-morphisms \(\Phi (\mathcal{ E}_i)\) and \(\Phi (\mathcal{F}_i)\) are locally nilpotent for all \(i \in I\). Integrability condition implies that only finite number of objects can be non-zero in \(\cal{K}\). 
\begin{Definition} \label{equibb}
Let \( \Phi_1 \colon \U \to \cal{K}_1\) and \(\Phi_2 \colon \U \to \cal{K}_2\) be two representations.  A strongly equivariant functor \(\gamma  \colon \cal{K}_1 \to \cal{K}_2\)  is a collection of functors \(\gamma(\nub) \colon\Phi_1(\nub) \to \Phi_2(\nub) \) for each \(\nub \in \Ob(\U)\) together with natural isomorphisms of functors \(c_u \colon \gamma \circ \Phi_1(u) \cong  \Phi_2(u) \circ \gamma\) for every 1-morphism  \(u \in \U\) such that 
\begin{equation}\label{2mor}
c_v ( \Id_{\gamma} \circ \Phi_1(\alpha) ) \cong (\Phi_2(\alpha) \circ  \Id_{\gamma} ) c_u
\end{equation}
for every 2-morphism \(\alpha\colon u \to v\) in  \(\U\). In \eqref{2mor},  we use \(\circ\) for horizontal composition, and multiplication for vertical composition of 2-morphisms. We call \(\gamma\)  a strongly equivariant equivalence if \(\gamma(\nub) \) is an equivalence for each \(\nub \in \Ob(\U)\). 
\end{Definition}

Fix a highest weight \(\lb\). We define Rouquier's universal categorification \({\cal L}(\lb)\) of the simple \(\mathfrak{sl}_n\)-module \(V(\lb)\)  following Brundan-Davidson \cite{BD}.   Let \(\Phi_{\lb}\colon \U \to {\cal R}(\lb)\) be the 2-representation defined by \({\cal R}(\lb)_{\nub} = \text{Hom}_{\U}(\lb, \nub)\), in words, \(\Phi_{\lb}\) sends each \(\nub \in \Ob(\U)\) to a the graded category of 1-morphisms \(\text{Hom}_{\U}(\lb, \nub)\) in \(\U\). The 1-morphisms \(\Phi_{\lb}({\cal E_i})\)  and \(\Phi_{\lb}({\cal F_i})\)  are composing 1-morphisms on the left by \({\cal E_i}\)  and \({\cal F_i}\), and 2-morphisms horizontally on the left by  \( \xy 0;/r.15pc/:
  (0,0)*{\xybox{
    (-2,-4)*{};(-2,6)*{}  **\dir{-}?(1)*\dir{>} ;
    (-.5,-3)*{\scs i}; (1,1)*{\scs \nub};
       (-4,0)*{};(3,0)*{};
     }};
  \endxy \) and \( \xy 0;/r.15pc/:
  (0,0)*{\xybox{
    (-2,-4)*{};(-2,6)*{}  **\dir{-} ?(.75)*\dir{<}; 
    (-.5,-3)*{\scs i}; (1,1)*{\scs \nub};
       (-4,0)*{};(3,0)*{};
     }};
  \endxy \)  respectively. The 2-morphisms are generated by the 2-morphisms in \(\U\).
  
  Now let \({\cal I}(\lb)\) be the full, additive 2-subcategory of \({\cal R}(\lb)\) in which the objects are generated by objects of the form \(R \,{\cal E_i}\), \(i \in I\) for a 1-morphism \(R\) in \(\U\) from \(\overbar{\l+\alpha_i}\) to \(\nub\).
We define the 2-category \( {\cal V}(\lb) = {\cal R}(\lb) /{\cal I}(\lb) \) as a quotient of additive 2-categories. \( {\cal V}(\lb) \) has the same objects as in \( {\cal R}(\lb) \) and \(\text{Hom}_{{\cal V}(\lb) } (a,b)= \text{Hom}_{{\cal R}(\lb) }(a,b)/\text{Hom}_{{\cal I}(\lb) } (a,b) \). \(\Phi_{\lb}\colon \U \to {\cal R}(\lb)\) induces a well-defined 2-representation \(\Phi_{\lb}\colon \U \to {\cal V}(\lb)\). 

There is another construction of a 2-representation called the {\em minimal categorification } \(\Phi^{\min}_{\lb}\colon\U \to {\cal V}_{\min}(\lb) \) of the irreducible \(\mathfrak{sl}_n\)-module \(V(\lb)\). The image \(\Phi^{\min}_{\lb} (\lb) \) of the object \(\lb\) is isomorphic to the ground field \(\k\), and the image of 2-morphisms of \(\U\) in \({\cal V}_{\min}(\lb) \) is finite-dimensional. Actually, Rouquier \cite{rouq, rouq1} proves that any 2-representation \(\Phi\) which categorifies \(V(\lb)\) such that \(\Phi(\lb) \simeq \k \) is strongly equivariantly equivalent to  \(\Phi^{\min}_{\lb}\).

The 2-representations \(\Phi^{}_{\lb}\) and \(\Phi^{\min}_{\lb}\) are integrable. They are related to module categories of cyclotomic KLR algebras as we will see next.

\subsection{Cyclotomic KLR algebras}

In this subsection we define the cyclotomic quotients of KLR algebras. Their module category is a 2-representation and has an interesting trace decategorification. We start by defining KLR algebras using the diagrammatics that is given in \cite{lauda0}.

Let \(\beta = \sum_{i \in I}^{} \beta_i \alpha_i \) be a linear combination of simple roots over \(\Z^+\). We  define  \(R(\beta)\) to be the graded \(\k\)-algebra generated by diagrams
\begin{equation} \label{gens}
x_i^r\; = \; \xy 0;/r.18pc/:
  (0,0)*{\xybox{
    (-2,-4)*{};(-2,6)*{} **\dir{-}?(1)*\dir{>} ;
    (-1,-3)*{\scs i}; (-2,2)*{\bullet}; (1,2)*{\scs r};
       (-10,0)*{};(6,0)*{};
     }};
  \endxy,
   \;\;  i \in I, r \geq 0 \quad \quad \quad \;\;
\tau _{ij}\;=\;\xy 0;/r.18pc/:
  (0,0)*{\xybox{
    (-4,-4)*{};(4,6)*{} **\crv{(-4,-1) & (4,1)}?(1)*\dir{>};
    (4,-4)*{};(-4,6)*{} **\crv{(4,-1) & (-4,1)}?(1)*\dir{>};
    (-5.5,-3)*{\scs i};
     (5.5,-3)*{\scs j};
     (-10,0)*{};(10,0)*{};
     }};
  \endxy,  \;\;  i,j \in I 
\end{equation}
with \(\beta_i\) number of \(i\) labeled strands subject to the relations \eqref{eq_r2_ij-gen}-\eqref{eq_r3_hard-gen}. The multiplication is given by stacking two diagrams on top of each other from bottom to top if the labels of strands at the conjunction match, and zero otherwise. The diagrams are subject to the following relations:

\begin{equation}
 \vcenter{\xy 0;/r.17pc/:
    (-4,-4)*{};(4,4)*{} **\crv{(-4,-1) & (4,1)}?(1)*\dir{};
    (4,-4)*{};(-4,4)*{} **\crv{(4,-1) & (-4,1)}?(1)*\dir{};
    (-4,4)*{};(4,12)*{} **\crv{(-4,7) & (4,9)}?(1)*\dir{};
    (4,4)*{};(-4,12)*{} **\crv{(4,7) & (-4,9)}?(1)*\dir{};
    (4,12); (4,13) **\dir{-}?(1)*\dir{>};
    (-4,12); (-4,13) **\dir{-}?(1)*\dir{>};
  (-5.5,-3)*{\scs i};
     (5.5,-3)*{\scs j};
 \endxy}
 \qquad = \qquad
 \left\{
 \begin{array}{ccc}
 0 &  &  \text{if $a_{ij}=2$,}\\ \\
     \xy 0;/r.17pc/:
  (3,6);(3,-6) **\dir{-}?(0)*\dir{<}+(2.3,0)*{};
  (-3,6);(-3,-6) **\dir{-}?(0)*\dir{<}+(2.3,0)*{};
  (-5,-6)*{\scs i};     (5.1,-6)*{\scs j};
 \endxy &  &  \text{if $a_{ij}=0$,}\\ \\
 \vcenter{\xy 0;/r.17pc/:
  (3,6);(3,-6) **\dir{-}?(0)*\dir{<}+(2.3,0)*{};
  (-3,6);(-3,-6) **\dir{-}?(0)*\dir{<}+(2.3,0)*{};
  (-3,2)*{\bullet};(-6.5,5)*{};
  (-5,-6)*{\scs i};     (5.1,-6)*{\scs j};
 \endxy} \;\; + 
\;\;  \vcenter{\xy 0;/r.17pc/:
  (3,6);(3,-6) **\dir{-}?(0)*\dir{<}+(2.3,0)*{};
  (-3,6);(-3,-6) **\dir{-}?(0)*\dir{<}+(2.3,0)*{};
  (3,2)*{\bullet};(7,5)*{};
  (-5,-6)*{\scs i};     (5.1,-6)*{\scs j};
 \endxy}
   &  & \text{if $a_{ij} =-1$,}
 \end{array}
 \right. \label{e_r2_ij-gen}
\end{equation}
\begin{equation} \label{e_dot_slide_ii-gen-cyc}
\xy 0;/r.17pc/:
  (0,0)*{\xybox{
    (-4,-4)*{};(4,6)*{} **\crv{(-4,-1) & (4,1)}?(1)*\dir{>}?(.25)*{\bullet};
    (4,-4)*{};(-4,6)*{} **\crv{(4,-1) & (-4,1)}?(1)*\dir{>};
    (-5,-3)*{\scs i};
     (5.1,-3)*{\scs i};
     (-10,0)*{};(10,0)*{};
     }};
  \endxy
 \;\; -
\xy 0;/r.17pc/:
  (0,0)*{\xybox{
    (-4,-4)*{};(4,6)*{} **\crv{(-4,-1) & (4,1)}?(1)*\dir{>}?(.75)*{\bullet};
    (4,-4)*{};(-4,6)*{} **\crv{(4,-1) & (-4,1)}?(1)*\dir{>};
    (-5,-3)*{\scs i};
     (5.1,-3)*{\scs i};
     (-10,0)*{};(10,0)*{};
     }};
  \endxy
\;\; =  \xy 0;/r.17pc/:
  (0,0)*{\xybox{
    (-4,-4)*{};(4,6)*{} **\crv{(-4,-1) & (4,1)}?(1)*\dir{>};
    (4,-4)*{};(-4,6)*{} **\crv{(4,-1) & (-4,1)}?(1)*\dir{>}?(.75)*{\bullet};
    (-5,-3)*{\scs i};
     (5.1,-3)*{\scs i};
     (-10,0)*{};(10,0)*{};
     }};
  \endxy
\;\;  -
  \xy 0;/r.17pc/:
  (0,0)*{\xybox{
    (-4,-4)*{};(4,6)*{} **\crv{(-4,-1) & (4,1)}?(1)*\dir{>} ;
    (4,-4)*{};(-4,6)*{} **\crv{(4,-1) & (-4,1)}?(1)*\dir{>}?(.25)*{\bullet};
    (-5,-3)*{\scs i};
     (5.1,-3)*{\scs i};
     (-10,0)*{};(12,0)*{};
     }};
  \endxy
  \;\; = \;\;
    \xy 0;/r.17pc/:
  (0,0)*{\xybox{
    (-4,-4)*{};(-4,6)*{} **\dir{-}?(1)*\dir{>} ;
    (4,-4)*{};(4,6)*{} **\dir{-}?(1)*\dir{>};
    (-5,-3)*{\scs i};
     (5.1,-3)*{\scs i};
          (-10,0)*{};(12,0)*{};
     }};
  \endxy.
\end{equation}

For $i \neq j$ we also have
\begin{equation} \label{e_dot_slide_ij-gen}
\xy 0;/r.17pc/:
  (0,0)*{\xybox{
    (-4,-4)*{};(4,6)*{} **\crv{(-4,-1) & (4,1)}?(1)*\dir{>}?(.75)*{\bullet};
    (4,-4)*{};(-4,6)*{} **\crv{(4,-1) & (-4,1)}?(1)*\dir{>};
    (-5,-3)*{\scs i};
     (5.1,-3)*{\scs j};
     (-10,0)*{};(10,0)*{};
     }};
  \endxy
 \;\; =
\xy 0;/r.17pc/:
  (0,0)*{\xybox{
    (-4,-4)*{};(4,6)*{} **\crv{(-4,-1) & (4,1)}?(1)*\dir{>}?(.25)*{\bullet};
    (4,-4)*{};(-4,6)*{} **\crv{(4,-1) & (-4,1)}?(1)*\dir{>};
    (-5,-3)*{\scs i};
     (5.1,-3)*{\scs j};
     (-10,0)*{};(10,0)*{};
     }};
  \endxy,
\qquad  \xy 0;/r.17pc/:
  (0,0)*{\xybox{
    (-4,-4)*{};(4,6)*{} **\crv{(-4,-1) & (4,1)}?(1)*\dir{>};
    (4,-4)*{};(-4,6)*{} **\crv{(4,-1) & (-4,1)}?(1)*\dir{>}?(.75)*{\bullet};
    (-5,-3)*{\scs i};
     (5.1,-3)*{\scs j};
     (-10,0)*{};(10,0)*{};
     }};
  \endxy
\;\;  =
  \xy 0;/r.17pc/:
  (0,0)*{\xybox{
    (-4,-4)*{};(4,6)*{} **\crv{(-4,-1) & (4,1)}?(1)*\dir{>} ;
    (4,-4)*{};(-4,6)*{} **\crv{(4,-1) & (-4,1)}?(1)*\dir{>}?(.25)*{\bullet};
    (-5,-3)*{\scs i};
     (5.1,-3)*{\scs j};
     (-10,0)*{};(12,0)*{};
     }};
  \endxy.
\end{equation}

If $i \neq k$ and $a_{ij} \geq 0$, the relation
\begin{equation}
\vcenter{\xy 0;/r.17pc/:
    (-4,-4)*{};(4,4)*{} **\crv{(-4,-1) & (4,1)}?(1)*\dir{};
    (4,-4)*{};(-4,4)*{} **\crv{(4,-1) & (-4,1)}?(1)*\dir{};
    (4,4)*{};(12,12)*{} **\crv{(4,7) & (12,9)}?(1)*\dir{};
    (12,4)*{};(4,12)*{} **\crv{(12,7) & (4,9)}?(1)*\dir{};
    (-4,12)*{};(4,20)*{} **\crv{(-4,15) & (4,17)}?(1)*\dir{};
    (4,12)*{};(-4,20)*{} **\crv{(4,15) & (-4,17)}?(1)*\dir{};
    (-4,4)*{}; (-4,12) **\dir{-};
    (12,-4)*{}; (12,4) **\dir{-};
    (12,12)*{}; (12,20) **\dir{-};
    (4,20); (4,21) **\dir{-}?(1)*\dir{>};
    (-4,20); (-4,21) **\dir{-}?(1)*\dir{>};
    (12,20); (12,21) **\dir{-}?(1)*\dir{>};
   (-6,-3)*{\scs i};
  (6,-3)*{\scs j};
  (15,-3)*{\scs k};
\endxy}
 \;\; =\;\;
\vcenter{\xy 0;/r.17pc/:
    (4,-4)*{};(-4,4)*{} **\crv{(4,-1) & (-4,1)}?(1)*\dir{};
    (-4,-4)*{};(4,4)*{} **\crv{(-4,-1) & (4,1)}?(1)*\dir{};
    (-4,4)*{};(-12,12)*{} **\crv{(-4,7) & (-12,9)}?(1)*\dir{};
    (-12,4)*{};(-4,12)*{} **\crv{(-12,7) & (-4,9)}?(1)*\dir{};
    (4,12)*{};(-4,20)*{} **\crv{(4,15) & (-4,17)}?(1)*\dir{};
    (-4,12)*{};(4,20)*{} **\crv{(-4,15) & (4,17)}?(1)*\dir{};
    (4,4)*{}; (4,12) **\dir{-};
    (-12,-4)*{}; (-12,4) **\dir{-};
    (-12,12)*{}; (-12,20) **\dir{-};
    (4,20); (4,21) **\dir{-}?(1)*\dir{>};
    (-4,20); (-4,21) **\dir{-}?(1)*\dir{>};
    (-12,20); (-12,21) **\dir{-}?(1)*\dir{>};
  (-14,-3)*{\scs i};
  (-6,-3)*{\scs j};
  (6,-3)*{\scs k};
\endxy}
 \label{eq_r3_easy-gen}
\end{equation}
holds. Otherwise if $a_{ij} =-1$, we have
\begin{equation}
\vcenter{\xy 0;/r.17pc/:
    (-4,-4)*{};(4,4)*{} **\crv{(-4,-1) & (4,1)}?(1)*\dir{};
    (4,-4)*{};(-4,4)*{} **\crv{(4,-1) & (-4,1)}?(1)*\dir{};
    (4,4)*{};(12,12)*{} **\crv{(4,7) & (12,9)}?(1)*\dir{};
    (12,4)*{};(4,12)*{} **\crv{(12,7) & (4,9)}?(1)*\dir{};
    (-4,12)*{};(4,20)*{} **\crv{(-4,15) & (4,17)}?(1)*\dir{};
    (4,12)*{};(-4,20)*{} **\crv{(4,15) & (-4,17)}?(1)*\dir{};
    (-4,4)*{}; (-4,12) **\dir{-};
    (12,-4)*{}; (12,4) **\dir{-};
    (12,12)*{}; (12,20) **\dir{-};
    (4,20); (4,21) **\dir{-}?(1)*\dir{>};
    (-4,20); (-4,21) **\dir{-}?(1)*\dir{>};
    (12,20); (12,21) **\dir{-}?(1)*\dir{>};
    (-6,-3)*{\scs i};
  (6,-3)*{\scs j};
  (15,-3)*{\scs i};
\endxy}
\quad - \quad
\vcenter{\xy 0;/r.17pc/:
    (4,-4)*{};(-4,4)*{} **\crv{(4,-1) & (-4,1)}?(1)*\dir{};
    (-4,-4)*{};(4,4)*{} **\crv{(-4,-1) & (4,1)}?(1)*\dir{};
    (-4,4)*{};(-12,12)*{} **\crv{(-4,7) & (-12,9)}?(1)*\dir{};
    (-12,4)*{};(-4,12)*{} **\crv{(-12,7) & (-4,9)}?(1)*\dir{};
    (4,12)*{};(-4,20)*{} **\crv{(4,15) & (-4,17)}?(1)*\dir{};
    (-4,12)*{};(4,20)*{} **\crv{(-4,15) & (4,17)}?(1)*\dir{};
    (4,4)*{}; (4,12) **\dir{-};
    (-12,-4)*{}; (-12,4) **\dir{-};
    (-12,12)*{}; (-12,20) **\dir{-};
    (4,20); (4,21) **\dir{-}?(1)*\dir{>};
    (-4,20); (-4,21) **\dir{-}?(1)*\dir{>};
    (-12,20); (-12,21) **\dir{-}?(1)*\dir{>};
  (-14,-3)*{\scs i};
  (-6,-3)*{\scs j};
  (6,-3)*{\scs i};
\endxy}
 \;\; =\;\;
 t_{ij} \;\;
\xy 0;/r.17pc/:
  (4,12);(4,-12) **\dir{-}?(0)*\dir{<};
  (-4,12);(-4,-12) **\dir{-}?(0)*\dir{<}?(.25)*\dir{};
  (12,12);(12,-12) **\dir{-}?(0)*\dir{<}?(.25)*\dir{};
  (-6,-9)*{\scs i};     (6.1,-9)*{\scs j};
  (14,-9)*{\scs i};
 \endxy.
 \label{e_r3_hard-gen}
\end{equation}
The multiplication is zero if the labels at the conjunction of diagrams do not match. We will call both  \(R(\beta)\) and \(R = \bigoplus_{\beta} R(\beta)\) KLR algebra when there is no confusion.
Let \(R(\beta) \text{-} \mathrm{ pMod}\) be the category of graded, finitely-generated, projective \(R(\beta)\)-modules. In \cite{lauda0} Proposition 3.18, Khovanov and Lauda showed that 
\[ R\text{-} \mathrm{ pMod}=\bigoplus_{\beta} R(\beta)\text{-}\mathrm{ pMod}\]
categorifies the positive half $\UA^+_q(\mathfrak{sl}_n)$ of $\UA_q(\mathfrak{sl}_n)$.

\begin{Definition} \label{ccc}
For a dominant   \(\mf{sl}_n\) weight \(\lb \in X\), the cyclotomic quotient \(R^{\lb}(\beta)\) of the KLR algebra \(R^{}(\beta)\)  is the quotient, in which leftmost strand with color \(i\) and \(\lb_i\) dots is set to be zero for all \(i \in I\):
 \begin{equation} 
\vcenter{\xy 0;/r.17pc/:
  (-3,6);(-3,-6) **\dir{-}?(0)*\dir{<}+(2.3,0)*{};
   (5,0)*{\cdots};  
  (-3,2)*{\bullet};(-6.5,5)*{};
  (-5,-4)*{\scs i};    (-7,3)*{\scs \lb_i}; 
 \endxy}\, \,  =\, \, 0\; .
\end{equation}
The algebra 
\begin{equation}
R^{\lb} := \bigoplus_{\beta \in \N [I]} R^{\lb}(\beta).
\end{equation}
 is called cyclotomic KLR algebra and its summand \( R^{\lb}(\beta)\) is called cyclotomic KLR algebra of rank \(\beta\). 
\end{Definition}

 \( R^{\lb} \) is a graded, unital and finite dimensional algebra. The algebra \( R^{\lb} \) is interesting for the following reason. There is a 2-representation  \(\U \to R^{\lb}\text{-}\mathrm{ pMod} \) (see Theorem 3.17 in \cite{ben} for the details). \(R^{\lb}\text{-} \mathrm{ pMod}\)  categorifies the irreducible finite dimensional $\, \UA_q(\mathfrak{sl}_n)$-module with the highest weight \(\lb\). This result was first conjectured by Khovanov and Lauda in \cite{lauda0}, and proved later independently by Brundan and Kleshchev \cite{BK2}, Kang and Kashiwara  \cite{kk}, Webster \cite{ben}.  

We now define a deformation of \( R^{\lb} \) according to Webster \cite{ben}.  Let \(\End_{\U} (\oplus_{\mathbf{i}} 1_{\lb} \mathcal{E}_{\mathbf{i}})\) be the endomorphism algebra of   directs sums of 1-morphisms in \(\U\) of form 
\(1_{\lb} \mathcal{E}^{a_1}_{i_1} \dots  \mathcal{E}^{a_l}_{i_l}   \), \(l \geq 0\), \(a_1, \dots, a_l \geq 0\), \(i_1, \dots i_l \in I\). 
\begin{Definition}
Let \(\check{R}^{\lb}\) be the quotient of \(\End_{\U} (\oplus_{\mathbf{i}} 1_{\lb} \mathcal{E}_{\mathbf{i}})\) by the relations 
 \begin{equation} 
\vcenter{\xy 0;/r.20pc/:
  (-3,8);(-3,-8) **\dir{-}?(0)*\dir{<}+(2.3,0)*{};
   (5,0)*{\cdots};  
  (-3,2)*{\bullet};(-6.5,5)*{};
  (-2,-7)*{\scs i};    (0,3)*{\scs \lb_i}; (-10,4)*{\scs \lb}; 
 \endxy}\, \, + \, \, 
 \vcenter{\xy 0;/r.20pc/:
  (-3,8);(-3,-8) **\dir{-}?(0)*\dir{<}+(2.3,0)*{};
   (5,0)*{\cdots};  
  (-3,2)*{\bullet};(-6.5,5)*{};
  (-2,-7)*{\scs i};    (2,3)*{\scs \lb_i-1}; (-12,9)*{\scs \lb}; 
  (-12,0)*{\iccbub{\spadesuit+1}{i}};
 \endxy}\, \, + 
 \dots +\, \, 
 \vcenter{\xy 0;/r.20pc/:
  (-3,8);(-3,-8) **\dir{-}?(0)*\dir{<}+(2.3,0)*{};
   (5,0)*{\cdots};  
  (-2,-7)*{\scs i};  
  (-12,9)*{\scs \lb}; 
  (-12,0)*{\iccbub{\spadesuit+\lb_i-1}{i}};
 \endxy}\, \,
  =\, \, 0\; ,
\end{equation} 
\begin{equation}
\xy 0;/r.20pc/:
  (4,7)*{\scs \lb};
  (2,-2)*{\iccbub{r}{i}};
 \endxy \;= 0 \quad \text{for all } r \geq 0.
\end{equation}
for all \(i \in I\). 
\end{Definition}
The following result is due to Webster \cite{ben}, section 3.5:
\begin{Theorem}
The algebra \(\check{R}^{\lb}\) is a flat deformation of \({R}^{\lb}\). 
\end{Theorem}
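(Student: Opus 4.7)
The plan is to exhibit a PBW-type basis of $\check{R}^{\lb}$ that reduces to a known basis of $R^{\lb}$ when the bubble parameters are set to zero, thereby verifying flatness.

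First, I would introduce a descending filtration on $\check{R}^{\lb}$ by the number of counterclockwise bubbles appearing at the leftmost region (which carries label $\lb$). Using the bubble-slide formulas~\eqref{eq_curl},~\eqref{eq_ident_decomp-ngeqz}, and~\eqref{eq_ident_decomp-nleqz}, together with the infinite Grassmannian relation~\eqref{eq_infinite_Grass}, any diagram in $\End_{\U}\bigl(\bigoplus_{\mathbf i} 1_{\lb}\mathcal{E}_{\mathbf i}\bigr)$ can be rewritten as a linear combination of diagrams of the form ``KLR-type diagram $\cdot$ leftmost bubble monomial'', where the KLR-type diagram involves only dots, crossings, and caps/cups that do not create bubbles in the $\lb$-region. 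The clockwise bubbles can be exchanged for counterclockwise ones via~\eqref{eq_infinite_Grass}, so this normal form is well-defined.

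Next I would identify the subalgebra generated by the KLR-type diagrams with a natural surjection onto $R^{\lb}$: the only relation that needs checking is that the cyclotomic relation defining $\check{R}^{\lb}$ specializes to the relation of Definition~\ref{ccc} modulo the ideal generated by all positive-degree counterclockwise bubbles at $\lb$. This is immediate from inspection of the defining relation, since only the leading term $x_i^{\lb_i}$ survives when all bubbles $\iccbub{\spadesuit+k}{i}$ with $k\geq 1$ are killed, and the degree-zero fake bubble equals the identity up to the scalar $c^{-1}_{i,\lb}$. Consequently, the associated graded algebra $\gr \check{R}^{\lb}$ surjects onto $R^{\lb}\otimes\k[\text{leftmost bubbles}]$.

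To upgrade this surjection to an isomorphism, I would exhibit a faithful representation of $\check{R}^{\lb}$ on which the proposed basis acts by linearly independent operators. The natural candidate, used throughout this paper, is the polynomial 2-representation $\Theta_N\colon\U\to\textbf{Pol}_N$ restricted to the 1-morphisms $1_{\lb}\mathcal{E}_{\mathbf i}$: by construction $\Theta_N$ factors through $\check{R}^{\lb}\text{-}\mathrm{pMod}$, and the free action of symmetric polynomials on the relevant bimodules realizes the leftmost bubbles as algebraically independent elements, while the known faithfulness of the KLR-action on cyclotomic quotients (due to Brundan--Kleshchev and Webster) handles the KLR factor. Combining the spanning statement with this linear independence yields the PBW decomposition $\check{R}^{\lb}\cong R^{\lb}\otimes\k[\text{bubble generators}]$ as graded vector spaces, which is precisely the assertion of flatness.

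The main obstacle is the linear independence argument in the final step: the cyclotomic relation of $\check{R}^{\lb}$ genuinely mixes KLR-type generators with bubble monomials, so one must show that no ``hidden'' relations are imposed beyond the ones written. Producing a sufficiently large 2-representation in which the bubbles act as independent polynomial generators—and checking compatibility with the strongly equivariant factorization through $\check{R}^{\lb}$—is the technical heart of the argument; the remaining steps are essentially bookkeeping in the diagrammatic calculus.
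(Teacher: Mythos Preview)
The paper does not prove this theorem at all: it is stated as a result due to Webster and attributed to \cite{ben}, section~3.5, with no argument given in the present paper. So there is no ``paper's own proof'' to compare against.

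That said, your outline is a reasonable sketch of how such a flatness statement is typically established, and it is broadly in the spirit of Webster's original argument. One concrete issue, however, is your proposed use of the polynomial 2-representation $\Theta_N$ for the linear-independence step: as defined in this paper, $\Theta_N$ is only available for the highest weight $\lb_0 = N\omega_1$, not for arbitrary dominant $\lb$, so it cannot directly serve as the faithful representation you need in general. You would instead have to appeal to a more general construction (for instance, Webster's deformed cyclotomic quotients themselves, or the equivariant-cohomology/quiver-variety models that work for all $\lb$). The rest of your plan---normalizing diagrams to a KLR factor times leftmost bubble monomials, checking that the cyclotomic relation specializes correctly when bubbles are killed, and then matching dimensions via a faithful action---is the standard route, and you have correctly identified the linear-independence step as the technical heart.
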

Unlike \({R}^{\lb}\), \(\check{R}^{\lb}\)  is infinite-dimensional.  The category \(\check{R}^{\lb}\text{-}\mathrm{ pMod}\)  also admits an action of \(\U\), and this action  gives another categorification of \(V(\lb)\).
The next theorem, proven by Rouquier (see Theorem 4.19 in \cite{brun1}), relates universal categorification 
\({\cal V}(\lb)\) and  \(\check{R}^{\lb}\text{-} pMod\).  
\begin{Theorem}
There is a strongly equivariant equivalence between \(\check{R}^{\lb} \text{-} \mathrm{ pMod}\) and the Karoubi envelope of  \({\cal V}(\lb)\).
\end{Theorem}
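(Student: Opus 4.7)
\proof
The plan is to exploit the universal property of the quotient \(\cal V(\lb)\) and a Yoneda-type construction to produce the equivalence in both directions. The central observation is that \(\check{R}^{\lb}\) is, by definition, a quotient of \(\End_{\U}\bigl(\bigoplus_{\mathbf{i}} \mathcal{E}_{\mathbf{i}} 1_{\lb}\bigr)\), and its defining relations are precisely those that must vanish once one kills the ideal generated by 1-morphisms of the form \(R\mathcal{E}_i 1_\lb\) on the object \(\lb\). Both \(\text{Kar}(\cal V(\lb))\) and \(\check{R}^{\lb}\text{-}\mathrm{pMod}\) are therefore natural candidates for the ``universal 2-representation of \(\U\) generated by a highest weight vector of weight \(\lb\).''

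In the first step I would use Webster's existing 2-representation \(\Psi\colon \U\to \check{R}^{\lb}\text{-}\mathrm{pMod}\) (Theorem~3.17 of \cite{ben}) and verify that it annihilates \(\cal I(\lb)\); concretely, one checks that \(\Psi(\mathcal{E}_i 1_\lb) = 0\) because the cyclotomic defining relation of \(\check R^\lb\) forces the corresponding bimodule to vanish. Since the target is additive and Karoubian, \(\Psi\) descends to an additive 2-functor
\[
\gamma\colon \text{Kar}(\cal V(\lb))\longrightarrow \check R^{\lb}\text{-}\mathrm{pMod},
\]
which by construction is strongly equivariant in the sense of Definition~\ref{equibb}. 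In the second step I would construct a candidate inverse \(\delta\) via a Hom-functor: set
\[
\delta(P) \;=\; \text{Hom}_{\check R^\lb}\Bigl(\bigoplus_{\mathbf{i}} \Psi(\mathcal{E}_{\mathbf{i}} 1_\lb),\, P\Bigr)
\]
regarded as an object of \(\text{Kar}(\cal V(\lb))\) after identifying its endomorphism algebra with \(\check R^\lb\). The faithful generator \(\bigoplus_{\mathbf{i}} \mathcal{E}_{\mathbf{i}}1_\lb\) exists on both sides, so a standard Morita-type argument identifies the respective endomorphism algebras and yields mutually inverse equivalences once one knows the endomorphism algebra comparison.

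The crux, and the step I expect to be the main obstacle, is precisely this endomorphism-algebra comparison: one must show that the natural algebra homomorphism
\[
\check R^{\lb}\;\longrightarrow\;\End_{\text{Kar}(\cal V(\lb))}\Bigl(\bigoplus_{\mathbf{i}} \mathcal{E}_{\mathbf{i}} 1_\lb\Bigr)
\]
is an isomorphism. Surjectivity follows because the 2-morphism generators of \(\U\) (dots, crossings, cups and caps) and the cyclicity and biadjointness relations realise every endomorphism diagrammatically in terms of the generators \eqref{gens} together with dotted bubbles which are already present in \(\check R^\lb\). Injectivity is the delicate part: one must show that no additional relations appear in the quotient beyond the cyclotomic and bubble-vanishing relations imposed in the definition of \(\check R^\lb\). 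This is where Rouquier's argument enters: one uses the action of \(\U\) on \(\check R^\lb\text{-}\mathrm{pMod}\) to exhibit a basis (indexed by KLR diagrams decorated with bubbles from the fundamental polynomial subalgebra) and then checks that the composite \(\check R^\lb\to \End_{\text{Kar}(\cal V(\lb))}\to \End_{\check R^\lb\text{-}\mathrm{pMod}}\) is already injective, which forces injectivity of the first map.

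Finally, strong equivariance is verified by noting that both \(\gamma\) and \(\delta\) are built from the \(\U\)-action via horizontal composition, so the natural isomorphisms \(c_u\) of Definition~\ref{equibb} come for free from the 2-functoriality of the constructions; compatibility with 2-morphisms \eqref{2mor} reduces to the compatibility of the Hom-functor with horizontal composition. This completes the proof sketch; full details are carried out in \cite{brun1}, Theorem~4.19.
\endproof
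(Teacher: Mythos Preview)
The paper does not give its own proof of this theorem; it simply attributes the result to Rouquier and points to Theorem~4.19 of the cited reference. So there is no in-paper argument to compare against, and your sketch ends by deferring to the same source.

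Your Morita-theoretic outline is the right shape, but there is a directional slip that makes parts of it literally inconsistent. You invoke \(\Psi(\mathcal E_i 1_\lb)=0\) as the highest-weight condition (correct), yet immediately afterwards take \(\bigoplus_{\mathbf i}\mathcal E_{\mathbf i}1_\lb\) as a ``faithful generator.'' If \(\lb\) is highest weight then every such object with \(\mathbf i\) nonempty is zero, so this cannot generate anything. The generating family in \(\cal V(\lb)\) consists of 1-morphisms going \emph{down} from \(\lb\), i.e.\ \(\mathcal F_{\mathbf i}1_\lb\); in the paper's conventions this is the same, via biadjointness, as the family \(1_\lb\mathcal E_{\mathbf i}\) whose endomorphism algebra is what \(\check R^\lb\) is defined to be a quotient of. Once you fix this, your endomorphism-algebra comparison becomes the genuine content, and you correctly identify injectivity as the hard step.

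A second gap is the construction of \(\delta\): writing \(\Hom_{\check R^\lb}(-,P)\) and declaring it ``regarded as an object of \(\Kar(\cal V(\lb))\)'' is not a definition. What one actually does is prove that the generator \(\bigoplus_{\mathbf i}\mathcal F_{\mathbf i}1_\lb\) in \(\Kar(\cal V(\lb))\) is a compact projective generator with endomorphism algebra \(\check R^\lb\), and then appeal directly to Morita theory (or Gabriel's theorem) to conclude the equivalence; the inverse functor comes for free rather than being built by hand.
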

The next theorem shows the universality property of \(\check{R}^{\lb}\text{-} \mathrm{ pMod}\) (see Corollary 3.27 in Webster \cite{ben}). It emphasizes the importance of  \(\check{R}^{\lb}\text{-}\mathrm{ pMod}\) in higher representation theory and its relation to other integrable 2-representations.
\begin{Theorem} \label{univv}
Given a 2-representation \(\Phi \colon \U \to \K\) for some additive, idempotent complete 2-category such that \(\Phi(\E_i) (\K_{\lb})=0\) for all \(i \in I\) there is a unique strongly equivariant functor \(\check{R}^{\lb}\text{-} \mathrm{ pMod} \to \K\).
\end{Theorem}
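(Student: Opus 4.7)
The plan is to reduce the statement to the two earlier inputs: Rouquier's quotient construction of ${\cal V}(\lb)$ as the universal categorification, together with the strongly equivariant equivalence $\check{R}^{\lb}\text{-}\mathrm{pMod} \simeq \mathrm{Kar}({\cal V}(\lb))$. The desired unique functor will be obtained by first producing a strongly equivariant functor out of the ``free'' 2-representation ${\cal R}(\lb)$, then observing that the hypothesis on $\Phi$ forces it to descend to the quotient ${\cal V}(\lb)$, and finally extending to the idempotent completion.

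First I would construct a strongly equivariant functor $\widetilde{\gamma} \colon {\cal R}(\lb) \to \K$ by sending a 1-morphism $R \colon \lb \to \nub$ in $\U$, viewed as an object of ${\cal R}(\lb)_{\nub}$, to the functor $\Phi(R) \colon \K_{\lb} \to \K_{\nub}$; equivalently, after fixing a generator of $\K_\lb$, to its image under $\Phi(R)$. On 2-morphisms, $\widetilde{\gamma}$ is defined by applying $\Phi$ to the generating 2-morphisms in $\U$. The compatibility data $c_u$ of Definition~\ref{equibb} arises tautologically from the associativity of composition of 1-morphisms, and the coherence \eqref{2mor} follows because $\Phi$ is itself a 2-functor. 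This step uses the fact that ${\cal R}(\lb)$ was defined as the representation ``freely generated'' by a single object at weight $\lb$, so there is no obstruction to constructing the functor out of it.

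Next I would verify that $\widetilde{\gamma}$ annihilates the sub-2-category ${\cal I}(\lb)$. By construction, every object of ${\cal I}(\lb)$ is a direct summand of an expression of the form $R\,{\cal E}_i$ with $R \colon \overbar{\lb + \alpha_i} \to \nub$ in $\U$, and $\widetilde{\gamma}(R\,{\cal E}_i) = \Phi(R) \circ \Phi({\cal E}_i)$ vanishes on $\K_{\lb}$ by the hypothesis $\Phi({\cal E}_i)(\K_{\lb}) = 0$. Since $\K$ is additive, $\widetilde{\gamma}$ kills all direct summands and therefore factors through the quotient ${\cal V}(\lb) = {\cal R}(\lb)/{\cal I}(\lb)$, producing a strongly equivariant functor $\gamma \colon {\cal V}(\lb) \to \K$. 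Because $\K$ is idempotent complete, $\gamma$ extends uniquely to the Karoubi envelope $\mathrm{Kar}({\cal V}(\lb))$, and composing with the equivalence $\check{R}^{\lb}\text{-}\mathrm{pMod} \simeq \mathrm{Kar}({\cal V}(\lb))$ gives the desired functor $\check{R}^{\lb}\text{-}\mathrm{pMod} \to \K$.

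For uniqueness, the point is that $\check{R}^{\lb}\text{-}\mathrm{pMod}$ is generated as a 2-representation by the image of the identity 1-morphism $1_{\lb}$: every indecomposable projective is a direct summand of some $\F_{\mathbf{j}}\,1_{\lb}$, and all 2-morphisms are generated by the image of 2-morphisms from $\U$ via horizontal composition. Any strongly equivariant functor is therefore determined by the data $\gamma(1_{\lb})$ and the natural isomorphisms $c_u$, both of which are fixed by strong equivariance once $\Phi$ is given. The main technical obstacle I anticipate is checking that the cyclotomic relation defining $\check{R}^{\lb}$ is automatically satisfied by the image of $\Phi$: one must show that the precise expression appearing in the definition of $\check{R}^{\lb}$ (the dotted strand plus the correction terms involving counterclockwise bubbles) represents a 2-morphism in $\U$ that factors through $\E_i\,1_{\lb}$, after applying the extended $\mathfrak{sl}_2$ relations \eqref{eq_ident_decomp-ngeqz}--\eqref{eq_ident_decomp-nleqz} and the bubble-sliding identities \eqref{eq_curl}. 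Once this factorization is made explicit, the hypothesis $\Phi({\cal E}_i)(\K_{\lb}) = 0$ immediately forces the cyclotomic relation on the $\Phi$-side, completing both the well-definedness and the uniqueness of the induced functor.
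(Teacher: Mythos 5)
The paper does not actually prove Theorem \ref{univv}: it is quoted from Webster's Corollary 3.27 in \cite{ben}, so there is no in-paper argument to compare against. Your sketch is a reasonable reconstruction of the standard Rouquier--Webster argument, and it uses exactly the ingredients the paper does state: build a strongly equivariant functor out of the free 2-representation ${\cal R}(\lb)$, use the hypothesis $\Phi(\E_i)(\K_{\lb})=0$ to kill ${\cal I}(\lb)$ and descend to ${\cal V}(\lb)$, extend over the Karoubi envelope using idempotent completeness of $\K$, and compose with the quoted strongly equivariant equivalence between $\check{R}^{\lb}\text{-}\mathrm{pMod}$ and $\Kar({\cal V}(\lb))$. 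As a proof strategy this is sound, though it is a reduction to the two quoted theorems rather than a self-contained argument --- which is in line with how the paper itself treats the statement.

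Two points need tightening. First, your functor $\widetilde{\gamma}$ out of ${\cal R}(\lb)$ silently depends on a choice of object of $\K_{\lb}$ (the image of $1_{\lb}$), and the uniqueness assertion only makes sense relative to that choice and up to natural isomorphism; the theorem as printed is already loose here, but your write-up should make the normalization explicit, since without it ``uniqueness'' literally fails (distinct objects of $\K_{\lb}$ annihilated by all $\Phi(\E_i)$ give distinct functors). Second, your closing paragraph conflates two routes: if the functor is obtained by factoring through $\Kar({\cal V}(\lb))$ and invoking the quoted equivalence, you do not additionally need to verify the deformed cyclotomic relation inside $\K$ --- that verification is precisely the content packaged into the quoted equivalence. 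The curl computation you describe (that the strand with $\lb_i$ dots plus its bubble correction terms is a left curl at the region $\lb$, hence factors through a 1-morphism involving $\E_i 1_{\lb}$ and dies under $\Phi$) is the key step only if one constructs the functor directly on $\check{R}^{\lb}\text{-}\mathrm{pMod}$, and it plays no role in uniqueness, which instead follows from the fact that $\check{R}^{\lb}$ is generated by images of 2-morphisms of $\U$ together with strong equivariance. Neither issue is fatal, but both should be resolved before the sketch counts as a proof.
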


Let  \(\Phi \colon \U \to \cal{K}\) be a 2-representation. By functoriality of the trace,  \(\Phi\)  induces a \(\mathbb{C}\)-algebra homomorphism 
\begin{equation}
 \Tr \U  \xrightarrow{\Tr \Phi } \Tr \K .
\end{equation}
We compose \(\Tr \Phi \) with  \(\dot{\bfU}(\mf{sl}_n[t]) \xrightarrow{\rho} \Tr \U \) to obtain
 a \(\mathbb{C}\)-algebra homomorphism
 \begin{equation} \label{dee}
 \dot{\bfU}(\mf{sl}_n[t])   \xrightarrow{\Tr \Phi } \Tr \K .
\end{equation}

Since \(\K\) is a cyclic 2-category,  \(\Tr \K \) acts on the center of objects \(Z(\K)\). Thus, combining this action with the algebra homomorphism \eqref{dee}, we deduce that  \(Z(\K)\) is a current algebra module. 

Let \(Z ( {R}^{\lb}\text{-}\mathrm{ pMod}) = \bigoplus_{\nub \leq \lb} Z ({R}^{\lb}(\lb -\nub) ) \text{-} \mathrm{ pMod}\) be the center of objects of \( {R}^{\lb} \text{-} \mathrm{ pMod}\), and it is the same as the center 
\(Z  ( {R}^{\lb} ) \) of the cyclotomic KLR algebra \(R^{\lb}\). Hence, by the homomorphism \eqref{dee}, both \(\Tr  {R}^{\lb} \) and \(Z ( {R}^{\lb})\) becomes a current algebra module. 

There is a map \(t \colon R^{\lb} \to \mathbb{C}\) which is a Frobenius trace (see Theorem 3.18, \cite{ben}). This means that the kernel of the map \(t \colon R^{\lb} \to \mathbb{C}\)  contains no nonzero left ideal of \(R^{\lb}\). The map can be realized diagrammatically by closing the diagram representing an element in \(R^{\lb}\) to the right. We obtain an element in \(R^{\lb}(0) \simeq \mathbb{C}\):
\begin{equation}
\hackcenter{\begin{tikzpicture}
    \draw[very thick] (-.55,0) -- (-.55,1.5);
    \draw[very thick] (0,0) -- (0,1.5);
    \draw[very thick] (.55,0) -- (.55,1.5);
    \draw[fill=white!20,] (-.8,.5) rectangle (.8,1);
    \node () at (0,.75) {$a$};
      \node () at (-1.2,.75) {$\lb$};
\end{tikzpicture}}
\quad \longmapsto
\quad
\hackcenter{\begin{tikzpicture}
    \draw[very thick] (-1.65,-.7) -- (-1.65, .7).. controls ++(0,.95) and ++(0,.95) .. (1.65,.7)
        to (1.65,-.7) .. controls ++(0,-.95) and ++(0,-.95) .. (-1.65,-.7);
    \draw[very thick] (-1.1,-.55) -- (-1.1,.55) .. controls ++(0,.65) and ++(0,.65) .. (1.1,.55)
        to (1.1,-.55) .. controls ++(0,-.65) and ++(0,-.65) .. (-1.1, -.55);
    \draw[very thick] (-.55,-.4) -- (-.55,.4) .. controls ++(0,.35) and ++(0,.35) .. (.55,.4)
        to (.55, -.4) .. controls ++(0,-.35) and ++(0,-.35) .. (-.55,-.4);
    \draw[fill=white!20,] (-1.8,-.25) rectangle (-.4,.25);
    \node () at (-1,0) {$a$};  \node () at (-2.2,0) {$\lb$};
\end{tikzpicture}} \; 
 \end{equation}
The map \(t \colon R^{\lb} \to \mathbb{C}\) is symmetric, that is, \(t(aa')=t(a'a)\) for all \(a, a' \in  R^{\lb}\). This map induces a non-degenerate \(Z( R^{\lb}(\beta) )\)-bilinear form \(t\colon Z( R^{\lb}(\beta)) \times \Tr  R^{\lb}(\beta)  \to \mathbb{C} \) of degree \(-d^{\lb}_{\lb - \beta} \) (see \cite{vas}, Prop. 3.10). 

We will need the following crucial result, which is proven by independently Beliakova-Habiro-Lauda-Webster \cite{lcurr}, Theorem 7.3 and Shan-Varagnolo-Vasserot \cite{vas}, Theorem 3.34. 
\begin{Theorem} \label{thmm}
As a current algebra module, \(\Tr {R}^{\lb} \) is isomorphic to the local Weyl module  \(W^{}(\lb)\). Dually,  \(Z ({R}^{\lb} )\) is isomorphic to the dual local Weyl module \(W^*(\lb)\).
\end{Theorem}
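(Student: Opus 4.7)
The plan is to construct an explicit surjection $\phi\colon W(\lb)\twoheadrightarrow \Tr R^{\lb}$ of current algebra modules and then argue that it is an isomorphism by a dimension count. For the dual statement we pair this with the Frobenius trace on $R^{\lb}$.

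First, I would define $\phi$ by sending the cyclic generator $v_\lb\in W(\lb)$ to the trace class $[1_{1_\lb}]\in\Tr R^{\lb}(0)$ of the identity $2$-morphism at the highest weight. To see that $\phi$ is well defined one must verify the three defining relations \eqref{defrel} in $\Tr R^{\lb}$. Using the explicit formulas for the current algebra action coming from Theorem~\ref{prop-cur} and the fact that the $2$-representation $\U\to R^{\lb}\text{-}\mathrm{pMod}$ factors through $\check R^{\lb}\text{-}\mathrm{pMod}$:
\begin{itemize}
\item The relation $E_{i,j}v_\lb=0$ follows because $\mathcal{E}_i1_\lb$ is zero in $R^{\lb}\text{-}\mathrm{pMod}$ (the corresponding weight space $\lb+\alpha_i$ lies above the highest weight, so $R^{\lb}(\lb-\alpha_i)$ is the zero algebra), and hence the closure diagram defining $\mathsf{E}_{i,j}1_\lb$ acts as zero on $[1_{1_\lb}]$.
\item The relation $H_{i,j}v_\lb=\delta_{j,0}\lb_iv_\lb$ follows from the defining cyclotomic bubble relations in $\check R^{\lb}$: at weight $\lb$ the counterclockwise bubbles are killed, which forces every term of $\pi_{i,j}(\lb)$ with $j>0$ to vanish, while for $j=0$ one has $\pi_{i,0}(\lb)=\lb_i\,\Id$.
\item Finally $F_{i,0}^{\lb_i+1}v_\lb=0$ follows from the cyclotomic relation of Definition~\ref{ccc}: the corresponding diagram is the closure of a vertical $i$-strand with $\lb_i+1$ dots, which is annihilated in $R^{\lb}$.
\end{itemize}

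Next I would show $\phi$ is surjective. By the PBW-type spanning argument for $\Tr\U$ combined with the action \eqref{mpp1}--\eqref{mpp3}, every trace class in $\Tr R^{\lb}$ is obtained by applying products of $F_{i,j}$'s to the class $[1_{1_\lb}]$ (elements produced by $E_{i,j}$'s or positive $H_{i,j}$'s either return to the image of such an $F$-word or vanish, using cyclicity of the $2$-category to move all decorations past the highest weight region). Hence $\phi$ is surjective.

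The main obstacle is injectivity: one must show that $\dim_\k\Tr R^{\lb}=\dim_\k W(\lb)$ in every weight space and in every graded degree. Here I would invoke the finite dimensionality of $R^{\lb}(\lb-\nub)$ and use the known graded dimension of $\Tr A$ for a symmetric Frobenius algebra $A$, namely the degree-shifted vector space dual of $Z(A)$. Combined with the computation of $Z(R^{\lb})$ as a quotient of the polynomial algebra by the cyclotomic ideal (Brundan, Shan--Varagnolo--Vasserot), this matches the graded character of $W(\lb)$ by the universality property Theorem~\ref{unip} applied to both sides. This forces $\phi$ to be an isomorphism of graded current algebra modules.

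For the second assertion, the Frobenius trace $t\colon R^{\lb}\to\k$ induces a non-degenerate bilinear pairing $Z(R^{\lb})\times\Tr R^{\lb}\to\k$ of degree $-d^\lb_{\lb-\beta}$ on each weight component, so $Z(R^{\lb})\cong(\Tr R^{\lb})^*$ with the appropriate grading shift. Under the anti-involution $\omega$ exchanging $E_{i,j}$ and $F_{i,j}$ and fixing $H_{i,j}$, the induced action on the dual matches the current algebra action on $W^*(\lb)$ defined in \eqref{dual} (the Frobenius property of $t$ ensures $\omega$ corresponds to reflecting diagrams, which is precisely how biadjunction acts on trace classes). Combining with the first part gives $Z(R^{\lb})\cong W(\lb)^*=W^*(\lb)$ as graded current algebra modules.
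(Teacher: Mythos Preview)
The paper does not actually prove Theorem~\ref{thmm}; it merely quotes it as a known result from \cite{lcurr} and \cite{vas}. So there is no ``paper's own proof'' to compare against, only the external references. Your outline follows, in broad strokes, the strategy of \cite{lcurr}: build a surjection $W(\lb)\to\Tr R^{\lb}$ by sending $v_{\lb}$ to the class of the identity at the highest weight, verify the defining relations, and then establish injectivity. The duality argument via the Frobenius trace is also the right mechanism.

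That said, two steps in your sketch are not correct as written. First, your verification of $F_{i,0}^{\lb_i+1}v_{\lb}=0$ misidentifies the diagram: $F_{i,0}^{\lb_i+1}\cdot[1_{1_{\lb}}]$ is represented by $\lb_i+1$ \emph{nested undotted} counterclockwise $i$-bubbles, not by the closure of a single strand carrying $\lb_i+1$ dots. The cyclotomic relation of Definition~\ref{ccc} kills a leftmost $i$-strand with $\lb_i$ dots; it does not directly annihilate nested bubbles. What is really needed is the (nontrivial) fact that $R^{\lb}\big((\lb_i+1)\alpha_i\big)=0$, i.e.\ that the $1$-morphism $\mathcal{F}_i^{\lb_i+1}\mathbf{1}_{\lb}$ is zero in $R^{\lb}\text{-}\mathrm{pMod}$; this follows from the nilHecke structure together with the cyclotomic relation, but requires an argument.

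Second, and more seriously, your injectivity step is circular. You propose to compute $\dim\Tr R^{\lb}$ by dualizing via the Frobenius form to $Z(R^{\lb})$ and then invoking the identification of $Z(R^{\lb})$ with a quotient of a polynomial ring. But in this paper (and in \cite{vas}) the computation of $Z(R^{\lb})$ is exactly what is at stake in the second half of Theorem~\ref{thmm}; you cannot use it to prove the first half. Likewise, appealing to Theorem~\ref{unip} ``on both sides'' gives you only a surjection in each direction, not a dimension equality. In \cite{lcurr} the dimension lower bound for $\Tr R^{\lb}$ is obtained by an independent spanning/basis argument inside the cyclotomic quotient, and this is where the genuine content of the proof lies. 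Your sketch does not supply a substitute for that step.
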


\section{The  2-category \(\textbf{Pol}_N\) }
In this section we will construct an integrable  2-representation of \(\U\) using symmetric polynomials. This is very similar to Khovanov-Lauda's construction of the 2-representation using cohomology ring of flag varieties. We will compare this 2-representation  to Rouquier's universal categorification. 
\subsection{Symmetric polynomials}

We denote the set of  \(n\)-compositions and \(n\)-partitions of \(N\) by $\mathcal{P}^{}(n, N)$ and $\mathcal{P}^+(n, N)$, respectively. 
The algebra of polynomials $P=\mathbb{C} [X_1, X_2,\dots, X_N]$  in \(N\) commuting variables admits an action of the symmetric group $S_N$. The action of \( \sigma \in S_N\) on a polynomial \( q(X_1, X_2, \dots, X_N) \in P\) is defined as
\[ \sigma q(X_1, X_2, \dots, X_N) = q(X_{\sigma^{}(1)}, X_{\sigma^{}(2)}, \dots, X_{\sigma^{}(N)} ).\]
We will assume that all variables are of degree 2.  By $Sym_N =P^{S_N }$, we denote the subalgebra of symmetric polynomials.
Consider  a \(n\)-composition \(\nu = (\nu_1, \nu_2, \dots, \nu_n) \in \mathcal{P}^{}(n, N)\) and the parabolic subgroup \(S_\nu = S_{\nu_1} \times S_{\nu_2} \times \dots \times S_{\nu_n} \). To be clear, there are many subgroups of \(S_N\) isomorphic to \(S_\nu\), but all of them are conjugate.  Let \(P_\nu= P^{S_\nu }\) be the subalgebra of \(P\) which is symmetric separately in the first \(\nu_1\) variables, in the second \(\nu_2\) variables and so on.  We have \(P_{(1, 1,\dots, 1)}=P \) and \(P_{(N,0 , \dots, 0)}=Sym_N \). 

For \(r \geq 0\) and \(1 \leq i \leq n\), let \(e_r(\nu; i) \), \(h_r(\nu; i)\) and \(p_r(\nu; i)\) be the \(r\)-th elementary, complete and power sum symmetric polynomials respectively, in variables  
$$\Omega_i =  \left\{X_k \, | \, \nu_1+\dots +\nu_{i-1}+1 \leq k \leq \nu_1+\dots+\nu_i  \right\}.  $$
For example, if \(N=6\), \(n=3\), and \(\nu =(2, 1, 3)\), then \( e_r (\nu; 1) = e_r(X_1, X_2)\), \( e_1 (\nu; 2) = e_1(X_3)=X_3\), and \( e_r (\nu; 3) = e_r(X_4, X_5, X_6)\). The algebra \(P_\nu\) is generated by \(\{e_r(\nu; i)\}_{  r \geq 0, \, 1 \leq i \leq n },\) as well as by \(\{h_r(\nu; i) \}_{  r \geq 0, \, 1 \leq i \leq n }\) or by  \(\{p_r(\nu; i) \}_{  r \geq 0, \, 1 \leq i \leq n }\).

We extend our notation by defining the following symmetric polynomials:
\begin{align} \label{symm}
h_r(\nu;i_1,\dots,i_m) &= \sum_{r_1+\cdots+r_m = r}
h_{r_1}(\nu;i_1)h_{r_2}(\nu;i_2) \cdots h_{r_m}(\nu;i_m),\\
e_r(\nu;i_1,\dots,i_m) &= \sum_{r_1+\cdots+r_m = r}
e_{r_1}(\nu;i_1)e_{r_2}(\nu;i_2) \cdots e_{r_m}(\nu;i_m),
\end{align}
where \(1 \leq i_1 < \cdots < i_m \leq n\), \(m>0\).
It is not difficult to see that  \(e_r(\nu; i_1, i_2, \dots, i_m)\) and  \(h_r(\nu; i_1, i_2, \dots, i_m)\) are the \(r\)-th elementary and complete symmetric polynomials, respectively, in variables \(\Omega_{i_1} \cup \dots \cup \Omega_{i_m}\). This follows from the following well-known identities about symmetric polynomials, which can be found in \cite{mcd}. We have 

\begin{equation}\label{ma1}
\sum_{s=0}^r (-1)^{s} e_{s}(x_1,\dots,x_n) h_{r-s}(x_1,\dots,x_n) = 0 \quad  \text{for all } r \geq 1, 
\end{equation}
and for $r \geq 0$ we  have 
\begin{align}\label{brad122}
h_r(X_1,\dots,X_p,Y_1,\dots,Y_q) &= \sum_{s=0}^r h_{s}(X_1,\dots,X_p) h_{r-s}(Y_{1},\dots,Y_q),\\
e_r(X_1,\dots,X_p,Y_1,\dots,Y_q) &= \sum_{s=0}^r e_{s}(X_1,\dots,X_p) e_{r-s}(Y_{1},\dots,Y_q),\label{brad212}
\end{align}
\begin{align}\label{brad222}
h_r(Y_{1},\dots,Y_q)&=\sum_{s=0}^r (-1)^{s} e_{s}(X_1,\dots,X_p) 
h_{r-s}(X_1,\dots,X_p,Y_{1},\dots,Y_q),\\
e_r(Y_{1},\dots,Y_q)&=\sum_{s=0}^r (-1)^{s} 
h_{s}(X_1,\dots,X_p)e_{r-s}(X_1,\dots,X_p,Y_{1},\dots,Y_q).
\end{align}

By convention, $e_r(X_1,\dots,X_p) = h_r(X_1,\dots,X_p)=p_r(X_1,\dots,X_p) = 0$ for $r < 0$, and $e_0(X_1,\dots,X_p) = h_0(X_1,\dots,X_p) =p_0(X_1,\dots,X_p)= 1$.  Moreover, $e_r(X_1,\dots,X_p)= 0$ holds for all $r > p$.

Let \(\nu=(\nu_1, \dots , \nu_n) \in \mathcal{P}(n, N)\) be a \(n\)-composition of a fixed positive integer \(N\) as before. 
By \( \nu-\alpha_i\in \mathcal{P}(n,N) \) and \( \nu+\alpha_i\in \mathcal{P}(n,N) \), we mean the linear sum 
\[\nu-\alpha_i= (\nu_1, \dots, \nu_{i-1}, \nu_i-1, \nu_{i+1}+1, \nu_{i+2}, \dots, \nu_n), \]
\[\nu+\alpha_i= (\nu_1, \dots, \nu_{i-1}, \nu_i+1, \nu_{i+1}-1, \nu_{i+2}, \dots, \nu_n), \]
if all components are non-negative, otherwise we set them to \(\emptyset\). 
We will also need the following compositions. 
Define the compositions  \((\nu-\alpha_i, \nu) \in \mathcal{P}(n+1,N) \)  and \((\nu+\alpha_i, \nu) \in \mathcal{P}(n+1,N) \)

\[(\nu-\alpha_i, \nu) = (\nu_1, \dots, \nu_{i-1}, \nu_i-1,1, \nu_{i+1}, \nu_{i+2}, \dots, \nu_n), \] 
\[(\nu+\alpha_i, \nu)  = (\nu_1, \dots, \nu_{i-1}, \nu_i,1, \nu_{i+1}-1, \nu_{i+2}, \dots, \nu_n), \] 
if all components are non-negative,  and \(\emptyset\) otherwise.
Recall the definition of the ring \(P_\nu = P^{S_\nu}\). We can also define the rings \(P_{\nu-\alpha_i, \nu}\) and \(P_{\nu+\alpha_i, \nu}\) corresponding to the compositions \((\nu-\alpha_i, \nu)\) and \((\nu+\alpha_i, \nu)\) as well. 
Notice that \(P_{\nu-\alpha_i, \nu}\) is a free \(P_\nu\)-module with basis \(\{ 1, X_{k_i}, \dots ,X_{k_i}^{\nu_i-1} \}\), where \(k_i=\sum_{j=1}^i \nu_j\), and a free \(P_{\nu-\alpha_i}\)-module with basis \(\{ 1, X_{k_i}, \dots, X_{k_i}^{\nu_{i+1}} \}\). This is due to the fact that \( P^{S_{l-1}}\) is a free \(P^{S_l}\) -module with basis 
\( \{ 1, X_l, X_l^2, \dots, X^{l-1}\}\).

\begin{Example}
If \(\nu =(2, 1, 3)\) and \(i=1\), then  \(\nu- \alpha_1  =(1, 2, 3)\), \(k_1 =2\) and \((\nu- \alpha_1, \nu)  =(1, 1,1, 3)\) and we have 
\[P_{\nu} = \la e_r(X_1, X_2), X_3 , e_r(X_4, X_5, X_6)  \ra ,\]
\[P_{\nu- \alpha_1} = \la X_1, e_r(X_2, X_3) , e_r(X_4, X_5, X_6)  \ra ,\]
\[P_{\nu- \alpha_1, \nu} = \la X_1, X_2,  X_3, e_r(X_4, X_5, X_6) \ra .\]
It is easy to see that we have \( X_1= e_1(X_1, X_2)-X_2\), and  \(P_{\nu- \alpha_1, \nu}\) is a free \(P_\nu\)-module with basis \( \left\{ 1, X_2 \right\}\).
\end{Example}

We agree that  \(P_{\nu-\alpha_i, \nu}\) is a right \(P_\nu\)-module and a left \(P_{\nu-\alpha_i}\)-module, although this is artificial, since \(P_{\nu,\nu-\alpha_i}\) is a commutative ring. Similarly, \(P_{\nu+\alpha_i, \nu}\) is a free right \(P_\nu\)-module with basis \(\{ 1, X_{k_i+1}, \dots X_{k_i+1}^{\nu_i} \}\) and a free left \(P_{\nu+\alpha_i}\)-module with basis \(\{ 1, X_{k_i+1}, \dots X_{k_i+1}^{\nu_{i+1}-1} \}\).
The following relations hold in \(P_{\nu-\alpha_i, \nu}\):
\begin{equation} \label{bs1}
e_r(\nu; i+1)  =\sum_{s=0}^r (-1)^{s} e_{r-s}(\nu-\alpha_i ;i+1) X^s_{k_i}.
\end{equation}
\begin{equation} \label{bs2}
e_r(\nu-\alpha_i; i) = \sum_{s=0}^r (-1)^{s} X^s_{k_i}e_{r-s}(\nu;i),
\end{equation}
\begin{equation} \label{bs3}
e_r(\nu-\alpha_i; i+1) = X_{k_i}  e_{r-1}(\nu; i+1) + e_r(\nu; i+1) ,
\end{equation}
\begin{equation} \label{bs4}
e_r(\nu; i) =e_{r-1}(\nu-\alpha_i; i) X_{k_i} + e_r(\nu-\alpha_i; i) ,
\end{equation}

\begin{equation} \label{bs5}
h_r(\nu; i+1)  =  h_{r}(\nu-\alpha_i; i+1) -h_{r-1}(\nu-\alpha_i; i+1)X_{k_i},\\.
\end{equation}
\begin{equation}\label{bs6}
h_r(\nu-\alpha_i; i) = h_{r}(\nu; i) -X_{k_i}h_{r-1}(\nu; i),
\end{equation}
\begin{equation} \label{bs7}
h_r(\nu-\alpha_i; i+1) = \sum_{s=0}^r  X^s_{k_i}h_{r-s}(\nu;i+1),
\end{equation}
\begin{equation} \label{bs8}
h_r(\nu; i) = \sum_{s=0}^r h_{r-s}(\nu-\alpha_i;i) X^s_{k_i},
\end{equation}
which are easily seen to follow from the properties  (\ref{brad122}), (\ref{brad222}) of symmetric polynomials. Similarly, in 
\(P_{\nu+\alpha_i, \nu}\), we have the identities

\begin{equation} \label{s1}
e_r(\nu+\alpha_i; i+1)  =\sum_{s=0}^r (-1)^{s} X^s_{k_i+1}e_{r-s}(\nu ;i+1).
\end{equation}
\begin{equation} \label{s2}
e_r(\nu; i) = \sum_{s=0}^r (-1)^{s} X^s_{k_i+1}e_{r-s}(\nu +\alpha_i;i),
\end{equation}
\begin{equation} \label{s3}
e_r(\nu; i+1) = X_{k_i+1}  e_{r-1}(\nu+\alpha_i; i+1) + e_r(\nu+\alpha_i; i+1) ,
\end{equation}
\begin{equation} \label{s4}
e_r(\nu+\alpha_i; i) = X_{k_i+1}  e_{r-1}(\nu; i) +e_r(\nu; i) ,
\end{equation}

\begin{equation} \label{s5}
h_r(\nu+\alpha_i; i+1)  =  h_{r}(\nu; i+1) -X_{k_i+1}h_{r-1}(\nu; i+1),\\.
\end{equation}
\begin{equation}\label{s6}
h_r(\nu; i) = h_{r}(\nu+\alpha_i; i) -X_{k_i+1}h_{r-1}(\nu+\alpha_i; i),
\end{equation}
\begin{equation} \label{s7}
h_r(\nu; i+1) = \sum_{s=0}^r  X^s_{k_i+1}h_{r-s}(\nu+\alpha_i;i+1),
\end{equation}
\begin{equation} \label{s8}
h_r(\nu+\alpha_i ;i) = \sum_{s=0}^r  X^s_{k_i+1}h_{r-s}(\nu;i).
\end{equation}
The following identities will also be useful:
\begin{equation} \label{sp1}
X^r_{k_i} = \sum_{l=0}^r (-1)^{l}h_{r-l}(\nu-\alpha_i;i+1) e_{l} (\nu;i+1) ,
\end{equation}
\begin{equation} \label{sp2}
X^r_{k_i} = \sum_{l=0}^r (-1)^{l}  e_{l}(\nu-\alpha_i;i)h_{r-l} (\nu;i)  ,
\end{equation}
\begin{equation} \label{spe1}
X^r_{k_i+1} = \sum_{l=0}^r (-1)^{l} e_{l} (\nu+\alpha_i;i+1) h_{r-l}(\nu;i+1),
\end{equation}
\begin{equation} \label{spe2}
X^r_{k_i+1} = \sum_{l=0}^r (-1)^{r-l} h_{l} (\nu+\alpha_i;i) e_{r-l}(\nu;i).
\end{equation}

We can extend the definition of the bimodule \(P_{\nu+\alpha_i, \nu}\) by letting  
\[P_{\nu+\alpha_i \pm \alpha_j, \nu} =P_{\nu+\alpha_i \pm \alpha_j, \nu+\alpha_i} \otimes_{P_{\nu+\alpha_i}} P_{\nu+\alpha_i, \nu}, \]
and hence, inductively define the bimodule \(P_{\nu+\sum m_i\alpha_i, \nu}\) for \(m_i \in \Z\).
Note that these bimodules are graded, and by \(P_{\nu+\alpha_i, \nu} \la s \ra \) we will mean the degree shift of  \(P_{\nu+\alpha_i, \nu} \) by \(s \in \Z\).  \\

\subsection{The 2-functor  \(\U \to \textbf{Pol}_N \) }

Let \( \textbf{Pol}_N\) be the additive 2-category, in which 
\begin{itemize}
\item objects are \(P_\nu\) for all  \(\nu \in \cal{P}(n, N)\),
\item 1-morphisms between the objects  \(P_\nu\) and \(P_{\nu+\sum m_i\alpha_i}\)  are the finite direct sums of bimodules \(P_{\nu+\sum m_i\alpha_i, \nu}\),
\item 2-morphisms are matrices of bimodule homomorphisms.
\end{itemize}

The next theorem is due to Khovanov and Lauda \cite{lauda2}, however, we replace their \(\textbf{EqFlag}_N^* \) 2-category with the 2-category \(\textbf{Pol}_N \). While the 2-category \(\textbf{EqFlag}_N^* \)  is presented in terms  of \(GL_\mathbb{C}(n)\)-equivariant cohomology rings of partial flag varieties in \cite{lauda2}, we can pass to \(\textbf{Pol}_N \) by identifying the Chern classes with elementary symmetric polynomials.

\begin{Theorem} \label{ythm} \cite{lauda2}
There is a 2-representation \(\Theta_N \colon \U \to  \textbf{Pol}_N \), which is defined as follows:
\begin{itemize}
\item On objects
\[
  \nub \quad \mapsto \begin{cases}
P_\nu \quad \text{ if }  \nu \in \mathcal{P}(n, N) \text{ and }  \nub_i = \nu_i - \nu_{i+1} \text{ for each } i \in I,\\
0 \quad \text{otherwise.}
\end{cases}  
\]
\item On 1-morphisms 
\[ \onel \la t \ra \quad \mapsto \quad  P_\nu \la t \ra ,\]

\[ \cal{E}_{i}  \onel\la t \ra \quad  \mapsto  P_{\nu+\alpha_i, \nu} \la t \ra  ,\] 
  
\[  \cal{F}_{i}\onel\la t \ra \quad  \mapsto P_{\nu-\alpha_i, \nu}\la t \ra.\]

\item On 2-morphisms

\begin{eqnarray*}
    \Theta_N\left(\;\xy
    (0,-3)*{\bbpef{}};
    (8,-5)*{ \bfit{ $\overline{\nu}$ }}; (-6,-2)*{i}; 
    (-4,2)*{\scs  };
    (4,2)*{\scs  };
    \endxy \; \right)
    & : &
 \left\{
  \begin{array}{ccl}
    P_{\nu} \ & \longrightarrow &
    \left( P_{\nu, \nu+\alpha_i} \otimes_{ P_{\nu}+\alpha_i}  P_{\nu+\alpha_i, \nu}\right)\la 2\nu_i\ra,
    \\
     1 &\mapsto & c^{-1}_{\nub, i}  \,  \xsum{r=0}{\nu_i}(-1)^{\nu_i-r} X_{k_i+1}^{r} \otimes   e_{\nu_i-r}(\nu;i),  
     
      \end{array}
 \right.
     \label{def_eq_FE_G}\\ \\ \\
     \Theta_N\left(\;\xy
    (0,-3)*{\bbpfe{}};
    (8,-5)*{ \bfit{$\nub$}};(-6,-2)*{i}; 
    (-4,2)*{\scs };
    (4,2)*{\scs };
    \endxy\;\right)
     & : &
     \left\{
  \begin{array}{ccl}
   P_{\nu} \ & \longrightarrow &
    P_{\nu,\nu-\alpha_i} \otimes_{P_{\nu-\alpha_i}} P_{\nu-\alpha_i,\nu} \la 2\nu_{i+1}\ra,
    \\
     1 &\mapsto&
    \xsum{r=0}{\nu_{i+1}}(-1)^{\nu_{i+1}-r}   X_{k_i}^{r}  \otimes  e_{\nu_{i+1}-r}(\nu; i+1) ,
 \end{array}
 \right.
     \label{def_eq_EF_G}
\end{eqnarray*} \\
\begin{equation*}
   \Theta_N\left(\;\xy
    (0,0)*{\bbcef{}};
    (8,2)*{ \bfit{$\overline{\nu}$}}; (-6,-2)*{i}; 
    (-4,-5.5)*{\scs};
    (4,-5.5)*{\scs};
    \endxy\;\right)
 :
 \left\{
  \begin{array}{ccl}
     P_{\nu,\nu+\alpha_i} \otimes_{P_{\nu+\alpha_i}} P_{\nu+\alpha_i,\nu}   \quad \rightarrow \quad  P_{\nu}\la 2(1-\nu_{i+1}) \ra, 
    \\ \\
    X_{k_i+1}^{r_1} \otimes X_{k_i+1}^{r_2}
    \quad \mapsto   \quad 
    h_{r_1+r_2+1-\nu_{i+1}}(\nu;i+1),
 \end{array}
 \right.
   \label{def_FE_cap} \end{equation*} \\
   \begin{equation*}
  \Theta_N\left(\;\xy
    (0,0)*{\bbcfe{}};
    (8,2)*{ \bfit{$\nub$}}; (-6,-2)*{i}; 
    (-4,-5.5)*{\scs };
    (4,-5.5)*{\scs };
    \endxy \right)
 :
 \left\{
  \begin{array}{ccl}
      P_{\nu,\nu-\alpha_i} \otimes_{P_{\nu-\alpha_i}} P_{\nu-\alpha_i,\nu}  
       \quad  \rightarrow   \quad 
     P_{\nu} \la 2(1-\nu_i)\ra,
    \\ \\
     X_{k_i}^{r_1} \otimes X_{k_i}^{r_2}
      \quad \mapsto   \quad 
   c_{i, \nub} \, h_{r_1+r_2+1-\nu_i}(\nu;i),
 \end{array}
 \right.
\label{def_EF_cap}
\end{equation*} \\

 
\begin{eqnarray*} \label{eq_gamma_updot}
 \Theta_N\left(
 \xy
  (0,0)*{\lineu{i}};
  (0,4)*{\txt\large{$\bullet$}};
  (2,5)*{s};
 (4,0)*{ \nub};
 (-8,0)*{ \overbar{\nu +\alpha_i}};
 (-10,0)*{};(10,0)*{};
 \endxy\right)
&: &
 \left\{
\begin{array}{ccc}
  P_{\nu+\alpha_i,\nu} & \to  &  P_{\nu+\alpha_i,\nu}
  \la 2s\ra , \\ \\
  X_{k_i+1}^r& \mapsto &  \quad X_{k_i+1}^{r+s}, \quad s \geq 0,
\end{array}
\right. \\ \\ \\
 \Theta_N\left(
 \xy
  (0,0)*{\lined{i}};
  (0,4)*{\txt\large{$\bullet$}};
   (2,5)*{s};
 (-8,0)*{ \overbar{\nu -\alpha_i} };
 (4,0)*{ \nub};
 (-10,0)*{};(10,0)*{};
 \endxy\right)
& \maps&
 \left\{
\begin{array}{ccc}
   P_{\nu-\alpha_i,\nu} & \to  &  P_{\nu-\alpha_i,\nu} \la 2s \ra, \\ \\
  X_{k_i}^{r} & \mapsto &  \quad X_{k_i}^{r+s},  \quad s \geq 0,
\end{array}
\right.
\end{eqnarray*} \\ \\

\begin{equation*} \label{eq_gamma_dcross}
    \Theta_N \left(
 \xy
  (0,0)*{\xybox{
    (-4,-4)*{};(4,4)*{} **\crv{(-4,-1) & (4,1)}?(1)*\dir{>} ;
    (4,-4)*{};(-4,4)*{} **\crv{(4,-1) & (-4,1)}?(1)*\dir{>};
    (-5,-3)*{\scs i};
     (5.1,-3)*{\scs j};
     (8,1)*{ \nub};
     (-7,0)*{};(9,0)*{};
     }};
  \endxy
  \right)
  \quad \maps \quad
P_{\nu+\alpha_i+\alpha_j, \nu+\alpha_j} \otimes_{P_{\nu+\alpha_j}} P_{\nu+\alpha_j, \nu}
 \to  
 P_{\nu+\alpha_i+\alpha_j, \nu+\alpha_i} \otimes_{P_{\nu+\alpha_i}} P_{\nu+\alpha_i, \nu} \la -a_{ij}\ra, 
     \hspace{0.5in}
\end{equation*} 
\begin{eqnarray*}
    X_{k_i+1}^{r_1} \otimes X_{k_j+1}^{r_2}& \mapsto &
    \left\{
    \begin{array}{ccl} 
    \xsum{f=0}{r_1-1}X_{k_i+2}^{r_1+r_2-1-f} \otimes X_{k_i+1}^{f} - \xsum{g=0}{r_2-1}X_{k_i+2}^{r_1+r_2-1-g} \otimes X_{k_i+1}^{g}&  &
      \text{if  $a_{ij}=2$,} \\
       \left(t_{ij} X_{k_j+1}^{r_2} \otimes X_{k_i+1}^{r_1+1}+t_{ji} X_{k_j+1}^{r_2+1} \otimes
   X_{k_i+1}^{r_1} \right) \la -1\ra  &  &  \text{if  $i=j+1$,} \\
    \left( X_{k_j+1}^{r_2} \otimes X_{k_i+1}^{r_1}\right) \la 1\ra  &  &
  \text{if  $i=j-1$,}\\
  X_{k_j+1}^{r_2} \otimes X_{k_i+1}^{r_1} &  & \text{if  $a_{ij}=0$,}
      \end{array} \right.     
\end{eqnarray*} \\ \\

\begin{equation*} \label{eq_gamma_dcross_down}
    \Theta_N\left(
 \xy
  (0,0)*{\xybox{
    (-4,-4)*{};(4,4)*{} **\crv{(-4,-1) & (4,1)}?(0)*\dir{<} ;
    (4,-4)*{};(-4,4)*{} **\crv{(4,-1) & (-4,1)}?(0)*\dir{<};
    (-6,-3)*{\scs i};
     (6,-3)*{\scs j};
     (8,1)*{ \nub};
     (-7,0)*{};(9,0)*{};
     }};
  \endxy
  \right)
  \quad \maps \quad
P_{\nu-\alpha_i-\alpha_j, \nu-\alpha_j} \otimes_{P_{\nu-\alpha_j}} P_{\nu-\alpha_j, \nu}
 \to 
P_{\nu-\alpha_i-\alpha_j, \nu-\alpha_i} \otimes_{P_{\nu-\alpha_i}} P_{\nu-\alpha_i, \nu} \la -a_{ij}\ra,
     \hspace{0.5in}
\end{equation*}
\begin{eqnarray*}
   X_{k_i}^{r_1} \otimes X_{k_j}^{r_2}& \mapsto &
    \left\{
    \begin{array}{ccl}
    \xsum{f=0}{r_2-1}X_{k_i-1}^{r_1+r_2-1-f} \otimes X_{k_i}^{f} -
      \xsum{g=0}{r_1-1}X_{k_i-1}^{r_1+r_2-1-g} \otimes X_{k_i}^{g}&  &
      \text{if  $a_{ij}=2$,}\\
      t_{ji} \left( X_{k_j}^{r_2} \otimes X_{k_i}^{r_1}\right) \la -1\ra
       &  &  \text{if  $i=j+1$,} \\
  t_{ji}\left( X_{k_j}^{r_2+1} \otimes X_{k_i}^{r_1}+X_{k_j}^{r_2} \otimes
    X_{k_i}^{r_1+1} \right) \la 1\ra    &  &
  \text{if  $i=j-1$,}
      \\
    X_{k_j}^{r_2} \otimes X_{k_i}^{r_1} &  & \text{if  $a_{ij}=0$.} 
    \end{array} \right.
\end{eqnarray*}
\end{itemize}
\end{Theorem}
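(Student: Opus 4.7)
The plan is to verify that the data given in the statement assemble into a well-defined additive $2$-functor $\Theta_N\colon\U\to\textbf{Pol}_N$, that is, that every relation in Definition \ref{defU_cat} is preserved. First I would check that the target bimodules have the correct degrees. Since every variable $X_k$ has degree $2$ and the bimodule $P_{\nu\pm\alpha_i,\nu}$ has natural generator $1$ in degree $0$, a direct count using the degrees of elementary and complete symmetric polynomials shows that each image carries the degree shift prescribed by the definition of $\U$. In particular, the degree shifts $\la 2\nu_i\ra$ and $\la 2(1-\nu_{i+1})\ra$ on the cup/cap morphisms combine to give the expected shifts $\la 1\pm\nub_i\ra$ on the biadjunction units and counits in weight $\nub$.

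Next I would verify the biadjointness relations \eqref{eq_biadjoint1}–\eqref{eq_biadjoint2} by composing the images of the cups and caps and using the identities \eqref{bs1}–\eqref{bs8} (and their counterparts \eqref{s1}–\eqref{s8}) to express the resulting tensors in the preferred bases $\{1,X_{k_i},\dots,X_{k_i}^{\nu_i-1}\}$ and $\{1,X_{k_i+1},\dots,X_{k_i+1}^{\nu_i}\}$. These zig-zag identities reduce to the statement that $\sum_{r}(-1)^{\nu_i-r}X^r\otimes e_{\nu_i-r}$ pairs via $h_{\bullet-\nu_i+1}$ to the identity, which is a direct consequence of the classical relation \eqref{ma1} between elementary and complete symmetric functions. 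The same computation, carried out in the bimodule $P_{\nu-\alpha_i,\nu}$, handles the other biadjunction. Cyclicity of dots and crossings \eqref{eq_cyclic_dot}–\eqref{eq_crossr-gen} then follows automatically once biadjointness is known, since the sideways crossings are defined by those very relations.

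The bulk of the proof is then the verification of the local relations. The nilHecke dot-slide relations \eqref{eq_dot_slide_ii-gen-cyc} and the mixed dot-slides \eqref{eq_dot_slide_ij-gen} reduce to straightforward Leibniz-type identities on monomials in the tensor factors, which can be read off from the images of the upward crossings. The quadratic relation \eqref{eq_r2_ij-gen} for $a_{ij}=2$ is essentially the standard nilHecke square-zero relation on $P$; for $a_{ij}=0$ and $a_{ij}=-1$ it reduces to comparing the two given formulas for the upward crossing. The braid relation \eqref{eq_r3_easy-gen}/\eqref{eq_r3_hard-gen} is the most tedious but purely formal step. The mixed relations \eqref{mixed_rel} require chasing the definitions of the sideways crossings through the biadjunction data, and they follow from the relations \eqref{bs1}–\eqref{bs8} together with the degree count. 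For the bubble relations \eqref{eq_positivity_bubbles} and the degree-zero bubble normalization, one computes the image of a bubble of $k$ dots as $\tilde{\operatorname{ev}}\circ(X^k\otimes 1)\circ\operatorname{coev}$ and identifies it with a multiple of $h_{k-\nu_{i+1}+1}(\nu;i+1)$ or the analogue for $i$; negativity of the degree then forces vanishing.

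The main obstacle, and the place where care is needed, is the infinite Grassmannian relation \eqref{eq_infinite_Grass} that determines the fake bubbles, together with the extended $\mathfrak{sl}_2$ relations \eqref{eq_curl}–\eqref{eq_ident_decomp-nleqz}. For \eqref{eq_infinite_Grass}, one checks that the generating series of clockwise and counterclockwise bubbles, evaluated under $\Theta_N$, become the generating series of the complete and (signed) elementary symmetric polynomials in $\Omega_i$ on one side and in $\Omega_{i+1}$ on the other, which multiply to $1$ by the classical Newton identity \eqref{ma1}. Granted this, the fake bubbles defined by \eqref{eq_fake_nleqz}–\eqref{eq_fake_ngeqz} are identified under $\Theta_N$ with honest symmetric polynomials, and both the curl relation \eqref{eq_curl} and the identity decomposition relations \eqref{eq_ident_decomp-ngeqz}–\eqref{eq_ident_decomp-nleqz} can then be verified by expanding an arbitrary tensor $X_{k_i}^{r_1}\otimes X_{k_i}^{r_2}$ in the preferred basis and repeatedly applying \eqref{sp1}–\eqref{spe2}. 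As noted in the statement, $\textbf{Pol}_N$ is equivalent to Khovanov–Lauda's $\textbf{EqFlag}_N$ via the map sending the Chern classes to elementary symmetric polynomials, so this verification is a symmetric-function translation of the check already performed in \cite{lauda2}.
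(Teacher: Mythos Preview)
Your proposal is correct and follows essentially the same approach as the paper: a direct verification that $\Theta_N$ preserves the defining relations of $\U$, reduced to the symmetric-function identities \eqref{bs1}--\eqref{spe2}, with the observation that this is the translation into polynomial language of the Chern-class computation already carried out in \cite{lauda2}. The paper's own proof is in fact a briefer sketch than yours---it checks biadjointness, the quadratic relation, and one case of the cubic relation explicitly, then leaves the remainder to the reader---so your outline is, if anything, more thorough.

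One small caution: your claim that cyclicity of dots and crossings ``follows automatically once biadjointness is known'' is not quite accurate. Biadjointness gives you the zig-zag identities, but cyclicity \eqref{eq_cyclic_dot}--\eqref{eq_almost_cyclic} is the additional statement that the left and right mates of a $2$-morphism coincide, and this requires its own (routine) computation with the given bimodule maps. The paper does not check this explicitly either, but you should not present it as a formal consequence of biadjointness alone.
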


\begin{proof}
Theorem is proved in \cite{lauda2}. We, however, show a way to do it using symmetric polynomials instead of Chern classes. Clearly, the 2-functor \(\Theta_N\) preserves the degrees of 2-morphisms. We need to check that \(\Phi\) preserves the relations between 2-morphisms. 
\begin{itemize}
\item Biadjointness property of 1-morphisms can be shown the following way:
\[ \Theta_N\left(\;  \xy   0;/r.18pc/:
    (-8,0)*{}="1";
    (0,0)*{}="2";
    (8,0)*{}="3";
    (-8,-10);"1" **\dir{-}; (-1,4)*{\txt\large{$\bullet$}}; (2,4)*{s}; (-6,-5)*{i}; 
    "1";"2" **\crv{(-8,8) & (0,8)} ?(0)*\dir{>} ?(1)*\dir{>};
    "2";"3" **\crv{(0,-8) & (8,-8)}?(1)*\dir{>};
    "3"; (8,10) **\dir{-};
    (14,9)*{\nub};
    (-6,9)*{\overbar{\nu+\alpha_i}};
    \endxy \;\right)
    \; ( 1) =\]\[= \Theta_N\left(\;\xy
    (0,0)*{\bbcfe{}};
    (9,2)*{ \bfit{$\overbar{\nu+\alpha_i}$}};  (-1,1)*{\txt\large{$\bullet$}}; (1,3)*{s}; (-6,-2)*{i}; 
    (-4,-5.5)*{\scs };
    (4,-5.5)*{\scs };
    \endxy \right)
 \left(
 c^{-1}_{\nub,i} \sum_{l=0}^{\nu_i} (-1)^{\nu_i-l} X_{k_i+1}^{s}  \otimes X_{k_i+1}^{l} \otimes  e_{\nu_i-l}(\nu ; i)
 \right)=
   \]
   \[ = X_{k_i+1}^{s}  \sum_{l=0}^{\nu_i} (-1)^{\nu_i-l}  h_{l-\nu_i} (\nu+\alpha_i ; i)   e_{\nu_i-l}(\nu ; i)    = X_{k_i+1}^{s}=  \Theta_N\left(
 \xy
  (0,0)*{\lineu{i}};
  (0,4)*{\txt\large{$\bullet$}};
  (2,5)*{s};
 (4,0)*{ \nub};
 (-8,0)*{ \overbar{\nu +\alpha_i}};
 (-10,0)*{};(10,0)*{};
 \endxy\right) \left( 1 \right)  .\]
 
\item  The following relations are easy computations obtained by applying the images of crossings twice:
 \[
 \Theta_N\left(\; 
 \vcenter{\xy 0;/r.17pc/:
    (-4,-4)*{};(4,4)*{} **\crv{(-4,-1) & (4,1)}?(1)*\dir{};
    (4,-4)*{};(-4,4)*{} **\crv{(4,-1) & (-4,1)}?(1)*\dir{};
    (-4,4)*{};(4,12)*{} **\crv{(-4,7) & (4,9)}?(1)*\dir{};
    (4,4)*{};(-4,12)*{} **\crv{(4,7) & (-4,9)}?(1)*\dir{};
    (4,12); (4,13) **\dir{-}?(1)*\dir{>};
    (-4,12); (-4,13) **\dir{-}?(1)*\dir{>};
  (-5.5,-3)*{\scs i};
     (5.5,-3)*{\scs j}; (8, 3)*{\nub}
 \endxy} \right)  (X_{k_{i}+1}^{r_1} \otimes X_{k_{j}+1}^{r_2}) 
 \,=\]
\[ \] 
 \[ = \,
 \left\{
 \begin{array}{ccc}
 0 &  &  \text{if $a_{ij}=2$,}\\ \\
t_{ij} (X_{k_{i}+1}^{r_1} \otimes X_{k_{j}+1}^{r_2})   &  &  \text{if $a_{ij}=0$,}\\ \\
 t_{ij} (X_{k_{i}+1}^{r_1+1} \otimes X_{k_{j}+1}^{r_2})  \;\; + \;\; t_{ji}
  (X_{k_{i}+1}^{r_1} \otimes X_{k_{j}+1}^{r_2+1}) 
   &  & \text{if $a_{ij} =-1$,}
 \end{array}
 \right.  
\]
 \item  
 We only show the following relation for \(i=j+1\) :
 \begin{equation}
 \Theta_N \left(
\vcenter{\xy 0;/r.17pc/:
    (-4,-4)*{};(4,4)*{} **\crv{(-4,-1) & (4,1)}?(1)*\dir{};
    (4,-4)*{};(-4,4)*{} **\crv{(4,-1) & (-4,1)}?(1)*\dir{};
    (4,4)*{};(12,12)*{} **\crv{(4,7) & (12,9)}?(1)*\dir{};
    (12,4)*{};(4,12)*{} **\crv{(12,7) & (4,9)}?(1)*\dir{};
    (-4,12)*{};(4,20)*{} **\crv{(-4,15) & (4,17)}?(1)*\dir{};
    (4,12)*{};(-4,20)*{} **\crv{(4,15) & (-4,17)}?(1)*\dir{};
    (-4,4)*{}; (-4,12) **\dir{-};
    (12,-4)*{}; (12,4) **\dir{-};
    (12,12)*{}; (12,20) **\dir{-};
    (4,20); (4,21) **\dir{-}?(1)*\dir{>};
    (-4,20); (-4,21) **\dir{-}?(1)*\dir{>};
    (12,20); (12,21) **\dir{-}?(1)*\dir{>};
    (-6,-3)*{\scs i};
  (6,-3)*{\scs j};
  (15,-3)*{\scs i};
\endxy} \right)
\quad - \quad
 \Theta_N \left(
\vcenter{\xy 0;/r.17pc/:
    (4,-4)*{};(-4,4)*{} **\crv{(4,-1) & (-4,1)}?(1)*\dir{};
    (-4,-4)*{};(4,4)*{} **\crv{(-4,-1) & (4,1)}?(1)*\dir{};
    (-4,4)*{};(-12,12)*{} **\crv{(-4,7) & (-12,9)}?(1)*\dir{};
    (-12,4)*{};(-4,12)*{} **\crv{(-12,7) & (-4,9)}?(1)*\dir{};
    (4,12)*{};(-4,20)*{} **\crv{(4,15) & (-4,17)}?(1)*\dir{};
    (-4,12)*{};(4,20)*{} **\crv{(-4,15) & (4,17)}?(1)*\dir{};
    (4,4)*{}; (4,12) **\dir{-};
    (-12,-4)*{}; (-12,4) **\dir{-};
    (-12,12)*{}; (-12,20) **\dir{-};
    (4,20); (4,21) **\dir{-}?(1)*\dir{>};
    (-4,20); (-4,21) **\dir{-}?(1)*\dir{>};
    (-12,20); (-12,21) **\dir{-}?(1)*\dir{>};
  (-14,-3)*{\scs i};
  (-6,-3)*{\scs j};
  (6,-3)*{\scs i};
\endxy} \right)
 \;\; =\;\;
 t_{ij}  \Theta_N \left(\;\;
\xy 0;/r.17pc/:
  (4,12);(4,-12) **\dir{-}?(0)*\dir{<};
  (-4,12);(-4,-12) **\dir{-}?(0)*\dir{<}?(.25)*\dir{};
  (12,12);(12,-12) **\dir{-}?(0)*\dir{<}?(.25)*\dir{};
  (-6,-9)*{\scs i};     (6.1,-9)*{\scs j};
  (14,-9)*{\scs i};
 \endxy \right)\, .
 \end{equation}

 We prove this relation by applying the 2-morphisms on the both sides of the equalities on \( X^{r_3}_{k_i+2} \otimes X^{r_2}_{k_j+1} \otimes X^{r_1}_{k_i+1}\).  Since \[ X^{r_3}_{k_i+2} \otimes X^{r_2}_{k_j+1} \otimes X^{r_1}_{k_i+1} =\Theta_N \left(\;\;
\xy 0;/r.17pc/:
  (4,12);(4,-12) **\dir{-}?(0)*\dir{<};
  (-4,12);(-4,-12) **\dir{-}?(0)*\dir{<}?(.25)*\dir{};
  (12,12);(12,-12) **\dir{-}?(0)*\dir{<}?(.25)*\dir{};
  (-6,-9)*{\scs i};     (6.1,-9)*{\scs j};  (-1.5,0)*{\bullet r_3};  (6.5,0)*{\bullet r_2};  (14.5,0)*{\bullet r_1};
  (14,-9)*{\scs i};
 \endxy \right) ( 1\otimes 1 \otimes 1)\] 
 holds, without loss of generality we can take \(r_1=r_2=r_3=0\):
 \[
  \Theta_N \left(
\vcenter{\xy 0;/r.17pc/:
    (-4,-4)*{};(4,4)*{} **\crv{(-4,-1) & (4,1)}?(1)*\dir{};
    (4,-4)*{};(-4,4)*{} **\crv{(4,-1) & (-4,1)}?(1)*\dir{};
    (4,4)*{};(12,12)*{} **\crv{(4,7) & (12,9)}?(1)*\dir{};
    (12,4)*{};(4,12)*{} **\crv{(12,7) & (4,9)}?(1)*\dir{};
    (-4,12)*{};(4,20)*{} **\crv{(-4,15) & (4,17)}?(1)*\dir{};
    (4,12)*{};(-4,20)*{} **\crv{(4,15) & (-4,17)}?(1)*\dir{};
    (-4,4)*{}; (-4,12) **\dir{-};
    (12,-4)*{}; (12,4) **\dir{-};
    (12,12)*{}; (12,20) **\dir{-};
    (4,20); (4,21) **\dir{-}?(1)*\dir{>};
    (-4,20); (-4,21) **\dir{-}?(1)*\dir{>};
    (12,20); (12,21) **\dir{-}?(1)*\dir{>};
    (-6,-3)*{\scs i};
  (6,-3)*{\scs j};
  (15,-3)*{\scs i}; (17,7)*{\nub}
\endxy} 
\right) (1 \otimes 1 \otimes 1) =
\Theta_N \left( \vcenter{\xy 0;/r.17pc/:
    (4,4)*{};(12,12)*{} **\crv{(4,7) & (12,9)}?(1)*\dir{};
    (12,4)*{};(4,12)*{} **\crv{(12,7) & (4,9)}?(1)*\dir{};
    (-4,12)*{};(4,20)*{} **\crv{(-4,15) & (4,17)}?(1)*\dir{};
    (4,12)*{};(-4,20)*{} **\crv{(4,15) & (-4,17)}?(1)*\dir{};
    (-4,4)*{}; (-4,12) **\dir{-};
    (12,12)*{}; (12,20) **\dir{-};
    (4,20); (4,21) **\dir{-}?(1)*\dir{>};
    (-4,20); (-4,21) **\dir{-}?(1)*\dir{>};
    (12,20); (12,21) **\dir{-}?(1)*\dir{>};
    (-6, 4)*{\scs j};
  (5, 4)*{\scs i};
  (14, 4)*{\scs i}; (17,12)*{\nub}
\endxy} 
\right) 
(t_{ij} \, 1 \otimes X_{k_i+2} \otimes 1 +t_{ji} \, X_{k_j+1} \otimes 1 \otimes 1) \la 1 \ra =\]
\[=\Theta_N \left( \vcenter{\xy 0;/r.17pc/:
    (-4,12)*{};(4,20)*{} **\crv{(-4,15) & (4,17)}?(1)*\dir{};
    (4,12)*{};(-4,20)*{} **\crv{(4,15) & (-4,17)}?(1)*\dir{};
    (12,12)*{}; (12,20) **\dir{-};
    (4,20); (4,21) **\dir{-}?(1)*\dir{>};
    (-4,20); (-4,21) **\dir{-}?(1)*\dir{>};
    (12,20); (12,21) **\dir{-}?(1)*\dir{>};
    (-6, 12)*{\scs j};
  (5, 12)*{\scs i};
  (14, 12)*{\scs i}; (17,16)*{\nub}
\endxy} 
\right) 
(t_{ij} \, 1 \otimes 1 \otimes 1 ) \la 1 \ra
= t_{ij} \, 1\otimes 1 \otimes 1, \]
 \[
  \Theta_N \left(
\vcenter{\xy 0;/r.17pc/:
    (4,-4)*{};(-4,4)*{} **\crv{(4,-1) & (-4,1)}?(1)*\dir{};
    (-4,-4)*{};(4,4)*{} **\crv{(-4,-1) & (4,1)}?(1)*\dir{};
    (-4,4)*{};(-12,12)*{} **\crv{(-4,7) & (-12,9)}?(1)*\dir{};
    (-12,4)*{};(-4,12)*{} **\crv{(-12,7) & (-4,9)}?(1)*\dir{};
    (4,12)*{};(-4,20)*{} **\crv{(4,15) & (-4,17)}?(1)*\dir{};
    (-4,12)*{};(4,20)*{} **\crv{(-4,15) & (4,17)}?(1)*\dir{};
    (4,4)*{}; (4,12) **\dir{-};
    (-12,-4)*{}; (-12,4) **\dir{-};
    (-12,12)*{}; (-12,20) **\dir{-};
    (4,20); (4,21) **\dir{-}?(1)*\dir{>};
    (-4,20); (-4,21) **\dir{-}?(1)*\dir{>};
    (-12,20); (-12,21) **\dir{-}?(1)*\dir{>};
  (-14,-3)*{\scs i};
  (-6,-3)*{\scs j};
  (6,-3)*{\scs i};
\endxy}
\right) (1 \otimes 1 \otimes 1)  
= 0.
 \]

 \end{itemize}
We leave the remaining relations for the readers to check. They all follow from the identities (\ref{bs1}) to (\ref{s8}) via direct computations. 
\end{proof}

Under the 2-functor \(\Theta_N\), the images of the clockwise and counterclockwise bubbles are:
\begin{eqnarray*}  
     \Theta_N \left(\;\xy
  (4,6)*{\nub};
  (0,-2)*{\cbub{\nub_i-1+r}{i}};
 \endxy \; \right) \, ,  \Theta_N \left(\;\xy
  (4,6)*{\nub};
  (0,-2)*{\ccbub{-\nub_i-1+r}{i}};
 \endxy \; \right) : P_\nu \to P_\nu,
\end{eqnarray*}

\begin{eqnarray} \label{eq_gamm_bubbles}   
     \Theta_N \left(\;\xy
  (4,6)*{\nub};
  (0,-2)*{\cbub{\nub_i-1+r}{i}};
 \endxy \; \right) & ( 1) = c_{i, \nub}
\sum_{l=0}^{r} (-1)^l e_l(\nu; i+1) h_{r-l} (\nu; i), 
\end{eqnarray}

\begin{eqnarray} \label{eq_gamma_bubbles}   
     \Theta_N \left(\;\xy
  (4,6)*{\nub};
  (0,-2)*{\ccbub{-\nub_i-1+r}{i}};
 \endxy \; \right) & ( 1) = c^{-1}_{i,\nub}
 \sum_{l=0}^{r} (-1)^l e_l(\nu; i) h_{r-l} (\nu; i+1).
\end{eqnarray}

If we replace the ring \(P_\nu\) with ring \(C_\nu :=  P_\nu /Sym_N\) for each \(\nu \in \Lambda(n,N)\), we can still define a 2-category \(\textbf{CPol}_N\) with objects  \(C_\nu \) and a 2-functor \(\Psi \colon \U \to \textbf{CPol}_N\) in an entirely similar way. Khovanov-Lauda \cite{lauda2} construct a 2-category \(\textbf{Flag}^*_N\) using the ordinary cohomology of partial flag varieties to show the non-degeneracy of \(\U\). The isomorphism between \(\textbf{EqFlag}^*_N\) and \(\textbf{Pol}_N\)  descends to a isomorphism between \(\textbf{Flag}^*_N\) and \(\textbf{CPol}_N\). Taking the quotients of the objects in \(\textbf{Pol}_N\) by \(Sym_N\) corresponds to imposing Grassmannian relation on the objects of \(\textbf{EqFlag}^*_N\). The 2-morphisms in \(\textbf{CPol}_N\) are finite-dimensional, and we have \(C_{\lb_0} :=  P_{\lb_0} /Sym_N \simeq \mathbb{C}\). The following result is also due to Khovanov-Lauda \cite{lauda2}, Theorem 6.14.
 \begin{Theorem}
 The 2-categories \(\textbf{Pol}_N\) and \(\textbf{CPol}_N\) categorify the finite-dimensional, simple \(\mathfrak{sl}_n\) representation \(V(\lb_0)\).
 \end{Theorem}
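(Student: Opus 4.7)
The plan is to leverage the universal property of Rouquier's cyclotomic categorification recorded as Theorem~\ref{univv}. I would first check that the object $\lb_0$ acts as a highest weight in $\textbf{Pol}_N$: since $\lb_0$ corresponds to the composition $(N,0,\dots,0)$, the shifted composition $\lb_0+\alpha_i$ has a negative entry for every $i\in I$, hence the object $\overbar{\lb_0+\alpha_i}$ is zero in $\textbf{Pol}_N$ and consequently $\Theta_N(\mathcal{E}_i\onenn{\lb_0})=0$ for all $i\in I$. By Theorem~\ref{univv}, $\Theta_N$ therefore factors through a unique strongly equivariant 2-functor
\[
\begin{tikzcd}
\U \arrow[rd] \arrow[r, "\Theta_N"] & \textbf{Pol}_N \\
& \check R^{\lb_0}\text{-pMod}\arrow[u, "\Theta'_N"', swap]
\end{tikzcd}.
\]

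Next, I would show that $\Theta'_N$ is a strongly equivariant equivalence. Essential surjectivity onto the sub-2-category of $\textbf{Pol}_N$ generated by $P_{\lb_0}$ under the action of the $\mathcal{F}_i$ is immediate, since every object of $\textbf{Pol}_N$ is reached from $P_{\lb_0}$ by iteratively tensoring with the bimodules $P_{\nu-\alpha_i,\nu}$, which are the building blocks of all 1-morphisms. Full faithfulness on 2-morphisms reduces to comparing bases of Hom-spaces: on the $\check R^{\lb_0}$-side one uses the standard spanning set of KLR diagrams modulo cyclotomic relations, and on the $\textbf{Pol}_N$-side one writes bimodule maps between iterated tensor products of $P_{\nu\pm\alpha_i,\nu}$ using the free module structure over $P_\nu$. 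The explicit formulas for crossings, cups and caps recorded in Theorem~\ref{ythm} are engineered precisely so that the images of a diagrammatic basis of $\check R^{\lb_0}$ are linearly independent bimodule maps, with linear independence verified via the identities \eqref{bs1}--\eqref{spe2} and the freeness of $P_{\mu,\nu}$ over $P_\nu$ (with ranks matching the weight multiplicities of $V(\lb_0)$ as the $N$-th symmetric power of the defining $\mathfrak{sl}_n$-representation).

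Combining the equivalence with the categorification theorem of Brundan-Kleshchev, Kang-Kashiwara and Webster, which identifies $K_0(\check R^{\lb_0}\text{-pMod})$ with the integral form of $V(\lb_0)$, establishes the result for $\textbf{Pol}_N$. For $\textbf{CPol}_N$ the same scheme applies: quotienting each $P_\nu$ by $\mathrm{Sym}_N$ corresponds on the $\U$-side to imposing the cyclotomic relation at the highest weight $\lb_0$, so the comparison 2-functor factors through $R^{\lb_0}\text{-pMod}$ itself. Since $C_\nu$ is a graded local ring with residue field $\mathbb{C}$, every graded projective $C_\nu$-module is a direct sum of shifts of $C_\nu$, and the graded split Grothendieck group of $C_\nu\text{-pmod}$ agrees with that of $P_\nu\text{-pmod}$, so the two 2-categories categorify the same module $V(\lb_0)$.

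The main obstacle is the full faithfulness of $\Theta'_N$ on 2-morphisms. In practice this requires matching a diagrammatic spanning set of $\check R^{\lb_0}$ with explicit bimodule maps between tensor products of the $P_{\nu\pm\alpha_i,\nu}$; the non-trivial input is the linear independence of the images, which is a substantial but routine computation once the formulas in Theorem~\ref{ythm} and the polynomial identities from the previous subsection are in hand. This verification was carried out by Khovanov-Lauda in the equivalent formulation using equivariant cohomology of partial flag varieties.
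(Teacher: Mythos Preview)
The paper does not give its own proof here; it simply cites Khovanov--Lauda \cite{lauda2}, Theorem 6.14, where the categorification statement is established by a direct analysis of the split Grothendieck group of the flag 2-category. Your approach is genuinely different: you attempt to deduce the result by factoring through the (deformed) cyclotomic KLR categorification via the universal property recorded as Theorem~\ref{univv}.

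For $\textbf{CPol}_N$ your strategy is sound and in fact coincides with what the paper observes in the paragraph immediately following the theorem: since $C_{\lb_0}\simeq\mathbb{C}$, the 2-representation $\Psi_N$ is a minimal categorification, and Rouquier's uniqueness gives that $\Psi'_N$ is a strongly equivariant equivalence with $R^{\lb_0}\text{-}\mathrm{pMod}$, which is already known to categorify $V(\lb_0)$.

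For $\textbf{Pol}_N$ there is a genuine gap. You assert that $\Theta'_N$ is an equivalence, but you do not prove full faithfulness, and your attribution of this to Khovanov--Lauda is inaccurate. What Khovanov--Lauda proved in \cite{lauda2} is (i) \emph{faithfulness} of $\Theta_N$ on certain spanning sets of 2-morphisms, which they used to establish nondegeneracy of $\U$, and (ii) a direct computation identifying $K_0$ of the flag 2-category with $V(\lb_0)$. They did \emph{not} prove that $\textbf{EqFlag}_N^*$ is equivalent to $\check{R}^{\lb_0}\text{-}\mathrm{pMod}$. Your sketch addresses only one direction: the linear-independence argument you describe would show that a diagrammatic spanning set maps to independent bimodule maps (faithfulness of $\Theta'_N$), but says nothing about \emph{fullness}, i.e.\ that every bimodule homomorphism between iterated tensor products of the $P_{\nu\pm\alpha_i,\nu}$ actually arises from a KLR diagram. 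Without fullness you cannot conclude the equivalence, and hence cannot transport the $K_0$ computation from $\check{R}^{\lb_0}\text{-}\mathrm{pMod}$ to $\textbf{Pol}_N$ along $\Theta'_N$ as you propose.
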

It is easy to check that both \(\textbf{Pol}_N\) and \(\textbf{CPol}_N\) are integrable. Recall the two categorifications of  \(V(\lb_0)\) -- the categories of graded, finitely generated, projective modules \(\check{R}^{\lb} \text{-} \mathrm{ pMod}\) and \(R^{\lb} \text{-} \mathrm{ pMod} \) over the deformed cyclotomic KLR algebra \(\check{R}^{\lb}\) and the cyclotomic KLR algebra \(R^{\lb} \) respectively.  By Theorem \ref{univv}, they carry a universality property which can depicted in the following commuting diagrams:
 \[ \begin{Large}
 \begin{tikzcd}
  \scs \mathcal{U} \arrow[rd] \arrow[r, "\Theta_N"] & \scs \textbf{Pol}_N \\
  &\scs \check{R}^{\lb_0} \text{-} \mathrm{ pMod} \arrow[u,  "\Theta'_N"] 
  \end{tikzcd},
  \end{Large} \quad \quad  \quad \begin{Large}
 \begin{tikzcd}
  \scs \mathcal{U} \arrow[rd] \arrow[r, "\Psi_N"] & \scs \textbf{CPol}_N \\
  &\scs R^{\lb_0} \text{-} \mathrm{ pMod} \arrow[u,  "\Psi'_N"] 
  \end{tikzcd},
  \end{Large}
  \]
where \(\Theta'_N\) and \(\Psi'_N\) are strongly equivariant 2-functors. The 2-category \(\textbf{CPol}_N\) is a minimal categorification since \(C_{\lb_0} \simeq \mathbb{C}\), i.e., \(\Psi'_N\)  is a strongly equivariant equivalence.

 \subsection{\(Z(\textbf{Pol}_N)\) as a current algebra module}

The 2-functor \(\Theta_N \colon \U \to \textbf{Pol}_N\) induces a \(\mathbb{C}\)-linear functor \(\Tr \Theta_N  \colon \Tr \U \to \Tr \textbf{Pol}_N \). The center of the object \(P_\nu\) of the 2-category \(\textbf{Pol}_N\) is isomorphic to \(P_\nu\), since \(P_\nu\) is a commutative ring and acts on itself by multiplication. Composing the linear isomorphism \(\rho \maps \dot{\bfU}(\mf{sl}_n[t]) \longrightarrow \Tr \U \)  with the functor  \(\Tr \Theta_N \colon \Tr \U \to \Tr \textbf{Pol}_N\) gives the current algebra module structure on \(\bigoplus_{\nu \in \mathcal{P}(n,N)} P_\nu\), where each ring \(P_\nu\) corresponds to the \(\mathfrak{sl}_n\) weight space \(\nub\). Recall the definition of the generators \(\mathsf{F}_{i,j}\),  \(\mathsf{E}_{i,j}\),  \(\mathsf{H}_{i,j}\)   of  \(\Tr \U \) in the equation \eqref{eq_sln-homom}.
  In what follows, we will define the action of \(\Tr \textbf{Pol}_N \) on the the center of objects  \(\bigoplus_{\nu \in \mathcal{P}(n,N)} P_\nu\) to be the images of the trace maps, defined in \eqref{mpp1} to \eqref{mpp3}, under the functor  \(\Theta_N\) :
 \[ \mathtt{F_{i,j}} = \Tr \Theta_N  (\mathsf{F}_{i,j}) \colon P_\nu \to  P_{\nu-\alpha_i} ,\]
  \[\mathtt{E_{i,j}} =\Tr \Theta_N  (\mathsf{E}_{i,j})  \colon P_{\nu} \to  P_{\nu+\alpha_i},\]
   \[\mathtt{H_{i,j}} =\Tr \Theta_N  (\mathsf{H}_{i,j})  \colon P_\nu \to  P_\nu , \]
for all \(i \in I\) and \(j \geq 0\). 
The next theorem defines the current algebra action and generalizes Brundan's formula (\cite{brun1}, Theorem 3.4)  for the action of \(\mathfrak{sl}_n\).
\begin{Theorem} \label{thmf}
\begin{enumerate}
\item The map \(\mathtt{F_{i,j}} \colon P_\nu \to  P_{\nu-\alpha_i} \)  is the \(P_{\nu-\alpha_i}\)-module homomorphism such that 
\[\mathtt{F_{i,j}} (X^{m}_{k_i})= c^{-1}_{i, \nub} \, \sum_{l=0}^{\nu_i-1}(-1)^{l} e_l(\nu-\alpha_i;i) h_{m+j+\nub_i-1-l }(\nu-\alpha_i; i+1).\]
\item The map \(\mathtt{E_{i,j}} \colon P_\nu \to  P_{\nu+\alpha_i} \)  is the \(P_{\nu+\alpha_i}\)-module homomorphism such that 
\[\mathtt{E_{i,j}} (X^{m}_{k_i+1})= c^{}_{i, \nub} \sum_{l=0}^{\nu_{i+1}-1}(-1)^{l} e_l(\nu+\alpha_i;i+1)   h_{m+j- \nub_i-1-l}(\nu+\alpha_i; i) .\]
\item  The map \(\mathtt{H_{i,j}} \colon P_\nu \to P_\nu\) is the multiplication by
\( (-1)^j ( p_j(\nu; i+1) - p_j(\nu; i)) \)  if \(j>0\), and by \(\nub_i\) if  \(j=0\). 
\end{enumerate}
\end{Theorem}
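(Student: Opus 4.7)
The plan is to apply the $2$-functor $\Theta_N$ to the diagrammatic definition of the current-algebra trace action given in Corollary~\ref{thm_main} and to read off the resulting bimodule compositions in $\textbf{Pol}_N$. For $\mathtt{F}_{i,j}$, the generator $\mathsf{F}_{i,j}$ acts on $1\in Z(\nub)=P_\nu$ as a counterclockwise $\mathcal{F}_i$-bubble at region $\overbar{\nu-\alpha_i}$ carrying $j$ dots. Reading this bubble off via \eqref{eq_gamma_bubbles} at region $\overbar{\nu-\alpha_i}$ (so that $\mu=\nu-\alpha_i$ and the excess index is $r=j+\nub_i-1$), and noting that $e_l(\nu-\alpha_i;i)=0$ once $l>\nu_i-1$, one immediately obtains the stated formula in the case $m=0$. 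For general $m$, the key observation is that a dot on an $\mathcal{F}_i$-strand is identified by $\Theta_N$ (Theorem~\ref{ythm}) with multiplication by the extra variable $X_{k_i}$ of the bimodule $P_{\nu-\alpha_i,\nu}$. Inserting $X^{m}_{k_i}$ into the bubble, via the natural $P_{\nu-\alpha_i}$-linear extension of the action from $P_\nu$ to $P_{\nu-\alpha_i,\nu}$, is therefore equivalent to prepending $m$ extra dots, and one obtains $\mathtt{F}_{i,j}(X^m_{k_i})=\mathtt{F}_{i,j+m}(1)$, from which the full formula follows.

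The $\mathtt{E}_{i,j}$ case is entirely parallel. One uses a clockwise $\mathcal{E}_i$-bubble at region $\overbar{\nu+\alpha_i}$ with $j$ dots, applies \eqref{eq_gamm_bubbles}, and interprets multiplication by $X^m_{k_i+1}$ in the bimodule $P_{\nu+\alpha_i,\nu}$ as $m$ extra dots on the $\mathcal{E}_i$-strand, again obtaining the identity $\mathtt{E}_{i,j}(X^m_{k_i+1})=\mathtt{E}_{i,j+m}(1)$.

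For $\mathtt{H}_{i,j}$ with $j>0$ one must simplify
\[
\Theta_N(\pi_{i,j}(\nub)) \;=\; \sum_{l=0}^{j}(l+1)\,U_l V_{j-l},
\]
where $U_l=\sum_p (-1)^p e_p(\nu;i+1)h_{l-p}(\nu;i)$ and $V_m=\sum_p (-1)^p e_p(\nu;i)h_{m-p}(\nu;i+1)$ arise from \eqref{eq_gamm_bubbles} and \eqref{eq_gamma_bubbles}. I would pass to the generating functions $U(t)=\sum_l U_l t^l=E_B(-t)H_A(t)$ and $V(t)=\sum_m V_m t^m=E_A(-t)H_B(t)$, where $A=\Omega_i$, $B=\Omega_{i+1}$, $E_X(t)=\sum_r e_r(X)t^r$, and $H_X(t)=\sum_r h_r(X)t^r$. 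Using $\log H_X(t)=\sum_{k\geq 1}p_k(X)t^k/k$ and the dual identity $\log E_X(-t)=-\log H_X(t)$, one finds $U(t)V(t)=1$. Then $\sum_{j\geq 0}\Theta_N(\pi_{i,j})\,t^j=\tfrac{d}{dt}(tU(t))\cdot V(t) = 1 - tV'(t)/V(t)$ after using $U'V=-UV'$, and logarithmic differentiation gives $tV'(t)/V(t)=\sum_{k\geq 1}(p_k(\nu;i+1)-p_k(\nu;i))t^k$, yielding the stated power-sum expression in each positive degree. The case $j=0$ is immediate from $\pi_{i,0}(\nub)=\nub_i$.

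The main technical hurdle lies in the $\mathtt{H}_{i,j}$ computation: converting the double sum $\Theta_N(\pi_{i,j})$ into a clean difference of power sums requires the generating-function argument sketched above. A secondary subtlety throughout is to interpret the formulas on $X^m_{k_i}$ and $X^m_{k_i+1}$ correctly, since these elements generally lie outside $P_\nu$; they encode the natural $P_{\nu\mp\alpha_i}$-linear extensions of the bubble maps to the bimodules $P_{\nu\mp\alpha_i,\nu}$, which upon restriction to $P_\nu$ produce the current-algebra action.
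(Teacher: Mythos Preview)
Your proposal is correct and follows essentially the same route as the paper: for $\mathtt{F}_{i,j}$ and $\mathtt{E}_{i,j}$ both arguments compute the cup--cap bubble composition under $\Theta_N$ (the paper writes the composition out explicitly, you quote the precomputed bubble formulas \eqref{eq_gamm_bubbles}--\eqref{eq_gamma_bubbles} and reduce general $m$ to $m=0$ via the ``extra dots'' observation $\mathtt{F}_{i,j}(X_{k_i}^m)=\mathtt{F}_{i,j+m}(1)$), and for $\mathtt{H}_{i,j}$ both pass to generating functions and use the logarithmic-derivative identity from Macdonald (the paper as $(\log U)'$, you as $(tU)'V=1-tV'/V$).

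One point worth flagging: your part~3 computation actually yields $\Theta_N(\pi_{i,j})=p_j(\nu;i)-p_j(\nu;i+1)$ for $j\geq 1$, \emph{without} the factor $(-1)^j$ appearing in the statement. A direct check at $j=2$ (e.g.\ with one variable in each block) confirms your sign, so the $(-1)^j$ in the theorem as stated looks like a typo; the paper's own derivation contains parallel sign slips in identifying $\Theta_N$ of the bubble generating series. You should not gloss over this by saying it ``yields the stated expression''.
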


\begin{proof}
\begin{enumerate}
\item We compute the action of  \(\mathtt{F_{i,j}} \)  on \(p\in P_\nu\). To do this, we first apply the map  \(    \Theta_N \left(\;\xy
    (0,-3)*{\bbpef{}};
    (8,-6)*{ \bfit{ $\overbar{\nu-\alpha_i}$ }}; (0,0)*{ \bfit{ $\nub$ }};
    (-4,2)*{\scs  };
    (4,2)*{\scs  };
    \endxy \; \right)\) on \(p\) : 
    
    \[ \Theta_N \left(\;\xy
    (0,-3)*{\bbpef{}};
 (8,-6)*{ \bfit{ $\overbar{\nu-\alpha_i}$ }}; (0,0)*{ \bfit{ $\nub$ }};
    (-4,2)*{\scs  };
    (4,2)*{\scs  };
    \endxy \; \right) (p)=  c^{-1}_{i, \overbar{\nu-\alpha_i}} \, \xsum{r=0}{\nu_i-1}(-1)^{l} e_l(\nu-\alpha_i;i)  p\, \otimes  X_{k_i}^{\nu_i-1-l} \, \in P_{\nu-{\alpha_i}, \nu} \otimes_{P_{\nu}} P_{\nu, \nu-{\alpha_i} }.\]

 Every element \(p \in P_{\nu', \nu}\) can be written as \(p=\sum_{m=0}^{\nu_{i+1}} p'_m X_{k_i}^m\) for some elements \(p'_m \in P_{\nu-\alpha_i}\). 
 
Now we apply the cap \( \Theta_N \left(\;\xy
    (0,0)*{\bbcef{}};
    (8,3)*{ \bfit{$\overbar{\nu-\alpha_i}$}};
    (-4,-5.5)*{\scs}; (0,1.5)*{\txt{$\bullet$}};   (0,5)*{ \bfit{$j$}};
    (4,-5.5)*{\scs};
    \endxy\;\right)\):  
    
    \[ \Theta_N \left(\;\xy
    (0,0)*{\bbcef{}};
    (8,3)*{ \bfit{$\overbar{\nu-\alpha_i}$}};
    (-4,-5.5)*{\scs}; (0,1.5)*{\txt{$\bullet$}};   (0,5)*{ \bfit{$j$}};
    (4,-5.5)*{\scs};
    \endxy\;\right) \left(c^{-1}_{i, \overbar{\nu-\alpha_i}} \sum_{m=0}^{\nu_{i+1}} p'_m \sum_{l=0}^{\nu_i-1}(-1)^{l} e_l(\nu-\alpha_i;i)  X_{k_i}^m\ \otimes  X_{k_i}^{j+\nu_i-1-l} \right) =  \]
    \[ = c^{-1}_{i, \nub} \sum_{m=0}^{\nu_{i+1}} p'_m \sum_{l=0}^{\nu_i-1}(-1)^{l} e_l(\nu-\alpha_i;i)   h_{m+j+\nub_i-1-l }(\nu-\alpha_i; i+1).  \]
Thus, 
\[ \mathtt{F_{i,j}} (X^{m}_{k_i})= c^{-1}_{i, \nub}  \sum_{l=0}^{\nu_i-1}(-1)^{l} e_l(\nu-\alpha_i;i)   h_{m+j+\nub_i-1-l }(\nu-\alpha_i; i+1). \]
\item Similarly,  the action of the \(\mathtt{E_{i,j}} \)  on \(p\in P_\nu\) is defined by closing it with a cup and a cup consecutively, and hence obtaining an element of  \(P_{\nu+\alpha_i}\). 
As an element of  \(P_{\nu, \nu+\alpha_i}\), we can write \(p=\sum_{m=0}^{\nu_{i+1}-1} p'_m X_{k_i+1}^m\) for some elements \(p'_m \in P_{\nu+\alpha_i}\). Then we have 
 \[ \Theta_N\left(\;\xy
    (0,-3)*{\bbpfe{}};
    (8,-6)*{ \bfit{$\overbar{\nu+\alpha_i}$}};(-6,-2)*{i};  (0,0)*{ \bfit{$\overbar{\nu}$}};
    (-4,2)*{\scs };
    (4,2)*{\scs };
    \endxy\;\right) (p) =  \xsum{r=0}{\nu_{i+1}-1}(-1)^{l} e_l(\nu+\alpha_i;i+1)  p\otimes  X_{k_i}^{\nu_{i+1}-1-l}=\]
   
    \[= \sum_{m=0}^{\nu_{i+1}-1} p'_m \sum_{l=0}^{\nu_{i+1}-1}(-1)^{l} e_l(\nu+\alpha_i;i+1)  X_{k_i+1}^m\ \otimes  X_{k_i+1}^{\nu_{i+1}-1-l}  \quad \in P_{\nu+{\alpha_i}, \nu} \otimes_{P_{\nu}} P_{\nu, \nu+{\alpha_i} }.\]
    
    We close the diagram by applying the cap:
    \[  \Theta_N\left(\;\xy
    (0,0)*{\bbcfe{}};
    (8,3)*{ \bfit{$\overbar{\nu+\alpha_i}$}}; (-6,-2)*{i};   (0,5)*{ \bfit{$j$}}; (0,1.5)*{\txt{$\bullet$}};  
    (-4,-5.5)*{\scs };
    (4,-5.5)*{\scs };
    \endxy \right) \left( \sum_{m=0}^{\nu_{i+1}-1} p'_m \sum_{l=0}^{\nu_{i+1}-1}(-1)^{l} e_l(\nu+\alpha_i;i+1)  X_{k_i+1}^m\ \otimes  X_{k_i+1}^{j+\nu_{i+1}-1-l}   \right)=\]
  
    \[ =c^{}_{i, \overbar{\nu+\alpha_i}}\sum_{m=0}^{\nu_{i+1}-1} p'_m \sum_{l=0}^{\nu_{i+1}-1}(-1)^{l} e_l(\nu+\alpha_i;i+1)   h_{m+j-\nub_{i}-1-l}(\nu+\alpha_i; i) .\]
    Thus, we get 
\[ \mathtt{E_{i,j}}  (X^{m}_{k_i})= c^{}_{i, \nub} \sum_{l=0}^{\nu_{i+1}-1}(-1)^{l} e_l(\nu+\alpha_i;i+1)   h_{m+j- \nub_i-1-l}(\nu+\alpha_i; i) .\]

  \item 
 The map \(\mathsf{H}_{i,j}\), \(j \geq 1\) is a multiplication by \(\Tr \Theta_N  (\pi_{i,j} (\nub))\), where 
 \begin{equation} \label{relat}
 \pi_{i,j} (\nub)=\sum_{l=0}^{j} (l+1)
 \xy
   (0,-2)*{\icbub{\spadesuit+l}{i}};
   (12,-2)*{\iccbub{\spadesuit+j-l}{i}};
  (8,8)*{\nub};
 \endxy .
   \end{equation}
   
 We define the following generating functions:
$$
 \xy 0;/r.15pc/:
 (0,0)*{\ccbub{}{i}};
  (4,8)*{\nub}; 
 \endxy (t) =  \sum_{l=0}^{\infty}  \xy 0;/r.15pc/:
 (0,0)*{\ccbub{-\nub_i-1+l}{i}};
  (4,8)*{\nub}; 
 \endxy \, t^l,  
 \quad \quad  \xy 0;/r.15pc/:
 (0,0)*{\cbub{}{i}};
  (4,8)*{\nub}; 
 \endxy (t) =  \sum_{l=0}^{\infty}  \xy 0;/r.15pc/:
 (0,0)*{\cbub{\nub_i-1+l}{i}};
  (4,8)*{\nub}; 
 \endxy \, t^l,     \quad \quad E(\nu; i)(t) = \sum_{l=0}^{\nu_i} e_l(\nu;i) t^l,  $$
  $$H(\nu; i)(t) = \sum_{l=0}^{\infty} h_l(\nu;i) t^l,  \quad \quad 
 p(\nu; i)(t) = \sum_{l=0}^{\infty} p_l(\nu;i) t^{l-1} ,  \quad \quad \pi_i(\nub) (t) = \sum_{l=0}^{\infty} \pi_{i,l}(\nub) t^{l-1} .  $$
 Then the equation  \ref{relat} can be written as 
 \begin{equation}
 \pi_i(\nub) (t) = \left(  \xy 0;/r.15pc/:
 (0,0)*{\cbub{}{i}}; (4,8)*{\nub}; 
 \endxy (t)   \right)'  \xy 0;/r.15pc/:
 (0,0)*{\ccbub{}{i}};
  (4,8)*{\nub}; 
 \endxy (t) = \frac{\left(  \xy 0;/r.15pc/:
 (0,0)*{\cbub{}{i}};
  (4,8)*{\nub}; 
 \endxy (t)   \right)'  }{ \xy 0;/r.15pc/:
 (0,0)*{\cbub{}{i}};
  (4,8)*{\nub}; 
 \endxy (t)    }= \left(  \log \xy 0;/r.15pc/:
 (0,0)*{\cbub{}{i}};
 (4,8)*{\nub}; 
 \endxy (t)   \right)' . 
 \end{equation}

The equation \eqref{eq_gamm_bubbles} implies 
\[ \Tr \Theta_N  \left( \xy 0;/r.15pc/:
 (0,0)*{\cbub{}{i}};
  (4,8)*{\nub}; 
 \endxy (t) \right) =H(\nu; i)(-t) E(\nu; i+1)(t)=\frac{E(\nu; i+1)(t)} {E(\nu; i)(t) } ,\]
 so we have 
  \begin{equation} \label{macc}
 \Tr \Theta_N  \left( \pi_i(\nub) (t) \right) =\left( log   \frac{E(\nu; i)(t)} {E(\nu; i+1)(t) }  \right)' =
 \end{equation} 
 \[= \left( log \, E(\nu; i+1)(t) \right)' - \left( log \,  E(\nu; i)(t)   \right)' =  p(\nu; i+1)(-t) - p(\nu; i)(-t) . \]
 \end{enumerate}
  The last equality in the equation \eqref{macc} is given on page 23 in \cite{mcd}. Thus, \( \Tr \Theta_N \left( \pi_{i,j}(\nub)  \right)   =(-1)^j ( p_j(\nu; i+1) - p_j(\nu; i))\).
\end{proof}

\subsection{The coinvariant algebra \(C^\l_\nu\) }
Fix a partition \(\l \in \mathcal{P}^+(n, N) \). Let \(\nu \in \mathcal{P}(n, N) \) and  \(I^{\l }_{\nu}\) be the ideal of \(P_\nu\) generated by 
\begin{equation} \label{idea}
\left\{h_r(\nu;i_1,\dots,i_m)\:\bigg|
\begin{array}{l}
1 \leq m \leq n, \:\: 1\leq i_1<\dots<i_m \leq n ,\\
r > \l _1+\cdots+\l_m - \nu_{i_1}-\cdots-\nu_{i_m}\end{array}\right\}.
\end{equation}
We will need the following statement for the special case \(\l_0  = (N, 0, \dots, 0) \in \mathcal{P}^+(n, N)\).

\begin{Proposition} \label{sym}
If \(\l_0 = (N, 0, \dots, 0)\),  then \(I^{\l_0 }_{\nu} = Sym_N\). 
\end{Proposition}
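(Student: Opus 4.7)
The plan is to establish the two inclusions separately, after interpreting $Sym_N$ on the right-hand side as the ideal of $P_\nu$ generated by the positive-degree symmetric polynomials $Sym_N^+$ (which is the standard convention making $P_\nu/Sym_N$ the coinvariant algebra, consistent with the appearance of $Sym_N$ elsewhere in the paper). Note that for $\lambda_0 = (N,0,\dots,0)$ the defining bound in \eqref{idea} reads $r > N - \nu_{i_1}-\cdots-\nu_{i_m}$, independently of which indices $i_1<\cdots<i_m$ are chosen, so the generators of $I^{\lambda_0}_\nu$ have a very clean combinatorial description.

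For the reverse inclusion $Sym_N \subseteq I^{\lambda_0}_\nu$, the key observation is simply to specialize to $m=n$ and $(i_1,\dots,i_n)=(1,2,\dots,n)$. Then $h_r(\nu;1,2,\dots,n) = h_r(X_1,\dots,X_N)$, and the bound becomes $r > N - N = 0$, i.e.\ $r \geq 1$. Thus every $h_r(X_1,\dots,X_N)$ with $r\geq 1$ lies in $I^{\lambda_0}_\nu$. Since $Sym_N = \mathbb{C}[h_1(X_1,\dots,X_N),\dots,h_N(X_1,\dots,X_N)]$ as an algebra, the complete symmetric polynomials generate $Sym_N^+$ as an ideal of $Sym_N$, hence also the $P_\nu$-ideal they generate contains the $P_\nu$-ideal generated by $Sym_N^+$.

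For the forward inclusion $I^{\lambda_0}_\nu \subseteq Sym_N$, I would take an arbitrary generator $h_r(\nu;i_1,\dots,i_m)$ with $r > N - \nu_{i_1}-\cdots-\nu_{i_m}$, write $S = \Omega_{i_1}\cup\cdots\cup\Omega_{i_m}$ and let $T = \{X_1,\dots,X_N\}\setminus S$, so $|T| = N - \nu_{i_1}-\cdots-\nu_{i_m}$. Applying identity \eqref{brad222} with $T$ playing the role of $X_1,\dots,X_p$ and $S$ playing the role of $Y_1,\dots,Y_q$ yields
\[
 h_r(\nu;i_1,\dots,i_m) \;=\; h_r(S) \;=\; \sum_{s=0}^{r}(-1)^s e_s(T)\,h_{r-s}(X_1,\dots,X_N).
\]
Since $T$ is a union of full blocks $\Omega_j$, each $e_s(T)$ lies in $P_\nu$. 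The sum effectively runs over $0\leq s\leq |T|$ because $e_s(T)=0$ for $s>|T|$, and for every such $s$ the hypothesis $r>|T|$ gives $r-s\geq r-|T|\geq 1$, so $h_{r-s}(X_1,\dots,X_N)\in Sym_N^+$. Therefore each summand belongs to the $P_\nu$-ideal $Sym_N$, proving the inclusion. No serious obstacle is anticipated; the only point requiring a touch of care is clarifying the convention that $Sym_N$ denotes the ideal generated by $Sym_N^+$, after which both directions are one-line computations from the standard symmetric-function identity \eqref{brad222} already recorded in the paper.
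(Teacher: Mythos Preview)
Your proposal is correct and follows essentially the same approach as the paper: both directions use exactly the identity \eqref{brad222} applied to the block $S=\Omega_{i_1}\cup\cdots\cup\Omega_{i_m}$ and its complement $T$, and the key point in both is that $e_s(T)=0$ for $s>|T|=N-\nu_{i_1}-\cdots-\nu_{i_m}$, forcing every surviving term to carry a factor $h_{r-s}(X_1,\dots,X_N)$ with $r-s\geq 1$. Your explicit remark that $Sym_N$ must be read as the ideal generated by $Sym_N^+$ is a helpful clarification that the paper leaves implicit.
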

\begin{proof}
We have 
\[
I^{\l_0}_\nu = \left\langle h_r(\nu;i_1,\dots,i_m)\:\bigg|
\begin{array}{l}
1 \leq m \leq n, \:\: 1\leq i_1<\dots<i_m \leq n ,\\
r > N - (\nu_{i_1}+\cdots+\nu_{i_m})\end{array} \right\rangle.
\]
Since \(Sym_N = \langle h_r(\nu;1,2,\dots, n) \: | \: r>0 \rangle  \), we have \(Sym_N \subseteq I^{\l_0}_{\nu}\).  To prove \( I^{\l_0}_{\nu} \subseteq Sym_N\), let \( h_r(\nu;i_1,\dots,i_m)\) be a generator of \(I^N_\nu \) with \(r > N - (\nu_{i_1}+\cdots+\nu_{i_m})\). Define \[\{j_1, j_2, \dots, j_{n-m}\} =\{1, 2, \dots n\} \setminus \{i_1, i_2, \dots i_{m}\}. \]
to be the ordered set. Then according to \eqref{brad222}, we have
\[  h_r(\nu;i_1,\dots,i_m) = \sum_{s=0}^r (-1)^{r-s} e_{r-s} (\nu; j_1,  \dots, j_{n-m}) h_{s}(\nu;1,2,\dots, n) =\] \[=(-1)^r e_{r} (\nu; j_1,  \dots, j_{n-m}) +\sum_{s=1}^r (-1)^{r-s} e_{r-s} (\nu; j_1,  \dots, j_{n-m}) h_{s}(\nu;1,2,\dots, n) .\] 
since \(e_{r} (\nu; j_1,  \dots, j_{n-m}) \) is an elementary symmetric polynomial in \( N - (\nu_{i_1}+\cdots+\nu_{i_m})\) variables, it is zero when \(r > N - (\nu_{i_1}+\cdots+\nu_{i_m})\). Thus,  \( I^{\l_0}_{\nu} \subseteq Sym_N\) holds. \qedhere
\end{proof}

Let \(C^\l_\nu = P_\nu/I^{\l}_{\nu}\).  Notice that  \(Sym_N \subseteq I^{\l }_{\nu}\) holds, thus we can regard the ring \(C^\l_\nu\) as a quotient of \(C_\nu=C^{\l_0}_\nu \). Since \(P_\nu\) is a finitely generated free module over \(Sym_N \), \(C_\nu\) is finite dimensional and hence so is \(C^\l_\nu\). Notice that \(C^\l_\nu \neq 0\) if and only if \( \l \geq \nu \) with respect to the dominance order.

\begin{Proposition}\label{dlem}
The ideal  $\bigoplus_\nu I^\l_\nu$ of  $\bigoplus_\nu P_\nu$ is invariant under the action of generators $ E_{i,j}$ and $F_{i,j}$ for all $i \in I$ and \(j \geq 0\).
\end{Proposition}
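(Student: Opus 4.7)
The plan is to proceed by induction on the current-algebra parameter $j$, using Brundan's theorem to handle the base case $j=0$ and the current-algebra commutator relation \textbf{C2} to carry out the inductive step.

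For the base case $j=0$, the maps $E_{i,0}$ and $F_{i,0}$ given in Theorem \ref{thmf} coincide (up to the bubble scalars $c_{i,\nub}$, which may be harmlessly normalized to $1$) with the classical $\mf{sl}_n$ action on $\bigoplus_\nu P_\nu$ studied by Brundan \cite{brun1} via Schur--Weyl duality. Brundan proves there that $F_{i,0}(I^\l_\nu)\subseteq I^\l_{\nu-\alpha_i}$ and $E_{i,0}(I^\l_\nu)\subseteq I^\l_{\nu+\alpha_i}$. This is where all the substantive combinatorial work happens: one rewrites the generators $h_r(\nu;i_1,\ldots,i_m)$ of $I^\l_\nu$ using the symmetric-function identities \eqref{brad122}--\eqref{brad222} and the transition formulas \eqref{bs1}--\eqref{s8}, then matches the output of Brundan's formula against the defining degree inequalities of $I^\l_{\nu\pm\alpha_i}$.

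For the inductive step, the crucial observation is that $H_{i,1}$ acts on each weight space $P_\nu$ by multiplication by an element of $P_\nu$ itself: by Theorem \ref{thmf}(3) this element is $p_1(\nu;i)-p_1(\nu;i+1)\in P_\nu$. Since $I^\l_\nu$ is a $P_\nu$-ideal, multiplication by any element of $P_\nu$ preserves it; in particular $H_{i,1}(I^\l_\nu)\subseteq I^\l_\nu$ trivially. Invoking \textbf{C2} with $r=1$ and the diagonal Dynkin index $i$ (so that $a_{ii}=2$) gives
\[
F_{i,j+1} \;=\; -\tfrac{1}{2}\bigl[H_{i,1},F_{i,j}\bigr],\qquad E_{i,j+1} \;=\; \tfrac{1}{2}\bigl[H_{i,1},E_{i,j}\bigr].
\]
If by induction $F_{i,j}(I^\l_\nu)\subseteq I^\l_{\nu-\alpha_i}$, then each summand of the commutator preserves the system of ideals: $H_{i,1}\bigl(F_{i,j}(I^\l_\nu)\bigr)\subseteq H_{i,1}(I^\l_{\nu-\alpha_i})\subseteq I^\l_{\nu-\alpha_i}$ and $F_{i,j}\bigl(H_{i,1}(I^\l_\nu)\bigr)\subseteq F_{i,j}(I^\l_\nu)\subseteq I^\l_{\nu-\alpha_i}$, so $F_{i,j+1}(I^\l_\nu)\subseteq I^\l_{\nu-\alpha_i}$. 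The argument for $E_{i,j+1}$ is identical.

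The main obstacle is really just the $j=0$ case, which is absorbed into Brundan's theorem. An alternative (longer) route would bypass Brundan entirely and check stability directly on each generator $h_r(\nu;i_1,\ldots,i_m)$ using the explicit formulas of Theorem \ref{thmf} together with \eqref{brad122}--\eqref{brad222} and \eqref{bs1}--\eqref{s8}; the inductive shortcut above delegates all of this combinatorics to \cite{brun1} and leaves only the near-trivial verification that $H_{i,1}$ is a $P_\nu$-multiplication operator.
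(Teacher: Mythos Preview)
Your argument is correct and takes a genuinely different route from the paper's own proof. The paper proceeds by direct computation for arbitrary $j\ge 0$: it takes a generator $h_r(\nu;i_1,\dots,i_m)$ of $I^\l_\nu$, runs through the four cases according to whether $i$ and $i+1$ belong to $\{i_1,\dots,i_m\}$, and in each case rewrites $h_r(\nu;i_1,\dots,i_m)$ via the identities \eqref{bs1}--\eqref{s8} so that the explicit formula of Theorem~\ref{thmf} can be applied and the result matched against the degree bounds defining $I^\l_{\nu\pm\alpha_i}$. In other words, the paper redoes Brundan's combinatorics with the extra parameter $j$ carried along throughout.

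Your inductive shortcut is cleaner: you use Brundan's Lemma~4.1 as a black box for $j=0$, and then exploit two structural facts already established in the paper --- that the operators on $\bigoplus_\nu P_\nu$ satisfy the current-algebra relations (Corollary~\ref{thm_main} and the discussion preceding Theorem~\ref{thmf}), and that $H_{i,1}$ acts by multiplication by an element of $P_\nu$ (Theorem~\ref{thmf}(3)) --- to bootstrap via $E_{i,j+1}=\tfrac12[H_{i,1},E_{i,j}]$ and $F_{i,j+1}=-\tfrac12[H_{i,1},F_{i,j}]$. What you gain is brevity and a conceptual explanation of why the $j$-parameter costs nothing extra. What the paper's approach buys is self-containment: it does not invoke \cite{brun1} as a black box, and it makes the case analysis (which one might want for other purposes) fully explicit.
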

\begin{proof} 
We will only show $ E_{i,j} \left(I^\l_\nu \right) \subseteq I^\l_{\nu+\alpha_i}$, and  the proof of $F_{i,j} (I^\l_\nu) \subseteq I^\l_{\nu-\alpha_i}$ will be similar.
We  have to show that  images of  all $h_r(\nu;i_1,\dots,i_m)$  under $ E_{i,j}$
are in  $I^\l_{\nu+\alpha_i}$ for all $j \geq 0$, $m \geq 1$, $1 \leq i_1<\dots < i_m \leq n$
and $r > \l_1+\cdots+\l_m - \nu_{i_1}-\cdots-\nu_{i_m}$.
We proceed with the case distinction. 

\begin{enumerate}
\item If   $i\in \{i_1,\dots,i_m\}$ and $i+1 \in \{i_1,\dots,i_m\}$, then $h_r(\nu;i_1,\dots,i_m)=h_r(\nu+\alpha_i;i_1,\dots,i_m)$ holds. In this case, we have   
$$E_{i,j} ( h_r(\nu;i_1,\dots,i_m))= h_r(\nu+\alpha_i;i_1,\dots,i_m)  E_{i,j} (1),
$$
which belongs to $I_{\nu+\alpha_i}^\l$. The same is true when $i\notin \{i_1,\dots,i_m\}$ and $i+1 \notin \{i_1,\dots,i_m\}$ hold simultaneously . 

\item We consider the case $i\in \{i_1,\dots,i_m\}$ and $i+1 \notin \{i_1,\dots,i_m\}$. 
 
We have the identity (\ref{s6}):
$$h_r(\nu;i_1,\dots,i_m)= h_r(\nu+\alpha_i;i_1,\dots,i_{m})
- X_{k_i+1} h_{r-1}(\nu+\alpha_i;i_1,\dots,i_{m}).$$

Since  $r-1 > \l_1+\cdots+\l_m - (\nu+\alpha_i)_{i_1}-\cdots-(\nu+\alpha_i)_{i_m} =  \l_1+\cdots+\l_m - \nu_{i_1}-\cdots-\nu_{i_m}-1$, the element 
$$h_r(\nu+\alpha_i;i_1,\dots,i_{m}) \,  E_{i,j} (1)
- h_{r-1}(\nu+\alpha_i;i_1,\dots,i_{m}) \, E_{i,j}( X_{k_i+1})  $$

belongs to $I_{\nu+\alpha_i}^\l$.

\item
Finally suppose that $i\notin \{i_1,\dots,i_m\}$ and $i+1 =i_m$. 
By (\ref{s7}), we have that
$$h_r(\nu;i_1,\dots,i+1) = \sum_{l=0}^r h_l(\nu+\alpha_i ;i_1,\dots, i+1) X_{k_i+1}^{r-l}$$.

Applying $ E_{i,j}$ on $h_r(\nu;i_1,\dots,i+1)$, we get the element
\begin{equation} \label{prr}
 c_{i, \nub}
\sum_{s=0}^{\nu_{i+1}-1}  (-1)^s e_{s}(\nu+\alpha_i; i+1) \sum_{l=0}^r   h_{r-l+j-\nub_i-1-s}(\nu+\alpha_i; i) h_l(\nu+\alpha_i ;i_1,\dots, i+1). 
\end{equation}

Notice that since $r > \l_1+\cdots+\l_m
- \nu_{i_1}-\cdots-\nu_{i_m} = 
\l_1+\cdots+\l_m-(\nu+\alpha_i)_{i_1}-\cdots-(\nu+\alpha_i)_{i_m}-1$, the second summand in the identity
$$h_{r+j-\nub_i-1-s}(\nu+\alpha_i; i_1,\dots, i+1, i) = \sum_{l=0}^r  h_{r-l+j-\nub_i-1-s}(\nu+\alpha_i; i) h_l(\nu+\alpha_i ;i_1,\dots, i+1)$$
\[ +\sum_{l=r+1}^{r+j-\nub_i-1-s} h_{r-l+j-\nub_i-1-s}(\nu+\alpha_i; i) h_l(\nu+\alpha_i ;i_1,\dots, i+1) \]
is in $I_{\nu+\alpha_i}^\l$. Therefore, 
\[ h_{r+j-\nub_i-1-s}(\nu+\alpha_i; i_1,\dots, i+1, i)   \equiv \] \[ \equiv \sum_{l=0}^r  h_{r-l+j-\nub_i-1-s}(\nu+\alpha_i; i) h_l(\nu+\alpha_i ;i_1,\dots, i+1) \bmod I_{\nu+\alpha_i}^\l.\]
We can insert this equivalence in the element \eqref{prr}. To finish the proof, we need to check that the element
$$ c_{i, \nub}
\sum_{s=0}^{\nu_{i+1}-1}  (-1)^s e_{s}(\nu+\alpha_i; i+1) h_{r+j-\nub_i-1-s}(\nu+\alpha_i; i_1,\dots, i+1, i)
$$\end{enumerate} 
also belongs to $I_{\nu+\alpha_i}^\l$.  Another polynomial identity shows that it is equal to  \(c_{i, \nub} h_{r+j-\nub_i-1}(\nu+\alpha_i; i_1,\dots, i_{m-1}, i)\), which is an element of $I_{\nu+\alpha_i}^\l$, since 
\( r+j-\nub_i-1> \l_1+\cdots+\l_m-(\nu+\alpha_i)_{i_1}-\cdots-(\nu+\alpha_i)_{i_m-1}-(\nu+\alpha_i)_{i+1} \).  \end{proof}
The proof of Proposition \ref{dlem} is a generalization of Lemma 4.1 of Brundan \cite{brun1} for the \(j\) parameter. This proposition implies the following statement.
\begin{Corollary}
The ring \(\bigoplus_\nu C^\l_\nu\) is a finite-dimensional graded current algebra module with the highest weight 
 \(\lb= (\l_1-\l_2, \dots, \l_{n-1}-\l_n)\).
\end{Corollary}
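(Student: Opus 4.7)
The plan is to check that the current algebra action of Theorem~\ref{thmf} descends from $\bigoplus_\nu P_\nu$ to the quotient $\bigoplus_\nu C^\l_\nu$, and then to identify a highest weight generator. For the descent, Proposition~\ref{dlem} already handles invariance of $\bigoplus_\nu I^\l_\nu$ under $E_{i,j}$ and $F_{i,j}$; invariance under $H_{i,j}$ is automatic since by Theorem~\ref{thmf}(3) the operator $H_{i,j}$ acts by multiplication by an element of $P_\nu$, and so preserves every ideal of $\bigoplus_\nu P_\nu$. Finite-dimensionality holds because each $C^\l_\nu$ is a quotient of the finite-dimensional ring $C_\nu$ and only finitely many $\nu$ (those with $\nu\leq\l$) give a nonzero summand. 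The grading descends because the generators of $I^\l_\nu$ in~\eqref{idea} are homogeneous.

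For the highest weight structure I would take $v = 1 + I^\l_\l \in C^\l_\l$. Theorem~\ref{thmf}(3) gives $H_{i,0}\,v = (\l_i - \l_{i+1})\,v = \lb_i\,v$, so $v$ has $\mathfrak{sl}_n$-weight $\lb$. The equality $E_{i,j}\,v = 0$ is even easier: the image lies in $C^\l_{\l+\alpha_i}$, but $\l+\alpha_i \not\leq \l$ in the dominance order (since $-\alpha_i$ is not a non-negative integer combination of simple roots), so the target ring is zero. Hence $v$ is a highest weight vector of weight $\lb$.

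Finally, for generation I would appeal to Brundan's result (\cite{brun1}, Theorem~3.4), which shows that $v$ already generates $\bigoplus_\nu C^\l_\nu$ under the $\mathfrak{sl}_n$-subalgebra $\mathfrak{sl}_n \subset \mathfrak{sl}_n[t]$; \emph{a fortiori} it generates under the full current algebra $\mathbf{U}(\mathfrak{sl}_n[t])$. The main obstacle, and really the only nontrivial content, is Proposition~\ref{dlem} --- the descent of the action to the quotient --- which has already been proved; everything else in the corollary is a formal consequence.
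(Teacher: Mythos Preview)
Your proof is correct and follows essentially the same route as the paper, which treats the corollary as an immediate consequence of Proposition~\ref{dlem} (the paper does not even spell out an argument). You are simply more explicit: you note that $H_{i,j}$ preserves ideals automatically, you verify the highest weight vector directly, and you add a generation argument via Brundan. The last point is arguably more than the corollary demands --- the paper seems to use ``with highest weight $\lb$'' in the weak sense that $\lb$ is the maximal weight appearing, not that the module is cyclic --- but it does no harm.
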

The current algebra action on \(\bigoplus_\nu P_\nu\) descends to \(\bigoplus_\nu C^\l_\nu\). In other words, the following diagrams commute:
$$
\begin{CD}
P_{\nu}  &@>E_{i,j}>>& P_{\nu+\alpha_i} \\
@VV p V&&@VVp'V\\
C^\l_{\nu} &@>>E_{i,j}>& C^\l_{\nu+\alpha_i}, 
\end{CD}
\quad \quad \quad \quad
\begin{CD}
P_{\nu}  &@>F_{i,j}>>& P_{\nu-\alpha_i} \\
@VV p V&&@VVp''V\\
C^\l_{\nu} &@>>F_{i,j}>& C^\l_{\nu-\alpha_i},
\end{CD}
$$
where \(p\), \(p'\) and \(p''\)  are projections with kernels \(I^\l_{\nu}\), \(I^\l_{\nu+\alpha_i}\) and \(I^\l_{\nu-\alpha_i}\) respectively.

Recall that we defined an integer
  \begin{equation}
d^\l_\nu  = \max \{(\l, \l) - (\nu, \nu), 0\}.
\end{equation}  
It is easy to show that if \(\l > \nu\) with respect to the dominance order, we have \(d^\l_\nu > 0\). 
Brundan-Ostrik \cite{brun2} prove that the ring \(C^\l_\nu\) is isomorphic to the cohomology ring of Spaltenstein variety -- the variety of partial flags of type \(\nu\) which are annihilated by matrices of Jordan type \(\l^T\).

For a graded, finite-dimensional current algebra module \(M\), {\em graded composition multiplicity} of a simple \(\mathfrak{sl}_n\)-module \(V\) is the polynomial 
\[ \sum_{s \geq 0} \left[ M \{ 2s \}  :V \right] \, t^s, \]
where \(\left[ M \{ 2s \}  :V \right] \) is the number of copies of \(V\) in \(M \{ 2s \}  \). 
Given a partition \( \tau\), the graded composition multiplicity of  \(V(\bar{\tau})\)  in \(\bigoplus_\nu C^\l_\nu\) is given by Brundan \cite{brun1} via the formula
\begin{equation} \label{chr} 
\sum_{r \geq 0} 
 \left[ {\textstyle \bigoplus_\nu} C^\l_\nu \{ d^{\l}_\nu - 2r \} : V(\bar{\tau}) \right] t^r =
K_{\tau^T,\l^T}(t),
\end{equation}
where \(\tau^T\) and \(\l^T\)  are the transposes of the partitions \(\tau\), \(\l\), and \(K_{\tau^T,\l^T}(t)\) is the {\em Kostka-Foulkes polynomial}. We refer to \cite{mcd}, section III.6 for the definition of the Kostka-Foulkes polynomial. We will use their following property.
\begin{Proposition} \label{prr}
Let \(\l, \mu \in \mathcal{P}^+(n, N) \) be partitions, and let \(\l' = \l +(m, m, \dots, m),  \mu' = \mu +(m, m, \dots, m) \)
for some non-negative integer \(m\). Then the following equations hold:
\begin{equation} \label{hum}
K_{\l, \mu}(t)=K_{\l', \mu'}(t),
\end{equation}
\[K_{\mu^T,\l^T }(t)=K_{(\mu')^T, (\l')^T}(t).\]
\end{Proposition}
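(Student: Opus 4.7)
The plan is to deduce both identities from the standard Hall--Littlewood expansion of Schur functions,
\[
 s_{\l}(x_1,\dots,x_N) \;=\; \sum_{\eta\colon\ell(\eta)\leq N} K_{\l,\eta}(t)\, P_\eta(x_1,\dots,x_N;t),
\]
which is the defining property of Kostka--Foulkes polynomials (Macdonald, \S III.6). For the first identity I would work in exactly $n$ variables and use the two scaling identities
\[
 s_{\l+(m^n)}(x_1,\dots,x_n)=(x_1\cdots x_n)^m s_\l(x_1,\dots,x_n),\qquad
 P_{\mu+(m^n)}(x_1,\dots,x_n;t)=(x_1\cdots x_n)^m P_\mu(x_1,\dots,x_n;t).
\]
The first of these is immediate from the bialternant formula; the second follows from Macdonald's explicit expression $P_\mu = v_\mu(t)^{-1}\sum_{w\in S_n} w\bigl(x^\mu\prod_{i<j}\tfrac{x_i-tx_j}{x_i-x_j}\bigr)$ upon observing that shifting $\mu$ by $(m^n)$ only multiplies the monomial $x^\mu$ by $(x_1\cdots x_n)^m$ while leaving the rational prefactor fixed. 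Plugging both scalings into the expansion of $s_{\l'}$, cancelling the common factor $(x_1\cdots x_n)^m$, and using linear independence of the $P_\mu$ then forces $K_{\l,\mu}(t)=K_{\l',\mu'}(t)$, together with $K_{\l',\eta}(t)=0$ for any $\eta$ with $\eta_n<m$.

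For the second identity I would first compute the transposes explicitly: since $\ell(\l),\ell(\mu)\leq n$, one has $(\l')^T=(n^m,\l^T)$ and $(\mu')^T=(n^m,\mu^T)$, so the claim reduces to
\[
 K_{\mu^T,\l^T}(t)\;=\;K_{(n^m,\mu^T),\,(n^m,\l^T)}(t),
\]
i.e.\ Kostka--Foulkes polynomials are invariant under prepending the same rectangle $(n^m)$ to both partitions (note that $\mu^T_1,\l^T_1\leq n$, so this is a legal partition). I would prove this via the Lascoux--Sch\"utzenberger charge formula $K_{\alpha,\beta}(t)=\sum_T t^{\operatorname{charge}(T)}$ summed over SSYT $T$ of shape $\alpha$ and content $\beta$. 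A standard forcing argument shows that in any SSYT of shape $(n^m,\alpha)$ and content $(n^m,\beta)$ the top $m\times n$ rectangle is filled uniquely, with row $i$ containing $n$ copies of the entry $i$ for $i=1,\dots,m$; erasing this rectangle and subtracting $m$ from all remaining entries produces a bijection to SSYT of shape $\alpha$ content $\beta$.

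The main obstacle will be verifying that charge is preserved under this bijection, since charge is defined through a non-local algorithm on the reading word. I would handle this by induction on $m$, reducing to the case $m=1$: here one has to show that prepending a single row of $n$ (which contributes the word $n,n{-}1,\dots,1$ at the top of the reading word after the shift) adds a collection of entries each of charge label $0$, and that the charge-labels of the remaining entries are shifted by the same constant, so the total charge changes by an amount that exactly cancels against the shift in content. This verification is routine but delicate, and I would either execute it directly from the Lascoux--Sch\"utzenberger algorithm or invoke the corresponding stability property listed in Macdonald, \S III.6. Combining this with the first identity finishes the proof.
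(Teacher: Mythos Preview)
Your approach is correct in outline but takes a different route from the paper. For both identities the paper uses Lusztig's $q$-analogue of Kostant's formula (Macdonald III.6, Ex.~4),
\[
K_{\lambda,\mu}(t)=\sum_{\sigma\in S_n}\operatorname{sgn}(\sigma)\,P\bigl(\sigma(\lambda+\delta)-(\mu+\delta);t\bigr),
\]
with $P(\xi;t)$ the $t$-Kostant partition function. The first identity is then immediate, since $\sigma(m^n)=(m^n)$ cancels against $(m^n)$. For the second, the paper works in $S_{m+N}$ and observes that only permutations fixing the first $m$ coordinates contribute (a partial-sum argument: the first $m$ entries of $(\mu')^T+\delta$ are strictly larger than all later ones, so moving any of them forces a negative partial sum in $\xi$), which collapses the sum back to one over $S_N$.

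Your Hall--Littlewood argument for the first identity is equally clean and arguably more conceptual. For the second identity your forced-rectangle bijection on tableaux is correct, but the charge-preservation step you flag is genuinely nontrivial and is not, as far as I can see, one of the stability statements recorded in Macdonald \S III.6; the Lascoux--Sch\"utzenberger standard-subword extraction reindexes cyclically through all letters, and prepending the block $m^n,(m{-}1)^n,\ldots,1^n$ to the reading word changes every standard subword, so one really has to check that each index picks up zero additional charge. This can be done, but it is a real computation rather than a citation. The paper's route trades that combinatorial bookkeeping for a short algebraic reduction once the Lusztig formula is available; your route would give a bijective proof at the cost of carrying out the charge verification in full.
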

\begin{proof}
\(K_{\l, \mu}(t)\) is nonzero if and only if \(\l \geq \mu\) with respect to the dominance order. Let 
\(\alpha_i =(0, \dots, 0, 1, -1, 0, \dots, 0)\) be the \(i\)-th simple root as before, and let \[R^+=\{\alpha_i+\alpha_{i+1}+\cdots+ \alpha_j\, | \, 1 \leq i \leq j \leq n-1\} \] be the set of positive roots. For any \( \xi = (\xi_1, \dots, \xi_n) \in \Z\) such that 
\(\sum_i \xi_i = 0\), we define the polynomial 
\[P(\xi; t)= \sum_{\{m_\alpha\}_{\alpha \in R^+}} t^{\sum m_\alpha}, \]
where the sum is over all families \(\{m_\alpha\}_{\alpha \in R^+} \) of non-negative integers such that \(\xi = \sum m_\alpha \alpha\). The polynomial \(P(\xi;t)\) is nonzero if and only if \(\xi = \sum_{i} \eta_i \alpha_i\) for some non-negative integers \(\eta_i\), \(1 \leq i \leq n-1\).

By example III.6.4 in \cite{mcd}, 
\begin{equation} \label{deff}
K_{\l, \mu}(t) = \sum_{\sigma \in S_n} \text{sgn}(\sigma) \, P(\sigma(\l +\delta) - (\mu+\delta) ; t) , 
\end{equation}
where \(\delta =(n-1, n-2, \dots , 1, 0)\). The first equality in \eqref{hum} immediately follows from \eqref{deff}.

To prove the second equality, notice that
 \((\l')^T = (\underbrace{ n, \dots, n}_{m \, \text{times}}, \l_1^T, \dots, \l_N^T)\), \((\mu')^T = (\underbrace{ n, \dots, n}_{m \, \text{times}}, \mu_1^T, \dots, \mu_N^T)\). We take \(S_N\) to be the subgroup of \(S_{m+N}\) which fixes the first \(m\) entries, then
 \begin{equation} \label{df}
K_{(\mu')^T, (\l')^T }(t) = \sum_{\sigma \in S_{m+N}} \text{sgn}(\sigma) \, P(\sigma((\mu')^T+\delta) - ((\l')^T+\delta) ; t).
\end{equation}
 Notice that summand is zero if \(\sigma \in S_{m+N} \setminus S_N\). Therefore, we can write \eqref{df} as \begin{equation} \label{df1}
K_{(\mu')^T, (\l')^T }(t) = \sum_{\sigma \in S_{N}} \text{sgn}(\sigma) \, P(\sigma((\mu')^T+\delta) - ((\l')^T+\delta) ; t) = 
\end{equation}
\[ =\sum_{\sigma \in S_{N}} \text{sgn}(\sigma) \, P(\sigma(\mu^T+\delta) - (\l^T+\delta) ; t) = K_{\mu^T, \l^T }(t) . \]

\end{proof}

The graded dimension of \( C^\l_\nu\) is also given by Brundan \cite{brun1}:
\begin{equation}  \label{bqq}
\sum_{r \geq 0}
 \dim_{\mathbb{C}} \,  C^\l_\nu \{ 2r \} \, t^r = t^{d_\nu^\l /2} \sum_{\tau \in \mathcal{P^+}(n, N)} K_{\tau, \nu} \, K_{ \tau^T, \l^T}  (t^{-1}), 
 \end{equation}
where \(K_{\tau, \nu} = K_{\tau, \nu}(1)\) denotes the Kostka number. The Kostka-Foulkes polynomial \(K_{\tau, \nu} (t)\)  for a composition \(\nu\) is defined to be \(K_{\tau, \hat{\nu}} (t) \), where \(\hat{\nu}\) is the partition obtained by permuting the entries of \(\nu\).

 By \cite{lcurr} and \cite{vas}, the center \(Z(R^{\lb}) \) of the cyclotomic KLR algebra \(R^{\lb} \)  is isomorphic to the dual local Weyl module \(W_{}^*(\lb)\) . 
 The next theorem is proved independently by Shan-Vasserot-Varagnolo \cite{vas} and  Webster \cite{bens}, section 3.
\begin{Theorem} \label{main} \cite{bens}
The current algebra module \(\bigoplus_\nu C^\l_\nu\) is isomorphic to the dual local Weyl module \(W^*(\lb)\).
\end{Theorem}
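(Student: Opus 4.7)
The plan is to combine two ingredients that are already available: the algebra isomorphism $Z(R^{\lb}) \cong \bigoplus_\nu C^\l_\nu$ of Shan--Varagnolo--Vasserot \cite{vas} and Webster \cite{bens}, and the current algebra module isomorphism $Z(R^{\lb}) \cong W^*(\lb)$ of Theorem \ref{thmm}. The substantive content is to verify that the first isomorphism intertwines the two current algebra actions; once this is done the result is immediate.

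To achieve this compatibility I would construct a 2-representation $\Psi^\l_N : \U \to \textbf{CPol}^\l_N$ whose center of objects recovers $\bigoplus_\nu C^\l_\nu$. Define $\textbf{CPol}^\l_N$ by replacing each object $C_\nu$ of $\textbf{CPol}_N$ with its quotient $C^\l_\nu$, and replacing the bimodule 1-morphisms $P_{\nu\pm\alpha_i,\nu}$ with the corresponding quotients. Proposition \ref{dlem}, extended from the center action to the full bimodule action using the explicit formulas in Theorem \ref{ythm} and the symmetric polynomial identities \eqref{bs1}--\eqref{s8}, guarantees that the ideals $I^\l_\nu$ give a compatible family of sub-bimodules, so that the 2-morphisms of $\textbf{CPol}_N$ descend. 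The resulting $\Psi^\l_N$ is then an integrable 2-representation of $\U$. Since $C^\l_\l \cong \mathbb{C}$ and $\mathcal{E}_i$ acts as zero at the top weight $\lb$ (as $C^\l_{\l+\alpha_i}=0$ whenever $\l+\alpha_i$ violates dominance), $\Psi^\l_N$ is a minimal categorification of $V(\lb)$.

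By Theorem \ref{univv}, $\Psi^\l_N$ is strongly equivariantly equivalent to the 2-representation $\U \to \check{R}^{\lb}\text{-}\mathrm{pMod}$, and hence after restriction to cyclotomic quotients to $\U \to R^{\lb}\text{-}\mathrm{pMod}$. Strong equivariance (Definition \ref{equibb}) together with the functoriality of the trace and of the center of objects transports the current algebra action across the equivalence: the induced isomorphism $Z(R^{\lb}) \cong \bigoplus_\nu C^\l_\nu$ is a current algebra module isomorphism, which coincides (after rescaling if necessary) with the algebra isomorphism of \cite{vas,bens}. Composing with Theorem \ref{thmm} then gives $\bigoplus_\nu C^\l_\nu \cong Z(R^{\lb}) \cong W^*(\lb)$ as graded current algebra modules.

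The main obstacle is the construction of $\Psi^\l_N$: one must check that the bimodule homomorphisms defined in Theorem \ref{ythm} for caps, cups, crossings, and dots all descend to the quotients by $I^\l_\nu$ on both sides, uniformly in $\nu$. Proposition \ref{dlem} handles only the $E_{i,j}, F_{i,j}$ generators acting on centers; upgrading this to the full 2-categorical structure amounts to repeating that argument for the bimodule generators using the identities \eqref{bs1}--\eqref{s8}, which is tedious but routine. Everything else — the universality of $R^{\lb}\text{-}\mathrm{pMod}$, the passage from strong equivariance to current algebra equivariance on centers, and the identification $Z(R^{\lb})\cong W^*(\lb)$ — is provided by the theorems already stated.
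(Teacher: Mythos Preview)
Your approach is genuinely different from the paper's and is in principle workable, but it carries more overhead than you acknowledge. The paper's proof stays entirely at the level of current algebra modules: it shows $\bigoplus_\nu C^\l_\nu$ is cocyclic (cogenerated by $1_\l \in C^\l_\l \cong \mathbb{C}$), so the dual of the universality property (Theorem~\ref{unip}) yields an injection $\phi:\bigoplus_\nu C^\l_\nu \hookrightarrow W^*(\lb)$. Surjectivity then comes from a socle/cosocle argument: Kodera--Naoi \cite{kod} identify the cosocle of $W^*(\lb)$ as $V(\lb_{\min})$ concentrated in degree zero, and the multiplicity formula \eqref{chr} shows the same for $\bigoplus_\nu C^\l_\nu$; since $W^*(\lb)$ has finite length, any nonzero quotient would meet the cosocle in degree zero, forcing the cokernel of $\phi$ to vanish. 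No 2-category beyond $\textbf{Pol}_N$ and no comparison of categorifications is needed.

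Your route, by contrast, requires constructing $\textbf{CPol}^\l_N$ as a full 2-representation and then invoking uniqueness of minimal categorifications. Two points need tightening. First, Theorem~\ref{univv} gives a strongly equivariant \emph{functor} from $\check{R}^{\lb}\text{-}\mathrm{pMod}$, not an equivalence; since $\check{R}^{\lb}$ is infinite-dimensional while $C^\l_\nu$ is not, there is no equivalence at that level. What you actually need is Rouquier's uniqueness of \emph{minimal} categorifications, matching $\textbf{CPol}^\l_N$ directly with $R^{\lb}\text{-}\mathrm{pMod}$. Second, minimality requires both $C^\l_\l \cong \mathbb{C}$ and $K_0(\textbf{CPol}^\l_N) \cong V(\lb)$; the paper establishes the analogous facts only for $\l_0 = (N,0,\dots,0)$, so for general $\l$ this is additional input (it follows from Brundan--Ostrik's identification with Spaltenstein cohomology, but you should say so). Once those are in place, your argument does go through, and it has the conceptual advantage of explaining \emph{why} the centers match: they are the centers of two models of the same minimal categorification. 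The paper's argument is shorter and more self-contained but less structural.
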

\begin{proof} We give a proof of existence of an injective map \(\phi\colon \bigoplus_\nu C^\l_\nu \to W^*(\lb)\). The module \(\bigoplus_\nu C^\l_\nu\) is cocyclic, cogenerated by the identity element \(1_{\l}\) of the highest weight space \( C^\l_\l \simeq \mathbb{C}\). This means that for any \(v \in \bigoplus_\nu C^\l_\nu\), there exist \(u \in \mathbf{U}(\mathfrak{sl}_n[t])\) such that \(uv = 1_\l\). Moreover,  \(\bigoplus_\nu C^\l_\nu\) is a finite-dimensional highest weight module with the highest weight \(\lb\). The remaining part of the proof follows from the dualization of the Theorem \ref{unip} -- the universality property of local Weyl modules. More explicitly, any cocyclic finite-dimensional highest weight module of highest weight \(\lb\)  injects into a dual local Weyl module \(W^*(\lb)\). The injective, degree zero \(\mathbf{U}(\mathfrak{sl}_n[t])\)-invariant map \(\phi\), which is uniquely defined \(\phi (1_{\lb}) = \delta_{v_{\lb}} \).

In order to prove surjectivity, it must shown that the co-kernel \(W^*(\lb) \big/ \phi (\bigoplus_\nu C^\l_\nu)\) is zero.

Socle of a \(\mathbf{U}(\mathfrak{sl}_n[t])\)-module \(M\) is the largest semisimple submodule of \(M\). Cosocle of \(M\) is the largest semisimple quotient of \(M\). The dual of the socle of \(M\) is isomorphic to the cosocle of the dual \(M^*\). Kodera-Naoi \cite{kod} show that the socle of the local Weyl module \(W(\lb)\) is the degree \({d_\nu^\l}\) homogeneous piece, and it is isomorphic to the simple \(\mathfrak{sl}_n\)-module \(V(\lb_{\min})\) where \(\lb_\min\) is the unique minimal dominant weight amongst those \(\leq \lb\). 

Dually,  the cosocle of \(W(\lb)\) is \(V(\lb_{\min})\), consisting of degree zero elements. By the multiplicity formula \eqref{chr}, the cosocle of \(\bigoplus_\nu C^\l_\nu\)  is also isomorphic to \(V(\lb_{\min})\) in degree zero, generated by the unit of the algebra \(C^\l_{\l_\min}\) over \(\mf{sl}_n\). Hence, \(W^*(\lb) \big/ \phi (\bigoplus_\nu C^\l_\nu)\) has no degree zero elements.

The dual local Weyl module \(W^*(\lb)\) has a finite length, therefore every quotient of  \(W^*(\lb)\) contains a simple submodule, which lies in the cosocle, by the definition of the cosocle. Since the cosocle of \(W^*(\lb)\) is simple, it is contained in the co-kernel \(W^*(\lb) \big/ \phi (\bigoplus_\nu C^\l_\nu)\). However, \(W^*(\lb) \big/ \phi (\bigoplus_\nu C^\l_\nu)\) has no degree zero elements, and hence is zero.
\end{proof}

 Shan-Vasserot-Varagnolo \cite{vas} and  Webster \cite{bens} also prove that there is a graded algebra isomorphism between the \( C^\l_\nu\) and the center \(Z \left( R^{\lb}( \lb -\nub )  \right) \) of the cyclotomic KLR algebra \(R^{\lb}( \lb -\nub )\) of rank \(\lb - \nub\). Theorem \ref{main} and graded dimension formula in \cite{brun1} gives the grade dimension of the center  \(Z \left( R^{\lb}( \lb -\nub )  \right) \):
\begin{equation}  \label{lqq}
\sum_{r \geq 0}
 \dim_\mathbb{C} \, Z \left( R^{\lb}( \lb -\nub )  \right) \{2r \} \, t^r = t^{d_\nu^\l/2}  \sum_{\tau \in \mathcal{P^+}(n, N)} K_{\tau, \nu} \, K_{ \tau^T, \l^T}  (t^{-1}).
 \end{equation}
 
 By Proposition \ref{hum}, the right hand side of the equation  \eqref{lqq} depend on \(\lb\) and \(\nub\) rather than actual \(\l\) and \(\nu\). \\

\begin{Theorem}
The graded character of the local Weyl module is given by the formula
 \begin{equation} \label{wgchr2}
\text{ch}_t \,  W(\lb) = \sum_{\tau \in \mathcal{P^+}(n, N)} K_{ \tau^T, \l^T}  (t^{}) \, \text{ch}_{} \, V(\bar{\tau}) ,
\end{equation}
and the graded dimension of the weight space \(\nub\) is given by
\begin{equation}  \label{wlqq}
 \dim_\mathbb{C} \, W_{\nub}(\lb) = \sum_{r \geq 0}
 \dim_\mathbb{C} \, W_{\nub}(\lb) \{ 2r \} \, t^r = \sum_{\tau \in \mathcal{P^+}(n, N)} K_{\tau, \nu} \, K_{ \tau^T, \l^T}  (t^{}).
 \end{equation}
\end{Theorem}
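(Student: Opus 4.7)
The plan is to deduce both formulas from Theorem~\ref{main}, Brundan's graded dimension formula~\eqref{bqq}, and the definition of the grading on the dual local Weyl module. First I would establish \eqref{wlqq} for each weight space separately, then sum over weights to obtain the character formula \eqref{wgchr2}.

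Step one: translate dimensions across the duality. By Theorem~\ref{main}, there is a current algebra isomorphism $\bigoplus_\nu C^\l_\nu \cong W^*(\lb)$ which decomposes into weight spaces, giving $C^\l_\nu \cong W^*_{\nub}(\lb)$ as graded vector spaces. By the very definition
\[
W^*_{\nub}(\lb)\{2s\} := \bigl(W_{\nub}(\lb)\{d^\l_\nu - 2s\}\bigr)^*,
\]
we obtain $\dim_\mathbb{C} C^\l_\nu\{2s\} = \dim_\mathbb{C} W_{\nub}(\lb)\{d^\l_\nu - 2s\}$. Reindexing via $2r = d^\l_\nu - 2s$ yields the identity
\[
\sum_{r \geq 0} \dim_\mathbb{C} W_{\nub}(\lb)\{2r\}\, t^r \;=\; t^{d^\l_\nu/2}\sum_{s \geq 0} \dim_\mathbb{C} C^\l_\nu\{2s\}\, t^{-s}.
\]

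Step two: apply Brundan's formula. Substituting $t \mapsto t^{-1}$ in \eqref{bqq} gives
\[
\sum_{s \geq 0} \dim_\mathbb{C} C^\l_\nu\{2s\}\, t^{-s} \;=\; t^{-d_\nu^\l/2}\sum_{\tau \in \mathcal{P}^+(n,N)} K_{\tau,\nu}\, K_{\tau^T,\l^T}(t).
\]
The factor $t^{d^\l_\nu/2}$ cancels when we combine these two identities, producing exactly \eqref{wlqq}. Here Proposition~\ref{prr} reassures us that the Kostka--Foulkes data depend only on $\lb$ and $\nub$, as the left-hand side manifestly does.

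Step three: derive the character. Using \eqref{wlqq} in the definition of the graded character,
\[
\text{ch}_t\, W(\lb) \;=\; \sum_\nub e(\nub)\sum_{r \geq 0} \dim_\mathbb{C} W_{\nub}(\lb)\{2r\}\, t^r \;=\; \sum_\tau K_{\tau^T,\l^T}(t)\sum_\nub K_{\tau,\nu}\, e(\nub),
\]
and the inner sum is precisely the ordinary character $\text{ch}\, V(\bar\tau) = \sum_\nub K_{\tau,\nu}\, e(\nub)$, by the Kostka interpretation of weight multiplicities in simple $\mathfrak{sl}_n$-modules. This yields \eqref{wgchr2}.

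I expect the only delicate point to be the bookkeeping of the $\nu$-dependent degree shift $d^\l_\nu$: one must verify that substituting $t\mapsto t^{-1}$ in~\eqref{bqq} produces a polynomial rather than a Laurent series, which is ensured by the fact that $C^\l_\nu$ is concentrated in degrees between $0$ and $d^\l_\nu$. Once this symmetry of the graded dimension polynomial is in hand, the remaining manipulations are formal.
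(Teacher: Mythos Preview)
Your proof is correct and uses the same key ingredients as the paper: Theorem~\ref{main}, the duality grading shift $W^*_{\nub}(\lb)\{2s\}=(W_{\nub}(\lb)\{d^\l_\nu-2s\})^*$, and Brundan's formulas. The only difference is the order of derivation: the paper starts from the multiplicity formula~\eqref{chr} to obtain the character formula~\eqref{wgchr2} first, and then remarks that the dimension formula follows similarly from~\eqref{bqq}; you instead start from the dimension formula~\eqref{bqq}, prove~\eqref{wlqq}, and then sum over weights using $\mathrm{ch}\,V(\bar\tau)=\sum_{\nub}K_{\tau,\nu}\,e(\nub)$ to recover~\eqref{wgchr2}. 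Both routes are short and equivalent; yours has the minor advantage of making explicit the step the paper leaves to the reader.
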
 
\begin{proof}
Combining the muliplicity formula \eqref{chr} with the Theorem \ref{main}, we have
\begin{equation} \label{wchr} 
\sum_{r \geq 0} 
 \left[ {\textstyle \bigoplus_\nu} W_{\nub}^*(\lb) \{ d^{\l }_\nu - 2r \} : V(\bar{\tau}) \right] t^r =K_{\tau^T, \l^T}(t),
\end{equation}
Since \(W_{\nub}^*(\lb)  \{ d^{\l }_\nu - 2r \} = (W_{\nub} (\lb) \{ 2r \} )^*\) and \(V(\bar{\tau}) \simeq V^*(\bar{\tau})\), the multiplicity formula for the local Weyl module is 
\begin{equation} \label{wwchr} 
\sum_{r \geq 0} 
 \left[ {\textstyle \bigoplus_\nu} W_{\nub}(\lb) \{ 2r \} : V(\bar{\tau}) \right] t^r =K_{\tau^T,\l^T}(t),
\end{equation}
which implies \eqref{wgchr2}.
The graded dimension formula follows similarly from the equation \eqref{bqq}. \qedhere
\end{proof}

\begin{Remark}
Let \(\l \in \mathcal{P^+}(n, N) \) and \(W(\lb)\) be the local Weyl module with the highest weight \(\lb\). If we set \(t=1\) in the formula \eqref{wgchr2}, we get
\begin{equation}\label{ung}
\text{ch}_t \,  W(\lb) \, |_{t=1}= \sum_{\tau \in \mathcal{P^+}(n, N)} K_{ \tau^T, \l^T}  (1^{}) \, \text{ch}_{} \, V(\bar{\tau})= \text{ch}\, \left( {\textstyle\bigwedge}^{\l^T_1} \mathbb{C}^n \otimes {\textstyle\bigwedge}^{\l_2^T}\mathbb{C}^n \otimes \cdots \otimes {\textstyle\bigwedge}^{\l_p^T} \mathbb{C}^n\right),
\end{equation}
where the \(\l^T = (\l_1^T, \l_2^T, \dots, \l^T_p)\) is the transpose of \(\l\). The equation \eqref{ung} also follows from Chari-Loktev \cite{lamp}, where they prove that \(W(\lb)\) is isomorphic to a certain graded tensor product of fundamental representations, called the fusion product, where recover the usual tensor product

\begin{equation}
 {\textstyle\bigwedge}^{\l^T_1} \mathbb{C}^n \otimes {\textstyle\bigwedge}^{\l_2^T} \mathbb{C}^n \otimes \cdots \otimes {\textstyle\bigwedge}^{\l_p^T} \mathbb{C}^n=\bigoplus_{\tau \in \mathcal{P^+}(n, N)} \left(V(\bar{\tau}) \right)^{\oplus K_{ \tau^T, \l^T} }
\end{equation}
if we set \(t=1\). 
\end{Remark}
A computation shows that if \(\l_0= (N, 0, \dots, 0)\), the graded dimension of \(C^{\l_0}_\nu \) is given by the quantum multinomial coefficient
\[ \binom{N}{\nu}_{t} = \binom{N}{\nu_1,\dots, \nu_n}_{t}= \frac{[N]_t!}{[\nu_1]_t! \cdots  [\nu_n]_t!},\]
 where \([a]_t!\) denotes the \(t\)-factorial 
 \[\frac{(1-t) (1-t^2) \cdots (1-t^a)}{(1-t)^a} = (1+t)(1+t+t^2) \cdots (1+t +t^2+ \cdots +t^{a-1}). \] 
 
 \begin{Example}
  Let \(N=9\), \(n=4\), \(\l= (5,2,1,1)\) and \(\nu =(3, 1,2, 3)\).
 We compute the graded dimension of \(Z \left( R^{\lb}( \lb -\nub ) \right)  \). We have \(\l^T = (4,2,1,1,1)\),  \(d_\nu^\l= 31-23=8\).
 \begin{equation} 
\sum_{r \geq 0}
 \dim \, Z \left( R^{\lb}( \lb -\nub ) \right) \{ 2r \}  \, t^r = t^4  \sum_{\tau \in \mathcal{P^+}(4, 9)} K_{\tau, (3,1,2,3)} \, \cdot K_{ \tau^T, (4,2,1,1,1)}  (t^{-1}) =
\end{equation}
\[ = t^4 [ K_{(3,3,2,1), (3,1,2,3)} \, \cdot K_{ (4,3,2,0,0) , (4,2,1,1,1)}  (t^{-1}) + K_{(5,2,1,1), (3,1,2,3)} \, \cdot K_{ (4,2,1,1,1), (4,2,1,1,1)}  (t^{-1}) +\]
\[+ K_{(4,3,1,1), (3,1,2,3)} \, \cdot K_{ (4,2,2,1,0), (4,2,1,1,1)}  (t^{-1}) + K_{(4,2,2,1), (3,1,2,3)} \, \cdot K_{ (4,3,1,1,0), (4,2,1,1,1)}  (t^{-1}) ]=\] 
\[ = t^4 ( t^{-2}+t^{-3}+t^{-4} + 2+ 2(t^{-2}+t^{-1})+(t^{-3}+t^{-2}+t^{-1}) ) =2t^4+ 3t^3+4t^2+2t+1.\]
It is easy to check that we get the same graded dimension if we choose \(\l'= (4,1,0,0)\) and \(\nu' =(2, 0,1, 2)\). In both cases, \(\lb =\lb'=(3,1,0)\) and \(\nub = \nub' = (2, -1, -1)\).
 \end{Example}

  \begin{Example}
  Let \(N=5\), \(n=3\), \(\l= (5,0,0)\) and \(\nu =(3, 1,1)\).
The graded dimension of the center \(Z \left( R^{\lb}( \lb -\nub ) \right)  \) is computed as follows. We have \(\l^T = (1,1,1,1,1)\),  \(d_\nu^\l= 14\).
 \begin{equation} 
\sum_{r \geq 0}
 \dim_{\mathbb{C}} \, Z \left( R^{\lb}( \lb -\nub ) \right)  \{ 2r \} \, t^r = t^{7}  \sum_{\tau \in \mathcal{P^+}(3, 5)} K_{\tau, (3,1,1)} \, \cdot K_{ \tau^T, (1,1,1,1,1)}  (t^{-1}) =
\end{equation}
\[ = t^{7} ( K_{(5,0,0), (3, 1,1)} \, \cdot K_{ (1,1,1,1,1) , (1,1,1,1,1)}  (t^{-1}) + K_{(4,1,0), (3, 1,1)} \, \cdot K_{ (2,1,1,1), (1,1,1,1,1)}  (t^{-1}) +\]
\[+ K_{(3,2,0), (3, 1,1)} \, \cdot K_{ (2,2,1,0,0), (1,1,1,1,1)}  (t^{-1}) + K_{(3,1,1), (3, 1,1)} \, \cdot K_{ (3,1,1,0,0), (1,1,1,1,1)}  (t^{-1}) )=\] 
\[ = t^{7} ( 1+ 2(t^{-1}+t^{-2}+t^{-3}+t^{-4})+(t^{-2}+t^{-3}+t^{-4}+t^{-5}+t^{-6})+(t^{-3}+t^{-4}+2t^{-5}+t^{-6}+t^{-7}) )=\]
\[=t^{7}+2t^{6}+3t^{5}+4t^{4}+4t^{3} + 3t^{2}+2t^{1}+1=\binom{5}{3,1,1}_{t}.\]
\end{Example}


\noindent Institute of Mathematics, University of Zurich, \\ Winterthurerstr. 190, 8057 Zurich, Switzerland, \\ E-mail: zaur.guliyev@math.uzh.ch

\end{document}